\DeclareMathOperator{\sech}{sech}
\mathchardef\ordinarycolon\mathcode`\:
\renewcommand*\env@matrix[1][\arraystretch]{%
	\edef\arraystretch{#1}%
	\hskip -\arraycolsep
	\let\@ifnextchar\new@ifnextchar
	\array{*\c@MaxMatrixCols c}}
\newtheorem{theorem}{Theorem}[section]
\theoremstyle{definition}
\newtheorem{remark}[theorem]{Remark}
\newtheorem{example}[theorem]{Example}
\newcommand{\s}{\mathcal{S}}
\title[Dwell-flee relations]{Stability of bimodal planar switched linear systems with both stable and unstable subsystems} 
\author[Swapnil Tripathi]{Swapnil Tripathi}
\address{Department of Mathematics\\
	Indian Institute of Science Education and Research Bhopal\\
	Bhopal Bypass Road, Bhauri \\
	Bhopal 462 066, Madhya Pradesh\\
	India}
\email{swapnil93@iiserb.ac.in}
\author[Nikita Agarwal]{Nikita Agarwal}
\address{Department of Mathematics\\
	Indian Institute of Science Education and Research Bhopal\\
	Bhopal Bypass Road, Bhauri \\
	Bhopal 462 066, Madhya Pradesh\\
	India}
\email{nagarwal@iiserb.ac.in}
\date{\today}
\begin{document} 
	
	\begin{abstract}
	We study dynamics of a bimodal planar linear switched system with a Hurwitz stable and an unstable subsystem. For given \textit{flee time} from the unstable subsystem, the goal is to find corresponding \textit{dwell time} in the Hurwitz stable subsystem so that the switched system is asymptotically stable. The dwell-flee relations obtained are in terms of certain smooth functions of the eigenvalues and (generalized) eigenvectors of the subsystem matrices. The results are extended to a special class of symmetric bilinear systems. The results are also extended to a multimodal planar linear switched system in which the switching is governed by an undirected star graph, where the internal node corresponds to a Hurwitz stable (unstable, respectively) subsystem and all the leaves correspond to unstable (Hurwitz stable, respectively) subsystems. For this multimodal system, dwell-flee relations are obtained as solutions of a minimax optimization problem.
\end{abstract}

\maketitle

\noindent \textbf{Keywords}: Piecewise continuous dynamical systems, control theory, dwell time, flee time, stability, asymptotic stability.\\
\noindent \textbf{2010 Mathematics Subject Classification}: 37N35 (Primary); 93C05, 93D20 (Secondary)
%\tableofcontents
\section{Introduction}

A \textit{switched system} comprises a family of subsystems and a switching signal. A \textit{switching signal} is a right continuous, piecewise constant function which determines the subsystem active at each time instant. Switched systems serve as a model for various control engineering applications of environmental, electrical, mechanical and chemical nature, see~\cite{zhang2010switched,xu2011fuel,belykh2014evolving}. Due to these applications in practical systems, stability and control analysis in switched systems has been an active area of research in the past several decades. The common Lyapunov function and multiple Lyapunov function techniques are used to study stability of time dependent switched systems, see~\cite{agrachev2001lie,lin2009stability}.
If all subsystems are stable, there may exist a signal which destabilizes the system, see~\cite{liberzon2003switching}. Thus it becomes relevant to find the class of signals for which the switched system is stable. Several constraints for signals such as \textit{dwell time}, \textit{average dwell time}, and \textit{mode-dependent average dwell time} are discussed in the literature. It is known that when all subsystems are stable, the switched system is stable if the dwell time is sufficiently large, that is, the switched system spends sufficiently large amount of time in each subsystem, see~\cite{morse1996supervisory,hespanha1999stability,karabacak2013dwell,geromel2006stability}.

In practical systems, it is unavoidable to have unstable behaviour, which may arise due to several reasons such as wear down of machines, failures due to environmental factors, and otherwise. Hence the issue of stability of switched systems with unstable subsystems has attracted significant attention in recent years, see~\cite{agarwal2018simple,zhai2001stability,yang2009stabilization,agarwal2019stabilizing,zhang2014stability}. Finding a switching signal which \textit{stabilizes} the switched system when either all or some subsystems are stable, with or without the presence of control, is extensively studied in the literature. Systems having all unstable subsystems have been studied in~\cite{agarwal2019stabilizing,xiang2014stabilization}. A survey of results on \textit{stabilizability} and \textit{controllability} is presented in~\cite{decarlo2000perspectives, sun2006switched}. Moreover in the past couple of decades, synchronization of coupled dynamical systems has attracted considerable attention, see~\cite{yang2020synchronization,zhou2020finite,yang2019synchronization,yang2020synchronizationcao,yang2018synchronization,tang2021finite,zou2021finite,zhang2020velocity} for results about systems with actuator fault, time delay and stochastic coupling. 

It is but intuitive that stability of a switched system with both stable and unstable subsystems can be achieved if the system spends sufficiently large time (dwell time) in stable subsystems and sufficiently small time (flee time) in unstable subsystems, see~\cite{agarwal2018simple, zhai2001stability}. This serves as a motivation behind this paper and allows us to ask the following question: can the dwell time be obtained in terms of the flee time and the properties of the subsystems?

In this paper we consider a \textit{bimodal planar switched linear system} -- a switched system on $\mathbb{R}^2$ with two linear subsystems, out of which exactly one is Hurwitz stable. For given flee time, our goal is to obtain dwell time which guarantees stability of such a system. The dwell-flee relations obtained in our work explicitly highlight the role of eigenvalues and eigenvectors of the subsystem matrices. Additionally most of the results in the literature give stability conditions in terms of the leading eigenvalues of the subsystem matrices, ignoring the other eigenvalues. However our dwell-flee relations are in terms of all the eigenvalues of the subsystem matrices. Furthermore, the dwell-flee relation obtained in~\cite{agarwal2018simple} are linear and use Jordan basis matrices of subsystem matrices having unit norm columns. Using a more efficient grouping of terms in the flow of the system and by varying over all possible Jordan basis matrices, we obtain lower dwell time. In addition, since we obtain dwell time as a function of flee time, if a certain amount of time is spent in the unstable subsystem at a particular switching instant, the unstable effect can be compensated by spending sufficiently large amount of time in the stable subsystem in the subsequent switching instance into the stable subsystem. Hence while defining a signal, there is no need of apriori fixing a flee time and a corresponding dwell time (using the dwell-flee relation) for all time, see signals defined in~\eqref{eq:signaldynam}. Hence, using our results, stability of the switched system can be achieved, even with any number of system failures during the time course of the signal.

The class of bimodal planar switched linear systems exhibit rich dynamical behaviour, see~\cite{liberzon2003switching,iwatani2006stability}. Though practical models are usually not linear, studying this class of systems is useful for gaining insight into its non-linear counterpart, which occurs in practice. For instance, in~\cite{palejiya2013stability}, the authors model an integrated wind turbine and battery system as a bimodal planar system and discuss stability issues using linearization of the subsystems. The issues of stability, stabilizability and controllability (with or without the presence of control) have also been studied specifically for bimodal setting. A necessary and sufficient condition for stability under arbitrary switching is given in~\cite{balde2009note} for bimodal planar linear systems, and in~\cite{eldem2009stability} for bimodal linear systems in $\mathbb{R}^3$. The stabilizability and controllability issues for bimodal planar linear systems have been addressed in~\cite{sename2013robust}.

\section{Problem setting and notations}
Let $A_1$ and $A_2$ be two planar ($2\times 2$) real matrices. In this paper, we will focus on switched systems on $\mathbb{R}^2$ of the form 
\begin{eqnarray}\label{eq:system}
	\dot{x}(t)&=&A_{\sigma(t)}x(t), 
\end{eqnarray}	
where $x(t)\in\mathbb{R}^2$ and $\sigma(t)\in \{1,2\}$, for $t\ge 0$. The function $\sigma$ is a right-continuous piecewise constant function, known as the \textit{switching signal}. The switched system~\eqref{eq:system} has two \textit{subsystems} $A_1$ and $A_2$. That is, at time instant $t$, if $\sigma(t)=1$, the flow of~\eqref{eq:system} is governed by the subsystem $\dot{x}(t)=A_{1}x(t)$, whereas if $\sigma(t)=2$, the flow is governed by the subsystem $\dot{x}(t)=A_{2}x(t)$. Thus the switching signal $\sigma$ determines the subsystem active at any time instant $t$. 

\subsection*{Assumptions and Notations}
\begin{itemize}
	\item For each $k\ge 1$, let $d_k>0$ denotes the $k^{th}$ discontinuity of $\sigma$, known as the $k^{th}$ \textit{switching instant}. Set $d_0=0$. Let $\Delta_k=d_k-d_{k-1}$. The switching signal $\sigma$ has infinitely many discontinuities. Further the system does not exhibit zeno behavior. That is, the sequence $(d_k)$ has no limit point. Hence $d_k\uparrow\infty$ as $k\rightarrow\infty$. 
	\item The switched system is governed by the subsystem $A_1$ at time instant $t=0$. This can be assumed without loss of generality since otherwise if $\sigma(0)=2$, then we can study the flow of the switched system for $t\ge d_1$. Since there are only two subsystems, we have $\sigma(d_{2k-2})=1$ and $\sigma(d_{2k-1})=2$, for $k\ge 1$. For $k\ge 1$, let $t_{k}=\Delta_{2k-1}$ be the time spent in the subsystem $A_1$ between the $(2k-2)^{th}$ and $(2k-1)^{th}$ switching instants and and $s_k=\Delta_{2k}$ be the time spent in the subsystem $A_2$ between the $(2k-1)^{th}$ and $(2k)^{th}$ switching instants. The signal $\sigma$ is shown in Figure~\ref{fig:signal}.   
	\item The state $x$ does not jump at any discontinuity point of $\sigma$. That is, $\lim_{t\rightarrow d_k^-}x(t)=x(d_k)$, for each $k\ge 1$.
	\item Both $A_1$ and $A_2$ are nonzero matrices. Since otherwise if $A_1$ is the zero matrix, then for each $k\ge 1$ and $t\in [d_{2k-1},d_{2k})$, $x(t)=x(d_{2k-1})$. That is, the state $x(t)$ does not change in the interval $[d_{2k-1},d_{2k})$. Hence the stability of the switched system is same as that of the subsystem $A_2$. The situation is similar when $A_2$ is the zero matrix. 
	\item The matrix $A_1$ is Hurwitz stable, that is, all its eigenvalues lie to the left of the imaginary axis. \textit{We will call $A_1$, a stable matrix throughout this paper}. The matrix $A_2$ is unstable, that is, one of its eigenvalues lies to the right of the imaginary axis. 
\end{itemize} 

Throughout the paper, for a given vector $x$, $\|x\|$ denotes the $\ell^2$-norm of $x$, and given matrix $A$, $\|A\|$ denotes the spectral norm of $A$. 

\begin{figure}[h!]
	\centering
	\includegraphics[scale=0.3]{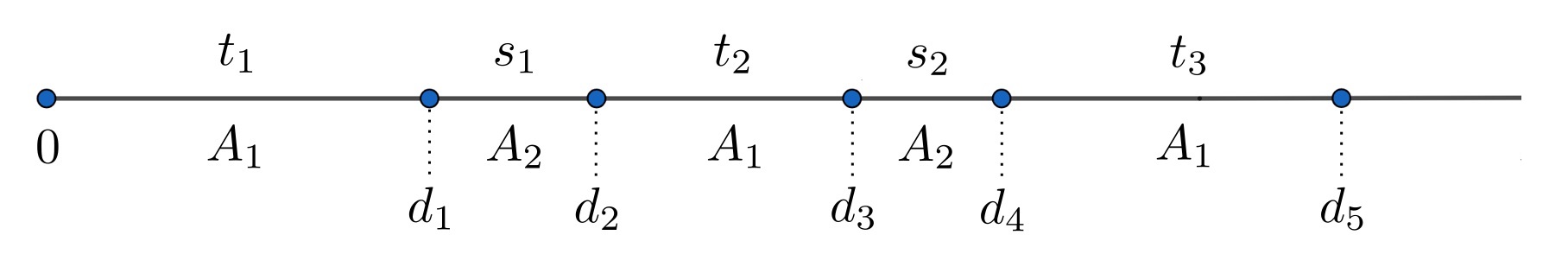}
	\caption{Switching signal $\sigma$ with $\sigma(0)=1$.}
	\label{fig:signal}
\end{figure} 

%With the notations and assumptions in place, the flow of~\eqref{eq:system} is given by 
%\[
%x(t)=e^{A_{\sigma_j}(t-d_{j-1})}\left(\prod_{k=1}^{j-1}e^{A_{\sigma_k}\Delta_k}\right)x(0),\ \ \ t\in[d_{j-1},d_j).
%\]
\noindent The switched system~\eqref{eq:system} is said to be 
\begin{enumerate}
	\item \textit{stable} if for every $\epsilon>0$, there exists $\delta>0$ such that $\| x(0)\|<\delta$ implies $\|x(t)\|<\epsilon$ for all $t>0$,
	\item \textit{asymptotically stable} if it is stable and $\Vert x(t)\Vert\to 0$ as $t\to\infty$ for every initial vector $x(0)\in\mathbb{R}^n$, and
	\item \textit{exponentially stable} if there exist $\alpha,\beta>0$ such that $\|x(t)\|\le \alpha e^{-\beta t}\|x(0)\|$ for all $t>0$, for every initial vector $x(0)\in\mathbb{R}^n$.
\end{enumerate} 
The above definitions are valid for more general switched system and not just for systems of form~\eqref{eq:system}. Note that the asymptotic stability and exponential stability are equivalent notions for switched linear systems,~\cite{hespanha2004uniform}, and are strictly stronger notions than stability, by definition.

Now, we formally define the constraints of dwell time and flee time discussed earlier. When both subsystem matrices $A_1$ and $A_2$ are stable, the signal $\sigma$ is said to have \textit{dwell time} $\tau> 0$ if $\Delta_k\ge \tau$, for all $k\ge 1$. In this setting, the switched system~\eqref{eq:system} is stable for all signals with sufficiently large dwell time~\cite{hespanha1999stability}. In our setting, when one of the subsystems is stable and other is unstable, it is intuitive that the switched system is stable if it spends sufficiently large time in the stable subsystem and spends small amount of time in the unstable subsystem, see~\cite{agarwal2018simple,agarwal2019stabilizing}. In this setting, the signal $\sigma$ is said to have \textit{dwell time} $\tau>0$ if $t_k=\Delta_{2k-1}\ge \tau$ and is said to have \textit{flee time} $\eta>0$ if $s_k=\Delta_{2k}\le\eta$, for all $k\ge 1$. In~\cite{agarwal2018simple}, for stability of a switched system with both stable and unstable subsystems, a relationship between dwell time $\tau$ and flee time $\eta$ is derived. 

\subsection*{Classes of switching signals}
The aim of this paper is to find dwell-flee relations (relationship between dwell time and flee time) guaranteeing the stability of the bimodal planar linear switched system of the form~\eqref{eq:system} with a stable and an unstable subsystem. We define the following classes of signals:
\begin{eqnarray}\label{eq:signalbm}
	\s\left[\tau,\eta\right]&=&\left\{\sigma\colon[0,\infty)\to \{1,2\}\ \vert\ \sigma(0)=1,\  t_k\ge \tau,\ s_k\le \eta, \ \text{for all } k\in\mathbb{N}\right\},\nonumber\\ 
	\s'\left[\tau,\eta\right]&=&\{\, 
	\sigma\in \s\left[\tau,\eta\right]\ \vert \ \text{the sequence } (t_k,s_k)_{k\ge 1} \text{ of tuples does not converge} \nonumber\\
	&& \text{    \ to the tuple } (\tau,\eta)\}.  
\end{eqnarray}

The collection $\mathcal{S}[\tau,\eta]$ consists of signals having dwell time $\tau$ and flee time $\eta$. Recall that $t_k$ is the time spent in the stable subsystem $A_1$ after the $(2k-2)^{th}$ switching instant before the signal jumps to unstable subsystem $A_2$ at the $(2k-1)^{th}$ switching instant. Thereafter the system spends time $s_k$ before the signal switches back to the stable subsystem $A_1$ at the $(2k)^{th}$ switching instant. The subcollection $\mathcal{S}'[\tau,\eta]$ of $\mathcal{S}[\tau,\eta]$ consists of those signals $\sigma$ for which the sequence $(t_k,s_k)_{k\ge 1}$ does not converge to $(\tau,\eta)$.

%	\begin{example}
%	Consider a signal $\sigma$ with $\sigma(0)=1$ with discontinuities $d_k$ given by $d_0=0$, $d_{2k}=\sum_{i=1}^k(t_i+s_i)$ and $d_{2k+1}=\sum_{i=1}^k(t_i+s_i)+t_{k+1}$, where $t_k=\tau+1/k$ and $s_k=\eta-1/k$, for $k\in\mathbb{N}$. Then $\sigma\in \mathcal{S}[\tau,\eta]\setminus \mathcal{S}'[\tau,\eta]$ since $(t_k,s_k)\rightarrow (\tau,\eta)$ as $k\rightarrow\infty$.
%\end{example}

Recall that, we will study stability of the switched system~\eqref{eq:system} where \textit{$A_1$ will be a Hurwitz matrix (all the eigenvalues with negative real part) and $A_2$ will be an unstable matrix (having at least one of the eigenvalues with positive real part)}. For $i=1,2$, let $J_i$ be the \emph{real} Jordan form of $A_i$, that is, there exists an invertible matrix $V_i$ (known as \textit{Jordan basis matrix}) such that $A_i=V_i J_i V_i^{-1}$. Let us fix $V_1$ and $V_2$ for the time being. The flow of the switched system~\eqref{eq:system} is given by
\[
x(t) = 
\left\{\begin{aligned}
	& e^{A_1 (t-d_{2j})}\prod_{i=1}^j \left(e^{A_2 s_i}e^{A_1 t_i}\right) x(0), & t & \in[d_{2j},d_{2j+1})\\
	& e^{A_2(t-d_{2j+1})}e^{A_1 t_{j+1}}\prod_{i=1}^j \left(e^{A_2 s_i}e^{A_1 t_i}\right) x(0), &  t & \in[d_{2j+1},d_{2j+2}).
\end{aligned}\right.
\]

\noindent Let $\sigma\in\mathcal{S}[\tau,\eta]$. For $t\in[d_{2j},d_{2j+2})$,
\begin{equation}\label{eq:flow}
	\|x(t)\|\le \zeta\ \xi\ \min\left\lbrace\prod_{i=1}^j \left\|V_1^{-1}V_2e^{J_2 s_i}\,V_2^{-1}V_1 e^{J_1 t_i} \right\|, \prod_{i=1}^j \left\|V_2^{-1}V_1e^{J_1 t_{i+1}}\,V_1^{-1}V_2 e^{J_2 s_i} \right\| \right\rbrace \|x(0)\|,
\end{equation}
where $\zeta=\max\{\|V_1\|\|V_1^{-1}V_2\|\|V_2^{-1}V_1\|\|V_1^{-1}\|,\|V_1\|\|V_1^{-1}V_2\|\|V_2^{-1}\|\}$, \\
$\xi=\left(\sup_{t\in [0,\infty)}\left\|{\rm e}^{J_1 t}\right\|\right)^2\,\sup_{t\in [0,\eta]}\left\|{\rm e}^{J_2 t}\right\|$. Note that $\xi$ is finite since $J_1$ is a stable matrix. 

For given flee time $\eta>0$, we will compute $\tau_{1,2}(V_1,V_2,\eta)>0$ and $\tau_{2,1}(V_1,V_2,\eta)>0$ such that for $s<\eta$,
\begin{eqnarray}
	\left\|V_1^{-1}V_2e^{J_2 s}\,V_2^{-1}V_1 e^{J_1 t} \right\| &<&1, \text{ for all } t>\tau_{1,2}(V_1,V_2,\eta), \label{eq:tau12} \\
	\left\|V_2^{-1}V_1e^{J_1 t}\,V_1^{-1}V_2 e^{J_2 s} \right\| &<& 1, \text{ for all } t>\tau_{2,1}(V_1,V_2,\eta).\label{eq:tau21}
\end{eqnarray}

Such a value $\tau_{1,2}(V_1,V_2,\eta)$ ($\tau_{2,1}(V_1,V_2,\eta)$, respectively) exists since the norm terms in~\eqref{eq:tau12} and~\eqref{eq:tau21} are continuous functions of $s\in[0,\eta]$, and hence the first (second, respectively) inequality is satisfied for sufficiently large $t>0$. Moreover, this implies stability of the switched system for $\sigma\in\mathcal{S}[\tau_{1,2}(V_1,V_2,\eta),\eta]\bigcup \mathcal{S}[\tau_{2,1}(V_1,V_2,\eta),\eta]$, due to~\eqref{eq:flow}.

\subsection*{Procedure for computing $\tau_{1,2}(\eta)$ and $\tau_{2,1}(\eta)$}
For any $n\times n$ real matrix $K$, $\|K\|<1$ if and only if $\rho(K^\top K)<1$, where $\rho$ denotes the spectral radius of a matrix. It was proved in~\cite{fleming1998schur} that for a planar matrix $A$, $\rho(A)<1$ if and only if $\lvert\text{tr}(A)\rvert<1+\text{det}(A)$ and $\lvert\text{det}(A)\rvert<1$. Thus, for a $2\times 2$ matrix $K$, $\|K\|<1$ if and only if 
\begin{enumerate}[(C1)]
	\item $\lvert\text{det}(K^\top K)\rvert<1$, and
	\item $\lvert\text{tr}(K^\top K)\rvert<1+\text{det}(K^\top K)$.
\end{enumerate}	

Recall inequalities~\eqref{eq:tau12} and~\eqref{eq:tau21}. Let us consider real Jordan basis matrices $V_1$ of $A_1$ with Jordan form $J_1$, and $V_2$ of $A_2$ with Jordan form $J_2$. To compute $\tau_{1,2}(V_1,V_2,\eta)$, we will study the two inequalities obtained by substituting $K=V_1^{-1}V_2e^{J_2 s}\,V_2^{-1}V_1 e^{J_1 t}$ in (C1) and (C2). The sub-region of the open first quadrant $Q_1$ of the $ts$-plane, satisfying (C1) will be referred to as the \textit{feasible region}. Then $\tau_{1,2}(V_1,V_2,\eta)$ will be such that the (unbounded) rectangle $\mathcal{R}_{\tau_{1,2}(V_1,V_2,\eta),\eta}=\{(t,s)\in Q_1\ \colon\ t\ge \tau_{1,2}(V_1,V_2,\eta),\ s\le \eta\}$ lies within the closure of the region in $Q_1$, satisfying both (C1) and (C2). This process will be repeated with $K=M_{D_1,D_2}e^{J_1 t}\,M_{D_1,D_2}^{-1} e^{J_2 s}$ to obtain $\tau_{2,1}(V_1,V_2,\eta)$. We will then vary over Jordan basis matrices $V_1$ and $V_2$ to obtain the best dwell time bound, for a given flee time. 

We will fix real Jordan basis matrices $P_1$ of $A_1$ and $P_2$ of $A_2$, and let $M=P_2^{-1}P_1$, known as the \textit{transition matrix}. The fact that any other Jordan basis matrix $V_1$ ($V_2$) of $A_1$ ($A_2$) can be expressed in terms of $P_1$ ($P_2$) will be used to simplify the analysis. Moreover, since the inequalities~\eqref{eq:tau12} and~\eqref{eq:tau21} do not alter if $V_2^{-1}V_1$ is multiplied by a nonzero scalar, we will choose $P_1$ and $P_2$ so that the matrix $M=P_2^{-1}P_1$ has determinant either $1$ or $-1$ ($1$, if possible). We will take $M=\begin{pmatrix}
	a&b\\ c&d
\end{pmatrix}$ throughout the paper. 

If the infimum is attained, we let $\tau_{1,2}(\eta)=\inf_{V_1,V_2}\tau_{1,2}(V_1,V_2,\eta)$. Otherwise, for any $\epsilon>0$, let $\tau_{1,2}(\eta)=\inf_{V_1,V_2}\tau_{1,2}(V_1,V_2,\eta)+\epsilon$. Similarly, if the infimum is attained, let $\tau_{2,1}(\eta)=\inf_{V_1,V_2}\tau_{2,1}(V_1,V_2,\eta)$. Otherwise, for any $\epsilon>0$, let $\tau_{2,1}(\eta)=\inf_{V_1,V_2}\tau_{2,1}(V_1,V_2,\eta)+\epsilon$. 

Then the switched system~\eqref{eq:system} is stable for all $\sigma\in \s[\tau_{1,2}(\eta),\eta]$ and is asymptotically stable for all $\sigma\in \s'[\tau_{1,2}(\eta),\eta]$, since for such signals $\|x(t)\|$ is bounded above by a scalar multiple of $\rho^r$, for some $\rho<1$ (since $(t_k,s_k)_{k\ge 1}$ does not converge to $(\tau_{1,2}(\eta),\eta)$). Similarly the switched system~\eqref{eq:system} is stable (asymptotically stable) for all $\sigma\in\s[\tau_{2,1}(\eta),\eta]$ ($\sigma\in\s'[\tau_{2,1}(\eta),\eta]$).

In addition to the classes of signals defined in~\eqref{eq:signalbm}, we define two more classes of signals:
\begin{eqnarray}\label{eq:signaldynam}
	\s_{1,2}&=&\left\{\sigma\colon[0,\infty)\to \{1,2\}\ \vert\ \sigma(0)=1,\  t_{k}\ge \tau_{1,2}(s_k), \ \text{for all } k\in\mathbb{N}\right\},\nonumber\\
	\s_{2,1}&=&\left\{\sigma\colon[0,\infty)\to \{1,2\}\ \vert\ \sigma(0)=1,\  t_{k+1}\ge \tau_{2,1}(s_k), \ \text{for all } k\in\mathbb{N}\right\}.
\end{eqnarray}
In these classes, in contrast to those defined in~\eqref{eq:signalbm}, the flee time is not fixed apriori while defining the signal. These classes are discussed in Remarks~\ref{rem:dynRC21},~\ref{rem:dynCR12},~\ref{rem:dynNN},~\ref{rem:dynNC},~\ref{rem:dynCN},~\ref{rem:dynNR12},~\ref{rem:dynRN21}, and~\ref{rem:dynCC}.

Throughout the paper, the partial derivative of a function $f$ with respect to $t$ will be denoted as $(\partial/\partial t)f$ or $f_t$. Along each horizontal line in the $ts$-plane, if the point of intersection of the line and the graph of a function $f$ or a curve $\mathcal{C}_1$ is to the right (left) of the the point of intersection of the line and the graph of a function $g$ or a curve $\mathcal{C}_2$, we say that the graph of the function $f$ or the curve $\mathcal{C}_1$, \textit{bounds from the right (left)}, the graph of $g$ or the curve $\mathcal{C}_2$.

\section{Organization of the paper}
A real planar ($2\times 2$) matrix is classified into three categories based on the eigenvalues: (i) real-diagonalizable, (ii) having a pair of non-real eigenvalues, and (iii) defective (having a repeated eigenvalue with one-dimensional eigenspace). 

The sections in this paper have been divided on the basis of the Jordan forms of $A_1$ and $A_2$, the two subsystem matrices, as in Table~\ref{table:sections}. The list in Table~\ref{table:sections} consists of all possible Jordan form combinations of the subsystem matrices. In each of these sections, we will compute $\tau_{1,2}(\eta)$ and $\tau_{2,1}(\eta)$. As mentioned earlier, the switched system~\eqref{eq:system} is stable for all signals $\sigma\in \s[\tau_{1,2}(\eta),\eta] \bigcup \s[\tau_{2,1}(\eta),\eta]$ and asymptotically stable for all signals $\sigma\in \s'[\tau_{1,2}(\eta),\eta] \bigcup \s'[\tau_{2,1}(\eta),\eta]$. Hence for $\tau(\eta)=\min\{\tau_{1,2}(\eta),\tau_{2,1}(\eta)\}$, the switched system~\eqref{eq:system} is stable for each $\sigma\in \s[\tau(\eta),\eta]$ and asymptotically stable for each $\sigma\in \s'[\tau(\eta),\eta]$. 

\begin{table}[h!]
	\centering
	\begin{tabular}{|c|c|c|}
		\hline
		\textbf{Section} & ${\bf A_1}$ (Hurwitz stable) & ${\bf A_2}$ (unstable)\\
		\hline
		\ref{sec:RR} & real-diagonalizable &  real-diagonalizable\\
		\hline
		\ref{sec:RC} & real-diagonalizable & non-real eigenvalues\\
		\hline
		\ref{sec:CR} & non-real eigenvalues & real-diagonalizable\\ 
		\hline
		\ref{sec:NN} & defective & defective \\
		\hline
		\ref{sec:NC} & defective & non-real eigenvalues \\
		\hline
		\ref{sec:CN} & non-real eigenvalues & defective \\
		\hline
		\ref{sec:NR} & defective & real-diagonalizable \\
		\hline
		\ref{sec:RN} & real-diagonalizable & defective\\
		\hline
		\ref{sec:CC} & non-real eigenvalues & non-real eigenvalues  \\
		\hline
	\end{tabular}\caption{List of sections.}\label{table:sections}
\end{table}
%\begin{enumerate}[(i)]
%	\item Both $A_1,A_2$ real-diagonalizable is discussed in Section~\ref{sec:RR}.
%	\item $A_1$ real-diagonalizable and $A_2$ having a pair of non-real eigenvalues is discussed in Section~\ref{sec:RC}.
%	\item $A_1$ having a pair of non-real eigenvalues and $A_2$ real-diagonalizable is discussed in Section~\ref{sec:CR}.
%	\item Both $A_1, A_2$ defective is discussed in Section~\ref{sec:NN}.
%	\item $A_1$ defective and $A_2$ having a pair of non-real eigenvalues is discussed in Section~\ref{sec:NC}.
%	\item $A_1$ having a pair of non-real eigenvalues and $A_2$ defective is discussed in Section~\ref{sec:CN}.
%	\item $A_1$ defective and $A_2$ real-diagonalizable is discussed in Section~\ref{sec:NR}.
%	\item $A_1$ real-diagonalizable and $A_2$ defective is discussed in Section~\ref{sec:RN}.
%	\item Both $A_1,A_2$ with a pair of non-real eigenvalues is discussed in Section~\ref{sec:CC}.
%\end{enumerate}

The results in~\cite{agarwal2018simple} can also be applied to compute dwell-flee relations for the system~\eqref{eq:system}. Their results make use of \emph{complex} Jordan basis matrices. The results, however, remain unaltered when \emph{real} Jordan basis matrices are used instead. Moreover, the dwell-flee relations obtained in their work are linear and use complex Jordan basis matrices with unit column norms. In the results presented in this paper, the dwell-flee relations give lower dwell time for given flee time since we vary over all possible Jordan basis matrices and also due to more efficient grouping of terms of the flow, see~\eqref{eq:flow}.

An extension of results for switched system~\eqref{eq:system} to a special class of symmetric bilinear switched systems is discussed in Section~\ref{sec:sbs}, and to multimodal planar system in which the switching between the subsystems is governed by an undirected star graph is given in Section~\ref{sec:multimodal}. A comparison between dwell-flee relations with those in the existing literature is discussed in Section~\ref{sec:examples} using several examples.

\section{Both subsystems real-diagonalizable} \label{sec:RR}

In this section, we will consider subsystems $A_1$ and $A_2$ both of which are real-diagonalizable. Fix a Jordan basis matrices $P_1$ of $A_1$ with Jordan form $J_1=\text{diag}\left(-p_1,-q_1\right)$, with $0<p_1\le q_1$, and $P_2$ of $A_2$ with Jordan form $J_2=\text{diag}\left(p_2,q_2\right)$, with either $0<p_2\le q_2$ or $p_2\le 0<q_2$. Let $M=P_2^{-1}P_1$. Since the inequalities~\eqref{eq:tau12} and~\eqref{eq:tau21} do not alter if $M$ is multiplied by a nonzero scalar, $P_1$ and $P_2$ are chosen so that the matrix $M$ has determinant 1. Denote $M$ as $\begin{pmatrix}
	a&b\\ c&d
\end{pmatrix}$. Observe that one of the entries of $M$ is 0 if and only if the matrices $A_1$ and $A_2$ have a common eigenvector. For $i=1,2$, any other Jordan basis matrix of $A_i$ corresponding to the Jordan form $J_i$ is of the form $P_iD_i$ for some diagonal invertible matrix $D_i$. Let $D_1=\text{diag}(\lambda_1,1/\lambda_1)$ and $D_2=\text{diag}(\lambda_2,1/\lambda_2)$. We will vary $\lambda_1$ and $\lambda_2$ over nonzero real numbers to compute $\tau_{1,2}(\eta)$ and $\tau_{2,1}(\eta)$. Let $M(\lambda_1,\lambda_2)=D_2^{-1}MD_1$.

\begin{remark}\label{rem:RRdiag}
	If either $p_1=q_1>0$ or $p_2=q_2>0$, the subsystem matrices $A_1$ and $A_2$ commute and hence are simultaneously diagonalizable. Inequalities~\eqref{eq:tau12} and~\eqref{eq:tau21} both reduce to $\left\|\text{diag}({\rm{e}}^{-p_1 t+p_2 s},{\rm{e}}^{-q_1 t+q_2 s})\right\|<1$. For given flee time $\eta>0$, let $\tau(\eta)=\min(p_2/p_1,q_2/q_1)\eta$f. Then the switched system~\eqref{eq:system} is stable for all signals $\sigma\in \s[\tau(\eta),\eta]$ and asymptotically stable for all signals $\sigma\in \s'[\tau(\eta),\eta]$.
\end{remark}
In view of Remark~\ref{rem:RRdiag}, we will assume that $p_1\ne q_1$ and $p_2\ne q_2$ in Sections~\ref{sec:RR12} and~\ref{sec:RR21}.

\subsection{Computing $\tau_{1,2}(\eta)$} \label{sec:RR12}
In this section, we will compute $\tau_{1,2}(\eta)$. Let $\lambda_1,\lambda_2$ be nonzero real numbers. For given flee time $\eta>0$, we will compute a dwell time $\tau_{1,2}(P_1D_1,P_2D_2,\eta)$ such that for all $t>\tau_{1,2}(P_1D_1,P_2D_2,\eta)$ and $s<\eta$, $\|M(\lambda_1,\lambda_2)^{-1}{\rm{e}}^{J_2 s}\,M(\lambda_1,\lambda_2){\rm{e}}^{J_1 t}\|<1$. Let $M(\lambda_1)=MD_1$. Note that $M(\lambda_1,\lambda_2)^{-1}{\rm{e}}^{J_2 s}\,M(\lambda_1,\lambda_2){\rm{e}}^{J_1 t}=M(\lambda_1)^{-1}{\rm{e}}^{J_2 s}\,M(\lambda_1){\rm{e}}^{J_1 t}$. Now $\|M(\lambda_1)^{-1}{\rm{e}}^{J_2 s}\,M(\lambda_1){\rm{e}}^{J_1 t}\|<1$ if and only if
\begin{enumerate}[(C1)]
	\item $(q_2+p_2) s<(q_1+p_1) t$, and
	\item the function $f(\lambda_1,t,s)<0$, where $f(\lambda_1,t,s)$ equals
	\begin{eqnarray*}
		\left(ad\, {\rm{e}}^{p_2s-p_1 t}-bc\, {\rm{e}}^{q_2 s-p_1t}\right)^2+ \frac{b^2d^2}{\lambda_1^4}\left({\rm{e}}^{p_2s-q_1t}-{\rm{e}}^{q_2s-q_1 t}\right)^2+a^2c^2\,\lambda_1^4\times \\
		\left({\rm{e}}^{q_2 s-p_1t}-{\rm{e}}^{p_2 s-p_1t}\right)^2	+\left(ad\,{\rm{e}}^{q_2 s-q_1t}-bc\, {\rm{e}}^{p_2s-q_1t}\right)^2-1-{\rm{e}}^{2(p_2+q_2)s-2(p_1+q_1)t}.
	\end{eqnarray*} 
\end{enumerate}

\begin{theorem}\label{rr:12zero}
	For given $\epsilon>0$ and flee time $\eta>0$,
	\begin{enumerate}[(i)]
		\item if at least one of the off-diagonal entries of transition matrix $M$ is zero, let $\tau_{1,2}(\eta)=\max\left(q_2/q_1,\,p_2/p_1\right)\eta+\epsilon$.
		\item if at least one of the diagonal entries of transition matrix $M$ is zero, let $\tau_{1,2}(\eta)=(q_2/p_1)\eta+\epsilon$.
	\end{enumerate}
	Then the switched system~\eqref{eq:system} is asymptotically stable for all signals $\sigma\in \s[\tau_{1,2}(\eta),\eta]$.
\end{theorem}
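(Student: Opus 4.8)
The plan is to prove both parts by exhibiting, for the prescribed threshold and any $\epsilon>0$, a single diagonal rescaling $D_1=\mathrm{diag}(\lambda_1,1/\lambda_1)$ for which the rectangle $\mathcal{R}_{\tau_{1,2}(\eta),\eta}$ lies strictly inside the region cut out by (C1) and (C2). Since $\det M=ad-bc=1$, assuming one entry vanishes pins down the others: if an off-diagonal entry is $0$ then $ad=1$ (so $a,d\neq 0$), whereas if a diagonal entry is $0$ then $bc=-1$ (so $b,c\neq 0$). The decisive observation is that substituting the vanishing entry into $f$ leaves at most one of the two cross terms $a^2c^2\lambda_1^4(\cdots)^2$ and $(b^2d^2/\lambda_1^4)(\cdots)^2$, and that surviving term carries a factor $\lambda_1^{4}$ or $\lambda_1^{-4}$. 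Sending $\lambda_1\to 0$ (when the term has $\lambda_1^4$) or $\lambda_1\to\infty$ (when it has $\lambda_1^{-4}$) formally removes it; I will then control the resulting error at a finite $\lambda_1$.

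For part (i) take $b=0$, so $bc=0$, the term $(b^2d^2/\lambda_1^4)(\cdots)^2$ is absent, and only $a^2c^2\lambda_1^4(e^{q_2 s-p_1 t}-e^{p_2 s-p_1 t})^2$ remains (if also $c=0$ it too is absent and the limit below holds exactly). Using $ad=1$ and letting $\lambda_1\to 0$, the function $f$ tends to
\[
f_0(t,s)=e^{2(p_2 s-p_1 t)}+e^{2(q_2 s-q_1 t)}-1-e^{2(p_2+q_2)s-2(p_1+q_1)t}=-(1-u)(1-v),
\]
where $u=e^{2(p_2 s-p_1 t)}$, $v=e^{2(q_2 s-q_1 t)}$ and $uv$ equals the final exponential. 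Hence $f_0<0$ exactly when $u,v<1$, i.e. $t>(p_2/p_1)s$ and $t>(q_2/q_1)s$, that is $t>\max(p_2/p_1,q_2/q_1)s$; and $u<1$, $v<1$ add to give (C1). The case $c=0$ produces the same $f_0$ via $\lambda_1\to\infty$, so the threshold is $\max(q_2/q_1,p_2/p_1)\eta$.

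For part (ii) take $a=0$, so $ad=0$, $bc=-1$, the term $a^2c^2\lambda_1^4(\cdots)^2$ is absent, and only $(b^2d^2/\lambda_1^4)(e^{p_2 s-q_1 t}-e^{q_2 s-q_1 t})^2$ remains; as $\lambda_1\to\infty$ it vanishes and $f$ tends to
\[
f_0(t,s)=e^{2(q_2 s-p_1 t)}+e^{2(p_2 s-q_1 t)}-1-e^{2(p_2+q_2)s-2(p_1+q_1)t}=-(1-w)(1-z),
\]
with $w=e^{2(q_2 s-p_1 t)}$, $z=e^{2(p_2 s-q_1 t)}$ and $wz$ the final exponential. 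Thus $f_0<0$ iff $t>(q_2/p_1)s$ and $t>(p_2/q_1)s$; since $0<p_1\le q_1$ and $p_2\le q_2$ force $q_2 q_1\ge p_2 p_1$, hence $q_2/p_1\ge p_2/q_1$, this reduces to $t>(q_2/p_1)s$, and again the two inequalities sum to (C1). The case $d=0$ gives the same $f_0$ via $\lambda_1\to 0$, yielding the threshold $(q_2/p_1)\eta$.

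The crux I expect to be the main obstacle is passing from the limiting $f_0$ to a genuine finite $\lambda_1$ and upgrading to asymptotic stability on all of $\s[\tau_{1,2}(\eta),\eta]$, not merely $\s'$. Fix $\epsilon>0$ and let $\tau$ be the claimed value. On the rectangle $t\ge\tau$, $0\le s\le\eta$ I will show each factor $1-u,1-v$ (resp. $1-w,1-z$) is bounded below by a positive constant: for instance $t\ge\tau$ forces $q_1 t\ge q_2\eta+q_1\epsilon\ge q_2 s+q_1\epsilon$, so $v\le e^{-2q_1\epsilon}<1$, and likewise for the other factor (when $p_2\le 0$ the corresponding factor is below $1$ automatically). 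Hence $f_0\le-\kappa<0$ uniformly. The single surviving cross term equals $\lambda_1^{\pm4}e^{-2p_1 t}$ (or $e^{-2q_1 t}$) times a function of $s$ bounded on $[0,\eta]$, so it is at most $C\lambda_1^{\pm4}$ on the rectangle; choosing $\lambda_1$ small (resp. large) enough that $C\lambda_1^{\pm4}<\kappa$ makes $f<0$ there, and together with (C1) this gives $\|M(\lambda_1)^{-1}e^{J_2 s}M(\lambda_1)e^{J_1 t}\|<1$ throughout. Finally, since $J_1$ is stable the norm factor tends to $0$ as $t\to\infty$ uniformly in $s\in[0,\eta]$, so its supremum over the rectangle is some $\rho<1$; by~\eqref{eq:flow} each product in the bound is at most $\rho^{\,j}\to 0$, giving asymptotic (indeed exponential) stability for every $\sigma\in\s[\tau_{1,2}(\eta),\eta]$.
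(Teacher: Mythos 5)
Your proposal is correct, and it rests on the same underlying mechanism as the paper's proof -- pushing the scaling parameter $\lambda_1$ to an extreme so that the one surviving cross term (weighted by $\lambda_1^{4}$ or $\lambda_1^{-4}$) disappears and the zero set of (C2) collapses onto the lines $t=(p_2/p_1)s$ and $t=(q_2/q_1)s$ -- but your execution is genuinely different and more elementary. The paper, treating the case $c=0$ in detail, rewrites (C2) as $b^2d^2/\lambda_1^4<m(t,s)$, invokes the implicit function theorem to parametrize the level curves by increasing functions $\tau_{\lambda_1}(s)$, and then uses a Dini-type monotone convergence argument (relegated to the appendix) to show $\tau_{\lambda_k}\to\max(q_2/q_1,p_2/p_1)s$ uniformly on $[0,\eta]$, finally choosing $\lambda_\epsilon$ so the curve enters the $\epsilon/2$-collar; the remaining cases $a,b,d=0$ are handled by analogous monotonicity claims. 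You instead exploit $\det M=1$ to reduce all four cases to two ($ad=1$ versus $bc=-1$), factor the limiting function as $f_0=-(1-u)(1-v)$, use the $\epsilon$-slack to get a uniform bound $f_0\le-\kappa<0$ on the rectangle $\mathcal{R}_{\tau_{1,2}(\eta),\eta}$, and bound the discarded term explicitly by $C\lambda_1^{\pm 4}$ there, so a single explicit finite $\lambda_1$ with $C\lambda_1^{\pm4}<\kappa$ closes the argument -- no implicit function theorem, no uniform convergence of curves. Your route also makes the passage to asymptotic stability on all of $\s[\tau_{1,2}(\eta),\eta]$ cleaner, since the uniform $-\kappa$ bound plus decay of $\|e^{J_1 t}\|$ immediately yields a supremum $\rho<1$ over the rectangle, which is exactly the ``bounded away from zero'' step the paper asserts more briefly. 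What the paper's approach buys in exchange is a geometric description of the feasible boundary curves $\tau_{\lambda_1}(s)$ themselves, a viewpoint it reuses in the subsequent theorems. One small imprecision in your write-up: $f_0<0$ holds when $u,v<1$ \emph{or} when $u,v>1$, so ``exactly when $u,v<1$'' is not literally correct; but since the branch $u,v>1$ violates (C1) (it forces $uv>1$) and you only use the implication $u,v<1\Rightarrow f_0<0$ on the rectangle, this does not affect the proof.
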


\begin{proof} Note that $ad=1$ if and only if one of the off-diagonal entries of the transition matrix $M$ is zero, and $ad=0$ if and only if one of the diagonal entries of the transition matrix $M$ is zero. 
	
	\noindent When $c=0$, $f(\lambda_1,t,s)<0$ if and only if for $s>0$, \[
	\frac{b^2d^2}{\lambda_1^4}<\frac{(1-{\rm{e}}^{2p_2 s-2p_1 t})(1-{\rm{e}}^{2q_2 s-2q_1 t})}{({\rm{e}}^{p_2 s}-{\rm{e}}^{q_2 s})^2}{\rm{e}}^{2 q_1 t}=m(t,s). 
	\]
	The function $m(t,s)$ is zero on lines $t=(q_2/q_1)s$ and $t=(p_2/p_1)s$, negative in the region $\{(t,s)\in Q_1\ \colon\ \min\left(q_2/q_1,\,p_2/p_1\right)s<t<\max\left(q_2/q_1,\,p_2/p_1\right) s\}$, and is positive elsewhere. Fix $\lambda_1>0$, then since $\max\left(q_2/q_1,\,p_2/p_1\right)\ge (q_2+p_2)/(q_1+p_1)\ge\min(q_2/q_1,\,p_2/p_1)$, the points $(t,s)$ satisfying (C1) and the equality $b^2d^2/\lambda_1^4=m(t,s)$ lie entirely in $\Omega_1=\{(t,s)\in Q_1\ \colon\ t>\max\left(q_2/q_1,\,p_2/p_1\right)s\}$. 
	
	Also, $m(t,s)>0$, $m_t(t,s)>0$, and $m_s(t,s)<0$, in $\Omega_1$. Thus for each $s>0$, there exists a unique $\left(t(\lambda_1,s),s\right)\in\Omega_1$ such that $b^2d^2/\lambda_1^4=m\left(t(\lambda_1,s),s\right)$. Moreover, using the implicit function theorem, we obtain a function $\tau_{\lambda_1}\colon(0,\infty)\to\mathbb{R}$, defined as $\tau_{\lambda_1}(s)=t(\lambda_1,s)$, which parametrizes the solution in the feasible region and satisfies $\tau_{\lambda_1}'(s)>0$. The function $\tau_{\lambda_1}$ can be extended continuously on $[0,\infty)$ by defining $\tau_{\lambda_1}(0)=0$ since $f(\lambda_1,t,0)=0$ if and only if $t=0$. Note that for each fixed $s>0$, $\tau_{\lambda_1}(s)$ is decreasing in $\lambda_1$.

	\begin{figure}[h!]
		\centering
		\includegraphics[width=.4\textwidth]{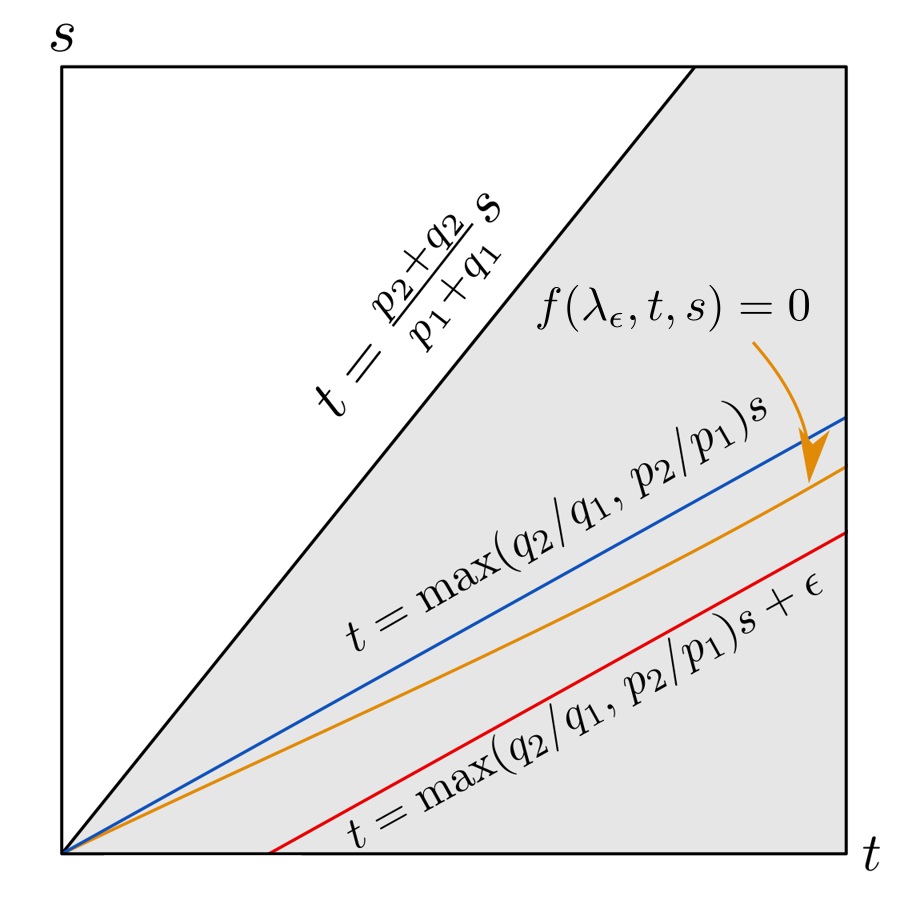}
		\caption{Zero set of $f\left(\lambda_{\epsilon},t,s\right)$ when $c=0$.}
		\label{fig:ad=1}
	\end{figure}
	
	Let $\epsilon>0$ and $\eta>0$ be given. The sequence of functions $\{\tau_k\}_{k\in\mathbb{N}}$ converges uniformly to the function $\tau(s)=\max\left(q_2/q_1,\,p_2/p_1\right)s$, on the compact set $[0,\eta]$, see Appendix~\ref{ap:proof} for details. Thus we choose a large enough $\lambda_{\epsilon}\in\mathbb{N}$ satisfying $\tau_{\lambda_{\epsilon}}(s)-\tau(s)<\epsilon/2$, for all $s\in[0,\eta]$, see Figure~\ref{fig:ad=1}. Now $f\left(\lambda_{\epsilon},t,s\right)<0$ in the region $\{(t,s)\in Q_1\ \colon\ t\ge \tau(\eta)+\epsilon/2,\ s\le \eta\}$. Also, (C1) is satisfied in this region by the earlier observations. Since $f\left(\lambda_{\epsilon},t,s\right)$ is bounded away from zero in the region $\mathcal{R}_{\tau(\eta)+\epsilon,\eta}=\{(t,s)\in Q_1\ \colon\ t\ge \tau(\eta)+\epsilon,\ s\le \eta\}$, there exists $\rho>0$ such that $\left\| M(\lambda_1)^{-1}{\rm{e}}^{J_2 s}\,M(\lambda_1){\rm{e}}^{J_1 t}\right\|<\rho<1$ in $\mathcal{R}_{\tau(\eta)+\epsilon,\eta}$. Hence we have proved the result. Further, the condition (C2) reduces to  
	\begin{eqnarray*}
		a^2 c^2\lambda_1^4&<&\frac{(1-{\rm{e}}^{2p_2 s-2p_1 t})(1-{\rm{e}}^{2q_2 s-2q_1 t})}{({\rm{e}}^{p_2 s}-{\rm{e}}^{q_2 s})^2}{\rm{e}}^{2 p_1 t} \ \ (\text{if $b=0$}),\\
		\frac{b^2 d^2}{\lambda_1^4}&<&\frac{(1-{\rm{e}}^{2q_2 s-2p_1 t})(1-{\rm{e}}^{2p_2 s-2q_1 t})}{({\rm{e}}^{p_2 s}-{\rm{e}}^{q_2 s})^2}{\rm{e}}^{2 q_1 t} \ \ (\text{if $a=0$}),\text{ and}\\
		a^2 c^2\lambda_1^4&<&\frac{(1-{\rm{e}}^{2q_2 s-2p_1 t})(1-{\rm{e}}^{2p_2 s-2q_1 t})}{({\rm{e}}^{p_2 s}-{\rm{e}}^{q_2 s})^2}{\rm{e}}^{2 p_1 t} \ \ (\text{if $d=0$}).\\
	\end{eqnarray*}
	The functions in right of each of these inequalities are increasing in $t$ and decreasing in $s$ in $\Omega_1$ when $b=0$, and in $\Omega_2=\{(t,s)\in Q_1\ \colon\ p_1 t>q_2 s\}$ when $a=0$ or $d=0$. Hence, for given $\epsilon$, similar arguments can be repeated by taking $\lambda_1$ large enough when $a=0$, and $\lambda_1$ small enough when $b=0$ or $d=0$.
\end{proof}

\noindent When each entry of the transition matrix $M$ is nonzero, we define the function 
\[
k(t,s)=f\left(\sqrt[8]{\frac{b^2d^2}{a^2c^2}\, {\rm{e}}^{-2(q_1-p_1)t}},t,s\right)
\]
which equals
\[
\left[ad\left({\rm{e}}^{p_2s-p_1t}-\epsilon{\rm{e}}^{q_2 s-q_1t}\right)-bc\left({\rm{e}}^{q_2 s-p_1t}-\epsilon{\rm{e}}^{p_2s-q_1 t}\right)\right]^2-\left({\rm{e}}^{(p_2+q_2)s-(p_1+q_1)t}-\epsilon\right)^2,
\] 
where $\epsilon=\dfrac{abcd}{|abcd|}$. Note that $\epsilon=1$ if and only if $ad<0$ or $ad>1$. Further, for all $\lambda_1>0$ and $t,s\ge 0$, $k(t,s)\le f(\lambda_1,t,s)$. Interestingly, we will observe that studying the zero set $\mathcal{C}=\{(t,s)\in Q_1\ \colon\ k(t,s)=0\}$ of $k$ will play a crucial role in obtaining an expression for $\tau_{1,2}(\eta)$.

\begin{theorem}\label{rr:12abcd>0} 
	Suppose $\epsilon=1$. For given flee time $\eta>0$,
	\begin{enumerate}[(i)]
		\item if $ad<0$, let $\tau_{1,2}(\eta)$ be the unique positive solution $t>0$ of \[
		\frac{\left({\rm e}^{q_2 \eta-p_1 t}-1\right)\left(1+{\rm e}^{p_2 \eta-q_1 t}\right)}{\left({\rm e}^{-p_1 t}+{\rm e}^{-q_1 t}\right)\left({\rm e}^{q_2 \eta}-{\rm e}^{p_2 \eta}\right)}=ad.
		\]
		\item if $ad>1$, let $\tau_{1,2}(\eta)=\max\left((p_2/p_1)\eta,T(\eta)\right)$, where $T(\eta)$ is the unique positive solution $t$ of
		\[
		\frac{\left(1+{\rm e}^{q_2 \eta-p_1 t}\right)\left(1-{\rm e}^{p_2 \eta-q_1 t}\right)}{\left({\rm e}^{-p_1 t}+{\rm e}^{-q_1 t}\right)\left({\rm e}^{q_2 \eta}-{\rm e}^{p_2 \eta}\right)}=ad.
		\]
	\end{enumerate}
	Then the switched system~\eqref{eq:system} is stable for all signals $\sigma\in \s[\tau_{1,2}(\eta),\eta]$ and asymptotically stable for all signals $\sigma\in \s'[\tau_{1,2}(\eta),\eta]$.
\end{theorem}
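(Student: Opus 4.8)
The plan is to reduce the statement to an analysis of the sub-level set $\{k<0\}$ and then to read off $\tau_{1,2}(\eta)$ from the geometry of the zero set $\mathcal{C}$. First I would record the role of $k$: since $\lambda_2$ cancels in $M(\lambda_1,\lambda_2)^{-1}{\rm e}^{J_2s}M(\lambda_1,\lambda_2){\rm e}^{J_1t}=M(\lambda_1)^{-1}{\rm e}^{J_2s}M(\lambda_1){\rm e}^{J_1t}$, varying over all Jordan basis matrices is the same as varying $\lambda_1$ alone, and applying the AM--GM inequality to the two $\lambda_1$-dependent summands $\tfrac{b^2d^2}{\lambda_1^4}({\rm e}^{p_2s-q_1t}-{\rm e}^{q_2s-q_1t})^2$ and $a^2c^2\lambda_1^4({\rm e}^{q_2s-p_1t}-{\rm e}^{p_2s-p_1t})^2$ shows that the displayed choice of $\lambda_1$ is the minimiser, so that $k(t,s)=\min_{\lambda_1>0}f(\lambda_1,t,s)$. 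Consequently, for $(t,s)$ satisfying (C1) there is a Jordan basis making the norm in~\eqref{eq:tau12} smaller than $1$ exactly when $k(t,s)<0$, and the whole problem becomes the determination of the region cut out by $k<0$ together with (C1).

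Next I would factor. With $\epsilon=1$, write $A=ad({\rm e}^{p_2s-p_1t}-{\rm e}^{q_2s-q_1t})-bc({\rm e}^{q_2s-p_1t}-{\rm e}^{p_2s-q_1t})$ and $B={\rm e}^{(p_2+q_2)s-(p_1+q_1)t}-1$, so that $k=A^2-B^2=(A-B)(A+B)$. Condition (C1) is precisely $B<0$; hence on the feasible half-plane $-1<B<0$ and $k<0\iff|A|<-B\iff B<A<-B$. Therefore the boundary of the feasibility region inside (C1) lies on the union $\mathcal{C}_+\cup\mathcal{C}_-$ with $\mathcal{C}_{\pm}=\{A\pm B=0\}$, and the task reduces to locating, for each $s\in[0,\eta]$, the rightmost point of this union and showing it traces a single increasing graph $t=t(s)$ with $t(0)=0$.

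The analysis then splits by the sign of $A$ at that rightmost crossing. I would show that for $ad<0$ the binding branch is $\mathcal{C}_+$, where $A=-B>0$, while for $ad>1$ it is $\mathcal{C}_-$, where $A=B<0$; substituting $bc=ad-1$ and ${\rm e}^{(p_2+q_2)s-(p_1+q_1)t}={\rm e}^{p_2s-p_1t}{\rm e}^{q_2s-q_1t}$ rearranges $A\pm B=0$ into the two displayed scalar equations of the schematic form $(\cdots)/(\cdots)=ad$. Strict monotonicity of the crossing curve in $s$, obtained from the implicit function theorem together with the signs of the partial derivatives of the relevant factor, is what guarantees that the supremum of the right boundary over $s\in[0,\eta]$ is attained at $s=\eta$, so it suffices to solve the equation there. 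Existence and uniqueness of the positive root at $s=\eta$ follow by checking that the left-hand side is strictly monotone in $t$ and meets the value $ad$ once: in case (i) it decreases from a positive limit as $t\to0^+$ to $-\infty$ as $t\to\infty$, crossing the negative number $ad$ exactly once; in case (ii) it increases from $0$ (attained at $t=(p_2/q_1)\eta$, where the factor $1-{\rm e}^{p_2\eta-q_1t}$ vanishes) to $+\infty$, crossing $ad>1$ exactly once. The $\max$ with $(p_2/p_1)\eta$ in case (ii) is forced by the requirement that $(t,\eta)$ also satisfy (C1) and lie where $\mathcal{C}_-$ is the genuine right boundary, i.e. ${\rm e}^{p_2\eta-p_1t}\le1$; whenever $T(\eta)$ undershoots $(p_2/p_1)\eta$ the latter becomes binding.

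Finally, with $\tau_{1,2}(\eta)$ so defined, the closed rectangle $\mathcal{R}_{\tau_{1,2}(\eta),\eta}$ is contained in the region where $k<0$ and (C1) both hold, and $k$ is bounded away from $0$ there; choosing a Jordan basis realising the minimum gives $\|M(\lambda_1)^{-1}{\rm e}^{J_2s}M(\lambda_1){\rm e}^{J_1t}\|\le\rho<1$ on the rectangle, and the flow estimate~\eqref{eq:flow} then yields stability for $\sigma\in\s[\tau_{1,2}(\eta),\eta]$ and geometric decay, hence asymptotic stability, for $\sigma\in\s'[\tau_{1,2}(\eta),\eta]$, exactly as in the argument preceding Theorem~\ref{rr:12zero}. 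I expect the main obstacle to be the geometric bookkeeping of the third paragraph: proving that the correct branch ($\mathcal{C}_+$ versus $\mathcal{C}_-$) really is the rightmost crossing, that this crossing is an increasing graph in $s$, and pinning down the auxiliary constraint $(p_2/p_1)\eta$ that appears in case (ii).
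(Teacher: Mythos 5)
Your outline through the branch analysis reproduces the paper's argument essentially step for step: the paper also works with $k(t,s)=\min_{\lambda_1>0}f(\lambda_1,t,s)$ (your AM--GM identification of the minimiser is exactly the substitution defining $k$), factors the zero set into the two curves $\ell_\pm(t,s)=ad$ (your $\mathcal{C}_+$ and $\mathcal{C}_-$ become $\ell_-=ad$ and $\ell_+=ad$ after substituting $bc=ad-1$), identifies $\ell_-=ad$ as the binding branch for $ad<0$ and $\ell_+=ad$ for $ad>1$, obtains an increasing crossing curve via the implicit function theorem from the signs of $(\ell_\pm)_t$ and $(\ell_\pm)_s$, and explains the $\max$ with $(p_2/p_1)\eta$ by the fact that the spurious $\ell_-=ad$ branch is bounded on the right by that line. (One small overreach: in case (i) you assert $\ell_-(\cdot,\eta)$ is strictly decreasing on all of $(0,\infty)$, whereas the paper proves $(\ell_-)_t<0$ only in $\Omega_1$ and locates the crossing there.)

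The genuine gap is in your final paragraph. First, $k$ is \emph{not} bounded away from zero on the closed rectangle $\mathcal{R}_{\tau_{1,2}(\eta),\eta}$: by your own construction $\tau_{1,2}(\eta)$ (resp.\ $T(\eta)$) solves the equality, so $k(\tau_{1,2}(\eta),\eta)=0$ at the corner, and no uniform $\rho<1$ exists there. This is precisely why the theorem claims only stability on $\s[\tau_{1,2}(\eta),\eta]$ and reserves asymptotic stability for $\s'[\tau_{1,2}(\eta),\eta]$, where the sequence $(t_k,s_k)$ staying away from $(\tau_{1,2}(\eta),\eta)$ recovers a geometric factor; your uniform-$\rho$ claim would prove asymptotic stability on all of $\s$, which this method cannot deliver. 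Second, ``choosing a Jordan basis realising the minimum'' is not a single choice: the minimiser $\lambda_1=\sqrt[8]{(b^2d^2/a^2c^2)\,{\rm e}^{-2(q_1-p_1)t}}$ depends on $t$, so one fixed basis realises $k$ only on one vertical line, while the flow estimate \eqref{eq:flow} requires the same $V_1,V_2$ in every factor. The paper's device repairs both defects at once: freeze $\lambda_1$ at $t=\tau_{1,2}(\eta)$, so that $f(\lambda_1,\tau_{1,2}(\eta),s)=k(\tau_{1,2}(\eta),s)\le 0$ for $s\le\eta$ gives $\left\|M(\lambda_1)^{-1}{\rm e}^{J_2 s}M(\lambda_1){\rm e}^{J_1 \tau_{1,2}(\eta)}\right\|\le 1$, and then extend to all $t\ge\tau_{1,2}(\eta)$ by submultiplicativity, using $\left\|{\rm e}^{J_1 (t-\tau_{1,2}(\eta))}\right\|\le 1$ since $J_1$ is stable diagonal. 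With the resulting bound $\le 1$ on the rectangle (equality only possible at the corner), stability on $\s$ and asymptotic stability on $\s'$ follow from \eqref{eq:flow} as in the discussion preceding \eqref{eq:signaldynam}; this frozen-basis-plus-semigroup step is the missing idea in your write-up.
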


\begin{proof}
	Since $\epsilon=1$, the zero set of $k$ is $\mathcal{C}=\{(t,s)\in Q_1\ \colon\ ad=\ell_\pm (t,s)\}$, where \begin{eqnarray*}
		\ell_\pm(t,s)&=&\frac{\left({\rm e}^{q_2 s-p_1 t}\pm 1\right)\left(1\mp{\rm e}^{p_2 s-q_1 t}\right)}{\left({\rm e}^{-p_1 t}+{\rm e}^{-q_1 t}\right)\left({\rm e}^{q_2 s}-{\rm e}^{p_2 s}\right)},\text{ for }s\ne 0.
	\end{eqnarray*}
	Let $\Omega(\alpha,\beta)=\{(t,s)\in Q_1\colon\ \alpha t>\beta s\}$. Then (C1) is satisfied in the region $\Omega(p_1+q_1,p_2+q_2)$, denote it by $\Omega$. Let $\Omega_0=\Omega(p_1,p_2)$, $\Omega_1=\Omega(p_1,q_2)$, $\Omega_2=\Omega(q_1,p_2)$, and $\Omega_3=\Omega(q_1,q_2)$. These sets satisfy $\Omega_1\subset\Omega_0,\ \Omega,\ \Omega_3\subset\Omega_2$. Also, $\Omega_0\subseteq\Omega\subseteq\Omega_3$ if and only if $q_1p_2\ge q_2p_1$, and $\Omega_3\subseteq\Omega\subseteq\Omega_0$ if and only if $q_1p_2\le q_2p_1$. Moreover, when $p_2<0<q_2$, $\Omega_0=\Omega_2=Q_1$. Observe that
	\begin{itemize}
		\item $\ell_+$ is positive in $\Omega_2$, hence in the feasible region $\Omega$.
		\item $(\ell_+)_t>0$ in $Q_1$; $(\ell_+)_s<0$ in $\Omega\cup\Omega_3$ (when $0<p_2<q_2$) and in $\Omega_3$ (when $p_2<0<q_2$), see Appendix~\ref{ap:proof} for proofs.
		\item $\ell_+((q_2/q_1)s,s)=1$, for all $s>0$ and $\ell_+>1$ in $\Omega_3$.
		\item $\ell_-$ is negative in $\Omega_1$.
		\item $(\ell_-)_s>0$ and $(\ell_-)_t<0$ in $\Omega_1$, see Appendix~\ref{ap:proof} for proofs.
		\item $\ell_-((p_2/p_1)s,s)=1$, for all $s>0$ and $\ell_-(t,s)<1$ in $\Omega_0$.
	\end{itemize}
	
	By the above observations, when $ad<0$, the part of $\mathcal{C}$ in the feasible region is given by $ad=\ell_-(t,s)$, and lies in $\Omega_1$. Moreover, due to the above observations about the partial derivatives of $\ell_-$, we conclude the existence of a function $T\colon(0,\infty)\to\mathbb{R}$ such that $ad=\ell_-(T(s),s)$. Moreover $T'(s)>0$. Since $\lim_{t\to\infty}k(t,s)=-1$, we conclude $k(t,s)<0$ in the region lying to the right of the curve $(T(s),s)$. Hence $k(t,s)\le 0$ in the region $\mathcal{R}_{T(\eta),\eta}=\{(t,s)\in \Omega_1\ \colon\ s\le \eta,\ t\ge T(\eta)\}$, with $k(t,s)=0$ if and only if $(t,s)=(T(\eta),\eta)$.
	\begin{figure}[h!]
		\centering
		\includegraphics[width=.4\textwidth]{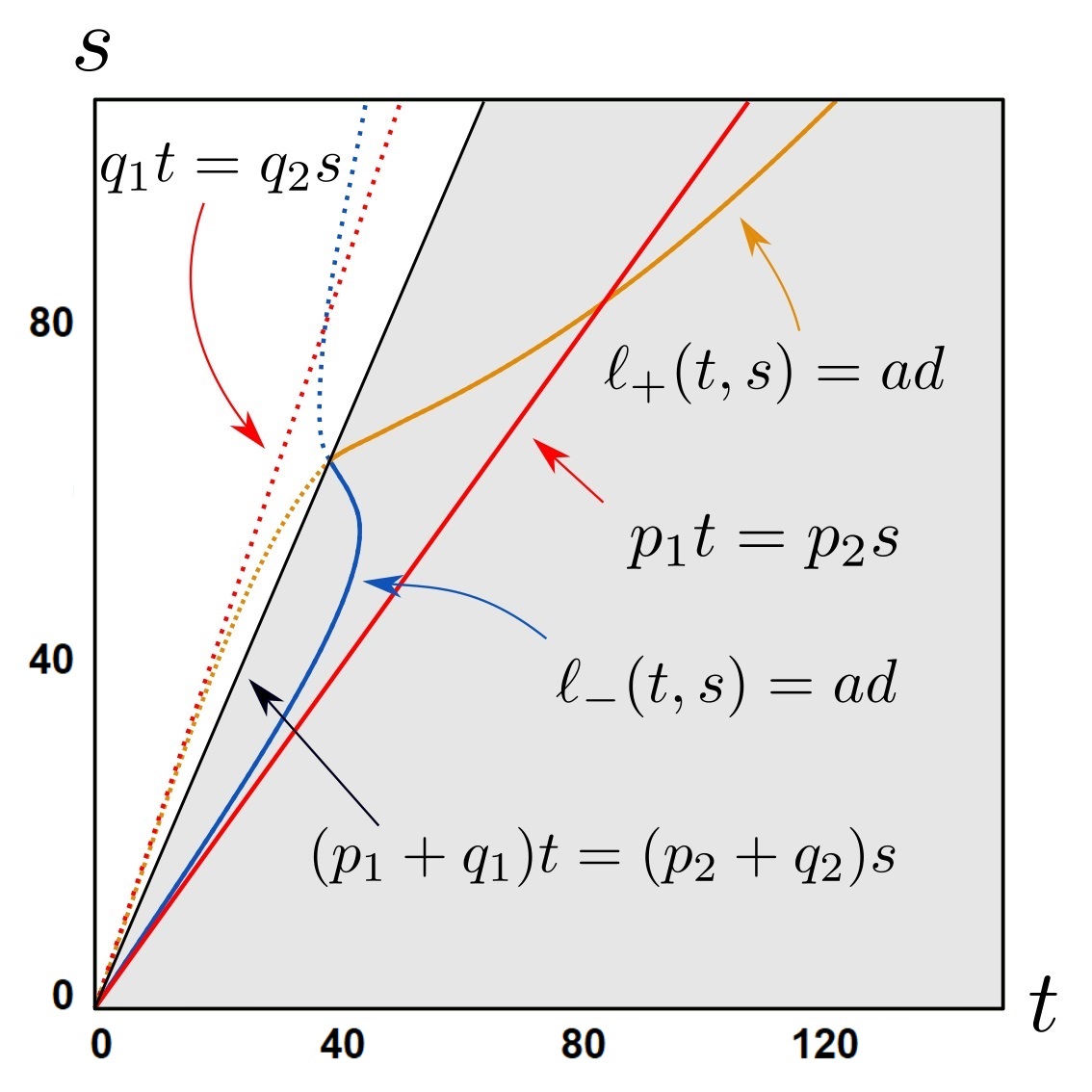}
		\caption{Zero set $\mathcal{C}$ when $ad>1$ for the values $ad=1.1$, $p_1=p_2=0.1$, $q_1=0.3$, and $q_2=0.14$.}
		\label{fig:ep1}
	\end{figure}
	
	When $ad>1$, observe that the zero set of $\ell_-(t,s)=ad$ lies in the region $Q_1\setminus\Omega_0$, since $(\ell_-)_s>0$ in $\Omega_0$ and $\ell_-(p_2 s/p_1,s)=1$. In this case, for given $t_0>0$, there may be more than one solutions $(s,t_0)\in\Omega$ such that $\ell_-(s,t_0)=ad$, refer Figure~\ref{fig:ep1}. The portion of $\mathcal{C}$ given by $\ell_-(t,s)=ad$ is bounded from the right by the line $t=(p_2/p_1) s$. Moreover the portion of $\mathcal{C}$ given by $\ell_+(t,s)=ad$ lies in the region $\Omega\cup\Omega_3$, where it can parameterized by a differentiable function $T\colon(0,\infty)\to\mathbb{R}$ using the implicit function theorem. The function $T$ is increasing due to the observations about the partial derivatives of $\ell_+$ made earlier.
	
	Define $\tau_{1,2}(s)=\max\left((p_2/p_1)s,T(s)\right)$, for $s>0$. We conclude that $\mathcal{C}$ lies to the left of the curve $(\tau_{1,2}(s),s)$. As in the earlier case, this implies $k(t,s)<0$ in the region lying on the right of the curve $(\tau_{1,2}(s),s)$. Hence, $k(t,s)\le 0$ in the region $\mathcal{R}_{\tau_{1,2}(\eta),\eta}$, with $k(t,s)=0$ only if $(t,s)=(\tau_{1,2}(\eta),\eta)$.
	
	Now, taking $\lambda_1=\sqrt[8]{\frac{b^2d^2}{a^2c^2}\, {\rm{e}}^{-2(q_1-p_1)\tau_{1,2}(\eta)}}$, we have $f(\lambda_1,\tau_{1,2}(\eta),s)=k(\tau_{1,2}(\eta),s)\le 0$, for all $s\le \eta$, with equality only if $s=\eta$. This implies $\|M(\lambda_1)^{-1}{\rm{e}}^{J_2 s}\,M(\lambda_1){\rm{e}}^{J_1 \tau_{1,2}(\eta)}\|\le 1$, for all $s\le \eta$. Now, for all $t\ge \tau_{1,2}(\eta)$ and $s\le \eta$, we have \[
	\left\|M(\lambda_1)^{-1}{\rm{e}}^{J_2 s}\,M(\lambda_1){\rm{e}}^{J_1 t}\right\|\le \left\|M(\lambda_1)^{-1}{\rm{e}}^{J_2 s}\,M(\lambda_1){\rm{e}}^{J_1 \tau_{1,2}(\eta)}\right\| \left\|{\rm{e}}^{J_1 (t-\tau_{1,2}(\eta))}\right\|\le 1.
	\]
	Hence the switched system~\eqref{eq:system} is stable for all signals $\sigma\in \s[\tau(\eta),\eta]$ and asymptotically stable for all signals $\sigma\in \s'[\tau(\eta),\eta]$.
\end{proof}

\begin{theorem}\label{rr:12abcd<0}
	Suppose $\epsilon=-1$. For given flee time $\eta>0$, let $\tau_{1,2}(\eta)$ be the unique solution $t>\max\left(p_2/p_1,q_2/q_1\right)\eta$ of \[
	-\frac{\left({\rm e}^{q_2 \eta-p_1 t}-1\right)\left({\rm e}^{p_2 \eta-q_1 t}-1\right)}{\left({\rm e}^{-p_1 t}-{\rm e}^{-q_1 t}\right)\left({\rm e}^{q_2 \eta}-{\rm e}^{p_2 \eta}\right)}=ad.
	\]
	Then the switched system~\eqref{eq:system} is stable for all signals $\sigma\in \s[\tau_{1,2}(\eta),\eta]$ and asymptotically stable for all signals $\sigma\in \s'[\tau_{1,2}(\eta),\eta]$.
\end{theorem}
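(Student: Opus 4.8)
The plan is to reduce everything, exactly as in the proof of Theorem~\ref{rr:12abcd>0}, to controlling the single function $k$. Since $k(t,s)=\inf_{\lambda_1>0}f(\lambda_1,t,s)$, it suffices to exhibit a $\tau_{1,2}(\eta)$ with $k(t,s)\le 0$ on the rectangle $\mathcal{R}_{\tau_{1,2}(\eta),\eta}$: at any such point the minimizing choice $\lambda_1=\sqrt[8]{(b^2d^2/a^2c^2)\,{\rm e}^{-2(q_1-p_1)t}}$ makes $f\le 0$, i.e.\ (C2) holds, and together with (C1) this gives $\|M(\lambda_1)^{-1}{\rm e}^{J_2 s}M(\lambda_1){\rm e}^{J_1 t}\|\le 1$. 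Because $\det M=1$ we have $bc=ad-1$, and the hypothesis $\epsilon=-1$ is exactly the case $0<ad<1$ (so $bc<0$), which I would record at the outset.

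Next I would turn the condition $k\le 0$ into a scalar inequality. Writing $k$ as a difference of two squares, the factor ${\rm e}^{(p_2+q_2)s-(p_1+q_1)t}+1$ is positive, and since $ad>0$ and $bc<0$ so is the other factor; hence $\mathrm{sign}(k)$ equals the sign of their difference. Substituting $bc=ad-1$ and regrouping via the identities ${\rm e}^{q_2 s-p_1 t}+{\rm e}^{p_2 s-q_1 t}-{\rm e}^{(p_2+q_2)s-(p_1+q_1)t}-1=({\rm e}^{q_2 s-p_1 t}-1)(1-{\rm e}^{p_2 s-q_1 t})$ and ${\rm e}^{p_2 s-p_1 t}+{\rm e}^{q_2 s-q_1 t}-{\rm e}^{q_2 s-p_1 t}-{\rm e}^{p_2 s-q_1 t}=({\rm e}^{p_2 s}-{\rm e}^{q_2 s})({\rm e}^{-p_1 t}-{\rm e}^{-q_1 t})$, and then dividing by the (negative) coefficient of $ad$, I obtain $k(t,s)\le 0\iff \ell(t,s)\le ad$, where $\ell(t,s)=-({\rm e}^{q_2 s-p_1 t}-1)({\rm e}^{p_2 s-q_1 t}-1)/\big[({\rm e}^{q_2 s}-{\rm e}^{p_2 s})({\rm e}^{-p_1 t}-{\rm e}^{-q_1 t})\big]$. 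Putting $s=\eta$ and $\ell=ad$ reproduces verbatim the equation in the statement, so the stated $\tau_{1,2}(\eta)$ is a point of the level curve $\mathcal{C}=\{\ell=ad\}$.

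I would then locate $\mathcal{C}$. The denominator of $\ell$ is positive on $Q_1$, so $\ell>0$ precisely on the open cone $(p_2/q_1)s<t<(q_2/p_1)s$ (which reads $0<t<(q_2/p_1)s$ when $p_2\le 0$), and $\ell=0$ on its bounding lines. The decisive computation is that $\ell\equiv 1$ on each interior line $t=(p_2/p_1)s$ and $t=(q_2/q_1)s$, since at $q_1 t=q_2 s$ and at $p_1 t=p_2 s$ the numerator and denominator of $\ell$ collapse to the same expression. As $0<ad<1$, the branch of $\mathcal{C}$ with $t>\max(p_2/p_1,q_2/q_1)s$ is trapped between the inner line $t=\max(p_2/p_1,q_2/q_1)s$ (where $\ell=1$) and the outer line $t=(q_2/p_1)s$ (where $\ell=0$), which pins down a single solution there and explains the restriction $t>\max(p_2/p_1,q_2/q_1)\eta$. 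On this branch I would verify $\ell_t<0$ and $\ell_s>0$ (to be deferred to Appendix~\ref{ap:proof}, as was done for $\ell_\pm$), so the implicit function theorem provides a strictly increasing $\tau_{1,2}(s)$ parametrizing it; and since the mediant inequality gives $\max(p_2/p_1,q_2/q_1)\ge (p_2+q_2)/(p_1+q_1)$, the whole rectangle automatically satisfies (C1).

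Because $\ell_t<0$ on the branch and $\ell<0<ad$ beyond the outer line, I get $\ell(t,s)\le ad$ for every $t\ge\tau_{1,2}(s)$, and monotonicity of $\tau_{1,2}$ then yields $k\le 0$ on $\mathcal{R}_{\tau_{1,2}(\eta),\eta}$ with equality only at the corner $(\tau_{1,2}(\eta),\eta)$. Fixing $\lambda_1=\sqrt[8]{(b^2d^2/a^2c^2)\,{\rm e}^{-2(q_1-p_1)\tau_{1,2}(\eta)}}$ gives $\|M(\lambda_1)^{-1}{\rm e}^{J_2 s}M(\lambda_1){\rm e}^{J_1\tau_{1,2}(\eta)}\|\le 1$ for $s\le\eta$, and for $t\ge\tau_{1,2}(\eta)$ I split off the contraction $\|{\rm e}^{J_1(t-\tau_{1,2}(\eta))}\|\le 1$ exactly as in Theorem~\ref{rr:12abcd>0}; the flow bound~\eqref{eq:flow} then delivers stability on $\s[\tau_{1,2}(\eta),\eta]$ and asymptotic stability on $\s'[\tau_{1,2}(\eta),\eta]$. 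The main obstacle is the sign analysis of $\ell_t$ and $\ell_s$, coupled with correctly separating the two regimes: for $0<p_2<q_2$ the cone contains two interior lines on which $\ell=1$ and $\mathcal{C}$ has two branches, of which the constraint $t>\max(p_2/p_1,q_2/q_1)s$ selects the right-hand one, whereas for $p_2\le 0<q_2$ the function $\ell$ decreases monotonically from $+\infty$ through $1$ down to $0$, so uniqueness is immediate.
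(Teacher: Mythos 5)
Your proposal is correct and follows essentially the same route as the paper's proof: your $\ell$ is exactly the paper's $R_+$, you use the same facts ($R_+=1$ on the lines $p_1t=p_2s$ and $q_1t=q_2s$, $(R_+)_t<0$ and $(R_+)_s>0$ in $\Omega_4=\{t>\max(p_2/p_1,q_2/q_1)s\}$, $\Omega_4\subseteq\Omega$ so (C1) is automatic), parametrize the same branch by an increasing function, and close with the identical choice $\lambda_1=\sqrt[8]{(b^2d^2/a^2c^2)\,{\rm e}^{-2(q_1-p_1)\tau_{1,2}(\eta)}}$ and the contraction-splitting argument of Theorem~\ref{rr:12abcd>0}. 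The only cosmetic difference is that you dispose of the second factor of $k$ by noting $A+B>0$ directly from $ad>0$, $bc=ad-1<0$, where the paper instead records $R_->1$ on $Q_1$; both observations serve the same purpose.
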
	 

\begin{proof}
	Since $\epsilon=-1$, we have $0<ad<1$. The zero set of $k$ is $\mathcal{C}=\{(t,s)\in Q_1\ \colon\ ad=R_\pm (t,s)\}$, where \begin{eqnarray*}
		R_\pm(t,s)&=&\mp\frac{\left({\rm e}^{q_2 s-p_1 t}\mp 1\right)\left({\rm e}^{p_2 s-q_1 t}\mp 1\right)}{\left({\rm e}^{-p_1 t}-{\rm e}^{-q_1 t}\right)\left({\rm e}^{q_2 s}-{\rm e}^{p_2 s}\right)},\text{ for }s\ne 0.\\
	\end{eqnarray*}
	The following observations can be made about these functions:
	\begin{itemize}
		\item $R_-(t,s)>1$, for all $(t,s)\in Q_1$. Thus $\mathcal{C}=\{(t,s)\in Q_1\ \colon\ ad=R_+ (t,s)\}$.
		\item $R_+(t,s)=1$ on lines $p_1t=p_2 s$ and $q_1 t=q_2 s$, hence on the line $t=\max\left(p_2/p_1,\ q_2/q_1\right)s$.
		\item Consider $\Omega_4=\{(t,s)\in Q_1\ \colon\ t>\max\left(p_2/p_1,\ q_2/q_1\right)s\}=\Omega_0\cap\Omega_3$. Then $(R_+)_t<0$ and $(R_+)_s>0$ in $\Omega_4$.
	\end{itemize}
	
	Since $0<ad<1$, $\mathcal{C}$ has a non-trivial intersection with $\Omega_4\subseteq \Omega$. This is true since $(R_+)_t<0$ in $\Omega_4$, $\lim_{t\to \max\left(\frac{p_2}{p_1},\frac{q_2}{q_1}\right)s_0}R_+(t,s_0)=1$, and $\lim_{t\to \infty} R_+(t,s_0)=-\infty$, for all $t_0>0$. Due to the observations about partial derivatives of $R_+$ in $\Omega_4$, we conclude that $\Omega_4\cap\, \mathcal{C}$ can be parameterized as the graph of an increasing function $T\colon (0,\infty)\to \mathbb{R}$ defined as follows: $T(s)$ is the unique solution $t>\max\left(p_2/p_1,\ q_2/q_1\right)s$ of $R_+(t,s)=ad$. Since $\mathcal{C}$ does not intersect with the region lying on the right of $(T(s),s)$, we conclude that $k(t,s)<0$ in this region, hence in the region $\mathcal{R}_{T(\eta),\eta}$. 
	
	Taking $\lambda_1=\sqrt[8]{\frac{b^2d^2}{a^2c^2}\, {\rm{e}}^{-2(q_1-p_1)T(\eta)}}$ and follow the same procedure as in the proof of Theorem~\ref{rr:12abcd>0} to obtain the result.
\end{proof}

\subsection{Computing $\tau_{2,1}(\eta)$}\label{sec:RR21}
In this section, we will compute $\tau_{2,1}(\eta)$. Let $\lambda_1,\lambda_2$ be nonzero real numbers. For given flee time $\eta>0$, we will compute a dwell time $\tau_{2,1}(P_1D_1,P_2D_2,\eta)$ such that for all $t>\tau_{2,1}(P_1D_1,P_2D_2,\eta)$ and $s<\eta$, $\|M(\lambda_1,\lambda_2){\rm{e}}^{J_1 t}\,M(\lambda_1,\lambda_2)^{-1}{\rm{e}}^{J_2 s}\|<1$. Let $M(\lambda_2)=D_2^{-1}M$. Note that $M(\lambda_1,\lambda_2){\rm{e}}^{J_1 t}\,M(\lambda_1,\lambda_2)^{-1}{\rm{e}}^{J_2 s}=M(\lambda_2){\rm{e}}^{J_1 t}\,M(\lambda_2)^{-1}{\rm{e}}^{J_2 s}$. Now $\|M(\lambda_2){\rm{e}}^{J_1 t}\,M(\lambda_2)^{-1}{\rm{e}}^{J_2 s}\|<1$ if and only if
\begin{enumerate}[(C1)]
	\item $(q_2+p_2) s<(q_1+p_1) t$, and
	\item the function $f(\lambda_2,t,s)<0$, where $f(\lambda_2,t,s)$ equals
	\begin{eqnarray*}
		\left(ad\, {\rm{e}}^{p_2s-p_1 t}-bc\, {\rm{e}}^{p_2 s-q_1t}\right)^2+ \frac{a^2 b^2}{\lambda_2^4}\left({\rm{e}}^{q_2s-p_1t}-{\rm{e}}^{q_2s-q_1 t}\right)^2+c^2d^2\,\lambda_2^4\times \\
		\left({\rm{e}}^{p_2 s-p_1t}-{\rm{e}}^{p_2 s-q_1t}\right)^2	+\left(ad\,{\rm{e}}^{q_2 s-q_1t}-bc\, {\rm{e}}^{q_2s-p_1t}\right)^2-1-{\rm{e}}^{2(p_2+q_2)s-2(p_1+q_1)t}.
	\end{eqnarray*} 
\end{enumerate}

\begin{theorem}\label{rr:21zero}
	For given $\epsilon>0$ and flee time $\eta>0$,
	\begin{enumerate}[(i)]
		\item if at least one of the off-diagonal entries of transition matrix $M$ is zero, let $\tau_{2,1}(\eta)=\max\left(q_2/q_1,\,p_2/p_1\right)\eta+\epsilon$.
		\item if at least one of the diagonal entries of transition matrix $M$ is zero, let $\tau_{2,1}(\eta)=(q_2/p_1)\eta+\epsilon$.
	\end{enumerate}
	Then the switched system~\eqref{eq:system} is asymptotically stable for all signals $\sigma\in \s[\tau_{2,1}(\eta),\eta]$.
\end{theorem}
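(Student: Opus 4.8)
The plan is to run the argument of Theorem~\ref{rr:12zero} essentially verbatim, because the claimed bounds for $\tau_{2,1}(\eta)$ coincide with those for $\tau_{1,2}(\eta)$ and the degenerate form of $f(\lambda_2,t,s)$ mirrors that of $f(\lambda_1,t,s)$. First I would record the algebraic dictionary forced by $\det M=ad-bc=1$: one off-diagonal entry of $M$ vanishes exactly when $ad=1$, and one diagonal entry vanishes exactly when $ad=0$ (equivalently $bc=-1$). This partitions the proof into the four subcases $c=0$, $b=0$ (off-diagonal) and $a=0$, $d=0$ (diagonal).

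In each subcase I would substitute the vanishing entry into $f(\lambda_2,t,s)$; one of the two $\lambda_2$-dependent terms then drops out, and after using the factorization
\[
1-{\rm e}^{2\alpha_2 s-2\alpha_1 t}-{\rm e}^{2\beta_2 s-2\beta_1 t}+{\rm e}^{2(\alpha_2+\beta_2)s-2(\alpha_1+\beta_1)t}=\left(1-{\rm e}^{2\alpha_2 s-2\alpha_1 t}\right)\left(1-{\rm e}^{2\beta_2 s-2\beta_1 t}\right)
\]
the inequality $f(\lambda_2,t,s)<0$ collapses to one of the form $C(\lambda_2)<\widetilde m(t,s)$, where $C(\lambda_2)$ is a positive multiple of $\lambda_2^{\pm4}$ and $\widetilde m$ is a ratio whose strictly positive denominator (using $p_1\ne q_1$) is irrelevant to the sign, while the numerator equals $\left(1-{\rm e}^{2p_2 s-2p_1 t}\right)\left(1-{\rm e}^{2q_2 s-2q_1 t}\right)$ in the off-diagonal cases and $\left(1-{\rm e}^{2q_2 s-2p_1 t}\right)\left(1-{\rm e}^{2p_2 s-2q_1 t}\right)$ in the diagonal cases. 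Thus the zero set of the numerator consists of the rays $t=(p_2/p_1)s$ and $t=(q_2/q_1)s$ (off-diagonal), or $t=(q_2/p_1)s$ and $t=(p_2/q_1)s$ (diagonal).

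From here the geometry matches Theorem~\ref{rr:12zero}. The relevant branch of $\{C(\lambda_2)=\widetilde m(t,s)\}$ lives where the numerator is positive, namely $\{(t,s)\in Q_1\colon t>\max(q_2/q_1,p_2/p_1)s\}$ in the off-diagonal cases and $\{(t,s)\in Q_1\colon t>(q_2/p_1)s\}$ in the diagonal cases; the mediant inequality $\max(q_2/q_1,p_2/p_1)\ge(p_2+q_2)/(p_1+q_1)$, together with $q_2/p_1\ge p_2/q_1$ and $q_2/p_1\ge(p_2+q_2)/(p_1+q_1)$ (all from $0<p_1\le q_1$, $p_2\le q_2$, $q_2>0$), guarantees that this region lies inside the feasible region (C1), so (C1) holds automatically there. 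I would then check $\widetilde m_t>0$ and $\widetilde m_s<0$ on that region, apply the implicit function theorem to produce a strictly increasing parametrization $\tau_{\lambda_2}(s)$ of the boundary of the feasible region, and show that $\tau_{\lambda_2}$ converges uniformly on $[0,\eta]$ to the bounding ray as $C(\lambda_2)\to0$, that is, as $\lambda_2\to\infty$ in the cases $c=0,d=0$ and as $\lambda_2\to0$ in the cases $b=0,a=0$. Choosing $\lambda_2$ so that $f(\lambda_2,\cdot,\cdot)$ is bounded away from $0$ on $\mathcal{R}_{\tau_{2,1}(\eta),\eta}$ then gives $\|M(\lambda_2){\rm e}^{J_1 t}M(\lambda_2)^{-1}{\rm e}^{J_2 s}\|<\rho<1$ on that rectangle and hence asymptotic stability on $\s[\tau_{2,1}(\eta),\eta]$, with the stated values of $\tau_{2,1}(\eta)$.

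The only genuinely new computations are the monotonicity checks $\widetilde m_t>0$, $\widetilde m_s<0$ for the denominators appearing in $f(\lambda_2,t,s)$, which differ from those in Theorem~\ref{rr:12zero} and must be re-differentiated; I expect these to be the main (though routine) obstacle. The qualitative conclusions are unchanged because the vanishing rays and the sign pattern of the numerator are identical, so the uniform-convergence and $\epsilon$-arguments, as well as the supporting estimates deferred to Appendix~\ref{ap:proof}, transfer directly from the $\tau_{1,2}$ proof.
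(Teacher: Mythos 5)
Your proposal is correct and takes essentially the same route as the paper: the paper's proof likewise reduces (C2) in each of the four one-entry-zero cases ($a=0$, $b=0$, $c=0$, $d=0$) to an inequality of the form $C(\lambda_2)<m(t,s)$ with exactly the numerators and bounding rays you identify, and then invokes the argument of Theorem~\ref{rr:12zero} (monotonicity of $m$ in $\Omega_1$, resp.\ $\Omega_2$, the implicit function theorem, uniform convergence, and the $\epsilon$-slack). The monotonicity re-differentiations you flag as the remaining work are left implicit in the paper too, which merely asserts that $m_t$ and $m_s$ have opposite signs in the relevant region.
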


\begin{proof}
	Condition (C2) reduces to 
	\begin{eqnarray*}
		c^2 d^2\lambda_2^4&<&\frac{(1-{\rm{e}}^{2p_2 s-2q_1 t})(1-{\rm{e}}^{2q_2 s-2p_1 t})}{({\rm{e}}^{-p_1 t}-{\rm{e}}^{-q_1 t})^2}{\rm{e}}^{-2 p_2 s} \ \ (\text{if $a=0$}),\\
		c^2 d^2\lambda_2^4&<&\frac{(1-{\rm{e}}^{2p_2 s-2p_1 t})(1-{\rm{e}}^{2q_2 s-2q_1 t})}{({\rm{e}}^{-p_1 t}-{\rm{e}}^{-q_1 t})^2}{\rm{e}}^{-2 p_2 s}\ \ (\text{if $b=0$}),\\
		\frac{a^2 b^2}{\lambda_2^4}&<&\frac{(1-{\rm{e}}^{2p_2 s-2p_1 t})(1-{\rm{e}}^{2q_2 s-2q_1 t})}{({\rm{e}}^{-p_1 t}-{\rm{e}}^{-q_1 t})^2}{\rm{e}}^{-2 q_2 s}\ \ (\text{if $c=0$}),\text{ and}\\
		\frac{a^2 b^2}{\lambda_2^4}&<&\frac{(1-{\rm{e}}^{2p_2 s-2q_1 t})(1-{\rm{e}}^{2q_2 s-2p_1 t})}{({\rm{e}}^{-p_1 t}-{\rm{e}}^{-q_1 t})^2}{\rm{e}}^{-2 q_2 s}\ \ (\text{if $d=0$}).
	\end{eqnarray*}
	When $b$ or $c$ is zero ($a$ or $d$ is zero, respectively), proceed as in the proof of Theorem~\ref{rr:12zero} by showing that the product of partial derivatives of the function on the right side of the corresponding inequality is $-1$ in $\Omega_1$ (in $\Omega_2$, respectively).
\end{proof}

When all entries of the transition matrix $M$ are nonzero, define the function $k(t,s)=f\left(\sqrt[8]{\frac{a^2b^2}{c^2d^2}\, {\rm{e}}^{2(q_2-p_2)s}},t,s\right)$, which can be expressed as
\[
\left[ad\left({\rm{e}}^{p_2s-p_1t}-\epsilon{\rm{e}}^{q_2 s-q_1t}\right)-bc\left({\rm{e}}^{p_2 s-q_1t}-\epsilon{\rm{e}}^{q_2s-p_1 t}\right)\right]^2-\left({\rm{e}}^{(p_2+q_2)s-(p_1+q_1)t}-\epsilon\right)^2,
\] 
where $\epsilon=\dfrac{abcd}{|abcd|}$. Further for all $\lambda_2$ and $t,s\ge 0$, $k(t,s)\le f(\lambda_2,t,s)$. Again, the zero set $\mathcal{C}=\{(t,s)\in Q_1\ \colon\ k(t,s)=0\}$ will be studied to obtain an expression for $\tau_{2,1}(\eta)$.

\begin{theorem}
	Suppose $\epsilon=-1$. For given flee time $\eta>0$, let $\tau_{2,1}(\eta)$ be the unique solution $t>\max\left(p_2/p_1,\ q_2/q_1\right)s$ of \[
	-\frac{\left({\rm e}^{q_2 \eta-p_1 t}-1\right)\left({\rm e}^{p_2 \eta-q_1 t}-1\right)}{\left({\rm e}^{-p_1 t}-{\rm e}^{-q_1 t}\right)\left({\rm e}^{q_2 \eta}-{\rm e}^{p_2 \eta}\right)}=ad.
	\]
	Then the switched system~\eqref{eq:system} is stable for all signals $\sigma\in \s[\tau_{2,1}(\eta),\eta]$ and asymptotically stable for all signals $\sigma\in \s'[\tau_{2,1}(\eta),\eta]$.
\end{theorem}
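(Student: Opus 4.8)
The plan is to reduce the statement to a sign analysis of the auxiliary function $k(t,s)$ introduced just before the theorem, and then to recognise that, for $\epsilon=-1$, this reduces \emph{exactly} to the computation already performed for $\tau_{1,2}(\eta)$ in Theorem~\ref{rr:12abcd<0}.

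First I would record the standing algebraic facts. Since $\det M=ad-bc=1$ we have $bc=ad-1$, hence $abcd=ad(ad-1)$; thus $\epsilon=abcd/|abcd|=-1$ is equivalent to $0<ad<1$, which is precisely the regime of Theorem~\ref{rr:12abcd<0}. Next comes the key observation, which also explains why the displayed defining equation is identical to the one in Theorem~\ref{rr:12abcd<0}: the function $k(t,s)$ of this section and the one used for $\tau_{1,2}(\eta)$ differ only in their $bc$-terms, namely $bc(e^{p_2 s-q_1 t}-\epsilon e^{q_2 s-p_1 t})$ here against $bc(e^{q_2 s-p_1 t}-\epsilon e^{p_2 s-q_1 t})$ there, and for $\epsilon=-1$ both equal $bc(e^{p_2 s-q_1 t}+e^{q_2 s-p_1 t})$ while the $ad$- and constant terms already coincide. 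Hence the two functions are identical, and the entire zero-set analysis carries over verbatim: with $\mathcal{C}=\{k=0\}=\{ad=R_\pm(t,s)\}$ one has $R_-(t,s)>1$ on $Q_1$ (so $\mathcal{C}=\{ad=R_+\}$), $R_+=1$ on the lines $p_1 t=p_2 s$ and $q_1 t=q_2 s$, and $(R_+)_t<0$, $(R_+)_s>0$ on $\Omega_4=\{(t,s)\in Q_1 : t>\max(p_2/p_1,q_2/q_1)s\}$. Because $0<ad<1$, the curve $\mathcal{C}$ meets $\Omega_4$ and is the graph of an increasing function $T$ determined by $R_+(T(s),s)=ad$, with $k<0$ to its right; evaluating at $s=\eta$ yields precisely the displayed equation and identifies $\tau_{2,1}(\eta)=T(\eta)$.

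It remains to turn the inequality $k(t,s)\le 0$ on $\mathcal{R}_{T(\eta),\eta}$ into a norm bound for $K_{21}=M(\lambda_2)e^{J_1 t}M(\lambda_2)^{-1}e^{J_2 s}$. Using $k(t,s)\le f(\lambda_2,t,s)$, with equality when $\lambda_2=\sqrt[8]{(a^2b^2/c^2d^2)\,e^{2(q_2-p_2)s}}$, I would fix $\lambda_2$ to be this value at the corner $s=\eta$; then $f(\lambda_2,t,\eta)=k(t,\eta)\le 0$ for every $t\ge T(\eta)$, so $\|K_{21}(t,\eta)\|\le 1$ along the whole top edge, with equality only at $t=T(\eta)$.

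The main obstacle is propagating this bound off the top edge to the full rectangle, and this is the single place where the argument is \emph{not} a verbatim copy of the $\tau_{1,2}$ case. In Theorem~\ref{rr:12abcd<0} the factor $e^{J_1 t}$ stood at the right end of the product, so increasing the dwell time merely appended the contraction $e^{J_1(t-\tau)}$; here the rightmost factor is $e^{J_2 s}$ with $J_2$ \emph{unstable}. Writing $e^{J_2 s}=e^{J_2\eta}e^{-J_2(\eta-s)}$ gives $\|K_{21}(t,s)\|\le\|K_{21}(t,\eta)\|\,\|e^{-J_2(\eta-s)}\|$, and $\|e^{-J_2(\eta-s)}\|\le 1$ holds exactly when both eigenvalues of $A_2$ are positive ($0<p_2\le q_2$); in that subcase the bound extends immediately to all $s\le\eta$. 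When $p_2<0<q_2$ this estimate fails, and I expect to need instead the monotonicity of $f(\lambda_2,\cdot,\cdot)$ for the fixed $\lambda_2$ above, namely $f_t\le 0$ and $f_s\ge 0$ on $\mathcal{R}_{T(\eta),\eta}$, so that $f$ attains its maximum over the rectangle at the corner $(T(\eta),\eta)$, where $f=k=0$. Either way one obtains $\|K_{21}\|\le 1$ on $\mathcal{R}_{T(\eta),\eta}$ with equality only at the corner; feeding this into the flow estimate~\eqref{eq:flow} gives stability for $\sigma\in\s[\tau_{2,1}(\eta),\eta]$ and, since $(t_k,s_k)$ then avoids the corner, strict contraction and hence asymptotic stability for $\sigma\in\s'[\tau_{2,1}(\eta),\eta]$.
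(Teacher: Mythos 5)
Your reduction is precisely the paper's own proof: the paper likewise observes that $\epsilon=-1$ is equivalent to $0<ad<1$ (via $bc=ad-1$), that the function $k$ of this subsection then coincides with the one of Section~\ref{sec:RR12}, so the zero-set analysis of Theorem~\ref{rr:12abcd<0} (the sets $\{ad=R_\pm\}$, the region $\Omega_4$, the increasing parametrization $T$) carries over verbatim, and it fixes $\lambda_2=\sqrt[8]{(a^2b^2/c^2d^2)\,e^{2(q_2-p_2)\eta}}$. Where you go beyond the paper's two-line argument is in making the edge-to-rectangle propagation explicit, and you are right that it is needed: with $\lambda_2$ frozen at $s=\eta$, the AM--GM slack gives $f(\lambda_2,t,s)-k(t,s)=2\lvert abcd\rvert\,e^{(p_2+q_2)s}\bigl(\cosh\bigl((q_2-p_2)(\eta-s)\bigr)-1\bigr)\bigl(e^{-p_1t}-e^{-q_1t}\bigr)^2>0$ for $s<\eta$, so $f(\lambda_2,t,s)\le 0$ does \emph{not} follow pointwise from $k\le 0$, and your factorization $e^{J_2 s}=e^{J_2\eta}e^{-J_2(\eta-s)}$ with $\|e^{-J_2(\eta-s)}\|\le 1$ when $0<p_2\le q_2$ is the correct analogue of the right-multiplication by $e^{J_1(t-\tau_{1,2}(\eta))}$ used in Theorem~\ref{rr:12abcd>0}. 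For $0<p_2\le q_2$ your proof is therefore complete and faithful to the paper's intent.

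The one genuine gap is your fallback for $p_2<0<q_2$. The asserted monotonicity $f_t\le 0$, $f_s\ge 0$ on $\mathcal{R}_{T(\eta),\eta}$ is not established, and a term-by-term check does not deliver it: in $f(\lambda_2,t,s)$ the term $-e^{2(p_2+q_2)s-2(p_1+q_1)t}$ has \emph{positive} $t$-derivative and (when $p_2+q_2>0$) \emph{negative} $s$-derivative, while the squared terms carrying the factor $e^{2p_2s}$ are \emph{decreasing} in $s$ when $p_2<0$; so neither sign claim is available without a finer argument, and your proposal leaves the corner-propagation open in exactly the subcase where the simple contraction factor fails. To be fair, the paper's one-line proof is silent on this subcase too; the natural repair, consistent with the device the paper deploys in the $\epsilon=1$ case of Section~\ref{sec:RR21}, is to use that $\|B\,\mathrm{diag}(x,y)\|$ is nondecreasing in $\lvert x\rvert$ and $\lvert y\rvert$, hence replace $p_2$ by $\tilde p_2=\max\{0,p_2\}$ throughout before running your argument --- but this yields the dwell-flee relation with $\tilde p_2$ in place of $p_2$, i.e., a (possibly) larger dwell time than the one displayed in the statement, so the subcase $p_2<0<q_2$ should either be argued separately or stated with $\tilde p_2$.
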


\begin{proof}
	The function $k$, just defined, coincides with the function given in Section~\ref{sec:RR12} when $0<ad<1$. Since the dwell-flee relations are obtained by points satisfying $k(\tau_{2,1}(\eta),\eta)$, choosing $\lambda_2=\sqrt[8]{\frac{a^2b^2}{c^2d^2}\, {\rm{e}}^{2(q_2-p_2)\eta}}$, we obtain the result.
\end{proof}

\begin{theorem}
	Suppose $\epsilon=1$. Let $\tilde{p}_2=\max\{0,p_2\}$. For given flee time $\eta>0$,
	\begin{enumerate}[(i)]
		\item if $ad<0$, let $\tau_{2,1}(\eta)$ be the unique solution $t>0$ of \[
		\frac{\left(1+{\rm e}^{\tilde{p}_2 \eta-q_1 t}\right)\left(1-{\rm e}^{q_2 \eta-p_1 t}\right)}{\left({\rm e}^{\tilde{p}_2 \eta}+{\rm e}^{q_2 \eta}\right)\left({\rm e}^{-q_1 t}-{\rm e}^{-p_1 t}\right)}=ad.
		\]
		\item if $ad>1$, let $\tau_{2,1}(\eta)=\max\left((q_2/p_1)\eta,T(\eta)\right)$, where $T(\eta)$ is the largest solution $t>0$ of \[
		\frac{\left({\rm e}^{\tilde{p}_2\eta-q_1 t}-1\right)\left(1+{\rm e}^{q_2\eta-p_1 t}\right)}{\left({\rm e}^{\tilde{p}_2 \eta}+{\rm e}^{q_2 \eta}\right)\left({\rm e}^{-q_1 t}-{\rm e}^{-p_1 t}\right)}=ad.
		\]
	\end{enumerate}
	Then the switched system~\eqref{eq:system} is stable for all signals $\sigma\in \s[\tau_{2,1}(\eta),\eta]$ and asymptotically stable for all signals $\sigma\in \s'[\tau_{2,1}(\eta),\eta]$.
\end{theorem}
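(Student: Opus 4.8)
The plan is to follow the blueprint of the proof of Theorem~\ref{rr:12abcd>0}, replacing the functions $\ell_\pm$ of the $\tau_{1,2}$-analysis by their $\tau_{2,1}$-counterparts. Since $\epsilon=1$ forces $ad<0$ or $ad>1$, and since $\det M=ad-bc=1$ gives $bc=ad-1$, the quantity $k(t,s)$ is a difference of two squares $k=(X-Y)(X+Y)$, where $X$ is the bracketed expression defining $k$ and $Y={\rm e}^{(p_2+q_2)s-(p_1+q_1)t}-1$. First I would factor: collecting the $ad$-terms shows $X=ad\,({\rm e}^{p_2 s}+{\rm e}^{q_2 s})({\rm e}^{-p_1 t}-{\rm e}^{-q_1 t})+{\rm e}^{p_2 s-q_1 t}-{\rm e}^{q_2 s-p_1 t}$, so that equating $X=\pm Y$ and solving for $ad$ produces two smooth branch functions
\[
L_+(t,s)=\frac{\left(1+{\rm e}^{p_2 s-q_1 t}\right)\left(1-{\rm e}^{q_2 s-p_1 t}\right)}{\left({\rm e}^{p_2 s}+{\rm e}^{q_2 s}\right)\left({\rm e}^{-q_1 t}-{\rm e}^{-p_1 t}\right)},\qquad L_-(t,s)=\frac{\left({\rm e}^{p_2 s-q_1 t}-1\right)\left(1+{\rm e}^{q_2 s-p_1 t}\right)}{\left({\rm e}^{p_2 s}+{\rm e}^{q_2 s}\right)\left({\rm e}^{-q_1 t}-{\rm e}^{-p_1 t}\right)},
\]
so that $\mathcal{C}=\{ad=L_+\}\cup\{ad=L_-\}$. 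Matching signs shows $L_+$ carries the $ad<0$ relation and $L_-$ the $ad>1$ relation; these coincide with the two displayed equations of the theorem once $p_2$ is read as $\tilde p_2$ and $s$ as $\eta$.

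The appearance of $\tilde p_2=\max\{0,p_2\}$ is the one genuinely new ingredient, and it enters through the extension step rather than through $k$ itself. The optimal scaling here is $\lambda_2=\sqrt[8]{(a^2b^2/c^2d^2)\,{\rm e}^{2(q_2-p_2)s}}$, which depends on $s$ (not $t$); consequently a single fixed $\lambda_2$ only reproduces $f(\lambda_2,t,s)=k(t,s)$ along one horizontal line. I would therefore fix $\lambda_2$ to be optimal at $s=\eta$ and control the norm on the rest of the rectangle by dominating $J_2$ with a matrix of nonnegative spectrum. Writing $\tilde J_2=\text{diag}(\tilde p_2,q_2)$, one has ${\rm e}^{J_2 s}={\rm e}^{\tilde J_2 s}\,\text{diag}({\rm e}^{(p_2-\tilde p_2)s},1)$ with $p_2-\tilde p_2\le 0$, so the second factor has norm $\le 1$ for $s\ge 0$ and hence $\|C\,{\rm e}^{J_2 s}\|\le\|C\,{\rm e}^{\tilde J_2 s}\|$ for every fixed matrix $C$. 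Since $\tilde J_2\succeq 0$, a one-line eigenvalue-derivative computation gives $\tfrac{d}{ds}\|C\,{\rm e}^{\tilde J_2 s}\|^2=2\lambda_{\max}\,w^\top \tilde J_2 w\ge 0$ (with $w$ a top unit eigenvector), so $\|C\,{\rm e}^{\tilde J_2 s}\|$ is nondecreasing in $s$ and it suffices to control $s=\eta$. This is precisely the substitution of $J_2$ by $\tilde J_2$, i.e. of $p_2$ by $\tilde p_2$, in $L_\pm$.

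It then remains to run the zero-set analysis on the line $s=\eta$ for $\tilde J_2$, which I would carry out verbatim as in Theorem~\ref{rr:12abcd>0}, relegating the sign and monotonicity computations for $L_\pm$ to the appendix exactly as is done for $\ell_\pm$. The feasibility condition (C1) holds throughout the relevant cone. One checks that $L_+<0$ precisely where $p_1 t>q_2 s$, where $L_+$ is monotone and meets each horizontal line once; for $ad<0$ this yields a unique increasing parametrization of $\{ad=L_+\}$ and hence the stated unique root. For $ad>1$, the portion $\{ad=L_+\}$ lies to the \emph{left} of the line $t=(q_2/p_1)s$ (because $L_+>0$ forces $p_1 t<q_2 s$), while $\{ad=L_-\}$ is parametrized by an increasing $T$ that may meet a horizontal line more than once, cf. Figure~\ref{fig:ep1}; taking $\tau_{2,1}(\eta)=\max\!\big((q_2/p_1)\eta,\,T(\eta)\big)$ with $T(\eta)$ the largest root places all of $\mathcal{C}$ to the left, so $k\le 0$ on $\mathcal{R}_{\tau_{2,1}(\eta),\eta}$ with equality only at the corner. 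Finally, choosing $\lambda_2=\sqrt[8]{(a^2b^2/c^2d^2)\,{\rm e}^{2(q_2-\tilde p_2)\eta}}$ turns $k(t,\eta)\le 0$ into $\|M(\lambda_2){\rm e}^{J_1 t}M(\lambda_2)^{-1}{\rm e}^{\tilde J_2 \eta}\|\le 1$ for $t\ge\tau_{2,1}(\eta)$, and the two reductions above upgrade this to $\|M(\lambda_2){\rm e}^{J_1 t}M(\lambda_2)^{-1}{\rm e}^{J_2 s}\|\le 1$ throughout the rectangle, giving stability on $\s[\tau_{2,1}(\eta),\eta]$ and, away from the corner, asymptotic stability on $\s'[\tau_{2,1}(\eta),\eta]$.

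The main obstacle is exactly the extension just described. Because the optimal Jordan scaling depends on $s$, no single basis is pointwise optimal on the rectangle, and the naive attempt to slide from the line $s=\eta$ down to $s<\eta$ fails precisely when $p_2<0$ (the stable direction of $A_2$), since then $\|C\,{\rm e}^{J_2 s}\|$ need not be monotone in $s$ and $k(t,\eta)\le 0$ alone does not bound $f(\lambda_2,t,s)$ for $s<\eta$. The device of dominating ${\rm e}^{J_2 s}$ by ${\rm e}^{\tilde J_2 s}$ is what repairs this and forces $\max\{0,p_2\}$ into the final relation; everything else is sign-bookkeeping parallel to the $\tau_{1,2}$ case.
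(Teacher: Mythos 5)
Your proposal is correct, and its skeleton (factoring $k$ as a difference of squares using $bc=ad-1$, extracting the two branches $L_\pm$, locating $\{ad=L_+\}$ in $\{p_1t>q_2s\}$ for $ad<0$ and bounding $\{ad=L_-\}$ by the larger of the line $t=(q_2/p_1)s$ and the largest root for $ad>1$) coincides with the paper's proof; your $L_\pm$ are exactly its $\ell_\pm$. Where you genuinely diverge is the mechanism that produces $\tilde p_2=\max\{0,p_2\}$ and the extension from the line $s=\eta$ to the whole rectangle. The paper treats $p_2<0<q_2$ by viewing $\ell_\pm$ as functions of the parameter $p_2$ and showing $(L_+)_{p_2}<0$, $(L_-)_{p_2}>0$, so the level curves for $p_2$ are bounded from the right by those for $p_2=0$; it does not explicitly address that the optimal scaling $\lambda_2$ depends on $s$, so that $k(t,\eta)\le 0$ with a single fixed $\lambda_2$ only controls $f$ on one horizontal line (in the $\epsilon=-1$ counterpart the extension is left implicit, and the naive step $\|C\,{\rm e}^{J_2 s}\|\le\|C\,{\rm e}^{J_2\eta}\|$ indeed uses $\|{\rm e}^{-J_2(\eta-s)}\|\le 1$, which fails when $p_2<0$). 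Your operator-level repair --- writing ${\rm e}^{J_2 s}={\rm e}^{\tilde J_2 s}\,\mathrm{diag}({\rm e}^{(p_2-\tilde p_2)s},1)$ with the second factor contractive, then proving $s\mapsto\|C\,{\rm e}^{\tilde J_2 s}\|$ nondecreasing via $\tfrac{d}{ds}\lambda_{\max}=2\lambda_{\max}\,w^\top\tilde J_2 w\ge 0$ --- both recovers the paper's $\tilde p_2$-substitution and simultaneously closes the $s$-extension gap in one stroke, which is arguably cleaner and more complete than the paper's curve-comparison argument; its cost is only a slightly more conservative bound when $p_2<0$ (you dominate the system by the $\tilde J_2$-system throughout, as the paper effectively also does). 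The one place your blueprint is lighter than the paper is the $ad>1$ case: the paper's justification that the right-bounding curve is increasing requires the case analysis comparing $ad$ with $\ell_0=(q_1q_2-p_1p_2)/(q_1q_2-p_1q_2)$ and $2$, assembling piecewise curves from the line $t=(q_2/p_1)s$ and branches of $\{\ell_-=ad\}$ (cf.\ Figure~\ref{fig:rrus}); your claim that taking the largest root and maxing with the line ``places all of $\mathcal{C}$ to the left'' is true but would need this (or an equivalent) monotonicity argument in a full write-up, exactly as you anticipate by deferring the sign and monotonicity computations to an appendix.
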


\begin{proof}
	The zero set of $k$ is $\mathcal{C}=\{(t,s)\in Q_1\ \colon\ \ell_\pm(t,s)=ad\}$, where \begin{eqnarray*}
		\ell_\pm (t,s)&=&\frac{\left({\rm e}^{p_2 s-q_1 t}\pm 1\right)\left(1\mp {\rm e}^{q_2 s-p_1 t}\right)}{\left({\rm e}^{p_2 s}+{\rm e}^{q_2 s}\right)\left({\rm e}^{-q_1 t}-{\rm e}^{-p_1 t}\right)},\text{ for }t\ne 0.
	\end{eqnarray*}
	
	When $0\le p_2<q_2$, the following observations can be made, where $\Omega_0,\Omega_1,\Omega_2$ are the sets defined in Theorem~\ref{rr:12abcd>0}:
	
	\begin{enumerate}[(O1)]
		\item $(\ell_+)_t<0$ and $(\ell_+)_s>0$ in $Q_1$.
		\item $(\ell_-)_t>0$ and $(\ell_-)_s<0$ in $\Omega_0\supseteq\Omega_1$, if $p_2/p_1\ge q_2/q_1$.
		\item $(\ell_-)_t>0$ and $(\ell_-)_s<0$ in $\Omega_1$, if $p_2/p_1\le q_2/q_1$.
		\item $\ell_->0$ in $\Omega_2$ and $\ell_+<0$ in $\Omega_1$.
		\item $\ell_->1$ in $\Omega_0$ and $\ell_+<1$ in $\Omega_3$.
	\end{enumerate}
	
	As in the proof of Theorem~\ref{rr:12abcd>0}, we can use these observations to arrive at conclusions that follow (details have been omitted). We conclude that when $ad<0$, the curve which bounds the zero set $\mathcal{C}$ from the right is given by $\ell_+(t,s)=ad$ and can be parametrized by $S\colon(0,\infty)\to\mathbb{R}$ satisfying $\ell_+(t,S(t))=ad$. The function $S$ is increasing by the implicit function theorem using observation (O1).
	
	Consider the case when $ad>1$. If $p_2/p_1\ge q_2/q_1$, $\Omega_0\subseteq\Omega\subseteq\Omega_3$. The curve $\mathcal{C}\cap\Omega_0$ is given by $\ell_-(t,s)=ad$ and its parameterization $S\colon(0,\infty)\to\mathbb{R}$ satisfying $\ell_-(t,S(t))=ad$ is an increasing function by observation (O2). Due to this, no portion of the curve $\mathcal{C}$ lies to the right of the graph of the function $S$ in the $ts$-plane. However, if $p_2/p_1< q_2/q_1$, then the parameterization $S$ satisfying $\ell_-(t,S(t))=ad$, may not be increasing. In this case, recall observation (O3). It follows that, on the line $t=(q_2/p_1)s$, \begin{itemize}
		\item $\lim_{\xi\to 0}\ell_-\left(\frac{q_2}{p_1}\xi,\xi\right)=\frac{q_1q_2-p_1p_2}{q_1q_2-p_1q_2}=\ell_0$\text{ (say)}.
		\item $\lim_{\xi\to \infty}\ell_-\left(\frac{q_2}{p_1}\xi,\xi\right)=2$.
		\item  $\ell_-\left(\frac{q_2}{p_1}\xi,\xi\right)$ is increasing in $\xi$, if $2p_1q_2\le q_1q_2+p_1p_2$, and decreasing in $\xi$, if $2p_1q_2\ge q_1q_2+p_1p_2$ and a constant function when $2p_1q_2= q_1q_2+p_1p_2$.
	\end{itemize}
	
	If $ad\ge\max\left\{\ell_0,2\right\}$ and $\ell_0\ne 2$, then the solution set of $\ell_-(t,s)=ad$ lies entirely in $\Omega_1$ and can be parametrized, using the implicit function theorem and (O3), by an increasing function $S\colon(0,\infty)\to\mathbb{R}$, below which both (C1) and (C2) are satisfied, hence $\tau_{2,1}(\eta)=S(\eta)$. Otherwise if $ad\ge \ell_0=2$, then $\ell_-(t,s)=ad$ is given by $q_2 s=p_1 t$, hence $\tau_{2,1}(\eta)=q_2\eta/p_1$.
	
	If $ad\le\min\left\{\ell_0,2\right\}$, the solution set of $\ell_-(t,s)=ad$ lies entirely in $Q_1\setminus\Omega_1$ and hence the line $q_2 s=p_1 t$ bounds the zero set. Moreover both (C1) and (C2) are satisfied in $p_1 t>q_2 s$, hence $\tau_{1,2}(\eta)=q_2\eta /p_1$.
	
	\begin{figure}[h!]
		\centering
		\includegraphics[scale=0.4]{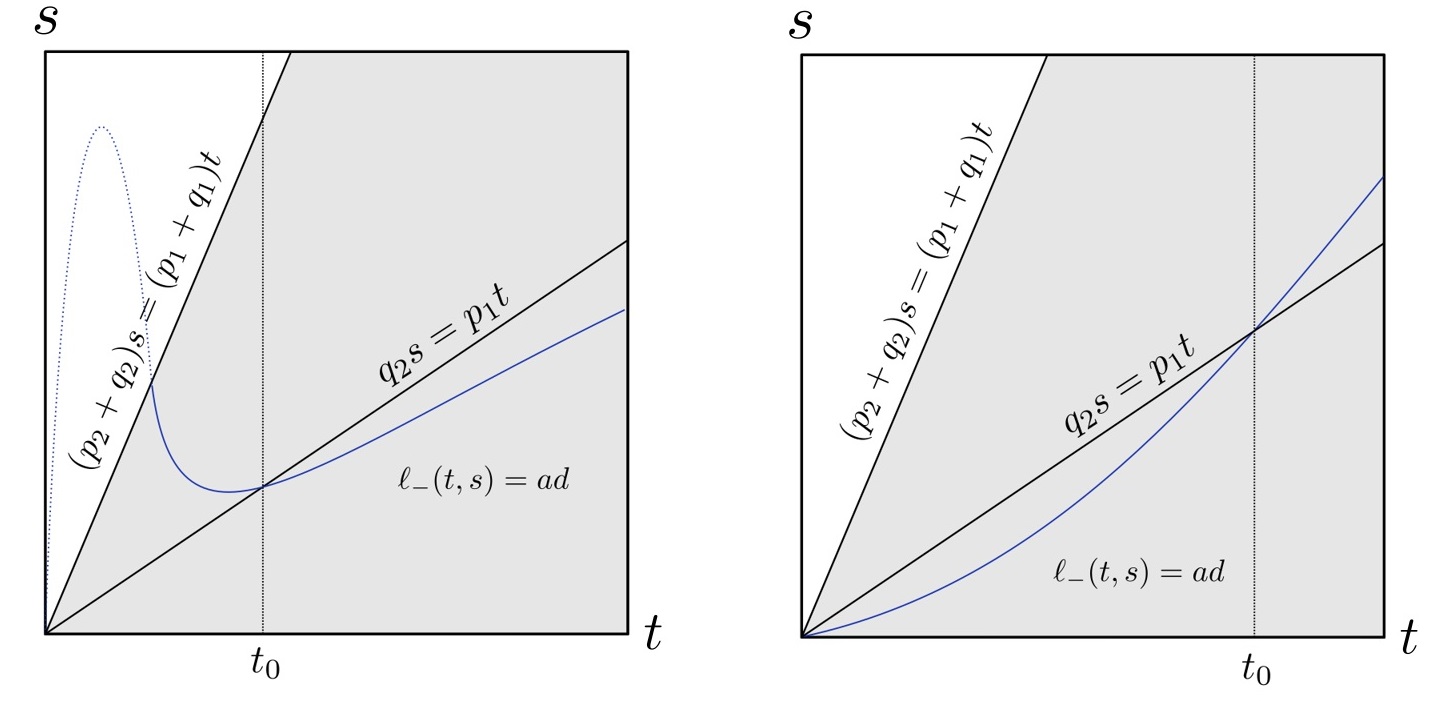}
		\caption{Zero set $\mathcal{C}$ when $2<ad<\ell_0$ (left) and $\ell_0<ad<2$ (right). }
		\label{fig:rrus}
	\end{figure}
	
	If $ad\in \left(\min\{\ell_0,2\},\max\{\ell_0,2\}\right)$ lies between $\ell_0$ and $2$, the solution set of $\ell_-(t,s)=ad$ intersects with the line $t=(q_2/p_1)s$ exactly once, at $t_0$ say. Using the fact that $\ell_-(t,0)$ is positive and increasing with range $[q_1/(q_1-p_1),\infty)$, we have two possibilities as shown in Figure~\ref{fig:rrus}, depending on the sign of $\ell_0-2$. Using the implicit function theorem and (O3), the portion of the zero set $\mathcal{C}$ in $\Omega_1$ can be parameterized by an increasing function $S_1\colon(t_0,\infty)\to\mathbb{R}$ when $2<ad<\ell_0$, and by an increasing function $S_2\colon(0,t_0)\to\mathbb{R}$ when $\ell_0<ad<2$. Hence the increasing curves \[
	\begin{cases}
		(p_1/q_2)t, & t\in(0,t_0)\\
		S_1(t), & t\in(t_0,\infty)
	\end{cases}, \ \text{and} \  \begin{cases}
		S_2(t), & t\in(0,t_0)\\
		(p_1/q_2)t, & t\in(t_0,\infty).
	\end{cases}
	\] 
	bound $\mathcal{C}$ from the right when $2<ad<\ell_0$ and $\ell_0<ad<2$, respectively. Since (C1) and (C2) are satisfied below the bounding curves obtained above, $\tau_{2,1}(\eta)=\max\left((q_2/p_1)\eta,\tau(\eta)\right)$, where $\tau(\eta)>0$ is the largest solution $t>0$ of $l_-(t,\eta)=ad$.
	
	When $p_2<0<q_2$, it is not possible to find a region of the form $\Omega(\alpha,\beta)=\{(t,s)\in Q_1\ \colon\ \alpha t>\beta s\}$ in which $(\ell_-)_t(\ell_-)_s$ is negative. Consider $\ell_\pm$ as a function of $p_2$ as well and denote it by $L_\pm$, then $L_-(0,t,s)$ lies below $L_-(p_2,t,s)$ and hence $L_-(p_2,t,s)=ac$ is bounded by the curve $L_-(0,t,s)=ac$ from the right. Similarly, $L_+(p_2,t,s)=ac$ is bounded by the curve $L_+(0,t,s)=ac$ from the right. Hence the result follows.
\end{proof}

%\section{One subsystem real-diagonalizable and the other with non-real eigenvalues}\label{sec:RCCR}

\section{$A_1$ real-diagonalizable and $A_2$ with non-real eigenvalues}\label{sec:RC}

In this section, $A_1$ is a real-diagonalizable matrix with Jordan form $J_1=\text{diag}\left(-p_1,-q_1\right)$, where $0<p_1\le q_1$ and $A_2$ has non-real eigenvalues with Jordan form $J_2= \begin{pmatrix} \alpha_2 & \beta_2\\ -\beta_2 & \alpha_2\end{pmatrix}$, where $\alpha_2> 0$ and $\beta_2\ne 0$. Fix a Jordan basis matrix $P_1$ of $A_1$ and $P_2$ of $A_2$ such that the matrix $M=P_2^{-1}P_1=\begin{pmatrix}
	a&b\\ c&d
\end{pmatrix}$ has determinant 1. Any other Jordan basis matrix of $A_1$ corresponding to the Jordan form $J_1$ is of the form $P_1D_1$ for some diagonal invertible matrix $D_1=\text{diag}(\lambda_1,1/\lambda_1)$ with $\lambda_1\ne 0$. Moreover any other Jordan basis matrix of $A_2$ corresponding to the Jordan form $J_2$ is a nonzero scalar multiple of $P_2$. We will vary $\lambda_1$ over nonzero real numbers to compute $\tau_{1,2}(\eta)$ and $\tau_{2,1}(\eta)$. Let $M(\lambda_1)=MD_1$.

\begin{remark}\label{rem:RCdiag}
	When $p_1=q_1>0$, the inequalities~\eqref{eq:tau12} and~\eqref{eq:tau21} both reduce to ${\rm{e}}^{-p_1 t+\alpha_2 s}<1$. For given flee time $\eta>0$, let $\tau(\eta)=\alpha_2\eta/p_1$. Then the switched system~\eqref{eq:system} is stable for all signals $\sigma\in \s[\tau(\eta),\eta]$ and asymptotically stable for all signals $\sigma\in \s'[\tau(\eta),\eta]$.
\end{remark}
\noindent In view of Remark~\ref{rem:RCdiag}, we will assume that $p_1\ne q_1$ in this section.

\subsection{Computing $\tau_{1,2}(\eta)$}

In this section, we will compute $\tau_{1,2}(\eta)$. Let $\lambda_1$ be a nonzero real number. For given flee time $\eta>0$, we will compute a dwell time $\tau_{1,2}(P_1D_1,P_2,\eta)$ such that for all $t>\tau_{1,2}(P_1D_1,P_2,\eta)$ and $s<\eta$, $\|M(\lambda_1)^{-1}{\rm{e}}^{J_2 s}\,M(\lambda_1){\rm{e}}^{J_1 t}\|<1$.

\begin{theorem}\label{rc:12}
	For given flee time $\eta>0$, let $\tau_{1,2}(\eta)$ be the unique positive solution $t$ of
	\[	\alpha_2\eta=\left(\frac{q_1+p_1}{2}\right)t-\sinh^{-1}\left(\lvert ab+cd\rvert \cosh\left(\frac{q_1-p_1}{2}\right)t+\sinh\left(\frac{q_1-p_1}{2}\right)t\right).
	\]
	Then the switched system~\eqref{eq:system} is asymptotically stable for all $\sigma\in	\s\left[\tau_{1,2}(\eta),\eta\right]$.
\end{theorem}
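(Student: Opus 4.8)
The plan is to turn the operator inequality $\|M(\lambda_1)^{-1}\mathrm e^{J_2 s}M(\lambda_1)\mathrm e^{J_1 t}\|<1$ into a single scalar inequality in $t,s$, to minimize it over $\lambda_1$, and then to strip off the oscillatory dependence on $s$ by a clean majorization that produces the closed form; the one delicate point is that handling the oscillation optimally is impossible in closed form, so the stated $\tau_{1,2}(\eta)$ is obtained from a (slightly conservative) bound.

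Writing $\mu=\tfrac{q_1-p_1}{2}>0$, $\nu=\tfrac{p_1+q_1}{2}$ and $w=ab+cd$, I would first use $\mathrm e^{J_2 s}=\mathrm e^{\alpha_2 s}R(s)$ with $R(s)$ the rotation by $\beta_2 s$, so that $N:=M(\lambda_1)^{-1}R(s)M(\lambda_1)=\cos(\beta_2 s)I+\sin(\beta_2 s)\tilde J$, where $\tilde J=M(\lambda_1)^{-1}JM(\lambda_1)$ is traceless with $\det\tilde J=1$. A direct computation gives the $(1,1)$-entry of $\tilde J$ equal to $w=ab+cd$, \emph{independent of} $\lambda_1$, while its off-diagonal entries are $(b^2+d^2)/\lambda_1^2$ and $-(a^2+c^2)\lambda_1^2$. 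Factoring $\mathrm e^{J_1 t}=\mathrm e^{-\nu t}E(t)$ with $E(t)=\mathrm{diag}(\mathrm e^{\mu t},\mathrm e^{-\mu t})$, the matrix in question is $K=\mathrm e^{\alpha_2 s-\nu t}NE(t)$ with $\det(NE(t))=1$; hence its singular values are $\sigma,\sigma^{-1}$ with $\sigma\ge1$, and $\|K\|\le1$ is equivalent to $\sigma\le \mathrm e^{\nu t-\alpha_2 s}$ — which already forces (C1), $\nu t\ge\alpha_2 s$ — i.e. to $\sigma-\sigma^{-1}\le 2\sinh(\nu t-\alpha_2 s)$. Since $(\sigma-\sigma^{-1})^2=\mathrm{tr}\!\big(E(t)N^\top N E(t)\big)-2$, the problem is now scalar.

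Next I would minimize $\mathrm{tr}\!\big(E(t)N^\top NE(t)\big)$ over $\lambda_1$. Its only $\lambda_1$-dependent part is $\sin^2(\beta_2 s)\big[(a^2+c^2)^2\lambda_1^4\mathrm e^{2\mu t}+(b^2+d^2)^2\lambda_1^{-4}\mathrm e^{-2\mu t}\big]$, whose bracket is minimized at an $s$-independent $\lambda_1^4=\tfrac{b^2+d^2}{a^2+c^2}\mathrm e^{-2\mu t}$, with minimum $2(a^2+c^2)(b^2+d^2)=2(w^2+1)$, using $(a^2+c^2)(b^2+d^2)-(ab+cd)^2=(ad-bc)^2=1$. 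After this substitution the half-angle identities collapse the trace to $\mathrm{tr}-2=4\big(\sinh^2(\mu t)+w^2\cosh^2(\mu t)\big)\cos^2(\beta_2 s-\psi/2)$ for a phase $\psi=\psi(t)$, so the exact optimized condition reads
\[
\sinh(\nu t-\alpha_2 s)\ \ge\ \sqrt{\sinh^2(\mu t)+w^2\cosh^2(\mu t)}\;\big|\cos(\beta_2 s-\psi/2)\big|.
\]
Because the minimizing $\lambda_1$ depends only on $t$, a single $\lambda_1$ (optimal at $t=\tau$) makes this hold for every $s$ at once. I then discard the oscillation via $|\cos|\le1$ and majorize $\sqrt{\sinh^2(\mu t)+w^2\cosh^2(\mu t)}\le|w|\cosh(\mu t)+\sinh(\mu t)$ (as $\sqrt{A^2+B^2}\le A+B$ for $A,B\ge0$); since the left-hand side is decreasing in $s$, the worst case is $s=\eta$, leaving the sufficient condition $\sinh(\nu t-\alpha_2\eta)\ge|ab+cd|\cosh(\mu t)+\sinh(\mu t)$, which is exactly the displayed equation after rearrangement.

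Finally I would define $\tau_{1,2}(\eta)$ as its solution and settle existence, uniqueness and one-sidedness through $F(t)=\nu t-\alpha_2\eta-\sinh^{-1}\!\big(g(t)\big)$, $g(t)=|w|\cosh(\mu t)+\sinh(\mu t)$: here $F(0)<0$, $F(t)\to+\infty$, and the identity $1+g(t)^2-(g'(t)/\mu)^2=w^2\ge0$ yields $g'(t)/\sqrt{1+g(t)^2}\le\mu$, hence $F'(t)\ge\nu-\mu=p_1>0$; so $F$ has a unique positive zero and the stability inequality holds precisely for $t\ge\tau_{1,2}(\eta)$. Fixing the $\lambda_1$ optimal at $t=\tau_{1,2}(\eta)$ gives $\|K(\tau_{1,2}(\eta),s)\|\le1$ for all $s\le\eta$, and $\|K(t,s)\|\le\|K(\tau_{1,2}(\eta),s)\|\,\|\mathrm e^{J_1(t-\tau_{1,2}(\eta))}\|\le\|K(\tau_{1,2}(\eta),s)\|$ (using $\|\mathrm e^{J_1\delta}\|\le1$) extends it to all $t\ge\tau_{1,2}(\eta)$; when $w\ne0$ the majorization is strict, so in fact $\|K\|<1$ uniformly on $s\in[0,\eta]$, giving asymptotic (equivalently exponential) stability for the whole class $\mathcal{S}[\tau_{1,2}(\eta),\eta]$. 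The main obstacle is the factor $|\cos(\beta_2 s-\psi/2)|$: treating it optimally would couple the rotation of $\mathrm e^{J_2 s}$ to the $\cosh$-decay of the right-hand side, so the clean $\sinh^{-1}$ formula is bought by replacing $|\cos|$ with $1$ and $\sqrt{\sinh^2+w^2\cosh^2}$ with $|w|\cosh+\sinh$.
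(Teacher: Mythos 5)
Your proposal is correct and reaches the paper's formula by the same two essential moves: minimizing over the scaling $\lambda_1$ at a $t$-dependent optimum $\lambda_1^4=\frac{b^2+d^2}{a^2+c^2}e^{-(q_1-p_1)t}$ (exactly the paper's choice defining $k(t,s)=f(\lambda_1^{\mathrm{opt}}(t),t,s)$), and then majorizing away the oscillation in $s$, which is precisely the content of the paper's inequality~\eqref{rcthm:ineq:mtilde}; the final submultiplicative extension $\|K(t,s)\|\le\|K(\tau,s)\|\,\|e^{J_1(t-\tau)}\|$ is also the paper's closing step. Where you genuinely differ is the middle of the argument: the paper derives (C1)--(C2) from the Schur criterion, then performs a zero-set analysis of $k$ through the functions $\ell_\pm$, $m$, $\tilde m$ with a case split over the intervals $\Lambda^1,\Lambda^2$ and sign cases of $R$, whereas you factor $e^{J_2 s}=e^{\alpha_2 s}R(s)$, conjugate to $\tilde J=M(\lambda_1)^{-1}JM(\lambda_1)$ (correctly identifying the $\lambda_1$-invariant entry $w=ab+cd$ and using $(a^2+c^2)(b^2+d^2)-(ab+cd)^2=(ad-bc)^2=1$), reduce to singular values via $\det(NE(t))=1$, and observe that the optimized trace is the perfect square $4\left(\cos(\beta_2 s)\sinh(\mu t)+w\sin(\beta_2 s)\cosh(\mu t)\right)^2$, i.e.\ your amplitude--phase form. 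This identity makes transparent in one line what the paper's $\ell_\pm/\tilde m$ bookkeeping establishes, and your uniqueness argument via $1+g(t)^2-(g'(t)/\mu)^2=w^2$, giving $F'(t)\ge p_1>0$, is cleaner than the paper's concavity-plus-limit argument for $S$. Both buy the same (slightly conservative) $\sinh^{-1}$ relation.

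One caution on the strictness needed for asymptotic stability on all of $\mathcal{S}[\tau_{1,2}(\eta),\eta]$: you correctly restrict your strictness remark to $w\ne 0$, and the case $w=0$ is genuinely delicate. There the dwell-flee relation reduces to $p_1\tau=\alpha_2\eta$, and if moreover $\beta_2\eta\in\pi\mathbb{N}$ then $e^{J_2\eta}=\pm e^{\alpha_2\eta}I$, so $\|K(\tau,\eta)\|=e^{\alpha_2\eta-p_1\tau}=1$ exactly and your estimate (like the paper's) yields only stability for the extremal periodic signal; note the paper's own assertion that $k$ is strictly negative also degenerates at this corner, since $\eta\in\Lambda^0$ gives $k(\tau,\eta)=-2\sinh(q_1\tau-\alpha_2\eta)\sinh(p_1\tau-\alpha_2\eta)=0$. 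So your proof matches the paper's in strength; for a fully airtight write-up you would either exclude this degenerate subcase ($w=0$ with $\beta_2\eta\in\pi\mathbb{N}$) or weaken the conclusion there to stability on $\mathcal{S}$ and asymptotic stability on $\mathcal{S}'$, as the paper does in neighboring theorems.
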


\begin{proof}
	We have $\left\|M(\lambda_1)^{-1}{\rm{e}}^{J_2 s}\, M(\lambda_1) {\rm{e}}^{J_1 t} \right\|< 1$ if and only if
	\begin{enumerate}
		\item[(C1)] $4\alpha_2 s-2(p_1+q_1)t<0$, and 
		\item[(C2)] the function $f(\lambda_1,t,s)<0$, where $f(\lambda_1,t,s)$ equals \begin{align*}
			&\frac{1}{2}\left(\frac{(b^2+d^2)^2}{\lambda_1^4}{\rm{e}}^{-(q_1-p_1) t}+(a^2+c^2)^2\lambda_1^4 {\rm{e}}^{(q_1-p_1)t}\right)\sin^2\beta_2 s+R \sinh (q_1-p_1) t\ \times\\
			& \sin 2\beta_2 s +\left(\cos^2\beta_2 s+R^2\sin^2\beta_2 s\right)\, \cosh (q_1-p_1) t-\cosh\left((q_1+p_1) t-2\alpha_2 s\right),
		\end{align*}
		where $R=ab+cd$.
	\end{enumerate} 
	\noindent Define the function $k(t,s)=f\left(\sqrt[4]{\frac{b^2+d^2}{a^2+c^2}\, {\rm{e}}^{-(q_1-p_1)t}},t,s\right)$, which equals
	\begin{eqnarray*}(R^2+1) \sin^2 \beta_2 s+R \sinh (q_1-p_1) t\, \sin 2\beta_2 s+\left(\cos^2\beta_2 s+R^2\sin^2\beta_2 s\right)\cosh (q_1-p_1) t\nonumber \\
		-\cosh\left((q_1+p_1) t-2\alpha_2 s\right).
	\end{eqnarray*}
	Note that $k(t,s)\le f(\lambda_1,t,s)$, for all $\lambda_1,t,s$. 
	
	For $s_0\in\Lambda^0=\{0\}\cup(\pi/\beta_2)\mathbb{N}$, $k(t,s_0)=-2\sinh(q_1 t-\alpha_2 s_0)\sinh(p_1 t-\alpha_2 s_0)$. Hence the zero set $\mathcal{C}=\{(t,s)\in Q_1\ \colon\ k(t,s)=0\}$ of $k$ can be rewritten as
	\begin{eqnarray*}
		\mathcal{C}&=&\bigcup_{s_0\in\Lambda^0}\left\{\left(\frac{\alpha_2s_0}{p_1},s_0\right) ,\left(\frac{\alpha_2s_0}{q_1},s_0\right)\right\}\bigcup\left\{(t,s)\in Q_1\colon\ R=\ell_\pm(t,s)\right\},
	\end{eqnarray*} where \begin{eqnarray*}
		\ell_\pm(t,s)&=&\frac{-\cos\beta_2 s\sinh\left(\frac{q_1-p_1}{2}\right)t\pm\sinh\left(\left(\frac{q_1+p_1}{2}\right)t-\alpha_2 s\right)}{\sin\beta_2 s\cosh\left(\frac{q_1-p_1}{2}\right)t},\ s\notin \Lambda^0.
	\end{eqnarray*}
	The following observations can be made in the feasible region in which $(p_1+q_1)t>2\alpha_2 s$ (condition (C1)):
	\begin{itemize}
		\item When $s_0\in \Lambda^1=\cup_{r\in\{0\}\cup\mathbb{N}}\left(2r\pi/\beta_2,(2r+1)\pi/\beta_2\right)$, let $t_0=2\alpha_2 s_0(p_1+q_1)^{-1}$. Then
		\begin{enumerate}[(a)]
			\item $\ell_+(t,s_0)$ is increasing in $t$ and $\ell_-(t,s_0)$ is decreasing in $t$.
			\item since $\ell_+(t_0,s_0)=\ell_-(t_0,s_0)$, we have $\ell_+(t,s_0)>\ell_-(t,s_0)$, for all $t>t_0$.
		\end{enumerate}
		\item When $s_0\in \Lambda^2=\cup_{r\in\mathbb{N}}\left((2r-1)\pi/\beta_2,2r\pi/\beta_2\right)$, let $t_0=2\alpha_2 s_0(p_1+q_1)^{-1}$. Then $\ell_+(t,s_0)$ is decreasing in $t$ and $\ell_-(t,s_0)$ is increasing in $t$. Hence $\ell_+(t,s_0)<\ell_-(t,s_0)$ for all $t>t_0$.
	\end{itemize}
	Furthermore, in the subregion $\Omega=\{(t,s)\ \colon\ p_1 t>\alpha_2 s\}$, we make some additional observations, refer Figure~\ref{fig:rc12}: \begin{itemize}
		\item When $s_0\in\Lambda^1$, $\ell_+(t,s)$ is positive and $\ell_-(t,s)$ is negative.
		\item When $s_0\in\Lambda^2$, $\ell_+(t,s)$ is negative and $\ell_-(t,s)$ is positive.
	\end{itemize}
	
	\begin{figure}[h!]
		\centering
		\includegraphics[width=.4\textwidth]{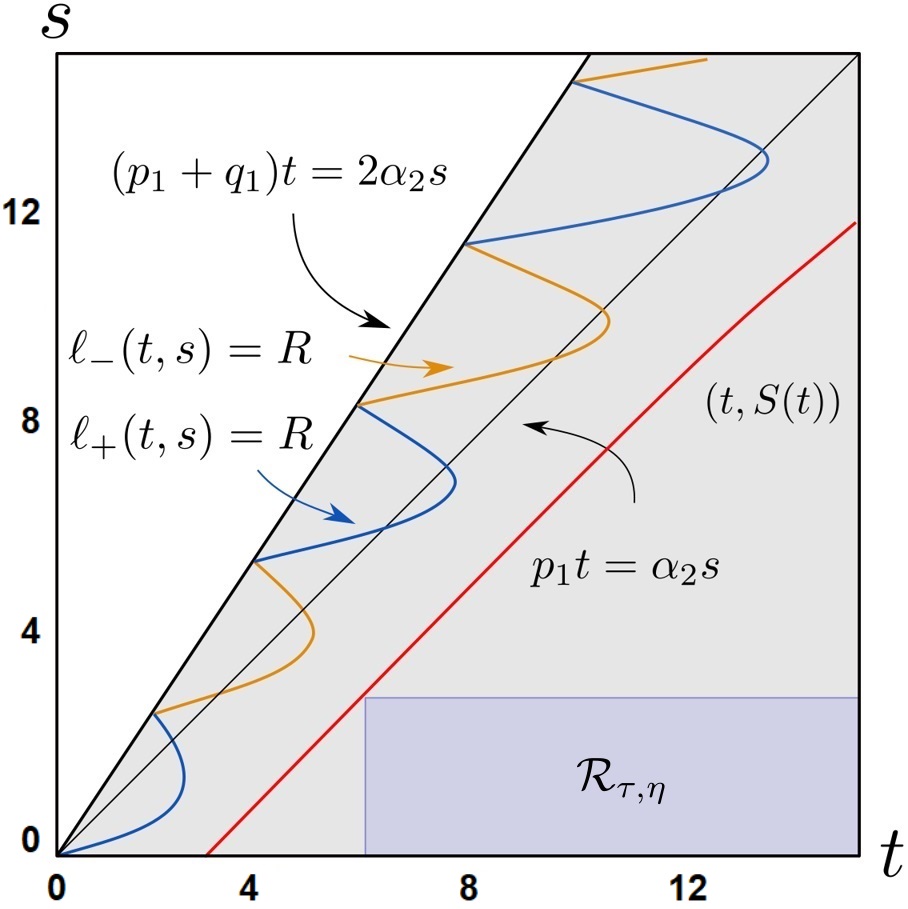}
		\caption{Graph of $\ell_\pm(t,s)$ over $\Omega$, and depiction of $\mathcal{C}$ in the feasible region for values $p_1=\alpha_2=0.1$, $\beta_2=1$ and $q_1=R=0.2$.}
		\label{fig:rc12}
	\end{figure}
	
	Now we will focus on finding a function whose graph lies to the right of the curve $\mathcal{C}$, and this graph will be used to find $\tau_{1,2}$. It turns out that it is enough to study the functions $\ell_+$ and $\ell_-$ in $\Omega$ to obtain such a function. We find this function for $R\ge 0$, the other case will follow similarly.
	
	Suppose $R\ge 0$. For $s\notin\Lambda^0$, we have $k(t,s)=\left(R-\ell_+(t,s)\right)\left(R-\ell_-(t,s)\right)$. Thus $k(t,s)<0$ if and only if $m(t,s)=\min\{\ell_+(t,s),\ell_-(t,s)\}<R<\max\{\ell_+(t,s),\ell_-(t,s)\}=\tilde{m}(t,s).$
	By the above observations, one can argue $m$ (and $\tilde{m}$) is negative and decreasing in $t$ (positive and increasing in $t$) in $\Omega$. Observe that
	\begin{eqnarray}
		\tilde{m}(t,s)&=& 
		\nonumber\left\{\begin{aligned}
			& \frac{\sinh\left(-\alpha_2 s+\left(\frac{q_1+p_1}{2}\right)t\right)-\cos\beta_2 s\sinh\left(\frac{q_1-p_1}{2}\right)t}{\lvert\sin\beta_2 s\rvert\cosh\left(\frac{q_1-p_1}{2}\right)t}, & s & \in\Lambda^1\\
			& \frac{\sinh\left(-\alpha_2 s+\left(\frac{q_1+p_1}{2}\right)t\right)+\cos\beta_2 s\sinh\left(\frac{q_1-p_1}{2}\right)t}{\lvert\sin\beta_2 s\rvert\cosh\left(\frac{q_1-p_1}{2}\right)t}, & s & \in\Lambda^2
		\end{aligned}\right.\\
		&\ge& \frac{\sinh\left(-\alpha_2 s+\left(\frac{q_1+p_1}{2}\right)t\right)-\sinh\left(\frac{q_1-p_1}{2}\right)t}{\cosh\left(\frac{q_1-p_1}{2}\right)t}.\label{rcthm:ineq:mtilde}
	\end{eqnarray}
	
	Since $\inf_{\Omega}\tilde{m}=0$, $\mathcal{C}$ has a nontrivial intersection with $\Omega$. Thus the graph of the function $S$ defined as 	\[	S(t)=\left(\frac{q_1+p_1}{2\alpha_2}\right)t-\frac{1}{\alpha_2}\sinh^{-1}\left(R\cosh\left(\frac{q_1-p_1}{2}\right)t+\sinh\left(\frac{q_1-p_1}{2}\right)t\right),
	\]
	lies to the right of the curve $\mathcal{C}$. Since the function $S$ is concave down and $\lim_{t\to\infty}S'(t)=p_1/\alpha_2>0$, $S$ is increasing in $t$.
	
	Similarly, for a general $R$, the graph of the function 
	\[	S(t)=\left(\frac{q_1+p_1}{2\alpha_2}\right)t-\frac{1}{\alpha_2}\sinh^{-1}\left(\lvert R\rvert \cosh\left(\frac{q_1-p_1}{2}\right)t+\sinh\left(\frac{q_1-p_1}{2}\right)t\right),
	\]
	lies to the right of the curve $\mathcal{C}$. Let $T(\eta)=S^{-1}(\eta)$, then $k(t,s)< 0$ in the region $\mathcal{R}_{T(\eta),\eta}=\{(t,s)\in Q_1\ \colon\ t\ge T(\eta)\text{ and }s\le \eta\}$, for all $\eta>0$. The function $k$ is strictly negative since the inequality~\eqref{rcthm:ineq:mtilde} is strict when $p_1\ne q_1$.
	
	Take $\lambda_1=\sqrt[4]{\frac{b^2+d^2}{a^2+c^2}\, {\rm{e}}^{-(q_1-p_1)T(\eta)}}$. For this choice of $\lambda_1$, we have $f(\lambda_1,T(s),s)=k(T(s),s)<0$, for all $s<\eta$. This further implies that there exists $\lambda_1>0$ (scaling matrix $D_1$) such that $\left\|M(\lambda_1)^{-1}{\rm{e}}^{J_2 s}\, M(\lambda_1) {\rm{e}}^{J_1 T(s)} \right\|< 1$, for all $s<\eta$. This implies $\left\|M(\lambda_1)^{-1}{\rm{e}}^{J_2 s}\, M(\lambda_1) {\rm{e}}^{J_1 t} \right\|\le\left\|M(\lambda_1)^{-1}{\rm{e}}^{J_2 s}\, M(\lambda_1) {\rm{e}}^{J_1 T(s)} \right\|< 1$ in the region $\mathcal{R}_{T(\eta),\eta}$. Hence the result follows.
\end{proof}

\subsection{Computing $\tau_{2,1}(\eta)$}

In this section, we will compute $\tau_{2,1}(\eta)$. Let $\lambda_1$ be a nonzero real number. For given flee time $\eta>0$, we will compute a dwell time $\tau_{2,1}(P_1D_1,P_2,\eta)$ such that for all $t>\tau_{2,1}(P_1D_1,P_2,\eta)$ and $s<\eta$, $\|M(\lambda_1){\rm{e}}^{J_1 t}\ M(\lambda_1)^{-1}{\rm{e}}^{J_2 s}\|<1$. Note that $M(\lambda_1){\rm{e}}^{J_1 t}\ M(\lambda_1)^{-1}{\rm{e}}^{J_2 s}=M{\rm{e}}^{J_1 t}\ M^{-1}{\rm{e}}^{J_2 s}$. Thus the analysis is independent of $\lambda_1$.

\begin{theorem}\label{rc:21}
	For given flee time $\eta>0$, let $\tau_{2,1}(\eta)$ be the unique positive solution $t$ of \[
	\alpha_2 \eta=\left(\frac{q_1+p_1}{2}\right)t- \sinh^{-1}\left(\sqrt{(a^2+c^2)(b^2+d^2)}\, \sinh\left(\frac{q_1-p_1}{2}\right)t\right).
	\]
	Then the switched system~\eqref{eq:system} is stable for all $\sigma\in	\s\left[\tau_{2,1}(\eta),\eta\right]$ and asymptotically stable for all $\sigma\in \s'\left[\tau_{2,1}(\eta),\eta\right]$.
\end{theorem}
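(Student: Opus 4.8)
The plan is to exploit the fact that, in contrast to the computation of $\tau_{1,2}(\eta)$ in Theorem~\ref{rc:12}, the relevant norm here is completely independent of the scaling parameter $\lambda_1$. Since $D_1=\text{diag}(\lambda_1,1/\lambda_1)$ is diagonal it commutes with the diagonal matrix ${\rm e}^{J_1 t}$, so $M(\lambda_1){\rm e}^{J_1 t}M(\lambda_1)^{-1}=M{\rm e}^{J_1 t}M^{-1}$ and the matrix to control is $K=M{\rm e}^{J_1 t}M^{-1}{\rm e}^{J_2 s}$. I would then use the criterion $\|K\|<1$ iff (C1) $\det(K^\top K)<1$ and (C2) $\text{tr}(K^\top K)<1+\det(K^\top K)$ (the absolute values being redundant as $K^\top K$ is positive semidefinite). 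Because $\det K=\det({\rm e}^{J_1 t})\det({\rm e}^{J_2 s})={\rm e}^{2\alpha_2 s-(p_1+q_1)t}$, condition (C1) is exactly $2\alpha_2 s<(p_1+q_1)t$, the same feasible region as before.

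The computational heart is a clean evaluation of $\text{tr}(K^\top K)$. Writing ${\rm e}^{J_2 s}={\rm e}^{\alpha_2 s}R$ with $R$ a rotation, so that ${\rm e}^{J_2 s}({\rm e}^{J_2 s})^\top={\rm e}^{2\alpha_2 s}I$, the cyclic property of the trace collapses the oscillatory factor entirely: $\text{tr}(K^\top K)={\rm e}^{2\alpha_2 s}\,\text{tr}(N^\top N)$ where $N=M{\rm e}^{J_1 t}M^{-1}$. This is precisely why, unlike for $\tau_{1,2}$, no $\sin\beta_2 s$ or $\cos\beta_2 s$ terms survive and no optimization over $\lambda_1$ is needed. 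Computing $N$ explicitly (the entries of $M^{-1}$ being $d,-b,-c,a$ since $\det M=1$), I would group the entry-squares into the natural combinations $(a^2+c^2)(b^2+d^2)$ and $(ab+cd)^2$ to obtain $\text{tr}(N^\top N)={\rm e}^{-(p_1+q_1)t}\big[2(a^2+c^2)(b^2+d^2)\cosh((q_1-p_1)t)-2(ab+cd)^2\big]$. The Lagrange identity $(a^2+c^2)(b^2+d^2)-(ab+cd)^2=(ad-bc)^2=(\det M)^2=1$ will then be used repeatedly.

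Substituting into (C2) and abbreviating $w={\rm e}^{2\alpha_2 s-(p_1+q_1)t}$ and $B=2(a^2+c^2)(b^2+d^2)\cosh((q_1-p_1)t)-2(ab+cd)^2$, condition (C2) becomes the quadratic inequality $w^2-Bw+1>0$. The identity above gives $B=2\cosh((q_1-p_1)t)+2(ab+cd)^2(\cosh((q_1-p_1)t)-1)\ge 2$, so the two roots are real, multiply to $1$, and hence equal $w_\pm={\rm e}^{\pm\phi}$ with $\cosh\phi=B/2$. Since the feasible region forces $w<1<w_+$, (C2) reduces to $w<w_-={\rm e}^{-\phi}$, i.e. $2\alpha_2 s<(p_1+q_1)t-\phi(t)$. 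A half-angle simplification, using the Lagrange identity once more, turns $\cosh\phi=B/2$ into $\phi=2\sinh^{-1}\big(\sqrt{(a^2+c^2)(b^2+d^2)}\,\sinh((q_1-p_1)t/2)\big)$, exactly twice the term in the statement; dividing by $2\alpha_2$ yields the bounding curve $s=S(t)$ and, on setting $s=\eta$, the equation defining $\tau_{2,1}(\eta)$.

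It remains to verify that this curve really bounds from the right the region where both (C1) and (C2) hold, that $\mathcal{R}_{\tau_{2,1}(\eta),\eta}$ lies to its right, and that the defining equation has a unique positive root; I expect this monotonicity/uniqueness step to be the main obstacle, since $S$ need not be increasing for all $t$. Writing $h(t)=\alpha_2 S(t)=\tfrac{p_1+q_1}{2}t-\sinh^{-1}\big(\sqrt{(a^2+c^2)(b^2+d^2)}\,\sinh((q_1-p_1)t/2)\big)$, I would show $h$ is convex with $h(0)=0$, $h(t)\to\infty$, and $h'(t)\to p_1>0$. Convexity follows because, with $C=(a^2+c^2)(b^2+d^2)\ge 1$ and $x=(q_1-p_1)t/2$, the squared ratio $C\cosh^2 x/(1+C\sinh^2 x)$ is decreasing in $\sinh^2 x$ (its derivative has sign $1-C\le 0$), so $h'$ is increasing. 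Hence $h$ attains its minimum once, is strictly increasing thereafter, and $h(t)=\alpha_2\eta>0$ has a unique solution lying on that increasing branch. On $\{t\ge\tau_{2,1}(\eta),\ s\le\eta\}$ one then gets $\alpha_2 s\le\alpha_2\eta=h(\tau_{2,1}(\eta))\le h(t)$, which is exactly (C2) with equality only at the corner, while (C1) follows since $\sinh^{-1}(\cdots)\ge 0$. Thus $\|K\|\le 1$ on the rectangle, strictly below $1$ off the corner, and the stability and asymptotic-stability conclusions follow from the flow bound~\eqref{eq:flow} together with the definitions of $\s[\cdot,\cdot]$ and $\s'[\cdot,\cdot]$, exactly as in the previous theorems.
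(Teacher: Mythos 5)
Your proposal is correct and follows essentially the same route as the paper's proof: after observing the $\lambda_1$-independence, both reduce $\|M{\rm e}^{J_1 t}M^{-1}{\rm e}^{J_2 s}\|<1$ via the trace--determinant (Fleming) criterion to the feasible-region condition $2\alpha_2 s<(p_1+q_1)t$ together with lying strictly below the curve $\alpha_2 s=\left(\frac{q_1+p_1}{2}\right)t-\sinh^{-1}\left(\sqrt{K}\sinh\left(\frac{q_1-p_1}{2}\right)t\right)$ with $K=(a^2+c^2)(b^2+d^2)\ge 1$, and both conclude by convexity of this bounding function (your monotone-ratio argument is the same structural step as the paper's explicit computation of $F_-''$, including the implicit case split on $K\gtrless\left((q_1+p_1)/(q_1-p_1)\right)^2$) followed by the standard rectangle argument. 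Your cyclic-trace/rotation evaluation of $\mathrm{tr}(K^\top K)$ and the half-angle factorization of $w^2-Bw+1$ merely derive explicitly the condition (C2) that the paper states directly, so there is no substantive difference in approach.
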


\begin{proof}
	Note that $\|M{\rm{e}}^{J_1 t}\, M^{-1}{\rm{e}}^{J_2 s}\|<1$ if and only if
	\begin{enumerate}
		\item[(C1)] $2\alpha_2 s<(p_1+q_1) t$, and
		\item[(C2)] the function $f(t,s)<0$, where \[
		f(t,s)=(a^2+c^2)(b^2+d^2)\sinh^2\left(\frac{q_1-p_1}{2}\right)t-\sinh^2\left(\alpha_2 s-\left(\frac{q_1+p_1}{2}\right)t\right).
		\] 	
	\end{enumerate}
	
	Notice that the conditions do not depend on the scaling matrix $D_1$. Denote $K=(a^2+c^2)(b^2+d^2)$. The zero set of $f$ is given by \begin{eqnarray}\label{rc:zeroset}
		\nonumber \alpha_2 s&=&\left(\frac{q_1+p_1}{2}\right)t\pm \sinh^{-1}\left(\sqrt{K}\sinh\left(\frac{q_1-p_1}{2}\right)t\right)=F_\pm(t).
	\end{eqnarray}
	
	\begin{figure}[h!]
		\centering
		\includegraphics[width=.4\textwidth]{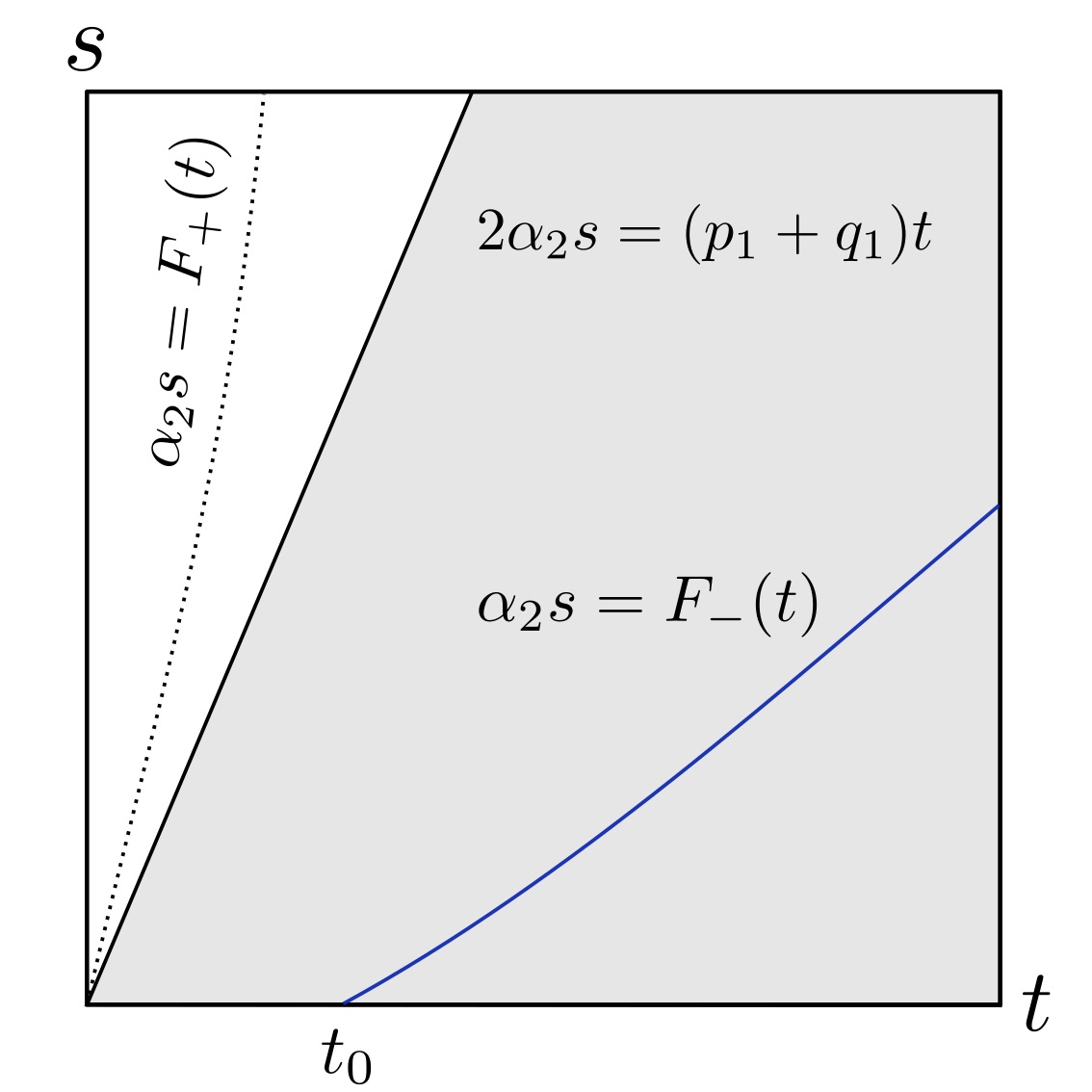}
		\caption{Plot of $f(t,s)=0$ in the feasible region.}\label{fig:rc}
	\end{figure}
	
	Refer to Figure~\ref{fig:rc} for the following discussion. The feasible region (condition (C1)) contains the part of the zero set of $f$ given by $\alpha_2 s=F_-(t)$. Since $F_-(0)=0$ and 
	{\small \[
		\frac{\partial}{\partial t} F_-(0)=\left(\frac{q_1+p_1}{2}\right)-\left(\frac{q_1-p_1}{2}\right)\sqrt{K},\ \ 
		\frac{\partial^2}{\partial t^2} F_-(t)=\frac{(K-1)\sqrt{K}(q_1-p_1)^2\sinh\left(\frac{q_1-p_1}{2}\right)t}{4\left(1+K\sinh^2 \left(\frac{q_1-p_1}{2}\right)t\right)^{3/2}},
		\]}
	$F_-$ is concave up and starts with a negative or nonnegative derivative at $t=0$ depending on the value of $K$. 
	
	When $K>((q_1+p_1)/(q_1-p_1))^2$, there exists a unique $t_0>0$ such that $F_-(t_0)=0$. For $t<t_0$, we have $F_-(t)<0<\alpha_2 s$, for all $s>0$. Thus, the part of zero set of $f$ given by $\alpha_2 s=F_-(t)$ does not intersect with the strip $0<t<t_0$.
	Now $F_-\colon\, [t_0,\infty)\to[0,\infty)$ is strictly increasing and hence is invertible. Thus for each $s>0$, there exists a unique $t_s>0$ such that $\alpha_2 s=F_-(t_s)$. 
	
	When $1\le K\le((q_1+p_1)/(q_1-p_1))^2$, $F_-$ is strictly increasing and hence invertible on $[0,\infty)$. Thus for each $s$, there exists a unique $t_s>0$ such that $\alpha_2 s=F_-(t_s)$. It is clear that both conditions (C1) and (C2) are satisfied in the region lying below the curve $\alpha_2 s=F_-(t)$.
	In particular, both the conditions are true in the region $\mathcal{R}_{\tau,\eta}=\{(t,s)\in Q_1\ \colon\, t>\tau\text{ and }s<\eta\}$, where $\eta=F_-(\tau)/\alpha_2$. Hence the result follows.
\end{proof}

\begin{remark}\label{rem:rcbetter}
	Since $ad-bc=1$ , we have $K=R^2+1$ ($K,R$ as in Theorems~\ref{rc:12} and~\ref{rc:21}). Thus we can compare the functions $S(t)$ (in the proof of Theorem~\ref{rc:12}) and $F_-(t)/\alpha_2$ (in the proof of Theorem~\ref{rc:21}). Note that $F_-(t)/\alpha_2\ge S(t)$, for all $t\ge 0$, since $\left(\sqrt{R^2+1}-1\right)\tanh\left(\frac{q_1-p_1}{2}\right)t\le \sqrt{R^2+1}-1\le \lvert R\rvert$. Therefore $\tau_{2,1}(\eta)\le \tau_{1,2}(\eta)$, for all $\eta>0$.
\end{remark}

\begin{remark}\label{rem:dynRC21}
	For the function $\tau_{2,1}$ obtained in Theorem~\ref{rc:21}, let $\sigma\in \s_{2,1}$ (defined in~\eqref{eq:signaldynam}). Then $\|Me^{J_1 t_{k+1}} M^{-1}e^{J_2 s_k}\|\le 1$, for all $k\in\mathbb{N}$. Substituting $V_1=P_1, V_2=P_2$ in~\eqref{eq:flow}, the switched system~\eqref{eq:system} is stable for all $\sigma\in	\s_{2,1}$.
\end{remark}

\section{$A_1$ has non-real eigenvalues and $A_2$ is real-diagonalizable}\label{sec:CR}

In this section, $A_1$ has non-real eigenvalues with Jordan form $J_1= \begin{pmatrix} -\alpha_1 & \beta_1\\ -\beta_1 & -\alpha_1\end{pmatrix}$, where $\alpha_1> 0$ and $\beta_1\ne 0$ and $A_2$ is a real-diagonalizable matrix with Jordan form $J_2=\text{diag}\left(p_2,q_2\right)$, where either $0<p_2\le q_2$ or $p_2\le 0<q_2$. Fix a Jordan basis matrix $P_1$ of $A_1$ and $P_2$ of $A_2$ such that the matrix $M=P_2^{-1}P_1=\begin{pmatrix}
	a&b\\ c&d
\end{pmatrix}$ has determinant 1.
Any other Jordan basis matrix of $A_1$ corresponding to the Jordan form $J_1$ is a nonzero scalar multiple of $P_1$. Moreover any other Jordan basis matrix of $A_2$ corresponding to the Jordan form $J_2$ is of the form $P_2D_2$ for some diagonal invertible matrix $D_2=\text{diag}(\lambda_2,1/\lambda_2)$ with $\lambda_2\ne 0$. We will vary $\lambda_2$ over nonzero real numbers to compute $\tau_{1,2}(\eta)$ and $\tau_{2,1}(\eta)$. Let $M(\lambda_2)=D_2^{-1}M$.

\begin{remark}\label{rem:CRdiag}
	When $p_2=q_2>0$, the inequalities~\eqref{eq:tau12} and~\eqref{eq:tau21} both reduce to ${\rm{e}}^{p_2 s-\alpha_1 t}<1$. For given flee time $\eta>0$, let $\tau(\eta)=p_2\eta/\alpha_1$. Then the switched system~\eqref{eq:system} is stable for all signals $\sigma\in \s[\tau(\eta),\eta]$ and asymptotically stable for all signals $\sigma\in \s'[\tau(\eta),\eta]$.
\end{remark}

\noindent In view of Remark~\ref{rem:CRdiag}, we will assume that $p_2\ne q_2$ in this section.

\subsection{Computing $\tau_{1,2}(\eta)$}

In this section, we will compute $\tau_{1,2}(\eta)$. Let $\lambda_2$ be a nonzero real number. For given flee time $\eta>0$, we will compute a dwell time $\tau_{1,2}(P_1,P_2D_2,\eta)$ such that for all $t>\tau_{1,2}(P_1,P_2D_2,\eta)$ and $s<\eta$, $\|M(\lambda_2)^{-1}e^{J_2 s}\,M(\lambda_2)e^{J_1 t}\|<1$. Note that $M(\lambda_2)^{-1}e^{J_2 s}\,M(\lambda_2)e^{J_1 t}=M^{-1}e^{J_2 s}\,Me^{J_1 t}$.

\begin{theorem}\label{cr:12}
	For given flee time $\eta>0$, let \[
	\tau_{1,2}(\eta)=\left(\frac{q_2+p_2}{2\alpha_1}\right)\eta+\frac{1}{\alpha_1} \sinh^{-1}\left(\sqrt{(a^2+b^2)(c^2+d^2)}\sinh\left(\frac{q_2-p_2}{2}\right)\eta\right).
	\]	
	Then the switched system~\eqref{eq:system} is stable for all $\sigma\in	\s\left[\tau_{1,2}(\eta),\eta\right]$ and asymptotically stable for all $\sigma\in \s'\left[\tau_{1,2}(\eta),\eta\right]$.
\end{theorem}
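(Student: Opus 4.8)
The plan is to imitate the analysis of Theorem~\ref{rc:21}, which is its exact dual with the roles of $A_1$ and $A_2$ interchanged, and to exploit that the relevant matrix is independent of the scaling $\lambda_2$. Since $M(\lambda_2)^{-1}e^{J_2 s}M(\lambda_2)=M^{-1}e^{J_2 s}M$, it suffices to find all $(t,s)$ for which $K=M^{-1}e^{J_2 s}Me^{J_1 t}$ satisfies $\|K\|<1$. Using the criterion that for a $2\times 2$ matrix $\|K\|<1$ iff (C1) $\lvert\det(K^\top K)\rvert<1$ and (C2) $\lvert\text{tr}(K^\top K)\rvert<1+\det(K^\top K)$, I would first compute $\det K=\det(e^{J_2 s})\det(e^{J_1 t})=e^{(p_2+q_2)s-2\alpha_1 t}$, so that $\det(K^\top K)=e^{2(p_2+q_2)s-4\alpha_1 t}$ and (C1) reduces to the feasible half-plane $(p_2+q_2)s<2\alpha_1 t$.

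The key simplification is that $J_1$ generates a scaled rotation: writing $e^{J_1 t}=e^{-\alpha_1 t}R(t)$ with $R(t)$ orthogonal gives $e^{J_1^\top t}e^{J_1 t}=e^{-2\alpha_1 t}I$, so the rotation angle $\beta_1 t$ drops out. With $N=M^{-1}e^{J_2 s}M$, cyclicity of the trace then yields $\text{tr}(K^\top K)=e^{-2\alpha_1 t}\,\text{tr}(N^\top N)$. Next I would evaluate $\text{tr}(N^\top N)=\|N\|_F^2$ directly from $M=\begin{pmatrix}a&b\\ c&d\end{pmatrix}$ with $\det M=1$; after collecting terms and using the identity $(ac+bd)^2=(a^2+b^2)(c^2+d^2)-(ad-bc)^2=K-1$, where $K=(a^2+b^2)(c^2+d^2)$, this collapses to $\text{tr}(N^\top N)=K(e^{p_2 s}-e^{q_2 s})^2+2e^{(p_2+q_2)s}$. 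I expect this trace reduction to be the only genuinely delicate step; everything after it is a transcription of the Theorem~\ref{rc:21} argument.

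Substituting back, clearing the positive factor $e^{2\alpha_1 t}$ from (C2), and applying the hyperbolic identities $(e^{p_2 s}-e^{q_2 s})^2=4e^{(p_2+q_2)s}\sinh^2\!\big(\tfrac{q_2-p_2}{2}s\big)$ and $(e^{2\alpha_1 t}-e^{(p_2+q_2)s})^2/e^{2\alpha_1 t}=4e^{(p_2+q_2)s}\sinh^2\!\big(\alpha_1 t-\tfrac{q_2+p_2}{2}s\big)$, condition (C2) becomes exactly
\[
K\sinh^2\!\left(\tfrac{q_2-p_2}{2}s\right)<\sinh^2\!\left(\alpha_1 t-\tfrac{q_2+p_2}{2}s\right),
\]
the precise dual of the function appearing in Theorem~\ref{rc:21}. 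In the feasible region the argument $\alpha_1 t-\tfrac{q_2+p_2}{2}s$ is positive and $\sinh$ is increasing, so (C2) is equivalent to $\alpha_1 t-\tfrac{q_2+p_2}{2}s>\sinh^{-1}\!\big(\sqrt{K}\sinh(\tfrac{q_2-p_2}{2}s)\big)$, i.e. $t>\tau_{1,2}(s)$, where $\tau_{1,2}(s)$ is the stated formula with $\eta$ replaced by $s$.

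Finally I would verify that $\tau_{1,2}$ is strictly increasing in $s$ so that the rectangle sits to the right of this curve: since $K-1=(ac+bd)^2\ge 0$ gives $K\ge 1$, one has $\sqrt{K}\cosh/\sqrt{1+K\sinh^2}\ge 1$, whence differentiation yields $\alpha_1\,\tau_{1,2}'(s)\ge\tfrac{q_2+p_2}{2}+\tfrac{q_2-p_2}{2}=q_2>0$; moreover $\sinh^{-1}(\cdot)\ge 0$ places the whole curve inside the feasible half-plane. Monotonicity gives $\tau_{1,2}(s)\le\tau_{1,2}(\eta)$ for $s\le\eta$, so both (C1) and (C2) hold throughout $\mathcal{R}_{\tau_{1,2}(\eta),\eta}$, strictly except at the corner $(\tau_{1,2}(\eta),\eta)$. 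Hence $\|M^{-1}e^{J_2 s}Me^{J_1 t}\|\le 1$ there and is strictly less than $1$ off the corner, and stability for $\sigma\in\mathcal{S}[\tau_{1,2}(\eta),\eta]$ together with asymptotic stability for $\sigma\in\mathcal{S}'[\tau_{1,2}(\eta),\eta]$ follows from~\eqref{eq:flow} as in the earlier theorems.
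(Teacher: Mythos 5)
Your proposal is correct and follows essentially the same route as the paper: both reduce $\|M^{-1}e^{J_2 s}Me^{J_1 t}\|<1$ to the identical pair of conditions (C1) $2\alpha_1 t>(p_2+q_2)s$ and (C2) $(a^2+b^2)(c^2+d^2)\sinh^2\bigl(\tfrac{q_2-p_2}{2}s\bigr)<\sinh^2\bigl(\alpha_1 t-\tfrac{q_2+p_2}{2}s\bigr)$, and then read off the stated increasing bounding curve in the feasible region. The only real difference is that the paper obtains (C1)--(C2) by a change of variables from Theorem~\ref{rc:21} whereas you recompute them directly (via $e^{J_1 t}=e^{-\alpha_1 t}R(t)$ with $R(t)$ orthogonal, trace cyclicity, and the identity $(a^2+b^2)(c^2+d^2)=(ac+bd)^2+(ad-bc)^2$), and your explicit estimate $\alpha_1\tau_{1,2}'(s)\ge q_2>0$ using $K\ge 1$ makes the monotonicity (including the case $p_2+q_2\le 0$) explicit where the paper leaves it implicit.
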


\begin{proof}
	Note that $\|M^{-1}e^{J_2 s}\,Me^{J_1 t}\|<1$ if and only if
	\begin{enumerate}
		\item[(C1)] $2\alpha_1 t>(p_2+q_2) s$, and
		\item[(C2)] the function $f(t,s)<0$, where \[
		f(t,s)=(a^2+b^2)(c^2+d^2)\sinh^2\left(\frac{q_2-p_2}{2}\right)s-\sinh^2\left(\alpha_1 t-\left(\frac{q_2+p_2}{2}\right)s\right).
		\] 	
	\end{enumerate}
	These conditions can also be obtained by making the change of variables \[(a,b,c,d,\alpha_2,\beta_2,p_1,q_1,s,t)\rightarrow(d,-b,-c,a,-\alpha_1,\beta_1,-p_2,-q_2,t,s),\]
	in the conditions in Theorem~\ref{rc:21}.
	The proof follows on observing that the part of zero set of (C2) in the feasible region is given by \[
	\alpha_1 t=\left(\frac{q_2+p_2}{2}\right)s+ \sinh^{-1}\left(\sqrt{(a^2+b^2)(c^2+d^2)}\sinh\left(\frac{q_2-p_2}{2}\right)s\right).
	\]
\end{proof}

\begin{remark}\label{rem:dynCR12}
	For the function $\tau_{1,2}$ obtained in Theorem~\ref{cr:12}, let $\sigma\in \s_{1,2}$ (defined in~\eqref{eq:signaldynam}). Then $\|M^{-1}e^{J_2 s_k} Me^{J_1 t_{k}} \|\le 1$, for all $k\in\mathbb{N}$. Substituting $V_1=P_1, V_2=P_2$ in~\eqref{eq:flow}, the switched system~\eqref{eq:system} is stable for all $\sigma\in\s_{1,2}$.
\end{remark}

\subsection{Computing $\tau_{2,1}(\eta)$}
In this section, we will compute $\tau_{2,1}(\eta)$. Let $\lambda_2$ be a nonzero real number. For given flee time $\eta>0$, we will compute a dwell time $\tau_{2,1}(P_1,P_2D_2,\eta)$ such that for all $t>\tau_{2,1}(P_1,P_2D_2,\eta)$ and $s<\eta$, $\|M(\lambda_2)e^{J_1 t}\ M(\lambda_2)^{-1}e^{J_2 s}\|<1$. 

\begin{theorem}\label{cr:21}
	For given flee time $\eta>0$, let
	\[	\tau_{2,1}(\eta)=\left(\frac{q_2+p_2}{2\alpha_1}\right)\eta+\frac{1}{\alpha_1}\sinh^{-1}\left(\lvert ac+bd\rvert \cosh\left(\frac{q_2-p_2}{2}\right)\eta+\sinh\left(\frac{q_2-p_2}{2}\right)\eta\right).
	\]
	Then the switched system~\eqref{eq:system} is asymptotically stable for all $\sigma\in	\s\left[\tau_{2,1}(\eta),\eta\right]$.
\end{theorem}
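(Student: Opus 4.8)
The plan is to obtain Theorem~\ref{cr:21} as the dual of Theorem~\ref{rc:12} (the computation of $\tau_{1,2}$ in the real-diagonalizable/non-real setting), in exactly the way Theorem~\ref{cr:12} was obtained from Theorem~\ref{rc:21}. Concretely, I would first write the condition $\|M(\lambda_2)\mathrm{e}^{J_1 t}M(\lambda_2)^{-1}\mathrm{e}^{J_2 s}\|<1$ through the criterion (C1)--(C2) of the procedure section. Since $\det(\mathrm{e}^{J_1 t}\mathrm{e}^{J_2 s})=\mathrm{e}^{-2\alpha_1 t+(p_2+q_2)s}$, condition (C1) reads $(p_2+q_2)s<2\alpha_1 t$, and (C2) produces an explicit $f(\lambda_2,t,s)<0$. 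These two conditions are precisely the images of the conditions of Theorem~\ref{rc:12} under the substitution $(a,b,c,d,\alpha_2,\beta_2,p_1,q_1,s,t)\mapsto(d,-b,-c,a,-\alpha_1,\beta_1,-p_2,-q_2,t,s)$, which sends $ab+cd$ to $-(ac+bd)$ and hence $|ab+cd|$ to $|ac+bd|$; this is the same change of variables already used in the proof of Theorem~\ref{cr:12}.

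Next, following the proof of Theorem~\ref{rc:12}, I would eliminate the scaling parameter: choosing the analogue of the scaling $\lambda_1=\sqrt[4]{\tfrac{b^2+d^2}{a^2+c^2}\,\mathrm{e}^{-(q_1-p_1)t}}$ used there produces a $\lambda_2$-independent function $k(t,s)\le f(\lambda_2,t,s)$, whose zero set $\mathcal{C}$ governs the dwell time. I would then factor $k$, off the exceptional lines where $\sin\beta_1 t=0$, as $\bigl(ac+bd-\ell_+(t,s)\bigr)\bigl(ac+bd-\ell_-(t,s)\bigr)$, where now the rotation comes from $\mathrm{e}^{J_1 t}$ and the diagonal flow from $\mathrm{e}^{J_2 s}$, so that $\ell_\pm$ carry $\cos\beta_1 t,\sin\beta_1 t$ together with $\cosh\bigl(\tfrac{q_2-p_2}{2}s\bigr),\sinh\bigl(\tfrac{q_2-p_2}{2}s\bigr)$. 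The goal is to bound $\mathcal{C}$ from the right by the graph of the explicit increasing function $t=\tfrac{q_2+p_2}{2\alpha_1}s+\tfrac{1}{\alpha_1}\sinh^{-1}\!\bigl(|ac+bd|\cosh(\tfrac{q_2-p_2}{2}s)+\sinh(\tfrac{q_2-p_2}{2}s)\bigr)$, evaluate at $s=\eta$ to read off $\tau_{2,1}(\eta)$, fix the corresponding $\lambda_2$, and close the argument with the flow estimate~\eqref{eq:flow} exactly as in Theorem~\ref{rc:12}. Since $p_2\ne q_2$ the governing inequality is strict, which upgrades the conclusion to asymptotic stability on the full class $\s[\tau_{2,1}(\eta),\eta]$.

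The main obstacle is a sign subtlety that forbids a purely formal substitution into the final displayed formula of Theorem~\ref{rc:12}. Because the substitution sends $\tfrac{q_1-p_1}{2}\mapsto-\tfrac{q_2-p_2}{2}$, the odd function $\sinh\bigl(\tfrac{q_1-p_1}{2}t\bigr)$ picks up a sign, so a blind substitution would yield $-\sinh\bigl(\tfrac{q_2-p_2}{2}s\bigr)$ inside the $\sinh^{-1}$ rather than the required $+\sinh$. The resolution is that this term does not come from the zero set itself but from the conservative estimate $|\cos\beta_1 t|\le 1$ (the analogue of inequality~\eqref{rcthm:ineq:mtilde}), which must be carried out afresh in the present coordinates; taking this extreme is what produces the bounding curve with $+\sinh\bigl(\tfrac{q_2-p_2}{2}s\bigr)$ and coefficient $|ac+bd|$. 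I would therefore redo the $\ell_\pm$ monotonicity analysis and the $\cos\le 1$ estimate directly rather than invoke the final formula, and verify that the resulting curve is increasing and lies in the feasible region $\{(p_2+q_2)s<2\alpha_1 t\}$, so that both (C1) and (C2) hold to its right.
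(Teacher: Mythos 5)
Your proposal is correct and follows essentially the same route as the paper's proof: the paper likewise obtains (C1)--(C2) from Theorem~\ref{rc:12} via the substitution $(a,b,c,d,\alpha_2,\beta_2,p_1,q_1,s,t)\rightarrow(d,-b,-c,a,-\alpha_1,\beta_1,-p_2,-q_2,t,s)$, defines the scaling-free minorant $k(t,s)=f\bigl(\sqrt[4]{\tfrac{a^2+b^2}{c^2+d^2}\,{\rm e}^{(q_2-p_2)s}},t,s\bigr)\le f(\lambda_2,t,s)$, and then repeats the $\ell_\pm$, $m$, $\tilde{m}$ analysis of Theorem~\ref{rc:12}. Your sign observation is exactly right and is also how the paper proceeds: it does not substitute into the final formula of Theorem~\ref{rc:12} but instead proves afresh, for $R=ac+bd$, the bound $\tilde{m}(t,s)\ge\bigl(\sinh\bigl(\alpha_1 t-\tfrac{q_2+p_2}{2}s\bigr)-\sinh\tfrac{q_2-p_2}{2}s\bigr)/\cosh\tfrac{q_2-p_2}{2}s$ coming from $\lvert\cos\beta_1 t\rvert\le 1$, from which the stated $\tau_{2,1}(\eta)$ with the $+\sinh$ term follows.
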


\begin{proof}
	Note that $\|M(\lambda_2)e^{J_1 t}\,M(\lambda_2)^{-1}e^{J_2 s}\|<1$ if and only if
	\begin{enumerate}
		\item[(C1)] $4\alpha_1 t>2(p_2+q_2)s$, and 
		\item[(C2)] the function $f(\lambda_2,t,s)<0$, where $f(\lambda_2,t,s)$ equals 
		\begin{align*}
			&\frac{1}{2}\left(\frac{(a^2+b^2)^2}{\lambda_2^4}e^{(q_2-p_2) s}+(c^2+d^2)^2\lambda_2^4 e^{-(q_2-p_2)s}\right)\sin^2\beta_1 t+R \sinh (q_2-p_2) s\ \times\\
			& \sin 2\beta_1 t +\left(\cos^2\beta_1 t+R^2\sin^2\beta_1 t\right)\, \cosh (q_2-p_2) s-\cosh\left((q_2+p_2) s-2\alpha_1 t\right),
		\end{align*}
		where $R=ac+bd$.
	\end{enumerate}
	These conditions can also be obtained by making the following change of variables in the conditions in Theorem~\ref{rc:12}: \[(a,b,c,d,\alpha_2,\beta_2,p_1,q_1,s,t)\rightarrow(d,-b,-c,a,-\alpha_1,\beta_1,-p_2,-q_2,t,s).\]
	
	\noindent Define the function $k(t,s)=f\left(\sqrt[4]{\frac{a^2+b^2}{c^2+d^2}\, e^{(q_2-p_2)s}},t,s\right)$, which equals
	\begin{eqnarray*}(R^2+1) \sin^2 \beta_1 t+R \sinh (q_2-p_2) s\, \sin 2\beta_1 t+\left(\cos^2\beta_1 t+R^2\sin^2\beta_1 t\right)\cosh (q_2-p_2) s\nonumber \\
		-\cosh\left((q_2+p_2) s-2\alpha_1 t\right).
	\end{eqnarray*}
	Proceeding as in the proof of Theorem~\ref{rc:12} by finding corresponding $\ell_\pm$, $m$ and $\tilde{m}$ with the change of variables mentioned above. When $R>0$, the proof follows since \[\tilde{m}(t,s)\ge \frac{\sinh\left(\alpha_1 t-\left(\frac{q_2+p_2}{2}\right)s\right)-\sinh\left(\frac{q_2-p_2}{2}\right)s}{\cosh\left(\frac{q_2-p_2}{2}\right)s}.\]
\end{proof}

\begin{remark}
	If $R=ac+bd$, then $\left(\sqrt{R^2+1}-1\right)\tanh\left(\frac{q_2-p_2}{2}\right)\eta\le \sqrt{R^2+1}-1\le \lvert R\rvert$. Hence using Theorems~\ref{cr:12} and~\ref{cr:21}, $\tau_{1,2}(\eta)\le\tau_{2,1}(\eta)$.
\end{remark}

\section{Both subsystems defective}\label{sec:NN}
Suppose $A_1$ and $A_2$ are defective matrices with Jordan forms $J_1=\begin{pmatrix}
	-n_1 & 1\\0 & -n_1
\end{pmatrix}$ and $J_2=\begin{pmatrix}
	n_2 & 1\\0 & n_2
\end{pmatrix}$, with $n_1>0$ and $n_2\ge 0$. For $i=1,2$, fix Jordan basis matrix $P_i=[u_i\ v_i]$ of $A_i$ with Jordan form $J_i$ such that $|\text{det}(P_i)|=1$. Let $M=P_2^{-1}P_1=\begin{pmatrix}
	a&b\\c&d
\end{pmatrix}$. Any other Jordan basis matrix of $A_i$ with Jordan form $J_i$ is a scalar multiple of $P_i(\epsilon_i)=P_iE_i$, where $E_i=\begin{pmatrix}
	1&\epsilon_i\\ 0&1
\end{pmatrix}$ with $\epsilon_i\in\mathbb{R}$. We will vary $\epsilon_1,\epsilon_2$ over real numbers to compute $\tau_{1,2}(\eta)$ and $\tau_{2,1}(\eta)$. Let $M(\epsilon_1,\epsilon_2)=P_2^{-1}(\epsilon_2)P_1(\epsilon_1)=\begin{pmatrix}
	a-\epsilon_2 c & b-\epsilon_2 d+\epsilon_1 a-\epsilon_1\epsilon_2 c \\ c& d+\epsilon_1 c
\end{pmatrix}$. \\
Define \[
\theta(t)=\begin{Vmatrix}\begin{pmatrix}
		1 & t\\ 0 & 1\end{pmatrix}
\end{Vmatrix}=\sqrt{1+\frac{t^2}{2}+t\sqrt{1+\frac{t^2}{4}}}. 
\]The function $\theta$ will be used in all the sections where a subsystem matrix is defective.

\subsection{Computing $\tau_{1,2}(\eta)$}

In this section, we will compute $\tau_{1,2}(\eta)$. Let $\epsilon_1,\epsilon_2$ be real numbers. For given flee time $\eta>0$, we will compute a dwell time $\tau_{1,2}(P_1(\epsilon_1),P_2(\epsilon_2),\eta)$ such that for all $t>\tau_{1,2}(P_1(\epsilon_1),P_2(\epsilon_2),\eta)$ and $s<\eta$, $\|M(\epsilon_1,\epsilon_2)^{-1}e^{J_2 s}\,M(\epsilon_1,\epsilon_2)e^{J_1 t}\|<1$. Note that $M(\epsilon_1,\epsilon_2)^{-1}e^{J_2 s}\,M(\epsilon_1,\epsilon_2)e^{J_1 t}=M(\epsilon_1)^{-1}e^{J_2 s}\,M(\epsilon_1)e^{J_1 t}$, where $M(\epsilon_1)=ME_1$.

\begin{theorem}\label{nn:12}
	For given flee time $\eta>0$, 
	\begin{enumerate}
		\item[(i)] if $c=0$, let $\tau_{1,2}(\eta)$ be the unique positive solution $t$ of $
		n_1 t-\ln\theta(t)=n_2\eta+\sinh^{-1}\left(\frac{d^2}{2}\eta\right)$.
		\item[(ii)] if $c\ne 0$, let $\tau_{1,2}(\eta)$ be the unique positive solution $t$ of $n_1 t-\ln\theta(t)=n_2\eta+\sinh^{-1}\left(\frac{c^2}{2}\eta\right)$.
	\end{enumerate}
	Then the switched system~\eqref{eq:system} is stable for all $\sigma\in	\s\left[\tau_{1,2}(\eta),\eta\right]$ and asymptotically stable for all $\sigma\in \s'\left[\tau_{1,2}(\eta),\eta\right]$.
\end{theorem}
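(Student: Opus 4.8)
The plan is to control the norm $\|M(\epsilon_1)^{-1}{\rm e}^{J_2 s}M(\epsilon_1){\rm e}^{J_1 t}\|$ directly through a submultiplicative factorization in which the function $\theta$ does all the work; the one elementary fact I will use repeatedly is the identity $\ln\theta(x)=\sinh^{-1}(x/2)$, immediate from the closed form of $\theta$. Writing $E=\begin{pmatrix}0&1\\0&0\end{pmatrix}$, $\delta=\det M=\pm1$ and $g=c\epsilon_1+d$ (the lower-right entry of $M(\epsilon_1)=ME_1$), I would first record that ${\rm e}^{J_1 t}={\rm e}^{-n_1 t}(I+tE)$ and ${\rm e}^{J_2 s}={\rm e}^{n_2 s}(I+sE)$, whence
\[
M(\epsilon_1)^{-1}{\rm e}^{J_2 s}M(\epsilon_1){\rm e}^{J_1 t}={\rm e}^{n_2 s-n_1 t}\,(I+s\,\mathcal N)(I+tE),\qquad \mathcal N=M(\epsilon_1)^{-1}E\,M(\epsilon_1)=\tfrac1\delta\begin{pmatrix}cg&g^2\\-c^2&-cg\end{pmatrix}.
\]
Here $\mathcal N$ is nilpotent of rank one with $\operatorname{tr}\mathcal N=0$, so $\det(I+s\mathcal N)=1$ and $\|\mathcal N\|_F=c^2+g^2$. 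For any $2\times2$ matrix $B$ with $\det B=1$ the singular values are $\sigma,\sigma^{-1}$, so $\|B\|$ is pinned down by $\|B\|^2+\|B\|^{-2}=\|B\|_F^2$; applying this to $I+s\mathcal N$ (with $\|\cdot\|_F^2=2+s^2(c^2+g^2)^2$) and to $I+tE$ (with $\|\cdot\|_F^2=2+t^2$) yields $\|I+s\mathcal N\|=\theta\!\big(s(c^2+g^2)\big)$ and $\|I+tE\|=\theta(t)$.

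Submultiplicativity then gives $\|M(\epsilon_1)^{-1}{\rm e}^{J_2 s}M(\epsilon_1){\rm e}^{J_1 t}\|\le {\rm e}^{n_2 s-n_1 t}\,\theta(t)\,\theta\!\big(s(c^2+g^2)\big)$, which is increasing in $c^2+g^2$ since $\theta$ is increasing on $[0,\infty)$. I would therefore minimise over the free Jordan parameter $\epsilon_1$: when $c\ne0$ the value $g=c\epsilon_1+d$ ranges over all of $\mathbb R$, so the minimum $c^2$ is attained at $g=0$ (that is $\epsilon_1=-d/c$), whereas when $c=0$ one has $g=d$ forced, with $d\ne0$ because $\det M=ad=\pm1$, giving $d^2$. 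Writing $w=c^2$ if $c\ne0$ and $w=d^2$ if $c=0$, the bound shows that $\|\cdots\|<1$ is implied by $h(t,s)>0$, where, using $\ln\theta(x)=\sinh^{-1}(x/2)$,
\[
h(t,s)=n_1 t-\ln\theta(t)-n_2 s-\ln\theta(ws)=n_1 t-\sinh^{-1}(t/2)-n_2 s-\sinh^{-1}(ws/2).
\]

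The rest is a one-variable analysis of $h$. I would compute $\partial_t h=n_1-(4+t^2)^{-1/2}$, hence $\partial_{tt}h=t(4+t^2)^{-3/2}>0$ for $t>0$, so $h(\cdot,s)$ is strictly convex; it starts at $h(0,s)=-n_2 s-\sinh^{-1}(ws/2)<0$ and tends to $+\infty$ (as $n_1t$ dominates $\sinh^{-1}(t/2)\sim\ln t$), giving a \emph{unique} positive root $T(s)$ with $h>0$ beyond it. Since $\partial_s h=-n_2-w(4+w^2s^2)^{-1/2}<0$, the map $s\mapsto h(t,s)$ is strictly decreasing, so $h(t,s)\ge h(t,\eta)$ for $s\le\eta$. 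Setting $\tau_{1,2}(\eta)=T(\eta)$ — precisely the root of $n_1 t-\ln\theta(t)=n_2\eta+\sinh^{-1}(w\eta/2)$ in the theorem — I obtain $h(t,s)\ge h(t,\eta)>0$ for all $t>\tau_{1,2}(\eta)$, $s\le\eta$, with equality only at the corner $(\tau_{1,2}(\eta),\eta)$. Feeding $\|M(\epsilon_1)^{-1}{\rm e}^{J_2 s}M(\epsilon_1){\rm e}^{J_1 t}\|\le1$ (strict off the corner) into the flow estimate~\eqref{eq:flow} yields stability for $\sigma\in\s[\tau_{1,2}(\eta),\eta]$, and for $\sigma\in\s'[\tau_{1,2}(\eta),\eta]$ the hypothesis that $(t_k,s_k)$ does not converge to $(\tau_{1,2}(\eta),\eta)$ produces a uniform factor $\rho<1$ infinitely often, giving asymptotic stability.

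I expect the main obstacle to be structural rather than computational: unlike the real-diagonalizable cases (e.g.\ Theorem~\ref{rr:12abcd>0}), the semigroup shortcut $\|K(t,s)\|\le\|K(\tau,s)\|\,\|{\rm e}^{J_1(t-\tau)}\|$ is \emph{unavailable} here, since for a defective stable block $\|{\rm e}^{J_1 u}\|={\rm e}^{-n_1 u}\theta(u)$ exceeds $1$ for small $u>0$. This forces me to establish $\|K\|\le1$ uniformly over the entire rectangle $\{t\ge\tau_{1,2}(\eta),\,s\le\eta\}$ at once, which is exactly what the submultiplicative bound, combined with the convexity of $h$ in $t$ and its monotonicity in $s$, delivers. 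The delicate point is the optimisation over $\epsilon_1$: I must check that passing to the submultiplicative bound before minimising still produces a valid dwell time and that the minimiser ($g=0$ when $c\ne0$, or the forced $g=d$ when $c=0$) is independent of $(t,s)$, so that a single Jordan basis $M(\epsilon_1)$ works simultaneously for every $s\le\eta$.
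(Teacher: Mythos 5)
Your proposal is correct and follows essentially the same route as the paper's proof of Theorem~\ref{nn:12}: the identical factorization $e^{J_1 t}=e^{-n_1 t}(I+tE)$ and $M(\epsilon_1)^{-1}e^{J_2 s}M(\epsilon_1)=e^{n_2 s}(I+s\mathcal N)$, the exact $\theta$-norm of the unit-determinant factors (the paper's condition (C2) with $K(\epsilon_1)=\left((d+\epsilon_1 c)^2+c^2\right)/2$ is precisely your $\sinh^{-1}(ws/2)$ via the identity $\ln\theta(x)=\sinh^{-1}(x/2)$), and the same optimisation giving $\epsilon_1=-d/c$ when $c\ne 0$ and the forced value $d^2$ when $c=0$. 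The only cosmetic difference is that you replace the paper's case split on $n_1\ge 1/2$ versus $n_1<1/2$ for $g(t)=n_1 t-\ln\theta(t)$ by a single convexity argument for $h(\cdot,s)$, which yields the same unique positive root and the same region bound.
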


\begin{proof} 
	Since $\left\|M(\epsilon_1)^{-1}e^{J_2 s}\,M(\epsilon_1) e^{J_1 t} \right\|\le \left\| M(\epsilon_1)^{-1}e^{J_2 s}\,M(\epsilon_1) e^{-n_1 t} \right\|\theta(t)$, $\left\|M(\epsilon_1)^{-1}e^{J_2 s}\,M(\epsilon_1) e^{J_1 t} \right\|<1$ if \begin{enumerate}
		\item[(C1)] $n_1 t-\ln\theta (t)-n_2 s>0$, and
		\item[(C2)] $f(t,s)<0$, where $f(t,s)=K(\epsilon_1)^2 s^2-\sinh^2(n_1 t-\ln\theta (t)-n_2 s)$, with $K(\epsilon_1)=\left((d+\epsilon_1 c)^2+c^2\right)/2$.
	\end{enumerate} 
	Conditions (C1) and (C2) are both satisfied in the region $g(t)>h(\epsilon_1,s)$, where
	\[
	g(t) = n_1 t-\ln\theta(t), \ h(\epsilon_1,s)=n_2 s+\sinh^{-1}\left(K(\epsilon_1)s \right).
	\]
	
	If $n_1\ge 1/2$, $g$ is increasing. Thus the zero set of $f(t,s)$ in the feasible region (region where $(C1)$ is satisfied) given by $g(t)=h(\epsilon_1,s)$ can be parametrized as $(T(\epsilon_1,s),s)$, where $T(\epsilon_1,s)=g^{-1}(h(\epsilon_1,s))$. Since $h$ is increasing in $s$ and $g^{-1}$ is increasing in $t$, $T(\epsilon_1,s)$ is increasing in $s$. Hence for all $\eta>0$, both the conditions (C1) and (C2) are satisfied in the region $\mathcal{R}_{T(\epsilon_1,\eta),\eta}=\{(t,s)\ \colon\, t\ge T(\epsilon_1,\eta)\text{ and } s\le\eta\}$ except at the point $(T(\epsilon_1,\eta),\eta)$. 
	
	If $c=0$, $K(\epsilon_1)=d^2/2$, for all $\epsilon_1$. Further if $c\ne 0$, $T(\epsilon_1,\eta)$ is minimum when $\epsilon_1=-d/c$. Since $K(-d/c)=c^2/2$, the result follows.
	
	When $n_1<1/2$, the function $g$ is concave up and has a unique positive root $t_0>0$. Also, $g$ is increasing on the interval $[t_0,\infty)$. Thus, for each $s\ge 0$, there is a unique $T(\epsilon_1,s)\ge t_0$ such that $g(T(\epsilon_1, s))=h(s)$. Note $T(\epsilon_1,0)=t_0$. Proceeding as in the case when $n_1\ge 1/2$, we obtain the result.
	
	%		\begin{figure}[H]
	%		\centering
	%		\includegraphics[scale=0.5]{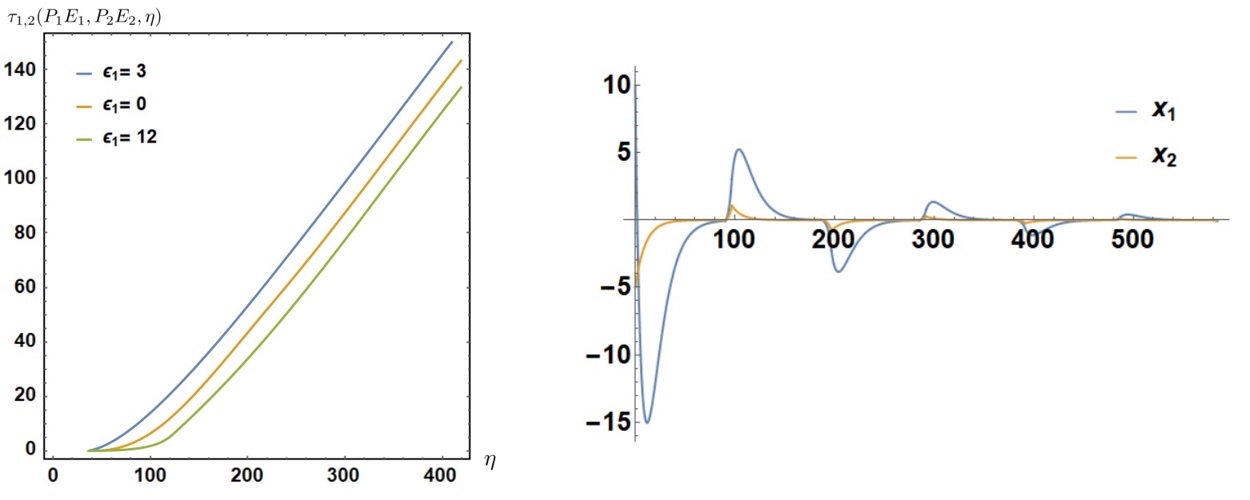}
	%		\caption{Zero set in feasible region when $n_1\ge 1/2$ (left) and $n_1<1/2$ (right).}\label{fig:nn}
	%	\end{figure}
\end{proof}

\subsection{Computing $\tau_{2,1}(\eta)$}
In this section, we will compute $\tau_{2,1}(\eta)$. Let $\epsilon_1,\epsilon_2$ be real numbers. For given flee time $\eta>0$, we will compute a dwell time $\tau_{2,1}(P_1(\epsilon_1),P_2(\epsilon_2),\eta)$ such that for all $t>\tau_{2,1}(P_1(\epsilon_1),P_2(\epsilon_2),\eta)$ and $s<\eta$, $\|M(\epsilon_1,\epsilon_2)e^{J_1 t}\ M(\epsilon_1,\epsilon_2)^{-1}e^{J_2 s}\|<1$. Note that $M(\epsilon_1,\epsilon_2)e^{J_1 t}\ M(\epsilon_1,\epsilon_2)^{-1}e^{J_2 s}=M(\epsilon_2)e^{J_1 t} \ M(\epsilon_2)^{-1}e^{J_2 s}$, where $M(\epsilon_2)=E_2^{-1}M$.

\begin{theorem}\label{nn:21}
	For given flee time $\eta>0$, 
	\begin{enumerate}
		\item[(i)] if $c=0$, let $\tau_{2,1}(\eta)$ be the unique positive solution $t$ of $n_2\eta+\ln\theta(\eta)=n_1 t-\sinh^{-1}\left(\frac{a^2}{2}t\right)$.
		\item[(ii)] if $c\ne 0$, let $\tau_{2,1}(\eta)$ be the unique positive solution $t$ of $n_2\eta+\ln\theta(\eta)=n_1 t-\sinh^{-1}\left(\frac{c^2}{2}t\right)$.
	\end{enumerate}
	Then the switched system~\eqref{eq:system} is stable for all $\sigma\in	\s\left[\tau_{2,1}(\eta),\eta\right]$ and asymptotically stable for all $\sigma\in \s'\left[\tau_{2,1}(\eta),\eta\right]$.
\end{theorem}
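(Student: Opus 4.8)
The plan is to mirror the proof of Theorem~\ref{nn:12}, but with the roles of the two exponentials interchanged: here the \emph{sandwiched} factor is $e^{J_1 t}$ and the free rightmost factor is $e^{J_2 s}$. First I would use that $M(\epsilon_1,\epsilon_2)e^{J_1 t}M(\epsilon_1,\epsilon_2)^{-1}e^{J_2 s}=M(\epsilon_2)e^{J_1 t}M(\epsilon_2)^{-1}e^{J_2 s}$ is independent of $\epsilon_1$ (as noted in the excerpt), so that only $\epsilon_2$ is a free parameter, with $M(\epsilon_2)=E_2^{-1}M$. Writing $e^{J_2 s}=e^{n_2 s}\bigl(\begin{smallmatrix}1&s\\0&1\end{smallmatrix}\bigr)$ and applying submultiplicativity gives $\|M(\epsilon_2)e^{J_1 t}M(\epsilon_2)^{-1}e^{J_2 s}\|\le e^{n_2 s}\,\theta(s)\,\|M(\epsilon_2)e^{J_1 t}M(\epsilon_2)^{-1}\|$, which isolates the flee-time term $\theta(s)$ on the right-hand side.

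Next I would compute the remaining norm exactly. Factoring $e^{J_1 t}=e^{-n_1 t}(I+tN_0)$ with $N_0=\bigl(\begin{smallmatrix}0&1\\0&0\end{smallmatrix}\bigr)$, conjugation gives $M(\epsilon_2)e^{J_1 t}M(\epsilon_2)^{-1}=e^{-n_1 t}(I+tL)$, where $L=M(\epsilon_2)N_0 M(\epsilon_2)^{-1}$ is rank-one nilpotent. As in the computation underlying Theorem~\ref{nn:12}, any rank-one nilpotent $L=xy^\top$ with $x\perp y$ is orthogonally similar to $\|L\|_F\,N_0$, so $\|I+tL\|=\theta\bigl(t\|L\|_F\bigr)$; multiplying the first column of $M(\epsilon_2)$ by the second row of $M(\epsilon_2)^{-1}$, together with $|\det M|=1$, yields $\|L\|_F=(a-\epsilon_2 c)^2+c^2$. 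Using the identity $\ln\theta(x)=\sinh^{-1}(x/2)$, the sufficient condition $\|M(\epsilon_2)e^{J_1 t}M(\epsilon_2)^{-1}e^{J_2 s}\|<1$ then reduces to $g(t)>h(s)$, where $g(t)=n_1 t-\sinh^{-1}\!\bigl(\tilde K(\epsilon_2)t\bigr)$, $h(s)=n_2 s+\ln\theta(s)$, and $\tilde K(\epsilon_2)=\tfrac12\bigl((a-\epsilon_2 c)^2+c^2\bigr)$.

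Then I would optimize and extract the bounding curve. Since $\sinh^{-1}$ is increasing, the dwell time is smallest when $\tilde K(\epsilon_2)$ is minimal: choosing $\epsilon_2=a/c$ when $c\ne0$ gives $\tilde K=c^2/2$, while for $c=0$ one has $\tilde K=a^2/2$ for every $\epsilon_2$, which produces exactly the two equations in the statement. The function $h$ is strictly increasing with $h(0)=0$, so it remains to parametrize the zero set $g(t)=h(s)$ by an increasing function $T$, set $\tau_{2,1}(\eta)=T(\eta)$, and conclude that on the rectangle $\mathcal{R}_{\tau_{2,1}(\eta),\eta}$ one has $g(t)\ge h(\eta)\ge h(s)$, strictly except at the corner; feeding this into the flow bound~\eqref{eq:flow} gives stability on $\mathcal{S}[\tau_{2,1}(\eta),\eta]$ and, because the contraction is strict whenever $(t_k,s_k)$ stays away from $(\tau_{2,1}(\eta),\eta)$, asymptotic stability on $\mathcal{S}'[\tau_{2,1}(\eta),\eta]$.

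The main obstacle is the monotonicity analysis of $g$. Since $g(0)=0$, $g''>0$, and $g'(t)=n_1-\tilde K/\sqrt{1+\tilde K^2 t^2}\to n_1>0$, the map $g$ is increasing on $(0,\infty)$ when $n_1\ge\tilde K$, but when $n_1<\tilde K$ it first decreases below zero and then increases, so one must argue that the relevant increasing branch carries the unique positive solution of $g(t)=h(\eta)$ and that $\tau_{2,1}(\eta)$ lies on it, guaranteeing $g(t)\ge g(\tau_{2,1}(\eta))$ throughout the rectangle. This convexity bookkeeping --- rather than the norm computation, which is elementary and essentially inherited from Theorem~\ref{nn:12} --- is where the care is needed.
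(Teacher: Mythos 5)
Your proposal is correct and takes essentially the same route as the paper: the same submultiplicative bound $\|M(\epsilon_2)e^{J_1 t}M(\epsilon_2)^{-1}e^{J_2 s}\|\le\|M(\epsilon_2)e^{J_1 t}M(\epsilon_2)^{-1}e^{n_2 s}\|\,\theta(s)$, the same reduction to $n_2 s+\ln\theta(s)<n_1 t-\sinh^{-1}\left(L(\epsilon_2)t\right)$ with $L(\epsilon_2)=\left((a-\epsilon_2 c)^2+c^2\right)/2$, the same minimization of $L(\epsilon_2)$ at $\epsilon_2=a/c$ when $c\ne 0$ (constant $a^2/2$ when $c=0$), and the same monotonicity analysis of $t\mapsto n_1t-\sinh^{-1}(\ell t)$ distinguishing $\ell\le n_1$ from $\ell>n_1$ (the paper's auxiliary function $k(\ell,t)$, including the unique positive root $t_0(\ell)$ and the increasing branch beyond it). The only cosmetic difference is that you derive the norm condition by computing $\|M(\epsilon_2)e^{J_1 t}M(\epsilon_2)^{-1}\|=e^{-n_1 t}\,\theta\left(2L(\epsilon_2)t\right)$ exactly via the rank-one nilpotent $L$ and the identity $\ln\theta(x)=\sinh^{-1}(x/2)$, whereas the paper obtains the identical inequalities (C1)--(C2) through its general trace--determinant criterion.
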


\begin{proof} 
	Since $\left\|M(\epsilon_2)e^{J_1 t} \ M(\epsilon_2)^{-1}e^{J_2 s} \right\|\le \left\| M(\epsilon_2)e^{J_1 t} \ M(\epsilon_2)^{-1}e^{n_2 s} \right\|\theta(s)$, $\left\|M(\epsilon_2)e^{J_1 t} \ M(\epsilon_2)^{-1}e^{J_2 s} \right\|<1$ if \begin{enumerate}
		\item[(C1)] $n_1 t-\ln\theta (s)-n_2 s>0$, and
		\item[(C2)] $f(t,s)<0$, where $f(t,s)=L(\epsilon_2)^2 t^2-\sinh^2(n_1 t-\ln\theta (s)-n_2 s)$, with $L(\epsilon_2)=\left((a-\epsilon_2 c)^2+c^2\right)/2$.
	\end{enumerate} 
	Conditions (C1) and (C2) are both satisfied in the region $g(s)>h(t)$, where
	\[
	g(s) = n_2 s+\ln\theta(s), \ h(\epsilon_2,t)=n_1 t-\sinh^{-1}\left(L(\epsilon_2)t \right).
	\]
	
	The function $g$ is increasing on $[0,\infty)$. Let $\ell_0$ be a positive real number. For $\ell\ge \ell_0$ and $t\ge 0$, define a function $k(\ell,t)=n_1 t-\sinh^{-1}(\ell t)$. Note $k(\ell,0)=0$. Then for each $t\ge 0$, $k$ is decreasing in $\ell$. Furthermore if $\ell\le n_1$, $k$ is an increasing function in $t$. Otherwise if $\ell>n_1$, there exists a unique $t_0(\ell)>0$ such that $k(\ell,t)<0$ for all $t\in (0,t_0(\ell))$, $k(\ell,t_0(\ell))=0$, and $k$ is an increasing function in $t$ on the interval $[t_0(\ell),\infty)$. 
	
	Note that $h(\epsilon_2,t)=k(L(\epsilon_2),t)$, for all $\epsilon_2$, $t\ge 0$. Hence for given $\epsilon_2$ and $s> 0$, there exists a unique positive real number $T(\epsilon_2,s)$ such that $g(s)=h(\epsilon_2,T(\epsilon_2,s))=k(L(\epsilon_2),T(\epsilon_2,s))$. Since $k(\ell,t)$ is decreasing in $\ell$, $T(\epsilon_2,s)$ is minimum when $L(\epsilon_2)$ is minimum. If $c\ne 0$, the function $L(\epsilon_2)$ takes minimum value $c^2/2$ at $\epsilon_2=a/c$, otherwise if $c=0$, $L$ is the constant function $a^2/2$. Hence the result follows. 	
\end{proof}

\begin{remark}\label{rem:dynNN}
	If $c\ne 0$, let $\epsilon_1=-d/c$ and $\epsilon_2=a/c$, otherwise let $\epsilon_1,\epsilon_2$ be any real numbers. For the functions $\tau_{1,2}$ and $\tau_{2,1}$ obtained in Theorems~\ref{nn:12} and~\ref{nn:21}, let $\sigma\in \s_{1,2}$ (defined in~\eqref{eq:signaldynam}). Then $\|M(\epsilon_1)^{-1}e^{J_2 s_k} M(\epsilon_1)e^{J_1 t_{k}} \|\le 1$, for all $k\in\mathbb{N}$. Substituting $V_1=P_1(\epsilon_1), V_2=P_2(\epsilon_2)$ in~\eqref{eq:flow}, the switched system~\eqref{eq:system} is stable for all $\sigma\in\s_{1,2}$. Similarly, the switched system~\eqref{eq:system} is stable for all $\sigma\in\s_{2,1}$ (defined in~\eqref{eq:signaldynam}).
\end{remark}

\vspace{-.3in}
\section{$A_1$ defective and $A_2$ with non-real eigenvalues}\label{sec:NC}

Suppose $A_1$ is a defective matrix with Jordan form $J_1=\begin{pmatrix}
	-n_1 & 1\\0 & -n_1
\end{pmatrix}$, where $n_1>0$ and $A_2$ has non-real eigenvalues with Jordan form $J_2= \begin{pmatrix} \alpha_2 & \beta_2\\ -\beta_2 & \alpha_2\end{pmatrix}$, where $\alpha_2> 0$ and $\beta_2\ne 0$. For $i=1,2$, fix Jordan basis matrix $P_i$ of $A_i$ with Jordan form $J_i$ such that $M=P_2^{-1}P_1$ has determinant $+1$ or $-1$. Any other Jordan basis matrix of $A_1$ with Jordan form $J_1$ is a scalar multiple of $P_1(\epsilon_1)=P_1E_1$, where $E_1=\begin{pmatrix}
	1&\epsilon_1\\ 0&1
\end{pmatrix}$ with $\epsilon_1\in\mathbb{R}$. Further any other Jordan basis matrix of $A_2$ with Jordan form $J_2$ is a nonzero scalar multiple of $P_2$. We will vary $\epsilon_1$ over real numbers to compute $\tau_{1,2}(\eta)$ and $\tau_{2,1}(\eta)$. Let $M(\epsilon_1)=P_2^{-1}P_1(\epsilon_1)=\begin{pmatrix}
	a & b+\epsilon_1 a \\ c& d+\epsilon_1 c
\end{pmatrix}$.

\subsection{Computing $\tau_{1,2}(\eta)$}

In this section, we will compute $\tau_{1,2}(\eta)$. Let $\epsilon_1$ be a real number. For given flee time $\eta>0$, we will compute a dwell time $\tau_{1,2}(P_1(\epsilon_1),P_2,\eta)$ such that for all $t>\tau_{1,2}(P_1(\epsilon_1),P_2,\eta)$ and $s<\eta$, $\|M(\epsilon_1)^{-1}e^{J_2 s}\,M(\epsilon_1)e^{J_1 t}\|<1$.

\begin{theorem}\label{nc:12}
	For given flee time $\eta>0$, let $\tau_{1,2}(\eta)$ be the unique positive solution $t$ of $n_1 t-\ln\theta(t)=\alpha_2\eta+\sinh^{-1}\sqrt{\frac{1}{4}\left(a^2+c^2+\frac{1}{a^2+c^2}\right)^2-1}$.
	
	Then the switched system~\eqref{eq:system} is stable for all $\sigma\in	\s\left[\tau_{1,2}(\eta),\eta\right]$ and asymptotically stable for all $\sigma\in \s'\left[\tau_{1,2}(\eta),\eta\right]$.
\end{theorem}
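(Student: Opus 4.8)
The plan is to follow the strategy of Section~\ref{sec:NN}: since $A_1$ is defective, I would peel off the nilpotent part of $e^{J_1 t}$ using the function $\theta$, and then control the remaining factor coming from the rotation block $e^{J_2 s}$ by a singular value estimate. Writing $e^{J_1 t}=e^{-n_1 t}\begin{pmatrix}1 & t\\ 0 & 1\end{pmatrix}$ and using submultiplicativity of the spectral norm gives
\[
\left\|M(\epsilon_1)^{-1}e^{J_2 s}M(\epsilon_1)e^{J_1 t}\right\|\le e^{-n_1 t}\,\theta(t)\,\left\|M(\epsilon_1)^{-1}e^{J_2 s}M(\epsilon_1)\right\|.
\]
Since $e^{J_2 s}=e^{\alpha_2 s}R_{\beta_2 s}$, where $R_\phi$ denotes the rotation by angle $\phi$ (of unit spectral norm), the last factor equals $e^{\alpha_2 s}\|M(\epsilon_1)^{-1}R_{\beta_2 s}M(\epsilon_1)\|$. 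Thus $\|M(\epsilon_1)^{-1}e^{J_2 s}M(\epsilon_1)e^{J_1 t}\|<1$ is guaranteed once $n_1 t-\ln\theta(t)>\alpha_2 s+\ln\|M(\epsilon_1)^{-1}R_{\beta_2 s}M(\epsilon_1)\|$.

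The key step is to bound the conjugated rotation uniformly in the angle and then optimize over $\epsilon_1$. Using a singular value decomposition $M(\epsilon_1)=U\Sigma W^{\top}$ with $\Sigma=\mathrm{diag}(\sigma_1,\sigma_2)$, $\sigma_1\ge\sigma_2$, one has $U^{\top}R_\phi U=R_{\pm\phi}$, so $\|M(\epsilon_1)^{-1}R_\phi M(\epsilon_1)\|=\|\Sigma^{-1}R_{\pm\phi}\Sigma\|$. A direct computation of $(\Sigma^{-1}R_\phi\Sigma)^{\top}(\Sigma^{-1}R_\phi\Sigma)$ shows its determinant is $1$ and its trace is $2+\sin^2\phi\,(\sigma_1^2/\sigma_2^2+\sigma_2^2/\sigma_1^2-2)$, which is maximized at $\phi=\pi/2$; hence $\max_\phi\|M(\epsilon_1)^{-1}R_\phi M(\epsilon_1)\|=\sigma_1/\sigma_2$. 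Because $\det M(\epsilon_1)=ad-bc=\pm1$ for every $\epsilon_1$, we have $\sigma_1\sigma_2=1$ and this maximum equals $\sigma_1^2$. Now $\sigma_1^2+\sigma_2^2=\|M(\epsilon_1)\|_F^2=a^2+c^2+(b+\epsilon_1 a)^2+(d+\epsilon_1 c)^2$ together with $\sigma_1\sigma_2=1$ shows $\sigma_1$ increases with the Frobenius norm; minimizing the convex quadratic in $\epsilon_1$ gives $\epsilon_1=-(ab+cd)/(a^2+c^2)$, for which the two columns of $M(\epsilon_1)$ are orthogonal. For this choice the singular values are $\sqrt{a^2+c^2}$ and $1/\sqrt{a^2+c^2}$, so with $w=a^2+c^2$,
\[
\ln\sigma_1^2=\bigl|\ln w\bigr|=\sinh^{-1}\!\Bigl(\tfrac{1}{2}\bigl|w-\tfrac{1}{w}\bigr|\Bigr)=\sinh^{-1}\sqrt{\tfrac{1}{4}\Bigl(w+\tfrac{1}{w}\Bigr)^2-1}.
\]

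Combining the two steps, $\|M(\epsilon_1)^{-1}e^{J_2 s}M(\epsilon_1)e^{J_1 t}\|<1$ holds whenever $n_1 t-\ln\theta(t)>\alpha_2 s+\sinh^{-1}\sqrt{\tfrac14(w+1/w)^2-1}$, and over $s\le\eta$ the worst case is $s=\eta$. Writing $g(t)=n_1 t-\ln\theta(t)$, exactly as in the proof of Theorem~\ref{nn:12} the function $g$ is strictly increasing when $n_1\ge 1/2$, and when $n_1<1/2$ it is concave up with a unique positive root and increasing thereafter; in either case the equation $g(t)=\alpha_2\eta+\sinh^{-1}\sqrt{\tfrac14(w+1/w)^2-1}$ has a unique positive solution, which I take to be $\tau_{1,2}(\eta)$. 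For $t\ge\tau_{1,2}(\eta)$ and $s<\eta$ the inequality is strict, so the relevant norm lies below $1$; feeding this into the flow estimate~\eqref{eq:flow} with $V_1=P_1(\epsilon_1)$, $V_2=P_2$ yields stability for $\sigma\in\s[\tau_{1,2}(\eta),\eta]$ and asymptotic stability for $\sigma\in\s'[\tau_{1,2}(\eta),\eta]$.

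The main obstacle is the singular value step: showing that the supremum over the rotation angle of $\|M(\epsilon_1)^{-1}R_\phi M(\epsilon_1)\|$ is the ratio of singular values of $M(\epsilon_1)$ rather than an angle-dependent quantity, and then carrying out the optimization over $\epsilon_1$ so that the orthogonalizing choice collapses the estimate to the clean quantity $\max(w,1/w)$. Once this is in place, the reduction to the scalar inequality and the monotonicity of $g$ are identical to the defective--defective case of Section~\ref{sec:NN}.
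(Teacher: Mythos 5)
Your proposal is correct and arrives at exactly the paper's dwell--flee equation, and its skeleton coincides with the paper's proof of Theorem~\ref{nc:12}: peel off the unipotent factor of $e^{J_1 t}$ via $\theta(t)$, reduce to the scalar inequality $n_1 t-\ln\theta(t)>\alpha_2 s+\mathrm{const}(\epsilon_1)$, analyze $g(t)=n_1 t-\ln\theta(t)$ in the two cases $n_1\ge 1/2$ and $n_1<1/2$, and minimize the constant over $\epsilon_1$ (your minimizer $\epsilon_1=-(ab+cd)/(a^2+c^2)$ is the one recorded in Remark~\ref{rem:dynNC}). The one step you do genuinely differently is the derivation of the constant. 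The paper keeps the exact angle-dependent condition, namely $f(t,s)=R(\epsilon_1)^2\sin^2\beta_2 s-\sinh^2\bigl(n_1 t-\ln\theta(t)-\alpha_2 s\bigr)<0$ with $R(\epsilon_1)$ determined by the Frobenius norm of $M(\epsilon_1)$, and then traps the zero set of $f$ between the curves $L_\pm(\epsilon_1)\colon n_1 t-\ln\theta(t)=\alpha_2 s\pm\sinh^{-1}R(\epsilon_1)$, using $L_+$ as the bounding curve. You instead bound $\|M(\epsilon_1)^{-1}R_\phi M(\epsilon_1)\|$ uniformly over the rotation angle by the singular-value ratio $\sigma_1/\sigma_2$ via the SVD identity $U^{\top}R_\phi U=R_{\pm\phi}$. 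The two constants agree: since $\det M(\epsilon_1)=\pm 1$ forces $\sigma_1\sigma_2=1$, one has $R(\epsilon_1)=\tfrac{1}{2}(\sigma_1^2-\sigma_2^2)=\sinh\bigl(\ln(\sigma_1/\sigma_2)\bigr)$, so the paper's $\sinh^{-1}R(\epsilon_1)$ is precisely your worst-angle bound $\ln(\sigma_1/\sigma_2)$. What your route buys is conceptual clarity: it identifies the paper's $\sinh^{-1}R(\epsilon_1)$ as a logarithmic singular-value ratio and makes the optimization over $\epsilon_1$ transparent (orthogonalizing the columns of $M(\epsilon_1)$ yields singular values $\sqrt{a^2+c^2}$ and $1/\sqrt{a^2+c^2}$, giving the clean constant in terms of $w=a^2+c^2$). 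What it gives up is the factor $\sin\beta_2 s$, discarded at the outset; the paper retains the exact zero set before discarding the same information through $L_+$, so the resulting thresholds --- and the handling of equality only at the corner $(\tau_{1,2}(\eta),\eta)$, which is what separates stability on $\s[\tau_{1,2}(\eta),\eta]$ from asymptotic stability on $\s'[\tau_{1,2}(\eta),\eta]$ --- are identical in both arguments.
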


\begin{proof}
	Since $\left\|M(\epsilon_1)^{-1}e^{J_2 s}\,M(\epsilon_1) e^{J_1 t} \right\|\le \left\| M(\epsilon_1)^{-1}e^{J_2 s}\,M(\epsilon_1) e^{-n_1 t} \right\|\theta(t)$, we have\\ $\left\|M(\epsilon_1)^{-1}e^{J_2 s}\,M(\epsilon_1) e^{J_1 t} \right\|<1$ if \begin{enumerate}
		\item[(C1)] $n_1t-\ln\theta(t)>\alpha_2 s$, and
		\item[(C2)] $f(t,s)<0$, where $f(t,s)=R(\epsilon_1)^2\sin^2\beta_2 s-\sinh^2(n_1t-\ln\theta(t)-\alpha_2 s)$, with $R(\epsilon_1)=\sqrt{\frac{\left(a^2+(b+\epsilon_1 a)^2+c^2+(d+\epsilon_1 c)^2\right)^2}{4}-1}$.
	\end{enumerate}
	
	Let $\epsilon_1$ be a fixed real number. The zero set of $f$ can be expressed as $n_1t-\ln\theta(t)=\alpha_2 s\pm \sinh^{-1}(R(\epsilon_1)\sin\beta_2 s)$ which is bounded between the curves $L_\pm(\epsilon_1)\colon\, n_1t-\ln\theta(t)=\alpha_2 s\pm \sinh^{-1} R(\epsilon_1)$. The curve $L_+(\epsilon_1)$ lies in the region satisfying (C1).
	
	Let $g(t)=n_1t-\ln\theta(t)$ be defined for all $t\ge 0$. Note $g(0)=0$. When $n_1\ge 1/2$, the function $g$ is increasing. When $n_1<1/2$, there exists a unique positive real number $t_0$ such that $g(t)<0$ for all $0< t<t_0$, $g(t_0)=0$, and $g$ is increasing on the interval $[t_0,\infty)$. In both the cases, for each $s>0$, there exists a unique $T(\epsilon_1,s)>0$ such that $g(T(\epsilon_1,s))=\alpha_2 s+\sinh^{-1}R(\epsilon_1)$. Let $\eta>0$ be given. Since the region below $L_+$ satisfies both the conditions (C1) and (C2), in particular, these conditions are satisfied in $\mathcal{R}_{T(\epsilon_1,\eta),\eta}=\{(t,s)\ \colon\, t>T(\epsilon_1,\eta)\text{ and }s<\eta\}$, except at the point $(T(\epsilon_1,\eta),\eta)$. Further note that $T(\epsilon_1,\eta)$ increases with $R(\epsilon_1))$. Hence $T(\epsilon_1,\eta)$ will be minimum when $\epsilon_1$ is such that $R(\epsilon_1)$ is minimum. Since $\inf_{\epsilon_1} R(\epsilon_1)=\sqrt{\frac{1}{4}\left(a^2+c^2+\frac{1}{a^2+c^2}\right)^2-1}$, the result follows.
\end{proof}

\subsection{Computing $\tau_{2,1}(\eta)$}

In this section, we will compute $\tau_{2,1}(\eta)$. Let $\epsilon_1$ be a real number. For given flee time $\eta>0$, we will compute a dwell time $\tau_{2,1}(P_1(\epsilon_1),P_2,\eta)$ such that for all $t>\tau_{2,1}(P_1(\epsilon_1),P_2,\eta)$ and $s<\eta$, $\|M(\epsilon_1)e^{J_1 t}\ M(\epsilon_1)^{-1}e^{J_2 s}\|<1$. 

\begin{theorem}\label{nc:21}
	For given flee time $\eta>0$, let $\tau_{2,1}(\eta)$ be the unique positive solution $t$ of $\alpha_2\eta=n_1 t-\sinh^{-1}\left(\left(\frac{a^2+c^2}{2}\right)t\right)$. Then the switched system~\eqref{eq:system} is stable for all $\sigma\in	\s\left[\tau_{2,1}(\eta),\eta\right]$ and asymptotically stable for all $\sigma\in \s'\left[\tau_{2,1}(\eta),\eta\right]$.
\end{theorem}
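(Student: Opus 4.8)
The plan is to obtain an \emph{exact} expression for the norm $\|M(\epsilon_1)e^{J_1 t}M(\epsilon_1)^{-1}e^{J_2 s}\|$ rather than merely bounding it, exploiting two structural features that are special to this ordering of the factors: the orthogonality of the rotational part of $e^{J_2 s}$, which now sits on the far right, and the nilpotency of the off-diagonal part of $e^{J_1 t}$. First I would factor the two exponentials as $e^{J_1 t}=e^{-n_1 t}(I+tN)$ with $N=\begin{pmatrix}0&1\\0&0\end{pmatrix}$, and $e^{J_2 s}=e^{\alpha_2 s}R(s)$ with $R(s)$ orthogonal. Since the spectral norm is invariant under right multiplication by an orthogonal matrix, the factor $R(s)$ drops out and one gets the exact identity
\[
\|M(\epsilon_1)e^{J_1 t}M(\epsilon_1)^{-1}e^{J_2 s}\|=e^{\alpha_2 s-n_1 t}\,\|I+t\tilde N\|,\qquad \tilde N:=M(\epsilon_1)NM(\epsilon_1)^{-1}.
\]

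Next I would compute $\tilde N$ explicitly. Because the first column of $M(\epsilon_1)$ is $(a,c)^\top$ independent of $\epsilon_1$, and $\det M(\epsilon_1)=\det M=\delta\in\{\pm1\}$, a direct calculation gives $\tilde N=\delta^{-1}\begin{pmatrix}-ac&a^2\\-c^2&ac\end{pmatrix}$, which is independent of $\epsilon_1$; this is exactly why the resulting dwell time carries no free scaling parameter and needs no optimization. The matrix $\tilde N$ is nilpotent (zero trace and determinant), so $\det(I+t\tilde N)=1$ and the two singular values of $I+t\tilde N$ multiply to $1$, while their squares sum to $\operatorname{tr}\big((I+t\tilde N)^\top(I+t\tilde N)\big)=2+t^2\|\tilde N\|_F^2=2+t^2(a^2+c^2)^2$. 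Solving this $2\times 2$ system for the larger eigenvalue of the Gram matrix and comparing with the definition of $\theta$ yields $\|I+t\tilde N\|=\theta\big((a^2+c^2)t\big)$.

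Then I would invoke the identity $\ln\theta(x)=\sinh^{-1}(x/2)$, immediate from $e^{\sinh^{-1}(x/2)}=\sqrt{1+x^2/4}+x/2$, to turn $\|M(\epsilon_1)e^{J_1 t}M(\epsilon_1)^{-1}e^{J_2 s}\|<1$ into the equivalent condition
\[
\alpha_2 s< n_1 t-\sinh^{-1}\!\Big(\tfrac{a^2+c^2}{2}\,t\Big)=:G(t),
\]
so the boundary curve is $\alpha_2 s=G(t)$ and the extremal case over $s<\eta$ is $s=\eta$. Finally I would check that $G(0)=0$ and that $G'(t)$ increases monotonically from $n_1-(a^2+c^2)/2$ to $n_1>0$, so that $G$ is eventually strictly increasing to $+\infty$; this yields a unique positive $t$ in the increasing branch with $G(t)=\alpha_2\eta$, which is declared $\tau_{2,1}(\eta)$, and for $t>\tau_{2,1}(\eta)$ one has $G(t)>\alpha_2\eta>\alpha_2 s$. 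Stability for $\sigma\in\mathcal S[\tau_{2,1}(\eta),\eta]$ and asymptotic stability for $\sigma\in\mathcal S'[\tau_{2,1}(\eta),\eta]$ then follow from the general bound~\eqref{eq:flow} exactly as in the preceding theorems.

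The step I expect to be the crux is the exact evaluation of $\|I+t\tilde N\|$: one must recognize that the nilpotency of $\tilde N$ pins the product of the singular values to $1$, so that the Frobenius norm alone (the sum of their squares) determines the spectral norm in closed form and matches it precisely with $\theta$. Everything else—peeling off the rotation, the $\epsilon_1$-independence of $\tilde N$, and the monotonicity argument for uniqueness—is routine once this identity is established.
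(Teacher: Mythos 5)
Your proposal is correct and takes essentially the same route as the paper: the paper's proof likewise begins with the exact identity $\left\|M(\epsilon_1)e^{J_1 t}M(\epsilon_1)^{-1}e^{J_2 s}\right\| = \left\|M(\epsilon_1)e^{J_1 t}M(\epsilon_1)^{-1}e^{\alpha_2 s}\right\|$ (your rotation-peeling step), arrives at the same exact, $\epsilon_1$-independent characterization $\alpha_2 s < n_1 t-\sinh^{-1}\big(\tfrac{a^2+c^2}{2}t\big)$ via its trace/determinant criterion for $\|K\|<1$ (which is your singular-value computation of $I+t\tilde N$ in different notation), and settles existence, uniqueness and monotonicity of the solution by the same analysis of $t\mapsto n_1 t-\sinh^{-1}(\ell t)$, there delegated to the proof of Theorem~\ref{nn:21} with $\theta\equiv 1$ and $n_2=\alpha_2$. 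Your closed-form evaluation $\|I+t\tilde N\|=\theta\big((a^2+c^2)t\big)$ together with $\ln\theta(x)=\sinh^{-1}(x/2)$ is a more explicit packaging of the same computation, not a different argument.
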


\begin{proof}
	We first note that $\left\|M(\epsilon_1) e^{J_1 t\,}M(\epsilon_1)^{-1}e^{J_2 s}\right\| = \left\|M(\epsilon_1) e^{J_1 t\,}M(\epsilon_1)^{-1}e^{\alpha_2 s} \right\|$. \\
	Now
	$\left\|M(\epsilon_1) e^{J_1 t\,}M(\epsilon_1)^{-1}e^{\alpha_2 s} \right\|<1$ if and only if \begin{enumerate}
		\item[(C1)] $n_1t>\alpha_2 s$, and
		\item[(C2)] $f(t,s)<0$, where $f(t,s)=\left(\frac{a^2+c^2}{2}\right)^2 t^2-\sinh^2(n_1t-\alpha_2 s)$.
	\end{enumerate}
	Note that these conditions are independent of $\epsilon_1$. In the feasible region (where (C1) is satisfied), the zero set of $f$ can be expressed as $n_1t-\sinh^{-1}(\ell t)=\alpha_2 s$, where $\ell=\frac{a^2+c^2}{2}$. The result follows by proceeding along the lines of the proof of Theorem~\ref{nn:21} with $\theta\equiv 1$ and $n_2=\alpha_2$. 
\end{proof}

\begin{remark}\label{rem:dynNC}
	Let $\epsilon_1=-(ab+cd)/(a^2+c^2)$. For the functions $\tau_{1,2}$ and $\tau_{2,1}$ obtained in Theorems~\ref{nc:12} and~\ref{nc:21}, let $\sigma\in \s_{1,2}$ (defined in~\eqref{eq:signaldynam}). Then $\|M(\epsilon_1)^{-1}e^{J_2 s_k} M(\epsilon_1)e^{J_1 t_{k}} \|\le 1$, for all $k\in\mathbb{N}$. Substituting $V_1=P_1(\epsilon_1), V_2=P_2$ in~\eqref{eq:flow}, the switched system~\eqref{eq:system} is stable for all $\sigma\in\s_{1,2}$. Similarly, the switched system~\eqref{eq:system} is stable for all $\sigma\in\s_{2,1}$ (defined in~\eqref{eq:signaldynam}).
\end{remark}

\section{$A_1$ has non-real eigenvalues and $A_2$ is defective}\label{sec:CN}

Suppose $A_1$ has non-real eigenvalues with Jordan form $J_1= \begin{pmatrix} -\alpha_1 & \beta_1\\ -\beta_1 & -\alpha_1\end{pmatrix}$, where $\alpha_1> 0$ and $\beta_2\ne 0$ and $A_2$ is a defective matrix with Jordan form $J_2=\begin{pmatrix}
	n_2 & 1\\0 & n_2
\end{pmatrix}$, where $n_2\ge 0$. For $i=1,2$, fix Jordan basis matrix $P_i$ of $A_i$ with Jordan form $J_i$ such that $M=P_2^{-1}P_1$ has determinant $+1$ or $-1$. Any other Jordan basis matrix of $A_1$ with Jordan form $J_1$ is a nonzero scalar multiple of $P_1$ and any other Jordan basis matrix of $A_2$ with Jordan form $J_2$ is a scalar multiple of $P_2(\epsilon_2)=P_2E_2$, where $E_2=\begin{pmatrix}
	1&\epsilon_2\\ 0&1
\end{pmatrix}$ with $\epsilon_2\in\mathbb{R}$. We will vary $\epsilon_2$ over real numbers to compute $\tau_{1,2}(\eta)$ and $\tau_{2,1}(\eta)$. Let $M(\epsilon_2)=P_2(\epsilon_2)^{-1}P_1=\begin{pmatrix}
	a-\epsilon_2 c &  b-\epsilon_2 d\\ c& d
\end{pmatrix}$. 

\subsection{Computing $\tau_{1,2}(\eta)$}

In this section, we will compute $\tau_{1,2}(\eta)$. Let $\epsilon_2$ be a real number. For given flee time $\eta>0$, we will compute a dwell time $\tau_{1,2}(P_1,P_2(\epsilon_2),\eta)$ such that for all $t>\tau_{1,2}(P_1,P_2(\epsilon_2),\eta)$ and $s<\eta$, $\|M(\epsilon_2)^{-1}e^{J_2 s}\,M(\epsilon_2)e^{J_1 t}\|<1$.

\begin{theorem}\label{cn:12}
	For given flee time $\eta>0$, let $\tau_{1,2}(\eta)=\frac{n_2}{\alpha_1}\eta+\frac{1}{\alpha_1}\sinh^{-1}\left(\left(\frac{c^2+d^2}{2}\right)\eta\right)$. Then the switched system~\eqref{eq:system} is stable for all $\sigma\in	\s\left[\tau_{1,2}(\eta),\eta\right]$ and asymptotically stable for all $\sigma\in \s'\left[\tau_{1,2}(\eta),\eta\right]$.
\end{theorem}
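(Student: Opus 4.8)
The plan is to reduce the operator-norm inequality $\|M(\epsilon_2)^{-1}e^{J_2 s}M(\epsilon_2)e^{J_1 t}\|<1$ to a single scalar inequality by exploiting the special structure of the two flows. First I would observe that since $J_1=\begin{pmatrix}-\alpha_1 & \beta_1\\ -\beta_1 & -\alpha_1\end{pmatrix}$, we have $e^{J_1 t}=e^{-\alpha_1 t}R(t)$, where $R(t)=\begin{pmatrix}\cos\beta_1 t & \sin\beta_1 t\\ -\sin\beta_1 t & \cos\beta_1 t\end{pmatrix}$ is orthogonal. Because the spectral norm is invariant under right multiplication by an orthogonal matrix, this gives $\|M(\epsilon_2)^{-1}e^{J_2 s}M(\epsilon_2)e^{J_1 t}\|=e^{-\alpha_1 t}\,\|M(\epsilon_2)^{-1}e^{J_2 s}M(\epsilon_2)\|$, so the dependence on $t$ and on $\beta_1$ collapses to the single factor $e^{-\alpha_1 t}$ and the task reduces to computing the norm of the conjugated unstable flow.

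Next I would write $e^{J_2 s}=e^{n_2 s}(I+sN)$ with $N=\begin{pmatrix}0&1\\0&0\end{pmatrix}$, so that $M(\epsilon_2)^{-1}e^{J_2 s}M(\epsilon_2)=e^{n_2 s}(I+sB)$ with $B=M(\epsilon_2)^{-1}NM(\epsilon_2)$. A short computation gives $B=\tfrac{1}{\delta}\begin{pmatrix}cd & d^2\\ -c^2 & -cd\end{pmatrix}$, where $\delta=\det M=\pm1$; crucially $B$ depends only on $c,d$ and not on $\epsilon_2$, since $N$ selects the unchanged second row $(c,d)$ of $M(\epsilon_2)$ while the first column of $M(\epsilon_2)^{-1}$ is $\tfrac1\delta(d,-c)^\top$. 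This is exactly why the final bound carries no $\epsilon_2$. Since $\operatorname{tr}B=0$ and $\det B=0$, the matrix $B$ is nilpotent and $\det(I+sB)=1$.

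The heart of the argument is then the evaluation of $\|I+sB\|$. Using $\|G\|_F^2=\sigma_1^2+\sigma_2^2$ and $|\det G|=\sigma_1\sigma_2$ for the singular values of a $2\times2$ matrix $G=I+sB$, together with $\|I+sB\|_F^2=2+s^2(c^2+d^2)^2$ and $\det(I+sB)=1$, I would deduce $(\sigma_1-\sigma_2)^2=s^2(c^2+d^2)^2$ and $\sigma_1\sigma_2=1$, hence $\sigma_1-\sigma_1^{-1}=s(c^2+d^2)$. Writing $\sigma_1=e^{x}$ turns this into $2\sinh x=s(c^2+d^2)$, so $\|I+sB\|=\sigma_1=\exp\big(\sinh^{-1}\big(\tfrac{(c^2+d^2)s}{2}\big)\big)$. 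Combining the three steps yields $\|M(\epsilon_2)^{-1}e^{J_2 s}M(\epsilon_2)e^{J_1 t}\|=\exp\big(-\alpha_1 t+n_2 s+\sinh^{-1}\big(\tfrac{(c^2+d^2)s}{2}\big)\big)$, which is strictly less than $1$ precisely when $\alpha_1 t>n_2 s+\sinh^{-1}\big(\tfrac{(c^2+d^2)s}{2}\big)$.

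Finally, since the right-hand side is increasing in $s$, it suffices to enforce the inequality at $s=\eta$; solving $\alpha_1 t=n_2\eta+\sinh^{-1}\big(\tfrac{(c^2+d^2)\eta}{2}\big)$ for $t$ gives exactly $\tau_{1,2}(\eta)$, and for $t\ge\tau_{1,2}(\eta)$, $s\le\eta$ the norm is $\le 1$ with equality only at the corner $(\tau_{1,2}(\eta),\eta)$. Feeding this into the flow bound~\eqref{eq:flow} with $V_1=P_1$, $V_2=P_2(\epsilon_2)$ yields stability on $\s[\tau_{1,2}(\eta),\eta]$ and asymptotic stability on $\s'[\tau_{1,2}(\eta),\eta]$, as in the earlier theorems. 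I expect the only genuinely delicate point to be the clean packaging of the singular-value computation of $I+sB$ into $\sinh^{-1}$ form; the orthogonal-invariance reduction and the nilpotency of $B$ are routine once spotted, and they are what make this case noticeably simpler than the defective-stable case of Theorem~\ref{nc:12} (no $\theta$-factor and no auxiliary feasibility condition are needed).
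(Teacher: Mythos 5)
Your proof is correct, and it reaches the paper's conclusion by a genuinely different computation of the key scalar inequality. The paper follows its general procedure: it applies the criterion $\|K\|<1$ iff $|\det(K^\top K)|<1$ and $|\operatorname{tr}(K^\top K)|<1+\det(K^\top K)$ to reduce the norm inequality to (C1) $\alpha_1 t-n_2 s>0$ and (C2) $\left(\frac{c^2+d^2}{2}\right)^2 s^2<\sinh^2(\alpha_1 t-n_2 s)$, remarks that these conditions are independent of $\epsilon_2$, identifies the zero set of (C2) in the feasible region as $\alpha_1 t=n_2 s+\sinh^{-1}\left(\frac{(c^2+d^2)s}{2}\right)$, and concludes by monotonicity. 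You instead compute the norm exactly: stripping the rotation via $e^{J_1 t}=e^{-\alpha_1 t}R(t)$ and orthogonal invariance, writing $M(\epsilon_2)^{-1}e^{J_2 s}M(\epsilon_2)=e^{n_2 s}(I+sB)$ with the nilpotent $B=\frac{1}{\delta}\begin{pmatrix}cd&d^2\\-c^2&-cd\end{pmatrix}$ (your row/column observation correctly shows $B$ is $\epsilon_2$-free, which \emph{explains}, rather than merely records, the paper's remark about $\epsilon_2$-independence), and extracting $\|I+sB\|=\exp\left(\sinh^{-1}\left(\frac{(c^2+d^2)s}{2}\right)\right)$ from $\sigma_1\sigma_2=1$ and $\sigma_1^2+\sigma_2^2=2+s^2(c^2+d^2)^2$; all of these computations check out. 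The resulting identity $\left\|M(\epsilon_2)^{-1}e^{J_2 s}M(\epsilon_2)e^{J_1 t}\right\|=\exp\left(-\alpha_1 t+n_2 s+\sinh^{-1}\left(\frac{(c^2+d^2)s}{2}\right)\right)$ is strictly stronger than, and easily seen equivalent to, (C1)--(C2): with $u=\alpha_1 t-n_2 s$, the pair (C1),(C2) says $u>0$ and $\sinh u>\frac{(c^2+d^2)s}{2}$, i.e.\ $u>\sinh^{-1}\left(\frac{(c^2+d^2)s}{2}\right)$, and your single condition subsumes (C1) automatically. What your route buys: an exact norm formula showing that $t=\tau_{1,2}(s)$ is precisely the level set where the product norm equals $1$, so within this grouping of terms and choice of Jordan bases the dwell-flee relation is unimprovable, and no feasibility condition needs separate tracking. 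What the paper's route buys: uniformity, since the same trace/determinant machinery is recycled across all nine Jordan-type combinations, whereas your closed-form singular-value argument is special to this case (a scaled rotation composed with a unipotent conjugate). Your closing steps (monotonicity in $s$, norm $\le 1$ on the rectangle with equality only at the corner $(\tau_{1,2}(\eta),\eta)$, then the flow bound~\eqref{eq:flow} giving stability on $\s[\tau_{1,2}(\eta),\eta]$ and asymptotic stability on $\s'[\tau_{1,2}(\eta),\eta]$) match the paper's.
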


\begin{proof}
	Note that $\left\|M^{-1}(\epsilon_2)e^{J_2 s}\,M(\epsilon_2) e^{J_1 t} \right\|<1$ if and only if \begin{enumerate}
		\item[(C1)] $\alpha_1 t-n_2s>0$, and
		\item[(C2)] $f(t,s)<0$, where $f(t,s)=\left(\frac{c^2+d^2}{2}\right)^2 s^2-\sinh^2(\alpha_1 t-n_2 s)$. 
	\end{enumerate}
	Note that these conditions are independent of $\epsilon_2$. In the feasible region (where (C1) is satisfied), the zero set of $f$ can be expressed as $\alpha_1 t=n_2 s+\sinh^{-1}(\ell s)$, where $\ell=\frac{c^2+d^2}{2}$. Hence the result follows.
\end{proof}

\subsection{Computing $\tau_{2,1}(\eta)$}

In this section, we will compute $\tau_{2,1}(\eta)$. Let $\epsilon_2$ be a real number. For given flee time $\eta>0$, we will compute a dwell time $\tau_{2,1}(P_1,P_2(\epsilon_2),\eta)$ such that for all $t>\tau_{2,1}(P_1,P_2(\epsilon_2),\eta)$ and $s<\eta$, $\|M(\epsilon_2)e^{J_1 t}\ M(\epsilon_2)^{-1}e^{J_2 s}\|<1$.

\begin{theorem}\label{cn:21}
	For given flee time $\eta>0$, let 
	\[
	\tau_{2,1}(\eta)=\frac{n_2}{\alpha_1}\eta+\frac{1}{\alpha_1}\ln\theta(\eta)+\frac{1}{\alpha_1}\sinh^{-1}\sqrt{\frac{1}{4}\left(c^2+d^2+\frac{1}{c^2+d^2}\right)^2-1}.
	\]
	Then the switched system~\eqref{eq:system} is stable for all $\sigma\in	\s\left[\tau_{2,1}(\eta),\eta\right]$ and asymptotically stable for all $\sigma\in \s'\left[\tau_{2,1}(\eta),\eta\right]$.
\end{theorem}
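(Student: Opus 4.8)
The plan is to reduce $\|M(\epsilon_2)e^{J_1 t}M(\epsilon_2)^{-1}e^{J_2 s}\|$ to a product of scalar factors by exploiting the structure of the two exponentials, mirroring the proof of Theorem~\ref{nc:12} with the roles of the subsystems swapped (equivalently, via the change of variables used in Theorem~\ref{cn:12}). Since $A_1$ has non-real eigenvalues, $e^{J_1 t}=e^{-\alpha_1 t}R(\beta_1 t)$ with $R(\phi)=\begin{pmatrix}\cos\phi&\sin\phi\\-\sin\phi&\cos\phi\end{pmatrix}$; since $A_2$ is defective, $e^{J_2 s}=e^{n_2 s}(I+sN)$ with $N=\begin{pmatrix}0&1\\0&0\end{pmatrix}$, so $\|e^{J_2 s}\|=e^{n_2 s}\theta(s)$. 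Splitting off the shear by submultiplicativity gives
\[
\|M(\epsilon_2)e^{J_1 t}M(\epsilon_2)^{-1}e^{J_2 s}\|\le e^{-\alpha_1 t+n_2 s}\,\theta(s)\,\|M(\epsilon_2)R(\beta_1 t)M(\epsilon_2)^{-1}\|,
\]
so the problem is to bound the conjugated rotation uniformly in $t$ and then minimize over the free parameter $\epsilon_2$.

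For the conjugated rotation I would use the singular value decomposition $M(\epsilon_2)=U\Sigma W^\top$ with $\Sigma=\mathrm{diag}(\sigma_1,\sigma_2)$, $\sigma_1\ge\sigma_2>0$. Orthogonal conjugation carries rotations to rotations, so $\|M(\epsilon_2)R(\phi)M(\epsilon_2)^{-1}\|=\|\Sigma R(\psi)\Sigma^{-1}\|$ for an angle $\psi$ that runs over $[0,2\pi)$ as $\phi$ does. The matrix $\Sigma R(\psi)\Sigma^{-1}$ has determinant $1$ and squared Frobenius norm $2\cos^2\psi+(\sigma_1^2/\sigma_2^2+\sigma_2^2/\sigma_1^2)\sin^2\psi$, from which a direct computation of its two singular values shows the operator norm is maximized at $\psi=\pi/2$, with value $\sigma_1/\sigma_2=\|M(\epsilon_2)\|\,\|M(\epsilon_2)^{-1}\|=\kappa(M(\epsilon_2))$. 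Because $\beta_1\neq 0$, the angle $\beta_1 t$ visits every value as $t$ ranges over $[\tau,\infty)$, so $\kappa(M(\epsilon_2))$ is the correct uniform bound across the whole dwell range.

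Next I would minimize $\kappa(M(\epsilon_2))$ over $\epsilon_2$. Since $\det M(\epsilon_2)=\pm1$ we have $\sigma_1\sigma_2=1$, hence $\kappa=\sigma_1^2$ and $\sigma_1^2+\sigma_1^{-2}=\|M(\epsilon_2)\|_F^2$; as $\kappa\ge1$ is increasing in this sum, minimizing $\kappa$ amounts to minimizing the Frobenius norm $\|M(\epsilon_2)\|_F^2=(a-\epsilon_2 c)^2+(b-\epsilon_2 d)^2+c^2+d^2$. This one-variable least-squares problem is minimized at $\epsilon_2=(ac+bd)/(c^2+d^2)$, where the $\epsilon_2$-dependent part equals the squared distance from $(a,b)$ to the line through $(c,d)$, namely $(ad-bc)^2/(c^2+d^2)=1/(c^2+d^2)$. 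Thus $\min_{\epsilon_2}\|M(\epsilon_2)\|_F^2=w$ with $w=c^2+d^2+\tfrac1{c^2+d^2}$, and solving $\sigma_1^2+\sigma_1^{-2}=w$ yields $\kappa_{\min}=\tfrac w2+\sqrt{w^2/4-1}$, so that $\ln\kappa_{\min}=\sinh^{-1}\sqrt{\tfrac14(c^2+d^2+\tfrac1{c^2+d^2})^2-1}$.

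Finally I would assemble the estimate: with the minimizing $\epsilon_2$, the displayed bound is at most $1$ exactly when $\alpha_1 t\ge n_2 s+\ln\theta(s)+\sinh^{-1}\sqrt{w^2/4-1}$. Since $n_2 s+\ln\theta(s)$ is increasing in $s$ and $e^{-\alpha_1 t}$ is decreasing in $t$, the worst case over the rectangle $\mathcal{R}_{\tau_{2,1}(\eta),\eta}=\{(t,s)\in Q_1\ \colon\ t\ge\tau_{2,1}(\eta),\ s\le\eta\}$ occurs at $s=\eta$, so $\tau_{2,1}(\eta)$ is precisely the stated solution of the boundary equation, with strict inequality in the interior; stability for $\sigma\in\s[\tau_{2,1}(\eta),\eta]$ and asymptotic stability for $\sigma\in\s'[\tau_{2,1}(\eta),\eta]$ then follow from~\eqref{eq:flow} as in the previous theorems. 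I expect the main obstacle to be the uniform rotation bound of the second paragraph—establishing cleanly that $\sup_\phi\|M(\epsilon_2)R(\phi)M(\epsilon_2)^{-1}\|=\kappa(M(\epsilon_2))$ and that this supremum is genuinely attained along the trajectory, so that no sharper $t$-dependent estimate is possible—rather than the routine least-squares minimization that follows.
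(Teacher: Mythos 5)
Your proposal is correct and takes essentially the same route as the paper's proof: both split off the shear via $\|e^{J_2 s}\|=e^{n_2 s}\theta(s)$, bound the conjugated rotation uniformly over the worst angle, and minimize the resulting constant over $\epsilon_2$ by the same least-squares computation, arriving at the identical boundary equation since $\sinh^{-1}R(\epsilon_2)=\ln\kappa(M(\epsilon_2))$ for $R(\epsilon_2)=\sqrt{\tfrac14\|M(\epsilon_2)\|_F^4-1}$ and $\lvert\det M(\epsilon_2)\rvert=1$. The only cosmetic difference is that you establish $\sup_\phi\|M(\epsilon_2)R(\phi)M(\epsilon_2)^{-1}\|=\kappa(M(\epsilon_2))$ directly by SVD, whereas the paper encodes the same worst-case bound through the trace--determinant conditions (C1)--(C2) with $f(t,s)=R(\epsilon_2)^2\sin^2\beta_1 t-\sinh^2(\alpha_1 t-n_2 s-\ln\theta(s))$ and the enveloping curves $L_\pm$ obtained by setting $\sin^2\beta_1 t=1$.
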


\begin{proof}
	We first note that $\left\|M(\epsilon_2)e^{J_1 t}\ M(\epsilon_2)^{-1}e^{J_2 s}\right\| \le \left\|M(\epsilon_2)e^{J_1 t}\ M(\epsilon_2)^{-1}e^{n_2 s} \right\|\theta(s)$. Now
	$\left\|M(\epsilon_2)e^{J_1 t}\ M(\epsilon_2)^{-1}e^{n_2 s} \right\|\theta(s)<1$ if and only if 
	\begin{enumerate}
		\item[(C1)] $\alpha_1t-n_2s-\ln\theta(s)>0$, and
		\item[(C2)] $f(t,s)<0$, where $f(t,s)=R(\epsilon_2)^2\sin^2(\beta_1 t)-\sinh^2 (\alpha_1t-n_2s-\ln\theta(s))$, with $R(\epsilon_2)=\sqrt{\frac{\left((a-\epsilon_2 c)^2+(b-\epsilon_2 d)^2+c^2+d^2\right)^2}{4}-1}$.
	\end{enumerate}
	Let $\epsilon_2$ be a fixed real number. The zero set of $f$ can be expressed as $n_2 s+\ln\theta(s)=\alpha_1 t\pm \sinh^{-1}(R(\epsilon_2)\sin\beta_1 t)$ which is bounded between the curves $L_\pm(\epsilon_2)\colon\, n_2 s+\ln\theta(s)=\alpha_1 t\pm \sinh^{-1}(R(\epsilon_2))$. The curve $L_-(\epsilon_2)$ lies in the region satisfying the first condition (C1). For each $s>0$, there exists a unique $T(\epsilon_2,s)=\frac{n_2 s+\ln\theta(s)+\sinh^{-1}(R(\epsilon_2))}{\alpha_1}$ such that $f(T(\epsilon_2,s),s)=0$ and $f$ is negative in the region $\mathcal{R}_{T(\epsilon_2,s),s}$ except at the point $(T(\epsilon_2,s),s)$. Note that $T(\epsilon_2,s)$ is minimum when $R(\epsilon_2)$ is minimum. Since $\min_{\epsilon_2} R(\epsilon_2) = \sqrt{\frac{1}{4}\left(c^2+d^2+\frac{1}{c^2+d^2}\right)^2-1}$, the result follows.
\end{proof}

\begin{remark}\label{rem:dynCN}
	Let $\epsilon_2=(ac+bd)/(c^2+d^2)$. For the functions $\tau_{1,2}$ and $\tau_{2,1}$ obtained in Theorems~\ref{cn:12} and~\ref{cn:21}, let $\sigma\in \s_{1,2}$ (defined in~\eqref{eq:signaldynam}). Then $\|M(\epsilon_2)^{-1}e^{J_2 s_k} M(\epsilon_2)e^{J_1 t_{k}} \|\le 1$, for all $k\in\mathbb{N}$. Substituting $V_1=P_1, V_2=P_2(\epsilon_2)$ in~\eqref{eq:flow}, the switched system~\eqref{eq:system} is stable for all $\sigma\in\s_{1,2}$. Similarly, the switched system~\eqref{eq:system} is stable for all $\sigma\in\s_{2,1}$ (defined in~\eqref{eq:signaldynam}).
\end{remark}

%\section{One subsystem defective and the other real-diagonalizable}\label{sec:NRRN}

\section{$A_1$ defective and $A_2$ real-diagonalizable}\label{sec:NR}

Suppose $A_1$ is a defective matrix with Jordan form $J_1=\begin{pmatrix}
	-n_1 & 1\\0 & -n_1
\end{pmatrix}$, where $n_1>0$ and $A_2$ is real-diagonalizable with Jordan form $J_2= \begin{pmatrix} p_2 & 0\\ 0 & q_2\end{pmatrix}$, where either $0<p_2\le q_2$ or $p_2\le 0<q_2$. For $i=1,2$, fix Jordan basis matrix $P_i$ of $A_i$ with Jordan form $J_i$ such that $M=P_2^{-1}P_1$ has determinant $+1$ or $-1$. Any other Jordan basis matrix of $A_1$ with Jordan form $J_1$ is a scalar multiple of $P_1(\epsilon_1)=P_1E_1$, where $E_1=\begin{pmatrix}
	1&\epsilon_1\\ 0&1
\end{pmatrix}$ with $\epsilon_1\in\mathbb{R}$. Further any other Jordan basis matrix of $A_2$ with Jordan form $J_2$ is a nonzero scalar multiple of $P_2D_2$ for some diagonal invertible matrix $D_2=\text{diag}(\lambda_2,1/\lambda_2)$ with $\lambda_2\ne 0$. We will vary $\epsilon_1$ over real numbers and $\lambda_2$ over nonzero real numbers to compute $\tau_{1,2}(\eta)$ and $\tau_{2,1}(\eta)$. Let $M(\epsilon_1,\lambda_2)=D_2^{-1}P_2^{-1}P_1(\epsilon_1)=\begin{pmatrix}
	a/\lambda_2 & (b+\epsilon_1 a)/\lambda_2 \\ \lambda_2c& \lambda_2(d+\epsilon_1 c)
\end{pmatrix}$. 

\subsection{Computing $\tau_{1,2}(\eta)$}

In this section, we will compute $\tau_{1,2}(\eta)$. Let $\epsilon_1$ be a real number and $\lambda_2$ be a nonzero real number. For given flee time $\eta>0$, we will compute a dwell time $\tau_{1,2}(P_1(\epsilon_1),P_2D_2,\eta)$ such that for all $t>\tau_{1,2}(P_1(\epsilon_1),P_2D_2,\eta)$ and $s<\eta$, $\|M(\epsilon_1,\lambda_2)^{-1}e^{J_2 s}\,M(\epsilon_1,\lambda_2)e^{J_1 t}\|<1$. Note that $M(\epsilon_1,\lambda_2)^{-1}e^{J_2 s}\,M(\epsilon_1,\lambda_2)e^{J_1 t}=M(\epsilon_1)^{-1}e^{J_2 s}\,M(\epsilon_1)e^{J_1 t}$, where $M(\epsilon_1)=ME_1$.

\begin{theorem}\label{nr:12}
	For given flee time $\eta>0$, let $\tau_{1,2}(\eta)$ be the unique positive solution $t$ of
	\[
	n_1 t-\ln\theta(t)=\left(\frac{p_2+q_2}{2}\right)\eta+\sinh^{-1}\left(\sqrt{K}\sinh\left(\frac{q_2-p_2}{2}\right)\eta\right),
	\] where $K=\frac{1}{4}\left(r^{-1}+r\right)^2$, with $r=\max\{2\lvert ac\rvert,1\}$. 
	
	Then the switched system~\eqref{eq:system} is stable for all $\sigma\in\s\left[\tau_{1,2}(\eta),\eta\right]$ and asymptotically stable for all $\sigma\in \s'\left[\tau_{1,2}(\eta),\eta\right]$.
\end{theorem}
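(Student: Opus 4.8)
The plan is to follow the template already used for a defective stable subsystem (as in Theorems~\ref{nn:12} and~\ref{nc:12}): absorb the nilpotent part of $e^{J_1 t}$ into the scalar function $\theta$, reduce the matrix-norm inequality to a pair of scalar conditions, and then optimize over the free Jordan-basis parameter $\epsilon_1$. Since $e^{J_1 t}=e^{-n_1 t}\begin{pmatrix}1&t\\0&1\end{pmatrix}$ we have $\|e^{J_1 t}\|=e^{-n_1 t}\theta(t)$, so submultiplicativity gives $\|M(\epsilon_1)^{-1}e^{J_2 s}M(\epsilon_1)e^{J_1 t}\|\le \|N\|\,e^{-n_1 t}\theta(t)$, where $N=M(\epsilon_1)^{-1}e^{J_2 s}M(\epsilon_1)$ is a conjugate of the diagonal matrix $e^{J_2 s}=\mathrm{diag}(e^{p_2 s},e^{q_2 s})$. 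Hence it suffices to guarantee $\ln\|N\| < n_1 t-\ln\theta(t)$, which I would phrase as the feasibility condition (C1) $n_1 t-\ln\theta(t)-\tfrac{p_2+q_2}{2}s>0$ together with (C2) $f(t,s)<0$, where $f(t,s)=P\sinh^2\!\big(\tfrac{q_2-p_2}{2}s\big)-\sinh^2\!\big(n_1 t-\ln\theta(t)-\tfrac{p_2+q_2}{2}s\big)$.

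The core computation is to evaluate $\|N\|=\sigma_{\max}(N)$ explicitly. Writing $b'=b+\epsilon_1 a$ and $d'=d+\epsilon_1 c$ for the second column of $M(\epsilon_1)=ME_1$, a direct expansion gives $\|N\|_F^2=(a^2+b'^2)(c^2+d'^2)(e^{2p_2 s}+e^{2q_2 s})-2(ac+b'd')^2 e^{(p_2+q_2)s}$ and $\det N=e^{(p_2+q_2)s}$. The key simplification is the Lagrange identity $(a^2+b'^2)(c^2+d'^2)-(ac+b'd')^2=(ad'-b'c)^2=(\det M)^2=1$, so that, setting $P:=(a^2+b'^2)(c^2+d'^2)=(ac+b'd')^2+1$, the singular-value formula $\sigma_{\max}^2=\tfrac12\big(\|N\|_F^2+\sqrt{\|N\|_F^4-4\det N^2}\big)$ collapses to the perfect square $\sigma_{\max}^2=e^{(p_2+q_2)s}\big(\sqrt P\,\sinh\tfrac{q_2-p_2}{2}s+\sqrt{1+P\sinh^2\tfrac{q_2-p_2}{2}s}\big)^2$. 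Taking logarithms yields $\ln\|N\|=\tfrac{p_2+q_2}{2}s+\sinh^{-1}\!\big(\sqrt P\,\sinh\tfrac{q_2-p_2}{2}s\big)$, which matches the right-hand side of the claimed equation once $P$ is minimized over $\epsilon_1$.

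The genuinely new ingredient, and the step I expect to be the main obstacle, is the minimization over $\epsilon_1$. Because $\sinh^{-1}$ and $\sinh$ are increasing and $\tfrac{q_2-p_2}{2}s\ge 0$, minimizing $\ln\|N\|$ amounts to minimizing $P=(ac+b'd')^2+1$, i.e. minimizing $|\phi(\epsilon_1)|$ for the quadratic $\phi(\epsilon_1)=ac\,\epsilon_1^2+(ad+bc)\epsilon_1+(ac+bd)$. Its discriminant is $(ad+bc)^2-4ac(ac+bd)=(ad-bc)^2-4a^2c^2=1-4a^2c^2$. When $2|ac|\le 1$ (including $ac=0$, where $ad-bc=\pm1$ forces a nonzero linear term) the quadratic has a real root, so $\min|\phi|=0$ and $\min P=1$; when $2|ac|>1$ the extreme value is $\tfrac{4a^2c^2-1}{4ac}$, giving $\min P=\tfrac{(4a^2c^2+1)^2}{16a^2c^2}$. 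I would then check that both cases unify into $\min P=\tfrac14\big(r^{-1}+r\big)^2=K$ with $r=\max\{2|ac|,1\}$, which is exactly the constant in the statement.

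Finally I would establish existence and uniqueness of $\tau_{1,2}(\eta)$ and draw the stability conclusion. The right-hand side $\psi(s)=\tfrac{p_2+q_2}{2}s+\sinh^{-1}\!\big(\sqrt K\,\sinh\tfrac{q_2-p_2}{2}s\big)$ satisfies $\psi(0)=0$ and, since the factor $\tfrac{\sqrt K\cosh(\cdot)}{\sqrt{1+K\sinh^2(\cdot)}}$ stays between $\tfrac{q_2-p_2}{2}$ and $\sqrt K\,\tfrac{q_2-p_2}{2}$, one gets $\psi'(s)\ge q_2>0$; thus $\psi$ is increasing and the binding constraint is at $s=\eta$. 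The left-hand side $g(t)=n_1 t-\ln\theta(t)$ is eventually strictly increasing to $\infty$ (increasing on $[0,\infty)$ if $n_1\ge\tfrac12$, and on $[t_0,\infty)$ past its unique positive root if $n_1<\tfrac12$), exactly as in the proof of Theorem~\ref{nn:12}, so $g(t)=\psi(\eta)$ has a unique positive solution $\tau_{1,2}(\eta)$. Choosing the minimizing $\epsilon_1$, conditions (C1)--(C2) hold throughout $\mathcal{R}_{\tau_{1,2}(\eta),\eta}$, so $\|M(\epsilon_1)^{-1}e^{J_2 s}M(\epsilon_1)e^{J_1 t}\|\le 1$ there, with equality only at the corner; feeding this into~\eqref{eq:flow} gives stability for $\sigma\in\s[\tau_{1,2}(\eta),\eta]$ and asymptotic stability for $\sigma\in\s'[\tau_{1,2}(\eta),\eta]$. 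The subtle points to watch are the sign bookkeeping when $p_2\le 0<q_2$ (so that $\tfrac{p_2+q_2}{2}$ may be negative yet $\psi$ stays increasing) and confirming the two branches of the $\epsilon_1$-minimization really merge into the single closed form $K$.
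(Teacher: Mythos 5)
Your proposal is correct and follows essentially the same route as the paper: the same bound $\|M(\epsilon_1)^{-1}e^{J_2 s}M(\epsilon_1)e^{J_1 t}\|\le\|M(\epsilon_1)^{-1}e^{J_2 s}M(\epsilon_1)e^{-n_1 t}\|\,\theta(t)$, the same reduction to the scalar curve $g(t)=n_1t-\ln\theta(t)=\frac{p_2+q_2}{2}s+\sinh^{-1}\bigl(\sqrt{K(\epsilon_1)}\sinh\frac{q_2-p_2}{2}s\bigr)$ (your exact singular-value computation via the Lagrange identity $(a^2+b'^2)(c^2+d'^2)=(ac+b'd')^2+1$ is just a sharper derivation of the paper's conditions (C1)--(C2), using that $\det M(\epsilon_1)=\det M=\pm1$), and the same minimization of $K(\epsilon_1)$ over $\epsilon_1$. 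In fact your explicit discriminant computation $\,(ad+bc)^2-4ac(ac+bd)=1-4a^2c^2\,$ and the case split at $2|ac|=1$ supply the verification of $\min_{\epsilon_1}K(\epsilon_1)=\frac14\bigl(r^{-1}+r\bigr)^2$ that the paper asserts without detail, and your monotonicity bound $\psi'(s)\ge q_2$ correctly handles the sign issue when $p_2\le 0<q_2$.
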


\begin{proof}
	Since $\left\|M(\epsilon_1)^{-1}e^{J_2 s}\,M(\epsilon_1)e^{J_1 t} \right\| \le \left\|M(\epsilon_1)^{-1}e^{J_2 s}\,M(\epsilon_1)e^{-n_1 t} \right\|\theta(t)$, $\|M(\epsilon_1)^{-1}e^{J_2 s}\,M(\epsilon_1)e^{J_1 t}\|<1$ if
	\begin{enumerate}
		\item[(C1)] $n_1t-\ln\theta(t)>(p_2+q_2)s/2$, and
		\item[(C2)] $f(t,s)<0$, where
		\[
		f(t,s)=K(\epsilon_1)\sinh^2\left(\frac{q_2-p_2}{2}\right)s-\sinh^2\left(-\left(\frac{q_2+p_2}{2}\right)s+n_1t-\ln\theta(t) \right),
		\]
		where $K(\epsilon_1)=(a^2+(b+\epsilon_1 a)^2)(c^2+(d+\epsilon_1 c)^2)$.
	\end{enumerate}
	
	\noindent The zero set of $f$ is a union of two curves given by $g(t)=h_\pm(s)$, where \[
	g(t)=n_1 t-\ln\theta(t), \ h_\pm(s)=\left(\frac{q_2+p_2}{2}\right)s\pm \sinh^{-1}\left(\sqrt{K(\epsilon_1)}\sinh\left(\frac{q_2-p_2}{2}\right)s\right).
	\]
	
	The curve $g(t)=h_+(s)$ lies in the feasible region (in which condition (C1) is satisfied) and hence both the conditions are satisfied in the region below the curve $g(t)=h_+(s)$ (since $f$ is continuous and $f(t,0)$ is negative for large $t$). 
	
	The function $g$ was studied in the proof of Theorem~\ref{nn:12}. Further the function $h_+$ is increasing in $s$. Therefore for given $s>0$, there exists a unique $T(\epsilon_1,s)>0$ such that $g(T(\epsilon_1,s))=h_+(s)$. Since $T(\epsilon_1,s)$ increases with $K(\epsilon_1)$, we have $\min_{\epsilon_1} K(\epsilon_1) =\frac{1}{4}\left(r^{-1}+r\right)^2$, for $r=\max\{2\lvert ac\rvert,1\}$, hence the result follows.
\end{proof}

\begin{remark}\label{rem:dynNR12}
	Let $\epsilon_1$ be such that $K(\epsilon_1)$ is minimum. Then
	\[
	\epsilon_1=\begin{cases}
		-\frac{d}{c}, & c\ne 0, a=0,\\
		-\frac{b}{a}, & a\ne 0, c=0,\\
		-\frac{ad+bc}{2ac}+\sqrt{\frac{1}{\min\{2|ac|,1\}}-1}, & ac\ne 0.
	\end{cases}
	\]
	For the function $\tau_{1,2}$ obtained in Theorem~\ref{nr:12}, let $\sigma\in \s_{1,2}$ (defined in~\eqref{eq:signaldynam}).\\
	Then $\|M(\epsilon_1)^{-1}e^{J_2 s_k} M(\epsilon_1)e^{J_1 t_{k}} \|\le 1$, for all $k\in\mathbb{N}$. Substituting $V_1=P_1(\epsilon_1), V_2=P_2$ in~\eqref{eq:flow}, the switched system~\eqref{eq:system} is stable for all $\sigma\in\s_{1,2}$. 
\end{remark}

\subsection{Computing $\tau_{2,1}(\eta)$}

In this section, we will compute $\tau_{2,1}(\eta)$. Let $\epsilon_1$ be a real number and $\lambda_2$ be a nonzero real number. For given flee time $\eta>0$, we will compute a dwell time $\tau_{2,1}(P_1(\epsilon_1),P_2D_2,\eta)$ such that for all $t>\tau_{2,1}(P_1(\epsilon_1),P_2D_2,\eta)$ and $s<\eta$,\\
$\|M(\epsilon_1,\lambda_2)e^{J_1 t}\ M(\epsilon_1,\lambda_2)^{-1}e^{J_2 s}\|<1$. 

\begin{theorem}\label{nr:21}
	(With notations as in this section)
	\begin{enumerate}[(i)]
		\item Let $0<p_2=q_2$. For given flee time $\eta>0$, let $\tau_{2,1}(\eta)$ be the unique positive solution $t$ of $q_2\eta=n_1 t-\sinh^{-1}(\lvert ac\rvert t)$, when $ac\ne 0$; and otherwise let $\tau_{2,1}(\eta)=(q_2/n_1)\eta+\epsilon_0$, for any $\epsilon_0>0$.
		\item When $p_2\ne q_2$, let $\tilde{p}_2=\max\{0,p_2\}$, $\mu=\frac{q_2-\tilde{p}_2}{2}$ and $\nu=\frac{q_2+\tilde{p}_2}{2}$. For given flee time $\eta>0$, 
		\begin{enumerate}[(a)]
			\item if $ac=0$, let $\tau_{2,1}(\eta)=(q_2/n_1)\eta+\epsilon_0$, for any $\epsilon_0>0$.
			\item if $ac\notin (-n_1,0]$, let $\tau_{2,1}(\eta)$ be the unique positive solution $t$ of 
			\[
			act \cosh\left(\mu\eta\right)=-\sinh\left(\mu\eta \right)+\text{sgn}(ac) \sinh\left(n_1 t-\nu\eta\right).
			\]
			\item if $ac\in(-n_1,0)$, let $\tau_{2,1}(\eta)=\max\{\tau_+(\eta),\tau_-(\eta)\}$, where $\tau_+(\eta)$ and $\tau_-(\eta)$ are unique positive solutions $t$ of 
			\begin{eqnarray*}
				ac\tau \cosh\left(\mu t\right)&=&-\sinh\left(\mu\eta\right)+ \sinh\left(n_1 t-\nu\eta\right),\text{ and}\\
				-n_1 t \cosh\left(\mu\eta\right)&=&-\sinh\left(\mu\eta\right)-\sinh\left(n_1 t-\nu\eta\right), \text{ respectively.}
			\end{eqnarray*}
		\end{enumerate}
	\end{enumerate}
	Then the switched system~\eqref{eq:system} is stable for all $\sigma\in\s\left[\tau_{2,1}(\eta),\eta\right]$ and asymptotically stable for all $\sigma\in \s'\left[\tau_{2,1}(\eta),\eta\right]$.
\end{theorem}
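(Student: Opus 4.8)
The plan is to mirror the reduction used for the other defective--stable cases, but to exploit that here the defective factor $e^{J_1 t}$ sits \emph{conjugated} by the basis matrix, so it can be handled exactly rather than through the coarse bound $\theta$. Writing $e^{J_1 t}=e^{-n_1 t}(I+tE)$ with $E=\left(\begin{smallmatrix}0&1\\0&0\end{smallmatrix}\right)$, and using that $E_1=I+\epsilon_1 E$ commutes with $E$, one gets
\[
M(\epsilon_1,\lambda_2)e^{J_1 t}M(\epsilon_1,\lambda_2)^{-1}=e^{-n_1 t}\bigl(I+t\,\widetilde E\bigr),\qquad \widetilde E=D_2^{-1}(MEM^{-1})D_2,
\]
so $\epsilon_1$ plays no role and only $\lambda_2$ survives. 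A direct computation gives $MEM^{-1}=\left(\begin{smallmatrix}-ac&a^2\\-c^2&ac\end{smallmatrix}\right)$ (up to an overall sign from $\det M=\pm1$, which flips $ac$ but not $|ac|$), whence the matrix whose norm must be controlled is $K'=e^{-n_1 t}(I+t\widetilde E)\,e^{J_2 s}$ with $e^{J_2 s}=\mathrm{diag}(e^{p_2 s},e^{q_2 s})$.

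First I would dispose of the sign of $p_2$. When $p_2\le0$ the factor $e^{p_2 s}\le e^{\tilde p_2 s}=1$, and since replacing the first diagonal entry of $e^{J_2 s}$ by the larger $e^{\tilde p_2 s}$ only enlarges $\|K'\|$ (a Rayleigh-quotient/congruence monotonicity argument), it suffices to prove the bound with $p_2$ replaced throughout by $\tilde p_2=\max\{0,p_2\}$; this is exactly what introduces $\mu=(q_2-\tilde p_2)/2$ and $\nu=(q_2+\tilde p_2)/2$. Next I apply the Schur--Cohn test (C1)--(C2) to $K'$. Since $\det(I+t\widetilde E)=1$, the determinant is independent of $\lambda_2$ and (C1) collapses to the feasibility line $\nu s<n_1 t$. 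For (C2) I would minimise $\|K'\|_F^2$ over $\lambda_2$; because $\det K'$ is fixed, minimising the Frobenius norm also minimises the spectral norm, and the optimal $\lambda_2$ turns the two off-diagonal contributions into a single term carrying the effective magnitude $|ac|$.

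The heart of the argument is then a clean factorisation of the optimised (C2): after the hyperbolic identities $e^{2\tilde p_2 s}+e^{2q_2 s}=2e^{2\nu s}\cosh(2\mu s)$, etc., and completing the square in $w=act$, the condition becomes
\[
\bigl|\,act\cosh(\mu s)+\sinh(\mu s)\,\bigr|<\sinh\!\bigl(n_1 t-\nu s\bigr),
\]
whose zero set splits into the two branches $act\cosh(\mu s)=-\sinh(\mu s)\pm\sinh(n_1 t-\nu s)$. Reading off the rightmost boundary of the region where both (C1) and this inequality hold, for fixed $s=\eta$, yields the stated $\tau_{2,1}$: when $ac=0$ the left side is the constant $\sinh(\mu s)$ and the inequality reduces to $q_2 s<n_1 t$, giving $(q_2/n_1)\eta+\epsilon_0$ (infimum not attained); when $ac>0$ the expression is positive so only the $+$ branch binds; and when $ac\le -n_1$ only the $-$ branch with the true $ac$ binds, the two situations combining into case (ii)(b) via $\mathrm{sgn}(ac)$. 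Case (i) is the degenerate specialisation $\mu=0$, where $|act|<\sinh(n_1 t-q_2 s)$ collapses the two branches into the single $\sinh^{-1}$ relation. Throughout I would check, as in Theorems~\ref{nr:12} and~\ref{nn:21}, that the boundary functions are increasing in $s$, so that the $\lambda_2$ optimal at $s=\eta$ validates the whole rectangle $\mathcal{R}_{\tau_{2,1}(\eta),\eta}$, and then invoke~\eqref{eq:flow}.

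The main obstacle is the middle regime $ac\in(-n_1,0)$. Here the linear-in-$t$ term $act\cosh(\mu s)$ is too weak (relative to $n_1$) for the lower branch $act\cosh(\mu s)=-\sinh(\mu s)-\sinh(n_1 t-\nu s)$ to be monotone and to bound the feasible region on its own: the $t$-derivative of $act\cosh(\mu s)+\sinh(\mu s)+\sinh(n_1 t-\nu s)$ along the feasibility line is $ac\cosh(\mu s)+n_1$, which changes sign precisely across $ac=-n_1$. The fix I would adopt is to bound the lower constraint conservatively by substituting the critical value $ac=-n_1$ (which only makes $act\cosh(\mu s)$ more negative, hence the constraint harder), producing the auxiliary curve $\tau_-$ defined by $-n_1 t\cosh(\mu\eta)=-\sinh(\mu\eta)-\sinh(n_1 t-\nu\eta)$, while keeping the genuine upper curve $\tau_+$ with the true $ac$; taking $\tau_{2,1}(\eta)=\max\{\tau_+(\eta),\tau_-(\eta)\}$ guarantees both constraints simultaneously and agrees with case (ii)(b) at $ac=-n_1$. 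Verifying that this maximum indeed bounds $\mathcal{R}_{\tau_{2,1}(\eta),\eta}$ from the right, and that $\tau_\pm$ are each well-defined and increasing, is the one genuinely fiddly step.
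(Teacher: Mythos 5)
Your proposal is correct and takes essentially the same route as the paper's proof: after eliminating $\epsilon_1$ and optimizing the scaling $\lambda_2$ (the paper's explicit choice $\lambda_2^4=(a^2/c^2)\,e^{2\mu s}$ is exactly your fixed-determinant Frobenius optimum), condition (C2) factors into the same two branches $act\cosh(\mu s)=-\sinh(\mu s)\pm\sinh(n_1t-\nu s)$, and your case analysis --- the $+$ branch for $ac>0$, the $-$ branch for $ac\le -n_1$, the conservative substitution $ac\mapsto -n_1$ giving $\tau_-$ together with the genuine $+$ curve giving $\tau_+$ when $ac\in(-n_1,0)$, the unattained infimum with $\epsilon_0$ when $ac=0$, and the reduction $p_2\mapsto\tilde p_2$ when $p_2<0$ --- matches the paper's treatment, which phrases the same steps as level-curve bounds on $\ell_\pm$ (bounding $\ell_-=ac$ from the right by $\ell_-=-n_1$, and using monotonicity of $\ell_\pm$ in $p_2$). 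The one piece of real technical content you defer --- well-definedness and monotonicity of the boundary parametrizations, which the paper establishes in Appendix~\ref{ap:proof} via the unique interior maximum of $t\mapsto\ell_-(t,s)$ in $\Omega_0$ --- you correctly identify as the fiddly step, and your heuristic sign computation ($ac\cosh(\mu s)+n_1$, whose critical value is actually $-n_1/\cosh(\mu s)$ rather than $-n_1$) is immaterial since your substitution argument does not rely on it.
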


\begin{proof}
	Note that $\left\|M(\epsilon_1,\lambda_2) e^{J_1 t\,}M(\epsilon_1,\lambda_2)^{-1}e^{J_2 s} \right\|<1$ if and only if
	\begin{enumerate}
		\item[(C1)] $2(p_2+q_2) s-4n_1 t<0$, and
		\item[(C2)] $f(\lambda_2,t,s)<0$, where $f(\lambda_2,t,s)$ equals
		\begin{eqnarray*}
			\frac{1}{2}\left(c^4\lambda_2^4e^{-(q_2-p_2) s}+\frac{a^4}{\lambda_2^4} e^{(q_2-p_2)s}\right) t^2+\left(1+a^2c^2t^2\right) \cosh((q_2-p_2) s)\\
			+2act \sinh((q_2-p_2) s)-\cosh((q_2+p_2) s-2 n_1 t).
		\end{eqnarray*}
	\end{enumerate}
	\noindent Note that these conditions are independent of $\epsilon_1$. When $0<p_2=q_2$, the result follows immediately.
	
	Suppose $0\le p_2<q_2$. Then (as per notation in the statement of the theorem) $\tilde{p}_2=p_2$, $\mu=\dfrac{q_2-p_2}{2}$, and $\nu=\dfrac{q_2+p_2}{2}$. Suppose $ac=0$. Without loss of generality, assume $a=0$, then $c\ne 0$. The condition (C2) reduces to \[
	\lambda_2^4<\frac{4\, \sinh (n_1t-q_2 s)\, \sinh (n_1t-p_2 s)}{c^4t^2\, e^{-2\mu s}}=m(t,s).
	\]
	The function $m(t,s)>0$ and increasing in $t$ in the sub-region $\Omega=\{(t,s)\in Q_1\colon\ n_1t>q_2 s\}$. Moreover, for any $\epsilon_0>0$, $m(t,s)\ge 4c^{-4}e^{2\mu s}(q_2 s/n_1+\epsilon_0)^{-2}\sinh^2 n_1\epsilon_0$ on the line $n_1t=q_2 s+n_1\epsilon_0$ and is thus bounded below by a positive number $p_0$. Thus, making a choice of $\lambda_2<\sqrt[4]{p_0}$, we observe that $f(\lambda_2,t,s)<0$, on $\mathcal{R}_{\tau,\eta}=\{(t,s)\in Q_1\colon\ t>\tau\text{ and }s<\eta\}$, where $n_1\tau=q_2\eta+n_1\epsilon_0$. Since $\epsilon_0>0$ was arbitrary, we have proved the claim.	
	
	When $ac\ne 0$, we define the function $k(t,s)=f\left(\sqrt[4]{\frac{a^2}{c^2}\, e^{2\mu s}},t,s\right)$. Then
	\[
	k(t,s)=a^2c^2t^2+\left(1+a^2c^2t^2\right)\, \cosh(2\mu s)+2act\, \sinh(2\mu s)-\cosh(2\nu s-2 n_1 t).
	\] 	
	Note that $k(t,s)\le f(\lambda_2,t,s)$, for all $\lambda_2,t,s$. 
	The zero set $\mathcal{C}=\{(t,s)\in Q_1\ \colon\ k(t,s)=0\}$ of $k$ can be rewritten as $\mathcal{C}=\{0,0\}\cup\{(t,s)\in Q_1\ \colon\ ac=\ell_\pm(t,s)\}$, where \[
	\ell_\pm(t,s)=\frac{1}{t} \sech\mu s \left[-\sinh \mu s\pm \sinh\left(n_1 t-\nu s\right)\right].
	\]
	In the region $\Omega_0=\{(t,s)\in Q_1\ \colon\ n_1t>\nu s\}$, the following observations can be made, see Figure~\ref{fig:nr:12} (refer to Appendix~\ref{ap:proof} for proofs of the following facts): 
	\begin{enumerate}[(O1)]
		\item $\ell_-<\ell_+$ in $\Omega_0$,
		\item the function $\ell_+(t,s)$ is decreasing in $s$ and increasing in $t$, 
		\item the function $\ell_-(t,s$) is increasing in $s$.
		\item $\ell_-$ and $\ell_+$ agree on the boundary of $\Omega_0$ in $Q_1$, and $\ell_\pm\left(\frac{\nu}{n_1}s,s\right)=\frac{-n_1\tanh\mu s}{\nu s}$, 
		the value of which increases from $-n_1\mu/\nu$ to $0$ as $s$ increases over the interval $[0,\infty)$. Thus $\ell_+(t,s)\ge -n_1\mu/\nu$ and $\ell_-(t,s)<0$ in $\Omega_0$.
	\end{enumerate} 
	
	Also, $\ell_+(t,s)>0$ in $\Omega\subset \Omega_0$. Thus we can conclude the following:
	\begin{itemize}
		\item If $ac>0$, the part of $\mathcal{C}$ in the feasible region is given by $ac=\ell_+(t,s)$. Also, $\ell_+(t,0)=t^{-1}\sinh n_1 t\ge n_1$, for all $t\ge 0$.\\	
		If $0<ac\le n_1$, for each $t>0$, there exists a unique $s_t$ such that $\ell_+(t,s_t)=ac$. Moreover, since $(\ell_+)_s<0$, by repeated application of the implicit function theorem at each $t>0$, we get a function $S\colon (0,\infty)\to\mathbb{R}$ whose graph is $\mathcal{C}\cap\Omega_0$.\\
		If $ac>n_1$, there is a unique $\tilde{t}>0$ such that $\ell_+(\tilde{t},0)=ac$. As in the previous step, we obtain a function $S\colon (\tilde{t},\infty)\to\mathbb{R}$ whose graph is $\mathcal{C}\cap\Omega_0$. \\
		Moreover, in both the cases, $S'(t)>0$.
		\item If $ac\le -n_1$, the set $\mathcal{C}\cap\Omega_0$ is given by $ac=\ell_-(t,s)$. Also, $\ell_-(t,0)=-t^{-1}\sinh n_1t\le -n_1$. Thus, there exists a $\tilde{t}\ge 0$ such that $\ell_-(\tilde{t},0)=ac$. For each $t>\tilde{t}$, there is a unique $s_t>0$ such that $\ell_-(t,s_t)=ac$. Applying the implicit function theorem at each $t>\tilde{t}$ implies the existence of the function $S\colon (\tilde{t},\infty)\to \mathbb{R}$ whose graph is $\mathcal{C}\cap\Omega_0$. Fix $t_0>\tilde{t}$, consider the function $\ell_-(t,S(t_0))$ for $t>\frac{\nu}{n_1}S(t_0)$. The function has a unique maxima (refer Appendix~\ref{ap:proof}) after which it is decreasing. Since $ac<-n_1\le \ell_-\left(\frac{\nu}{n_1}s,s\right)$ for all $s>0$ (by (O4)), we can conclude that $\left(\ell_-\right)_t(t_0,S(t_0))<0$. Hence, $S'(t_0)>0$.
		\item Refer to Figure~\ref{fig:nr:12} for the following discussion. If $-n_1<ac\le 0$, the curve given by $ac=\ell_-(t,s)$ may not be the graph of an increasing function as in the other cases. The curve $\ell_-(t,s)=-n_1$, which is the graph of an increasing function, say $s=S_1(t)$, bounds the curve  $ac=\ell_-(t,s)$ from the right. Also the curve $ac=\ell_+(t,s)$ is the graph of an increasing function, say $S_2(t)$. Thus, the function $S\colon (0,\infty)\to\mathbb{R}$ defined as $S(t)=\min \{S_1(t),S_2(t)\}$ bounds the zero set of $k$ from the right.
	\end{itemize} 
	
	\begin{figure}[h!]
		\centering
		\includegraphics[width=.4\textwidth]{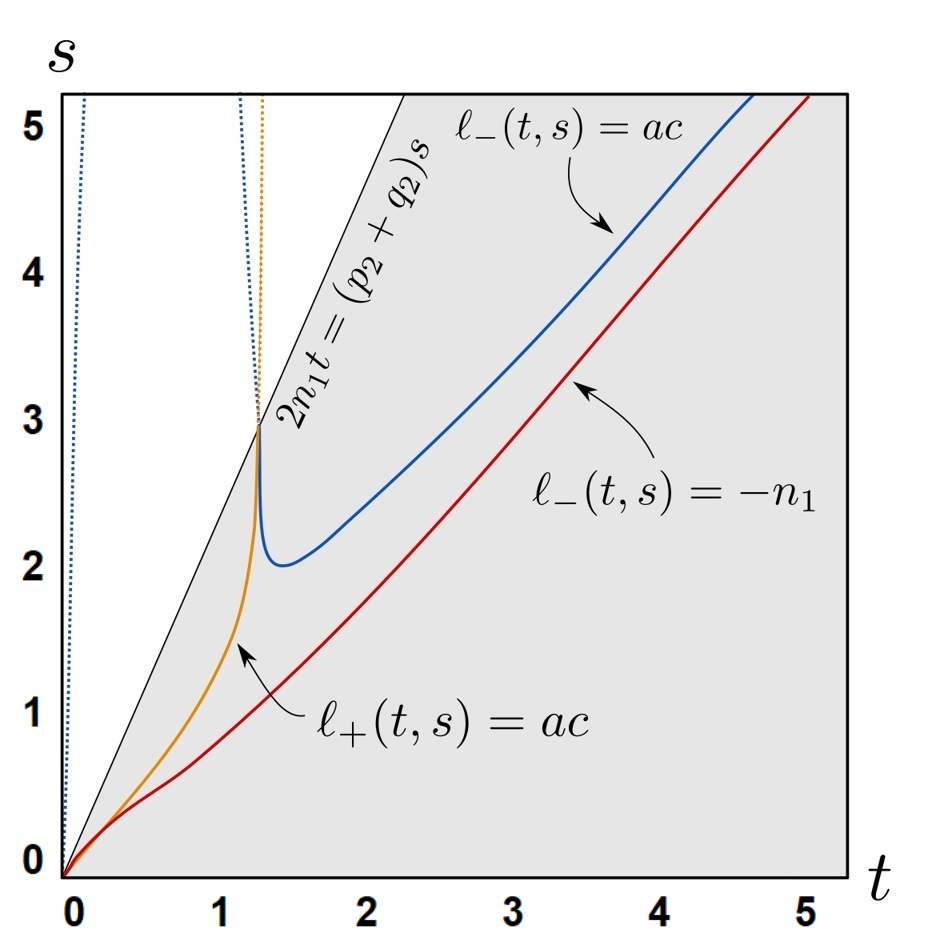}
		\caption{Graphs of $\ell_\pm(t,s)$ in $\Omega_0$; and depiction of the case $-n_1<ac\le 0$. The graph corresponds to the values $n_1=7$, $p_2=0.1$, $q_2=6$ and $ac=-0.8$.}\label{fig:nr:12}
	\end{figure}
	
	In all the three cases, we have bounded $\mathcal{C}$ from the right by the graph $s=S(t)$ of an increasing function $S$. Hence $k(t,s)\le 0$ in the regions $\mathcal{R}_{t_0,S(t_0)}=\{(t,s)\in Q_1\ \colon\ t\ge t_0\text{ and }s\le S(t_0)\}$, for all $t_0>\tilde{t}$ with $k(t,s)=0$ only if $(t,s)=(t_0,S(t_0))$. 
	
	For $\lambda_2=\sqrt[4]{\frac{a^2}{c^2}\, e^{2\mu S(t_0)}}$, $f(\lambda_2,t,S(t_0))=k(t,S(t_0))<0$, for all $t>t_0$. Thus there exists scaling matrix $D_2$ such that $\left\|M(\epsilon_1,\lambda_2) e^{J_1 t\,}M(\epsilon_1,\lambda_2)^{-1}e^{J_2 S(t_0)} \right\|<1$, for all $t>t_0$. Now for any $s<S(t_0)$ and $t>t_0$,
	\begin{eqnarray*}
		\left\|M(\epsilon_1,\lambda_2) e^{J_1 t\,}M(\epsilon_1,\lambda_2)^{-1}e^{J_2 s} \right\|&=&\left\|M(\epsilon_1,\lambda_2) e^{J_1 t\,}M(\epsilon_1,\lambda_2)^{-1}e^{J_2 S(t_0)} \right\|\left\|e^{-J_2 \left(S(t_0)-s\right)} \right\|\\
		&\le & \left\|M(\epsilon_1,\lambda_2) e^{J_1 t\,}M(\epsilon_1,\lambda_2)^{-1}e^{J_2 S(t_0)} \right\|<1.
	\end{eqnarray*}
	Thus $\left\|M(\epsilon_1,\lambda_2) e^{J_1 t\,}M(\epsilon_1,\lambda_2)^{-1}e^{J_2 S(t_0)} \right\|\le 1$, for all $(t,s)\in\mathcal{R}_{t_0,S(t_0)}$, with equality only if $(t,s)=(t_0,S(t_0))$. Hence $f(\lambda_2,t,s)<0$, for all $t>t_0$ and $s<S(t_0)$. 
	
	Now consider $p_2<0<q_2$. Then (as per notation in the statement of the theorem) $\tilde{p}_2=0$ and $\mu=\nu=\dfrac{q_2}{2}$. Here it is not possible to find a region of the form $\Omega(\alpha,\beta)=\{(t,s)\in Q_1\colon\ \alpha t>\beta s\}$ in which $(\ell_-)_t(\ell_-)_s$ is negative. Consider the functions $\ell_\pm$ as functions of $p_1$ as well. To avoid confusion, we will denote these functions as $L_\pm$. We refer to Figure~\ref{fig:nr:12} for the following discussion. Since $(L_+)_{p_2}(p_2,t,s)<0$ and $(L_-)_{p_2}(p_2,t,s)>0$, the surface $L_+(0,t,s)$ lies below $L_+(p_2,t,s)$ and hence the level curve $L_+(p_2,t,s)=ac$ is bounded by the curve $L_+(0,t,s)=ac$ from the right. Similarly $L_-(0,t,s)$ lies above $L_-(p_2,t,s)$ and hence the level curve $L_-(p_2,t,s)=ac$ is bounded by the curve $L_-(0,t,s)=ac$ from the right.
\end{proof}

\section{$A_1$ is real-diagonalizable and $A_2$ is defective}\label{sec:RN}

Suppose $A_1$ is real-diagonalizable with Jordan form $J_1= \begin{pmatrix} -p_1 & 0\\ 0 & -q_1\end{pmatrix}$, where $0<p_1\le q_1$ and $A_2$ is a defective matrix with Jordan form $J_2=\begin{pmatrix}
	n_2 & 1\\0 & n_2
\end{pmatrix}$, where $n_2\ge 0$. For $i=1,2$, fix Jordan basis matrix $P_i$ of $A_i$ with Jordan form $J_i$ such that $M=P_2^{-1}P_1$ has determinant $+1$ or $-1$. Any other Jordan basis matrix of $A_1$ with Jordan form $J_1$ is a nonzero scalar multiple of $P_1D_1$ for some diagonal invertible matrix $D_1=\text{diag}(\lambda_1,1/\lambda_1)$ with $\lambda_1\ne 0$. Any other Jordan basis matrix of $A_2$ with Jordan form $J_2$ is a scalar multiple of $P_2(\epsilon_2)=P_2E_2$, where $E_2=\begin{pmatrix}
	1&\epsilon_2\\ 0&1
\end{pmatrix}$ with $\epsilon_2\in\mathbb{R}$. We will vary $\epsilon_2$ over real numbers and $\lambda_1$ over nonzero real numbers to compute $\tau_{1,2}(\eta)$ and $\tau_{2,1}(\eta)$. Let $M(\epsilon_2,\lambda_1)=P_2(\epsilon_2)^{-1}P_1D_1=\begin{pmatrix}
	(a-c\epsilon_2)\lambda_1 & (b-d\epsilon_2)/\lambda_1 \\ c\lambda_1& d/\lambda_1
\end{pmatrix}$.

\subsection{Computing $\tau_{1,2}(\eta)$}

In this section, we will compute $\tau_{1,2}(\eta)$. Let $\epsilon_2$ be a real number and $\lambda_1$ be a nonzero real number. For given flee time $\eta>0$, we will compute a dwell time $\tau_{1,2}(P_1D_1,P_2(\epsilon_2),\eta)$ such that for all $t>\tau_{1,2}(P_1D_1,P_2(\epsilon_2),\eta)$ and $s<\eta$, $\|M(\epsilon_2,\lambda_1)^{-1}e^{J_2 s}\,M(\epsilon_2,\lambda_1)e^{J_1 t}\|<1$. Note that $M(\epsilon_2,\lambda_1)^{-1}e^{J_2 s}\,M(\epsilon_2,\lambda_1)e^{J_1 t}=M(\lambda_1)^{-1}e^{J_2 s}\,M(\lambda_1)e^{J_1 t}$, where $M(\lambda_1)=MD_1$.

\begin{theorem}\label{rn:12}
	For given flee time $\eta>0$,  \begin{enumerate}[(i)]
		\item if $cd=0$, let $\tau_{1,2}(\eta)=(n_2/p_1)\eta+\epsilon_0$, for any $\epsilon_0>0$. 
		\item if $cd\notin (-n_2,0]$, let $\tau_{1,2}(\eta)$ be the unique positive solution $t$ of \[\lvert cd\rvert \eta \cosh\left(\left(\frac{q_1-p_1}{2}\right)t\right)=- \sinh\left(n_2 \eta-\left(\frac{q_1+p_1}{2}\right)t\right)-\text{sgn}(cd)\sinh\left(\frac{q_1-p_1}{2}\right)t.\]
		\item if $cd\in(-n_2,0)$, let $\tau_{1,2}(\eta)=\max\{\tau_+(\eta),\tau_-(\eta)\}$, where $\tau_+(\eta)$ and $\tau_-(\eta)$ are the unique positive solutions $t$ of \begin{eqnarray*}
			\sinh\left(\frac{q_1-p_1}{2}\right)t&=&\sinh\left(\left(\frac{q_1+p_1}{2}\right)t-n_2 \eta\right),\text{ and}\\
			cd\eta\cosh\left(\left(\frac{q_1-p_1}{2}\right)t\right)&=&-\sinh\left(\frac{q_1-p_1}{2}\right)t-\sinh\left(\left(\frac{q_1+p_1}{2}\right)t-n_2 \eta\right),
		\end{eqnarray*}
		respectively.
	\end{enumerate}
	Then the switched system~\eqref{eq:system} is stable for all $\sigma\in\s\left[\tau_{1,2}(\eta),\eta\right]$ and and asymptotically stable for all $\sigma\in \s'\left[\tau_{1,2}(\eta),\eta\right]$.
\end{theorem}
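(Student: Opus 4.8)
The plan is to reproduce, in the present configuration, the analysis of Theorem~\ref{nr:21}, under which the defective and the diagonalizable block, the matrices $M$ and $M^{-1}$, and the time variables $t$ and $s$ all interchange their roles. Concretely, the two scalar conditions for $\|M(\lambda_1)^{-1}e^{J_2 s}M(\lambda_1)e^{J_1 t}\|<1$ can be obtained directly, or read off from Theorem~\ref{nr:21} via the substitution $(a,b,c,d,n_1,p_2,q_2,t,s)\to(d,-b,-c,a,-n_2,-p_1,-q_1,s,t)$. First I would record them. Writing $K=M(\lambda_1)^{-1}e^{J_2 s}M(\lambda_1)e^{J_1 t}$, one has $\det(K^\top K)=e^{4n_2 s-2(p_1+q_1)t}$, so condition (C1) reads $(p_1+q_1)t>2n_2 s$; expanding $\operatorname{tr}(K^\top K)$ gives (C2): $f(\lambda_1,t,s)<0$, where
\[
f(\lambda_1,t,s)=\tfrac12\Big(c^4\lambda_1^4 e^{(q_1-p_1)t}+\tfrac{d^4}{\lambda_1^4}e^{-(q_1-p_1)t}\Big)s^2+(1+c^2d^2s^2)\cosh\big((q_1-p_1)t\big)+2cds\sinh\big((q_1-p_1)t\big)-\cosh\big((q_1+p_1)t-2n_2 s\big).
\]

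Next I would optimize over the free scaling $\lambda_1$, the only $\lambda_1$-dependent part of $f$ being its $s^2$-coefficient. When $cd\neq0$ this coefficient is minimized at $\lambda_1^4=\tfrac{d^2}{c^2}e^{-(q_1-p_1)t}$, and the resulting function $k(t,s)=c^2d^2s^2+(1+c^2d^2s^2)\cosh\big((q_1-p_1)t\big)+2cds\sinh\big((q_1-p_1)t\big)-\cosh\big((q_1+p_1)t-2n_2 s\big)$ satisfies $k(t,s)\le f(\lambda_1,t,s)$ for every $\lambda_1$. Reading $k(t,s)=0$ as a quadratic in $cd$ and simplifying its discriminant with $\sinh^2(\phi/2)=\tfrac{\cosh\phi-1}{2}$ identifies, in the feasible region, the zero set $\mathcal{C}=\{(t,s)\in Q_1:\ cd=\ell_\pm(t,s)\}$, where
\[
\ell_\pm(t,s)=\frac{1}{s\cosh(\frac{q_1-p_1}{2}t)}\Big[-\sinh\big(\tfrac{q_1-p_1}{2}t\big)\pm\sinh\big(\tfrac{q_1+p_1}{2}t-n_2 s\big)\Big].
\]

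I would then establish the monotonicity facts mirroring observations (O1)--(O4) of Theorem~\ref{nr:21}: $\ell_-<\ell_+$ on the feasible region, $\ell_+>0$ exactly to the right of the line $t=n_2 s/p_1$ on which $\ell_+=0$, and the signs of the partial derivatives of $\ell_\pm$, which let the relevant level set be written through the implicit function theorem as the graph $t=T(s)$ of an increasing function bounding $\mathcal{C}$ from the right. The three cases then split precisely as $ac$ did in Theorem~\ref{nr:21}. For $cd=0$ one reduces (C2): the inner $s^2$-term vanishes, and choosing $\lambda_1$ large (if $c=0$) or small (if $d=0$) makes $f<0$ on $\{p_1 t>n_2 s+\text{slack}\}$, whence $\tau_{1,2}(\eta)=\tfrac{n_2}{p_1}\eta+\epsilon_0$. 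For $cd\notin(-n_2,0]$ a single branch, $cd=\ell_+$ when $cd>0$ and $cd=\ell_-$ when $cd\le -n_2$, is monotone and yields the stated equation for $\tau_{1,2}(\eta)$. For $cd\in(-n_2,0)$ the branch $cd=\ell_-$ need not be monotone, so it must be combined with the line $t=n_2 s/p_1$ (where $\ell_+=0$); the rightmost bound is then the larger of the two, giving $\tau_{1,2}(\eta)=\max\{\tau_+(\eta),\tau_-(\eta)\}$ with $\tau_+$ solving $\sinh(\tfrac{q_1-p_1}{2}t)=\sinh(\tfrac{q_1+p_1}{2}t-n_2\eta)$ and $\tau_-$ solving $cd=\ell_-(t,\eta)$.

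To finish, I would fix $\lambda_1=\sqrt[4]{\tfrac{d^2}{c^2}e^{-(q_1-p_1)\tau_{1,2}(\eta)}}$, so that $f(\lambda_1,\tau_{1,2}(\eta),s)=k(\tau_{1,2}(\eta),s)\le0$ for all $s\le\eta$ (with equality only at $s=\eta$, because $T$ is increasing), giving $\|M(\lambda_1)^{-1}e^{J_2 s}M(\lambda_1)e^{J_1\tau_{1,2}(\eta)}\|\le1$ on that vertical segment; then for $t\ge\tau_{1,2}(\eta)$ I factor $e^{J_1 t}=e^{J_1\tau_{1,2}(\eta)}e^{J_1(t-\tau_{1,2}(\eta))}$ and use $\|e^{J_1(t-\tau_{1,2}(\eta))}\|\le1$ (stable diagonal block), which together with~\eqref{eq:flow} yields stability on $\s[\tau_{1,2}(\eta),\eta]$ and asymptotic stability on $\s'[\tau_{1,2}(\eta),\eta]$. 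The main obstacle I expect is case (iii): showing that the level curve $cd=\ell_-$ genuinely folds back for $-n_2<cd<0$, and that the two increasing pieces (the line $t=n_2 s/p_1$ and the branch $cd=\ell_-$) together bound $\mathcal{C}$ from the right, so that taking their maximum is both necessary and sufficient; the sign analysis of the partials of $\ell_-$ near the boundary $n_2 s=\nu' t$ of the feasible region is the delicate point.
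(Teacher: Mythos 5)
Your proposal is correct and ultimately travels the same road as the paper, but you make explicit what the paper compresses into two lines: the paper's entire proof of Theorem~\ref{rn:12} is the remark that the conditions for $\|M(\lambda_1)^{-1}e^{J_2 s}M(\lambda_1)e^{J_1 t}\|<1$ arise from those of Theorem~\ref{nr:21} under the substitution $(M,t,s,n_1,p_2,q_2)\rightarrow(M^{-1},s,t,-n_2,-p_1,-q_1)$, followed by ``hence the result follows.'' Your direct derivation of (C1), of $f(\lambda_1,t,s)$, of the optimal choice $\lambda_1^4=(d^2/c^2)e^{-(q_1-p_1)t}$, and of $k$ and $\ell_\pm$ is exactly what this substitution produces (your effective parameter $cd$ is the image of $ac$, since $a\to d$, $c\to -c$ and the argument of the defective block's $\sinh$ also flips sign), and your closing step --- fixing $\lambda_1$ at the value determined by $\tau_{1,2}(\eta)$ and factoring $e^{J_1 t}=e^{J_1 \tau_{1,2}(\eta)}e^{J_1(t-\tau_{1,2}(\eta))}$ --- is the paper's standard argument from Theorems~\ref{rr:12abcd>0} and~\ref{nr:21}. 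What your explicit route buys, and what the paper's citation quietly skips, is that case (iii) of Theorem~\ref{rn:12} is \emph{not} the verbatim mirror of Theorem~\ref{nr:21}(ii)(c): the literal substitution would yield the pair of level curves $cd=\ell_+(t,\eta)$ and $-n_2=\ell_-(t,\eta)$, whereas the theorem as stated uses the line $\ell_+(t,\eta)=0$ (i.e.\ $p_1t=n_2\eta$) together with $cd=\ell_-(t,\eta)$, so the monotonicity analysis genuinely has to be redone in the new variables, as you do. On your one flagged worry: the fold you fear in case (iii) does not occur in this configuration. On the feasible region $(p_1+q_1)t>2n_2s$ one checks directly that $(\ell_-)_s>0$ and $(\ell_-)_t<0$ hold everywhere (both summands of $\ell_-$ are monotone the right way, using $\frac{q_1+p_1}{2}\cosh X\cosh Y-\frac{q_1-p_1}{2}\sinh X\sinh Y\ge\frac{q_1+p_1}{2}\cosh(X-Y)>0$ with $X=\frac{q_1+p_1}{2}t-n_2s$ and $Y=\frac{q_1-p_1}{2}t$), so the level set $cd=\ell_-$ is automatically the graph of an increasing function $t=T_-(s)$; the maximum with the line $t=n_2s/p_1$ is needed not because $\ell_-$ folds, but because the branch $cd=\ell_+$ (which for $cd<0$ lies entirely in $\{p_1t<n_2s\}$) and the possible termination of the $\ell_-$ level curve at the boundary of the feasible region must also be covered. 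With that one monotonicity computation supplied --- replacing your slightly self-contradictory ``need not be monotone'' / ``two increasing pieces'' discussion --- your argument is complete and verifies the stated formulas in all three cases.
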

\begin{proof}
	Note that the equivalent conditions for $\|M(\lambda_1)^{-1}e^{J_2s}M(\lambda_1)e^{J_1t}\|<1$ can be obtained from those in Theorem~\ref{nr:21} by making the following change of variables: $(M,t,s,n_1,p_2,q_2)\rightarrow (M^{-1},s,t,-n_2,-p_1,-q_1)$. Hence the result follows.
\end{proof} 

\subsection{Computing $\tau_{2,1}(\eta)$}

In this section, we will compute $\tau_{2,1}(\eta)$. Let $\epsilon_2$ be a real number and $\lambda_1$ be a nonzero real number. For given flee time $\eta>0$, we will compute a dwell time $\tau_{2,1}(P_1D_1,P_2(\epsilon_2),\eta)$ such that for all $t>\tau_{2,1}(P_1D_1,P_2(\epsilon_2),\eta)$ and $s<\eta$, $\|M(\epsilon_2,\lambda_1)e^{J_1 t}\ M(\epsilon_2,\lambda_1)^{-1}e^{J_2 s}\|<1$. Note that $M(\epsilon_2,\lambda_1)e^{J_1 t}\ M(\epsilon_1,\lambda_2)^{-1}e^{J_2 s}=M(\epsilon_2)^{-1}e^{J_2 s}\,M(\epsilon_2)e^{J_1 t}$, where $M(\epsilon_2)=ME_2$.

\begin{theorem}\label{rn:21}
	For given flee time $\eta>0$, let $\tau_{2,1}(\eta)$ be the unique positive solution $t$  of \[
	n_2\eta+\ln\theta(\eta)=\left(\frac{p_1+q_1}{2}\right)t-\sinh^{-1}\left(\sqrt{K}\sinh\left(\frac{q_1-p_1}{2}\right)t\right),
	\] where $K=\frac{1}{4}\left(r^{-1}+r\right)^2$ with $r=\max\{2\lvert cd\rvert,1\}$. Then the switched system~\eqref{eq:system} is stable for all $\sigma\in\s\left[\tau_{2,1}(\eta),\eta\right]$ and asymptotically stable for all $\sigma\in \s'\left[\tau_{2,1}(\eta),\eta\right]$.
\end{theorem}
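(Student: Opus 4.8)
The plan is to obtain Theorem~\ref{rn:21} from the already proved Theorem~\ref{nr:12} by a change of variables, mirroring the way Theorem~\ref{rn:12} was deduced from Theorem~\ref{nr:21}. First I would record that the product governing $\tau_{2,1}$ is independent of the scaling $\lambda_1$: writing $M(\epsilon_2,\lambda_1)=M(\epsilon_2)D_1$ with $D_1=\text{diag}(\lambda_1,1/\lambda_1)$, and using that $D_1$ commutes with the diagonal matrix $e^{J_1 t}$, one gets $M(\epsilon_2,\lambda_1)e^{J_1 t}M(\epsilon_2,\lambda_1)^{-1}e^{J_2 s}=M(\epsilon_2)e^{J_1 t}M(\epsilon_2)^{-1}e^{J_2 s}$. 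Since $A_2$ is defective, I would then peel off the nilpotent factor of $e^{J_2 s}=e^{n_2 s}\begin{pmatrix}1&s\\0&1\end{pmatrix}$ through $\|e^{J_2 s}\|=e^{n_2 s}\theta(s)$, exactly as the defective factor $e^{J_1 t}$ is peeled off in Theorem~\ref{nr:12}. This reduces the goal to the sufficient condition
\[
\left\|M(\epsilon_2)\,\text{diag}(e^{-p_1 t},e^{-q_1 t})\,M(\epsilon_2)^{-1}\right\|\,e^{n_2 s}\theta(s)<1 .
\]

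Next I would apply the characterization of $\|K\|<1$ through the trace and determinant of $K^\top K$ to the scaled matrix $K=M(\epsilon_2)\,\text{diag}\big(e^{n_2 s+\ln\theta(s)-p_1 t},e^{n_2 s+\ln\theta(s)-q_1 t}\big)M(\epsilon_2)^{-1}$, turning the inequality into a feasibility condition together with a sign condition $f(t,s)<0$, just as in the paragraph preceding Theorem~\ref{nr:12}. The crucial point is that the resulting pair is the exact image of conditions (C1)--(C2) of Theorem~\ref{nr:12} under
\[
(M,t,s,n_1,p_2,q_2)\longmapsto(M^{-1},s,t,-n_2,-p_1,-q_1).
\]
Indeed $n_1 t-\ln\theta(t)>\tfrac{p_2+q_2}{2}s$ becomes $\tfrac{p_1+q_1}{2}t>n_2 s+\ln\theta(s)$, and, since $\sinh^2$ is even, the sign condition becomes $K(\epsilon_2)\sinh^2\!\big(\tfrac{q_1-p_1}{2}t\big)-\sinh^2\!\big(\tfrac{q_1+p_1}{2}t-n_2 s-\ln\theta(s)\big)<0$. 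As $\det M=\pm1$ forces $M^{-1}=\pm\begin{pmatrix}d&-b\\-c&a\end{pmatrix}$, the quantity $|ac|$ controlling the minimization in Theorem~\ref{nr:12} is carried to $|cd|$; hence $\min_{\epsilon_2}K(\epsilon_2)=\tfrac14(r^{-1}+r)^2$ with $r=\max\{2|cd|,1\}$, which is the constant $K$ in the statement.

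With the conditions matched, the zero-set analysis of Theorem~\ref{nr:12} transfers directly: the branch of $\{k=0\}$ inside the feasible region is the graph of an increasing function, so the curve defined by $n_2\eta+\ln\theta(\eta)=\tfrac{p_1+q_1}{2}t-\sinh^{-1}\!\big(\sqrt K\sinh(\tfrac{q_1-p_1}{2}t)\big)$ bounds the zero set from the right, and both conditions hold on the rectangle $\mathcal{R}_{\tau_{2,1}(\eta),\eta}$, strictly except at its corner $(\tau_{2,1}(\eta),\eta)$. Choosing $\epsilon_2$ that attains $\min_{\epsilon_2}K(\epsilon_2)$ (the scaling $\lambda_1$ being irrelevant) fixes a real Jordan basis matrix for which $\|M(\epsilon_2)e^{J_1 t}M(\epsilon_2)^{-1}e^{J_2 s}\|\le1$ on that rectangle; substituting the corresponding $V_1,V_2$ into~\eqref{eq:flow} and using $\rho<1$ off the corner yields stability on $\s[\tau_{2,1}(\eta),\eta]$ and asymptotic stability on $\s'[\tau_{2,1}(\eta),\eta]$.

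The step deserving the most care is verifying that the three sign reversals and the interchange $t\leftrightarrow s$ preserve the geometric, not merely algebraic, content of Theorem~\ref{nr:12}: one must check that the correct branch of the zero set is selected as the right-bounding curve, that its parametrization remains increasing, and that the possible non-monotonicity near $t=0$ of $G(t)=\tfrac{p_1+q_1}{2}t-\sinh^{-1}\!\big(\sqrt K\sinh(\tfrac{q_1-p_1}{2}t)\big)$ (arising when $\sqrt K>(p_1+q_1)/(q_1-p_1)$, the image of the $n_1<1/2$ case in Theorem~\ref{nr:12}) still leaves a unique positive solution, which it does because $n_2\eta+\ln\theta(\eta)>0$ pins the root to the increasing branch of $G$. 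These properties are precisely the ones established in Theorem~\ref{nr:12}, so they survive the substitution, but they should be asserted explicitly rather than assumed.
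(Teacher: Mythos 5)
Your proposal is correct and is, modulo packaging, the paper's own proof: the paper derives the same sufficient conditions by peeling off $e^{n_2 s}\theta(s)$ and then analyzes the same curves $g(s)=n_2 s+\ln\theta(s)$ and $h_\pm(t)=\left(\frac{q_1+p_1}{2}\right)t\pm\sinh^{-1}\left(\sqrt{K(\epsilon_2)}\sinh\left(\frac{q_1-p_1}{2}\right)t\right)$, minimizing $K(\epsilon_2)$ to obtain $r=\max\{2\lvert cd\rvert,1\}$, exactly as you do; your only deviation is obtaining (C1)--(C2) via the substitution $(M,t,s,n_1,p_2,q_2)\mapsto(M^{-1},s,t,-n_2,-p_1,-q_1)$ from Theorem~\ref{nr:12}, which is precisely the device the paper itself uses for the companion Theorem~\ref{rn:12}. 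Your explicit handling of the possibly non-monotone branch of $h_-$ when $\sqrt{K}>(p_1+q_1)/(q_1-p_1)$ matches the paper's case analysis, and in fact states the threshold in the correct direction, where the paper's proof has the two cases inadvertently swapped.
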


\begin{proof}
	Since $\left\|M(\epsilon_2)e^{J_1 t}\,M(\epsilon_2)^{-1}e^{J_2 s} \right\| \le \left\|M(\epsilon_2)e^{J_1 t}\,M(\epsilon_2)^{-1}e^{n_2 s} \right\|\theta(s)$, $\left\|M(\epsilon_2)e^{J_1 t}\,M(\epsilon_2)^{-1}e^{J_2 s} \right\|<1$ if
	\begin{enumerate}
		\item[(C1)] $n_2s+\ln\theta(s)<(p_1+q_1)t/2$, and
		\item[(C2)] $f(t,s)<0$, where
		\[
		f(t,s)=K(\epsilon_2)\sinh^2\left(\frac{q_1-p_1}{2}\right)2-\sinh^2\left(\left(\frac{q_1+p_1}{2}\right)t-n_2s-\ln\theta(s) \right),
		\]
		where $K(\epsilon_2)=((a-\epsilon_2 c)^2+c^2)((b-\epsilon_2 d)^2+d^2)$.
	\end{enumerate}
	
	\noindent The zero set of $f$ is a union of two curves given by $g(s)=h_\pm(t)$, where \[
	g(s)=n_2 s+\ln\theta(s), \ h_\pm(t)=\left(\frac{q_1+p_1}{2}\right)t\pm \sinh^{-1}\left(\sqrt{K(\epsilon_2)}\sinh\left(\frac{q_1-p_1}{2}\right)t\right).
	\]
	
	The curve $g(s)=h_-(t)$ lies in the feasible region (in which condition (C1) is satisfied) and hence both the conditions are satisfied in the region below the curve $g(s)=h_-(t)$ (since $f$ is continuous and $f(t,0)$ is negative for large $t$). 
	
	The function $g$ is increasing. The function $h_-$ is increasing when $K\ge (q_1+p_1)^2/(q_1-p_1)^2$. When $1\le K<(q_1+p_1)^2/(q_1-p_1)^2$, $h_-$ has a unique positive root after which it is increasing. In either of the cases, for given $s>0$, there exists a unique $T(\epsilon_2,s)>0$ such that $g(s)=h_-(T(\epsilon_2,s))$. Since $T(\epsilon_2,s)$ increases with $K(\epsilon_2)$, we have $\min_{\epsilon_2} K(\epsilon_2)=\frac{1}{4}\left(r^{-1}+r\right)^2$, for $r=\max\{2\lvert cd\rvert,1\}$, and hence the result follows.
\end{proof}

\begin{remark}\label{rem:dynRN21}
	Let $\epsilon_2$ be such that $K(\epsilon_2)$ is minimum. Then
	\[
	\epsilon_2=\begin{cases}
		\frac{a}{c}, & c\ne 0, d=0,\\
		\frac{b}{d}, & d\ne 0, c=0,\\
		\frac{ad+bc}{2cd}+\sqrt{\frac{1}{\min\{2|cd|,1\}}-1}, & cd\ne 0.
	\end{cases}
	\] 
	For the function $\tau_{2,1}$ obtained in Theorem~\ref{rn:21}, let $\sigma\in \s_{2,1}$ (defined in~\eqref{eq:signaldynam}).\\
	Then $\|M(\epsilon_2)e^{J_1 t_{k}}  M(\epsilon_2)^{-1}e^{J_2 s_k} \|\le 1$, for all $k\in\mathbb{N}$. Substituting $V_1=P_1, V_2=P_2(\epsilon_2)$ in~\eqref{eq:flow}, the switched system~\eqref{eq:system} is stable for all $\sigma\in\s_{2,1}$ (defined in~\eqref{eq:signaldynam}). 
\end{remark}

\section{Both subsystems with non-real eigenvalues}\label{sec:CC}
In this section, we will consider subsystem matrices $A_1$ and $A_2$ both having non-real eigenvalues with Jordan forms $J_1=\begin{pmatrix} -\alpha_1 & \beta_1\\ -\beta_1 & -\alpha_1\end{pmatrix}$ and $J_2=\begin{pmatrix} \alpha_2 & \beta_2\\ -\beta_2 & \alpha_2\end{pmatrix}$, where $\alpha_1,\alpha_2> 0$ and $\beta_i\ne 0$. Fix Jordan basis matrices $P_1,P_2$ of $A_1,A_2$ with Jordan forms $J_1,J_2$, respectively such that the determinant of $M=P_2^{-1}P_1$ is $1$ or $-1$. For $i=1,2$, any other Jordan basis matrix of $A_i$ with Jordan form $J_i$ is a nonzero scalar multiple of $P_i$. Hence $\tau_{1,2}(\eta)=\tau_{1,2}(P_1,P_2,\eta)$ and $\tau_{2,1}(\eta)=\tau_{2,1}(P_1,P_2,\eta)$.

\subsection{Computing $\tau_{1,2}(\eta)$}
For given flee time $\eta>0$, we will compute a dwell time $\tau_{1,2}(\eta)$ such that for all $t>\tau_{1,2}(\eta)$ and $s<\eta$, $\|M^{-1}e^{J_2 s}\,Me^{J_1 t}\|<1$. 
\begin{theorem}\label{cc:12}
	For given flee time $\eta>0$, let
	\[
	\tau_{1,2}(\eta)=\frac{\alpha_2}{\alpha_1} \eta+\frac{1}{\alpha_1}\cosh^{-1}\left(\frac{a^2+b^2+c^2+d^2}{2}\right).
	\]
	Then the switched system~\eqref{eq:system} is stable for all $\sigma\in	\s\left[\tau_{1,2}(\eta),\eta\right]$ and asymptotically stable for all $\sigma\in \s'\left[\tau_{1,2}(\eta),\eta\right]$.
\end{theorem}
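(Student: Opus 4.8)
The plan is to exploit the fact that both $e^{J_1 t}$ and $e^{J_2 s}$ are scalar multiples of rotation matrices. Writing $R(\phi)=\begin{pmatrix}\cos\phi & \sin\phi\\ -\sin\phi & \cos\phi\end{pmatrix}$, we have $e^{J_1 t}=e^{-\alpha_1 t}R(\beta_1 t)$ and $e^{J_2 s}=e^{\alpha_2 s}R(\beta_2 s)$. Since there is no Jordan-basis freedom in this case (any admissible $V_i$ is a nonzero scalar multiple of $P_i$), the relevant matrix in~\eqref{eq:tau12} is $K=M^{-1}e^{J_2 s}Me^{J_1 t}=e^{\alpha_2 s-\alpha_1 t}\,M^{-1}R(\beta_2 s)M\,R(\beta_1 t)$. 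First I would use orthogonality of $R(\beta_1 t)$ to discard that factor, so that $\|K\|=e^{\alpha_2 s-\alpha_1 t}\,\|B\|$ with $B=M^{-1}R(\beta_2 s)M$; thus $\|K\|<1$ becomes $\|B\|<e^{\alpha_1 t-\alpha_2 s}$, which already forces the feasibility condition (C1) $\alpha_1 t>\alpha_2 s$ since $\|B\|\ge 1$.

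Next I would compute $B$ entrywise. Writing $\theta=\beta_2 s$, a direct multiplication gives $B=\begin{pmatrix}\cos\theta+R\sin\theta & (b^2+d^2)\sin\theta\\ -(a^2+c^2)\sin\theta & \cos\theta-R\sin\theta\end{pmatrix}$ (up to the sign of $\det M$, which will not matter), where $R=ab+cd$. The key algebraic identity is $(a^2+c^2)(b^2+d^2)-(ab+cd)^2=(ad-bc)^2=1$, coming from $\det M=\pm1$; it yields $\det B=1$ and, crucially, collapses $\operatorname{tr}(B^{\top}B)$ to depend only on $N:=a^2+b^2+c^2+d^2$, namely $\operatorname{tr}(B^{\top}B)=2\cos 2\theta+N^2\sin^2\theta$. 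Because $\det B=1$, the largest singular value satisfies $\|B\|^2+\|B\|^{-2}=\operatorname{tr}(B^{\top}B)$, and since $v\mapsto v+v^{-1}$ is increasing on $[1,\infty)$, the inequality $\|B\|^2<e^{2(\alpha_1 t-\alpha_2 s)}$ (given (C1)) is equivalent to $2\cosh\!\big(2(\alpha_1 t-\alpha_2 s)\big)>\operatorname{tr}(B^{\top}B)$. Using $\cosh 2\gamma=1+2\sinh^2\gamma$ and $\cos 2\theta=1-2\sin^2\theta$, this simplifies to the clean condition (C2) $\sinh^2(\alpha_1 t-\alpha_2 s)>\tfrac{N^2-4}{4}\sin^2(\beta_2 s)$.

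The final step is to recognize $\tfrac{N^2-4}{4}=\sinh^2\!\big(\cosh^{-1}(N/2)\big)$ (note $N\ge 2|ad-bc|=2$, so $\cosh^{-1}(N/2)$ is real), so that (C2) reads $\sinh(\alpha_1 t-\alpha_2 s)>\sinh\!\big(\cosh^{-1}(N/2)\big)\,|\sin\beta_2 s|$. I would then bound the right-hand side uniformly on the rectangle $s\le\eta$: since $|\sin\beta_2 s|\le 1$ and $\alpha_2 s\le\alpha_2\eta$, taking $t\ge\tau_{1,2}(\eta)=\tfrac{\alpha_2}{\alpha_1}\eta+\tfrac1{\alpha_1}\cosh^{-1}(N/2)$ gives $\alpha_1 t-\alpha_2 s\ge\cosh^{-1}(N/2)+\alpha_2(\eta-s)\ge\cosh^{-1}(N/2)$, which makes both (C1) and (C2) hold throughout $\mathcal{R}_{\tau_{1,2}(\eta),\eta}$, with equality only on the boundary. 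Hence $\|K\|\le 1$ there, and stability for $\sigma\in\mathcal{S}[\tau_{1,2}(\eta),\eta]$ together with asymptotic stability for $\sigma\in\mathcal{S}'[\tau_{1,2}(\eta),\eta]$ follow from~\eqref{eq:flow} and the general argument in the Procedure paragraph.

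The main obstacle I anticipate is the bookkeeping that makes $\operatorname{tr}(B^{\top}B)$ depend only on $N$: with $P=a^2+c^2$, $Q=b^2+d^2$, the cancellation $P^2+Q^2+2R^2=(P+Q)^2-2PQ+2R^2=(P+Q)^2-2$ hinges on $PQ-R^2=1$, and getting this exactly right (including the harmless independence from the sign of $\det M$ and from the angle $\beta_1 t$) is where care is needed. The only genuinely non-monotone feature, the oscillating factor $\sin^2\beta_2 s$, is handled not by solving the threshold curve $s\mapsto T(s)$ but by the cruder uniform bound $|\sin\beta_2 s|\le 1$; this is precisely why the closed form for $\tau_{1,2}(\eta)$ decouples the $\eta$-dependent term $\alpha_2\eta/\alpha_1$ from the constant $\tfrac1{\alpha_1}\cosh^{-1}(N/2)$.
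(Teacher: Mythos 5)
Your proof is correct and follows essentially the same route as the paper: the conditions you derive via the conjugated rotation $B=M^{-1}R(\beta_2 s)M$ (using $\det B=1$, $\|B\|^2+\|B\|^{-2}=\operatorname{tr}(B^\top B)$, and the Lagrange identity $(a^2+c^2)(b^2+d^2)-(ab+cd)^2=(ad-bc)^2=1$) are exactly the paper's (C1) $\alpha_2 s<\alpha_1 t$ and (C2) $K\sin^2\beta_2 s<\sinh^2(\alpha_2 s-\alpha_1 t)$ with $K=\tfrac{1}{4}(a^2+b^2+c^2+d^2)^2-1=\sinh^2\bigl(\cosh^{-1}\bigl(\tfrac{a^2+b^2+c^2+d^2}{2}\bigr)\bigr)$, which the paper obtains from its general trace--determinant criterion. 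Your uniform bound $\lvert\sin\beta_2 s\rvert\le 1$ is precisely the paper's observation that the zero set of $f$ lies between the lines $L_\pm\colon s=(\alpha_1/\alpha_2)t\pm(1/\alpha_2)\sinh^{-1}\sqrt{K}$, so the linear dwell--flee formula and the concluding stability argument coincide.
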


\begin{figure}[h!]
	\centering
	\includegraphics[width=.35\textwidth]{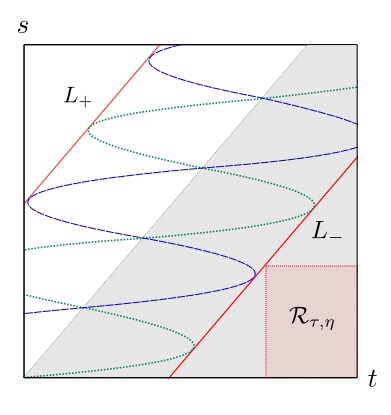}
	\caption{Zero set in the feasible region is bounded below by $L_-$. The dotted curves are $\alpha_1t=\alpha_2 s\pm \sinh^{-1}(\sqrt{K}\sin\beta_2 s)$, depending on sign of $\beta_2$.}
	\label{fig:CC2}
\end{figure}

\begin{proof}
	Note that $\|M^{-1}e^{J_2 s}\,M e^{J_1 t}\|<1$ if and only if  
	\begin{enumerate}
		\item[(C1)] $\alpha_2 s-\alpha_1 t<0$, and
		\item[(C2)] $f(t,s)=K \sin^2 \beta_2 s-\sinh^2(\alpha_2 s-\alpha_1 t)<0$, where $K=\frac{\left(a^2+b^2+c^2+d^2\right)^2}{4}-1$.
	\end{enumerate}
	
	The condition (C1) can be rewritten as $s<(\alpha_1/\alpha_2)t$. The zero set of $f$ lies between the lines $L_\pm\colon\,s=(\alpha_1/\alpha_2)t\pm (1/\alpha_2)\sinh^{-1}\sqrt{K}$. Refer to Figure~\ref{fig:CC2} for the following discussion. Consider the region $\mathcal{R}_{\tau,\eta}=\{(t,s)\in Q_1\colon\, t>\tau\text{ and }s<\eta\}$, where $\tau$ and $\eta$ satisfy $\eta=(\alpha_1/\alpha_2)\tau-(1/\alpha_2)\sinh^{-1}\sqrt{K}$. This region lies below the line $L_-$ and hence (C1) is satisfied in this set. Also (C2) is satisfied since $f$ is continuous, has zero set lying above the line $L_-$, and since $f(t,0)<0$ for sufficiently large $t$. Hence the result follows.
\end{proof}

\subsection{Computing $\tau_{2,1}(\eta)$}
For given flee time $\eta>0$, we will compute a dwell time $\tau_{2,1}(\eta)$ such that for all $t>\tau_{2,1}(\eta)$ and $s<\eta$, $\|Me^{J_1 t}\,M^{-1}e^{J_2 s}\|<1$. 

\begin{theorem}\label{cc:21}
	For given flee time $\eta>0$, let \[\tau_{2,1}(\eta)=\frac{\alpha_2}{\alpha_1} \eta+\frac{1}{\alpha_1}\cosh^{-1}\left(\frac{a^2+b^2+c^2+d^2}{2}\right).
	\]
	Then the switched system~\eqref{eq:system} is stable for all $\sigma\in	\s\left[\tau_{2,1}(\eta),\eta\right]$ and asymptotically stable for all $\sigma\in \s'\left[\tau_{2,1}(\eta),\eta\right]$.
\end{theorem}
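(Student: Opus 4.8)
The plan is to mirror the proof of Theorem~\ref{cc:12}, exploiting the fact that for both subsystems the matrix exponential is a positive scalar times a rotation. Writing $e^{J_1 t}=e^{-\alpha_1 t}R_1(t)$ and $e^{J_2 s}=e^{\alpha_2 s}R_2(s)$, where $R_1(t),R_2(s)$ are the orthogonal rotation matrices through angles $\beta_1 t$ and $\beta_2 s$, I would first factor out the scalars to get $\|Me^{J_1 t}M^{-1}e^{J_2 s}\|=e^{\alpha_2 s-\alpha_1 t}\|MR_1(t)M^{-1}R_2(s)\|$. Since right multiplication by an orthogonal matrix preserves the spectral norm ($\|AU\|^2=\rho(U^\top A^\top A U)=\rho(A^\top A)=\|A\|^2$), the factor $R_2(s)$ drops out, leaving $\|Me^{J_1 t}M^{-1}e^{J_2 s}\|=e^{\alpha_2 s-\alpha_1 t}\|MR_1(t)M^{-1}\|$. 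Thus, in contrast with the $\tau_{1,2}$ computation, the rotational dependence now enters only through $\beta_1 t$ rather than through $\beta_2 s$.

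Next I would convert $\|MR_1(t)M^{-1}\|<e^{\alpha_1 t-\alpha_2 s}$ into conditions of the form (C1)--(C2). The matrix $N=MR_1(t)M^{-1}$ has $\det N=\det R_1(t)=1$, so its two singular values are reciprocal; writing $w=\|N\|^2\ge 1$, one has $w+1/w=\text{tr}(N^\top N)$, and $w\mapsto w+1/w$ is increasing on $[1,\infty)$. Using the cyclic identity $\text{tr}(N^\top N)=\text{tr}\!\big(R_1^\top (M^\top M)R_1(M^\top M)^{-1}\big)$ and diagonalizing the symmetric positive definite matrix $M^\top M=Q\,\text{diag}(\mu,1/\mu)\,Q^\top$ (its determinant is $1$ since $\det M=\pm 1$), a direct computation gives $\text{tr}(N^\top N)=2+\big((a^2+b^2+c^2+d^2)^2-4\big)\sin^2\beta_1 t=2+4K\sin^2\beta_1 t$, with $K$ as in Theorem~\ref{cc:12}. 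Comparing $w+1/w$ with $u+1/u=2\cosh\big(2(\alpha_1 t-\alpha_2 s)\big)$ for $u=e^{2(\alpha_1 t-\alpha_2 s)}$ then shows, for $\alpha_1 t>\alpha_2 s$, that $\|Me^{J_1 t}M^{-1}e^{J_2 s}\|<1$ holds iff (C1) $\alpha_1 t>\alpha_2 s$ and (C2) $f(t,s):=K\sin^2\beta_1 t-\sinh^2(\alpha_1 t-\alpha_2 s)<0$; these are exactly the conditions of Theorem~\ref{cc:12} with $\beta_2 s$ replaced by $\beta_1 t$.

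The geometric step then goes through verbatim. On the zero set of $f$ inside the feasible region one has $\alpha_1 t-\alpha_2 s=\sinh^{-1}\!\big(\sqrt{K}\,|\sin\beta_1 t|\big)\le \sinh^{-1}\sqrt{K}$, so the whole zero set is trapped between the parallel lines $\alpha_1 t-\alpha_2 s=\pm\sinh^{-1}\sqrt{K}$, and hence $f<0$ throughout the half-plane $\alpha_1 t-\alpha_2 s>\sinh^{-1}\sqrt{K}$. Choosing $\tau_{2,1}(\eta)$ so that the corner of the rectangle $\mathcal{R}_{\tau_{2,1}(\eta),\eta}$ lies on the upper line, i.e.\ $\alpha_1\tau_{2,1}(\eta)-\alpha_2\eta=\sinh^{-1}\sqrt{K}$, forces $\alpha_1 t-\alpha_2 s>\sinh^{-1}\sqrt{K}$ for every $(t,s)$ with $t>\tau_{2,1}(\eta)$ and $s<\eta$, so both (C1) and (C2) hold on all of $\mathcal{R}_{\tau_{2,1}(\eta),\eta}$. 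Using $\cosh^{-1}(x)=\sinh^{-1}\sqrt{x^2-1}$ with $x=(a^2+b^2+c^2+d^2)/2$ (so that $x^2-1=K$) and solving for $\tau_{2,1}(\eta)$ reproduces the stated formula, which is identical to $\tau_{1,2}(\eta)$; the stability and asymptotic stability conclusions for $\sigma\in\mathcal{S}[\tau_{2,1}(\eta),\eta]$ and $\sigma\in\mathcal{S}'[\tau_{2,1}(\eta),\eta]$ follow from~\eqref{eq:flow} as in the previous theorems.

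The only genuinely computational ingredient, and the step I expect to be the main obstacle, is the trace identity $\text{tr}(N^\top N)=2+4K\sin^2\beta_1 t$: it requires showing that $\|MR_1(t)M^{-1}\|$ depends on $M$ only through $\text{tr}(M^\top M)=a^2+b^2+c^2+d^2$ and on $t$ only through $\sin^2\beta_1 t$, with no residual dependence on the individual entries $a,b,c,d$. The normalization $\det M=\pm 1$—which makes the singular values of $M$ reciprocal and the eigenvalues of $M^\top M$ equal to $\mu,1/\mu$—is precisely what collapses the computation to this clean form, and it is also what makes $\tau_{2,1}$ coincide with $\tau_{1,2}$.
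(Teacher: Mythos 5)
Your proof is correct, and it lands on the paper's own intermediate conditions before finishing identically: you derive exactly (C1) $\alpha_2 s-\alpha_1 t<0$ and (C2) $K\sin^2\beta_1 t-\sinh^2(\alpha_2 s-\alpha_1 t)<0$ with $K=\frac{(a^2+b^2+c^2+d^2)^2}{4}-1$, trap the zero set between the lines $\alpha_1 t=\alpha_2 s\pm\sinh^{-1}\sqrt{K}$, and place the corner of $\mathcal{R}_{\tau_{2,1}(\eta),\eta}$ on the upper line --- precisely the geometry in the paper's proofs of Theorems~\ref{cc:12} and~\ref{cc:21}. The genuine difference is in how (C1)--(C2) are obtained: the paper gets them by the change of variables $(M,\alpha_1,\beta_1,t)\leftrightarrow(M^{-1},-\alpha_2,\beta_2,s)$ applied to Theorem~\ref{cc:12}, whose own computation rests on the trace/determinant test for spectral norm less than one stated in the Procedure section; you instead argue structurally, via the factorizations $e^{J_1 t}=e^{-\alpha_1 t}R_1(t)$ and $e^{J_2 s}=e^{\alpha_2 s}R_2(s)$, right orthogonal invariance of the spectral norm (which kills $R_2(s)$), reciprocal singular values of $N=MR_1(t)M^{-1}$ from $\det N=1$, and the trace identity $\operatorname{tr}(N^\top N)=2+4K\sin^2\beta_1 t$. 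That identity checks out: diagonalizing $M^\top M=Q\,\operatorname{diag}(\mu,1/\mu)\,Q^\top$ gives $\operatorname{tr}(N^\top N)=2\cos^2\beta_1 t+\left(\mu^2+\mu^{-2}\right)\sin^2\beta_1 t$, and $\mu+\mu^{-1}=a^2+b^2+c^2+d^2$ yields the claim (conjugation by a reflecting $Q$ may flip the rotation angle, but $\sin^2$ is even, so no harm). Your route is longer but self-contained --- it bypasses both Theorem~\ref{cc:12} and the trace/determinant criterion --- and it explains conceptually why $\tau_{2,1}=\tau_{1,2}$: the rightmost rotation in the product is always absorbed by the norm, so only the other subsystem's angle survives ($\beta_2 s$ in Theorem~\ref{cc:12}, $\beta_1 t$ here); the paper's route buys brevity. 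One hairline shared with the paper: your strict inequalities give $\|Me^{J_1 t}M^{-1}e^{J_2 s}\|<1$ only on the open rectangle, while $\s\left[\tau_{2,1}(\eta),\eta\right]$ permits $t_k=\tau_{2,1}(\eta)$ and $s_k=\eta$; at that corner one still has $w+1/w\le 2+4K=u+1/u$, hence norm at most $1$, which is all that the stability claim and the asymptotic-stability claim for $\s'$ require.
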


\begin{proof}
	$\|Me^{J_1 t}\,M^{-1}e^{J_2 s}\|<1$ if and only if 
	\begin{enumerate}
		\item[(C1)] $\alpha_2 s-\alpha_1 t<0$, and
		\item[(C2)] the function $f(t,s)=K \sin^2 \beta_1 t-\sinh^2(\alpha_2 s-\alpha_1 t)<0$, where $K=\frac{\left(a^2+b^2+c^2+d^2\right)^2}{4}-1$.
	\end{enumerate}
	These conditions are obtained from those in Theorem~\ref{cc:12} by the following change of variables $(M,\alpha_1,\beta_1,t)\leftrightarrow (M^{-1},-\alpha_2,\beta_2,s)$. The proof follows on observing that the zero set is bounded between the lines $\alpha_1 t=\alpha_2 s\pm\sinh^{-1}\sqrt{K}$.
\end{proof}

\begin{remark}\label{rem:dynCC}
	For the functions $\tau_{1,2}$ and $\tau_{2,1}$ obtained in Theorem~\ref{cc:12} and~\ref{cc:21}, let $\sigma\in \s_{1,2}$ (defined in~\eqref{eq:signaldynam}). Then $\|M^{-1}e^{J_2 s_k} Me^{J_1 t_{k}} \|\le 1$, for all $k\in\mathbb{N}$. Substituting $V_1=P_1, V_2=P_2$ in~\eqref{eq:flow}, the switched system~\eqref{eq:system} is stable for all $\sigma\in\s_{1,2}$. Similarly, the switched system~\eqref{eq:system} is stable for all $\sigma\in\s_{2,1}$ (defined in~\eqref{eq:signaldynam}). 
\end{remark}

\section{Extension to more general classes of switched systems}

In this section, we will present two possible extensions of our results for bimodal planar switched systems to a special class of symmetric bilinear systems and multimodal systems governed by an undirected star graph.

\subsection{Symmetric bilinear systems}\label{sec:sbs}
Let $\mathcal{U}$ denotes the collection of bounded piecewise constant functions $u:[0,\infty)\rightarrow\mathbb{R}$ with $\liminf_{t\rightarrow \infty}u(t)>0$. Consider the following symmetric bilinear system
\begin{eqnarray}\label{eq:system-sbs}
	\dot{x}(t)&=&u(t)A_{\sigma(t)}x(t), 
\end{eqnarray}
where $x(t)\in\mathbb{R}^2$, for all $t\ge 0$, the switching signal $\sigma:[0,\infty)\rightarrow \{1,2\}$ is a piecewise constant right continuous function, and $u\in\mathcal{U}$. We will use notations and make assumptions for $\sigma$ like in the earlier sections.

So far we have focussed on system~\eqref{eq:system} where $u\equiv 1$. For given $\eta>0$, we computed $\tau(\eta)$ such that the switched system~\eqref{eq:system} is stable for all signals $\sigma\in \mathcal{S}[\tau(\eta),\eta]$ and asymptotically stable for all signals $\sigma\in \mathcal{S}'[\tau(\eta),\eta]$. We will use these results to obtain dwell-flee relations for stability of system~\eqref{eq:system-sbs}. \\~\\
The flow of system~\eqref{eq:system-sbs} is given by
\begin{eqnarray*}
	x(t)&=&e^{A_{\sigma_j}I(u,d_{j-1},t)}\left(\prod_{k=1}^{j-1}e^{A_{\sigma_k}I(u,d_{k-1},d_k)}\right)x(0),\ \ \ t\in[d_{j-1},d_j),
\end{eqnarray*} 
where $I(u,d_{k-1},d_k)=\int_{d_{k-1}}^{d_k} u(r)\ dr$. Assume that $A_1$ is Hurwitz stable and $A_2$ is unstable. For $i=1,2$, let $P_i$ be a Jordan basis matrix of the subsystem matrix $A_i$ with real Jordan basis matrix $J_i$. Let $M=P_2^{-1}P_1$. 

\begin{theorem}\label{thm:sbs}
	Let $u\in \mathcal{U}$ with $\underline{u}=\liminf_{t\rightarrow\infty} u(t)$ and $\overline{u}=\limsup_{t\rightarrow\infty} u(t)$. 
	\begin{enumerate}
		\item[(a)] For each $\eta>0$, let $\tau(\eta,P_1,P_2)>0$ be such that either $\|M^{-1}e^{J_2 s}Me^{J_1 t}\|\le 1$ or $\|Me^{J_1 t}M^{-1}e^{J_2 s}\|\le 1$, for all $t> \tau(\eta,P_1,P_2)$ and $s<\eta$. Then for each $\eta>0$, the switched system~\eqref{eq:system-sbs} is stable for all signals $\sigma\in\s[\tau(\overline{u}\eta,P_1,P_2)/\underline{u},\eta]$. 
		\item[(b)] For each $\eta>0$, let $\tau(\eta,P_1,P_2)>0$ and $\rho<1$ be such that either $\|M^{-1}e^{J_2 s}Me^{J_1 t}\|\le \rho$ or $\|Me^{J_1 t}M^{-1}e^{J_2 s}\|\le \rho$, for all $t> \tau(\eta,P_1,P_2)$ and $s<\eta$. Then for each $\eta>0$, the switched system~\eqref{eq:system-sbs} is asymptotically stable for all signals $\sigma\in\s[\tau(\overline{u}\eta,P_1,P_2)/\underline{u},\eta]$. 
	\end{enumerate}
\end{theorem}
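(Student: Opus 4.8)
The plan is to exploit the fact that the scalar gain $u$ merely reparametrizes time within each mode. On any interval where $\sigma$ is constant equal to $i$, the propagator of~\eqref{eq:system-sbs} is $e^{A_i I(u,\cdot,\cdot)}$, so the flow of~\eqref{eq:system-sbs} is identical to that of~\eqref{eq:system} with each sojourn length $\Delta_k$ replaced by the integrated length $\hat\Delta_k=I(u,d_{k-1},d_k)$. Writing $\hat t_i=I(u,d_{2i-2},d_{2i-1})$, $\hat s_i=I(u,d_{2i-1},d_{2i})$ and taking $V_1=P_1,\ V_2=P_2$ in~\eqref{eq:flow}, the same derivation yields, for $t\in[d_{2j},d_{2j+2})$,
\[
\|x(t)\|\le \zeta\,\hat\xi\,\min\left\{\prod_{i=1}^j\left\|M^{-1}e^{J_2\hat s_i}Me^{J_1\hat t_i}\right\|,\ \prod_{i=1}^j\left\|Me^{J_1\hat t_{i+1}}M^{-1}e^{J_2\hat s_i}\right\|\right\}\|x(0)\|,
\]
where $\hat\xi$ is the analogue of $\xi$ with the supremum of $\|e^{J_2 r}\|$ taken over the bounded range of attainable integrated flee lengths; $\hat\xi$ is finite since $J_1$ is stable and the flee lengths are bounded. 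The problem thus reduces to controlling the tail of the product of norms, which is precisely the quantity governed by the hypothesis once $\hat t_i$ and $\hat s_i$ are pinned down.

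First I would convert the constraints defining $\sigma\in\s[\tau(\overline u\eta,P_1,P_2)/\underline u,\eta]$, namely $t_k\ge \tau(\overline u\eta,P_1,P_2)/\underline u$ and $s_k\le\eta$, into bounds on the integrated times. Fix $\epsilon>0$; since $\liminf_{t\to\infty}u=\underline u$ and $\limsup_{t\to\infty}u=\overline u$, there is a time $T$ beyond which $\underline u-\epsilon<u(r)<\overline u+\epsilon$. Let $N$ index the first dwell--flee pair beginning after $T$. For $k\ge N$ the integrals satisfy $\hat s_k\le(\overline u+\epsilon)\eta$ and $\hat t_k\ge(\underline u-\epsilon)\,\tau(\overline u\eta,P_1,P_2)/\underline u$, so that, for small $\epsilon$, the pair $(\hat t_k,\hat s_k)$ is driven into the region $\{t>\tau(\overline u\eta,P_1,P_2),\ s<\overline u\eta\}$. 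Invoking the hypothesis \emph{at the flee parameter} $\overline u\eta$ then makes the corresponding factor $\le 1$ in case (a) and $\le\rho$ in case (b).

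The finitely many early pairs $1\le i<N$ contribute only a fixed constant $C$: each factor is a product of a bounded number of exponentials $e^{J_i\hat\Delta}$ with $\hat\Delta$ ranging over a bounded set (as $u$ is bounded and there is no Zeno behaviour), so every such norm is uniformly bounded. Combining this with the tail estimate and the displayed bound gives $\|x(t)\|\le \zeta\,\hat\xi\,C\,(\text{tail product})\,\|x(0)\|$. In case (a) the tail product is $\le 1$, so $\|x(t)\|\le \zeta\hat\xi C\|x(0)\|$ for all $t$, and choosing $\delta$ proportional to $(\zeta\hat\xi C)^{-1}$ proves stability. In case (b) the tail product is at most $\rho^{\,k-N+1}$, which tends to $0$ as $t\to\infty$ because $\sigma$ has infinitely many switches; hence $\|x(t)\|\to 0$, and by linearity this is exponential, so asymptotic, stability for every admissible $\sigma$.

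The step I expect to require the most care is the reconciliation in the tail: the $\liminf$/$\limsup$ furnish only the $\epsilon$-slackened bounds $\hat t_k\ge(\underline u-\epsilon)\tau(\overline u\eta,P_1,P_2)/\underline u$ and $\hat s_k\le(\overline u+\epsilon)\eta$, both nudging $(\hat t_k,\hat s_k)$ across the threshold in the unfavourable direction. To close this gap I would use that on the open region $\{t>\tau(\eta,P_1,P_2),\ s<\eta\}$ the relevant norm is in fact strictly below $1$ (as established for each Jordan-type combination in Sections~\ref{sec:RR}--\ref{sec:CC}), together with the continuity and monotonicity of $\tau(\cdot,P_1,P_2)$ in its flee argument; this produces a genuine margin that absorbs a sufficiently small $\epsilon$ and places $(\hat t_k,\hat s_k)$ safely inside the $\le 1$ (resp.\ $\le\rho$) region. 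Establishing this margin, and the uniform boundedness of the transient constant $C$, are the only nonroutine ingredients.
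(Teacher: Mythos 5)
Your proposal is correct and follows the paper's proof almost verbatim: the same reduction of~\eqref{eq:system-sbs} to the bimodal flow bound~\eqref{eq:flow} with integrated sojourn times $I(u,d_{k-1},d_k)$, the same choice $V_1=P_1$, $V_2=P_2$, the same transient/tail splitting at a time $T$ past which $u$ is controlled (the paper absorbs the finitely many early factors by starting the estimate at $\|x(d_{2j})\|$, you by a constant $C$ --- same thing), and the same factor-wise application of the hypothesis, with the tail product $\le 1$ in case (a) and $\le\rho^{k-N+1}\to 0$ in case (b). The one divergence is exactly the step you flag as delicate: the paper simply chooses $T$ with $\underline{u}\le u(t)\le\overline{u}$ for all $t\ge T$, so that $\hat{t}_k\ge\tau(\overline{u}\eta,P_1,P_2)$ and $\hat{s}_k\le\overline{u}\eta$ hold exactly and the open-region hypothesis passes to the closure of $\{t>\tau(\overline{u}\eta,P_1,P_2),\ s<\overline{u}\eta\}$ by continuity of the norm; no $\epsilon$-margin is invoked at all. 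Your observation that $\liminf$/$\limsup$ strictly furnish only the $\epsilon$-slackened bounds is a fair criticism of that step (e.g.\ $u$ may approach $\overline{u}$ from above, so $u(t)\le\overline{u}$ need never hold), but note that your repair leans on strict inequality of the norm in the open region and on continuity and monotonicity of $\tau(\cdot,P_1,P_2)$ in the flee argument, none of which is part of the theorem's stated hypotheses; these properties do hold for the specific dwell--flee functions constructed in Sections~\ref{sec:RR}--\ref{sec:CC}, so your patch works in the intended applications, but as a proof of the abstract statement it imports extra assumptions. A self-contained fix is available: either strengthen the definition of $\mathcal{U}$ so that $\underline{u}$ and $\overline{u}$ are eventual lower/upper bounds (which is what the paper's proof actually uses), or, for part (b) alone, run your $\epsilon$-argument using only the margin $\rho<1$ together with continuity of $(t,s)\mapsto\|M^{-1}e^{J_2 s}Me^{J_1 t}\|$, which genuinely absorbs a sufficiently small $\epsilon$ without any appeal to the structure of $\tau$.
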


\begin{proof}
	Note that $0<\underline{u}\le \overline{u}<\infty$ since $u\in\mathcal{U}$. Let $\sigma\in\s[\tau(\overline{u}\eta,P_1,P_2)/\underline{u},\eta]$. Let $T>0$ be such that $\underline{u}\le u(t)\le \overline{u}$, for all $t\ge T$. Let $j\ge 1$ be the least number such that $d_{2j}\ge T$. Thus for $i\ge j$, $I(u,d_{2i},d_{2i+1})\ge \underline{u} (d_{2i+1}-d_{2i})$, and $I(u,d_{2i+1},d_{2i+2})\le \overline{u} (d_{2i+2}-d_{2i+1})$. For $t\in[d_{2k},d_{2k+2})$, by a certain grouping of terms,
	\begin{eqnarray*}
		\|x(t)\|&\le& \zeta\ \xi\ \min\left\lbrace\prod_{i=j}^{k-1} \left\|M^{-1}e^{J_2 I(u,d_{2i+1},d_{2i+2})}\,M e^{J_1 I(u,d_{2i},d_{2i+1})} \right\|, \right. \\
		& & \ \ \ \ \ \ \ \left. \prod_{i=j}^{k-1} \left\|Me^{J_1 I(u,d_{2i+2},d_{2i+3})}\,M^{-1}e^{J_2 I(u,d_{2i+1},d_{2i+2})} \right\| \right\rbrace \|x(d_{2j})\|,
	\end{eqnarray*}
	where $\zeta=\max\{\|P_1\|\|P_1^{-1}P_2\|\|P_2^{-1}P_1\|\|P_1^{-1}\|,\|P_1\|\|P_1^{-1}P_2\|\|P_2^{-1}\|\}$, \\
	$\xi=\left(\sup_{t\in [0,\infty)}\left\|{\rm e}^{J_1 t}\right\|\right)^2\,\sup_{t\in [0,\eta]}\left\|{\rm e}^{J_2 t}\right\|$. Note that $\xi$ is finite since $J_1$ is a stable matrix. Since $\sigma\in \s[\tau(\overline{u}\eta,P_1,P_2),\eta]$, $d_{2i+1}-d_{2i}\ge \tau(\overline{u}\eta,P_1,P_2)/\underline{u}$ and $d_{2i+2}-d_{2i+1}\le \eta$, then for all $i\ge j$, 
	\begin{enumerate}
		\item[(a)] if the hypothesis of part (a) in the statement of the Theorem holds true then either \begin{eqnarray*}
			\left\|M^{-1}e^{J_2 I(u,d_{2i+1},d_{2i+2})}\,M e^{J_1 I(u,d_{2i},d_{2i+1})} \right\| &\le& 1, \ \text{or } \\
			\left\|Me^{J_1 I(u,d_{2i+2},d_{2i+3})}\,M^{-1}e^{J_2 I(u,d_{2i+1},d_{2i+2})} \right\|&\le& 1.
		\end{eqnarray*}
		\item[(b)] if the hypothesis of part (b) in the statement of the Theorem holds true then either
		\begin{eqnarray*}
			\left\|M^{-1}e^{J_2 I(u,d_{2i+1},d_{2i+2})}\,M e^{J_1 I(u,d_{2i},d_{2i+1})} \right\| &\le& \rho, \ \text{or } \\
			\left\|Me^{J_1 I(u,d_{2i+2},d_{2i+3})}\,M^{-1}e^{J_2 I(u,d_{2i+1},d_{2i+2})} \right\|&\le& \rho.
		\end{eqnarray*}		
	\end{enumerate}
	The result follows.
\end{proof}

\begin{remark}
	The results for~\eqref{eq:system-sbs} can be extended to the switched systems of the form
	\begin{eqnarray*}
		\dot{x}(t)&=&u_{\sigma(t)}(t)A_{\sigma(t)}x(t),
	\end{eqnarray*}
	where $x(t)\in\mathbb{R}^2$, for all $t\ge 0$, the switching signal $\sigma:[0,\infty)\rightarrow \{1,2\}$ is a piecewise constant right continuous function, and $u_1,u_2\in\mathcal{U}$.
\end{remark}

\subsection{Multimodal systems}\label{sec:multimodal}
In the section, we will obtain dwell-flee relation for a multimodal switched system having only one Hurwitz stable subsystem. Moreover the switching between subsystems is governed by the star graph as shown in Figure~\ref{fig:graph}.

\begin{figure}[h!]
	\centering
	\includegraphics[width=.4\textwidth]{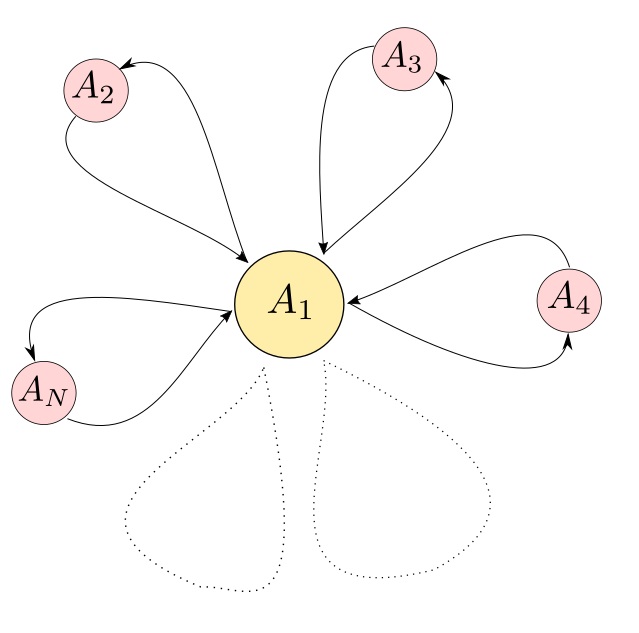}
	\caption{Underlying star graph.}
	\label{fig:graph}
\end{figure}

Let $A_1,\dots,A_N$ be planar matrices with Jordan decomposition $A_k=P_kJ_kP_k^{-1}$, for each $k=1,\dots,N$. Assume that $A_1$ is Hurwitz stable and all the other matrices are unstable. Let $\mathcal{G}$ be the star graph with vertices labelled $A_1,\dots,A_N$, as shown in Figure~\ref{fig:graph}. Consider a multimodal planar linear switched system given by
\begin{eqnarray}\label{eq:multisystem}
	\dot{x}(t) &=& A_{\sigma(t)}x(t),
\end{eqnarray}
where the switching signal $\sigma$ is a piecewise constant right-continuous function, and for $t\ge 0$, $x(t)\in\mathbb{R}^2$ and $\sigma(t)\in\{1,\dots,N\}$. Let us define the following notations and make the following assumptions:
\begin{itemize}
	\item $\sigma(0)=1$. The set of discontinuities of $\sigma$ is given by $(d_k)_{k\ge 1}$ where $d_k\uparrow\infty$, $d_0=0$. 
	\item $\sigma$ is governed by the graph $\mathcal{G}$. That is, for each $k\ge 0$, there is an edge from the vertex $A_{\sigma(d_{2k})}$ to the vertex $A_{\sigma(d_{2k+1})}$. Since $\sigma(0)=1$, $\sigma(d_{2k})=1$, $\sigma(d_{2k-1})\in \{2,\dots,N\}$, for all $k\ge 1$.
	\item Let $\Delta_{2k-1}=t_k$ (time spent in subsystem $A_1$) and $\Delta_{2k}=s_k$, for $k\ge 1$ (time spent in subsystems other than $A_1$). 
\end{itemize}
We wish to study the stability of system~\eqref{eq:multisystem}. We now define the following classes of signals:
\begin{eqnarray*}
	\s_N\left[\tau,\eta\right]&=&\left\{\sigma\colon[0,\infty)\to \{1,\dots,N\}\ \vert\ \sigma(0)=1,\  t_k\ge \tau,\ s_k\le \eta, \ \text{for all } k\in\mathbb{N}\right\},\nonumber\\ 
	\s_N'\left[\tau,\eta\right]&=&\{\, 
	\sigma\in \s\left[\tau,\eta\right]\ \vert \ \text{the sequence } (t_k,s_k)_{k\ge 1} \text{ of tuples does not converge }\nonumber \\
	&& \text{ \ to the tuple } (\tau,\eta) \}. 
	%	\s'\left[\tau,\eta\right]&=&\{\, 
	%	\sigma\in \s\left[\tau,\eta\right]\ \vert \ \text{ there exists a subsequence }(k_\ell)_{\ell} \text{ of } \mathbb{N}\text{ such that} 
	%	\\
	%	& &\ \ (t_{k_\ell},s_{k_\ell}) \text{ does not converge to }(\tau,\eta) \text{ as } \ell\rightarrow\infty\, \}. 
\end{eqnarray*}
Then for $t\in[d_{2j},d_{2j+2})$, $t_i\ge \tau$, and $s_i\le \eta$, for all $i=1,\dots,j$, the flow of~\eqref{eq:multisystem} satisfies
\begin{equation}\label{eq:solmm}
	\|x(t)\|\le \zeta\ \xi\ \ \left(\prod_{i=1}^j \left\|M_{\sigma(d_{2i-1})}^{-1}e^{J_{\sigma(d_{2i-1})} s_i}\,M_{\sigma(d_{2i-1})} e^{J_1 t_i} \right\| \right)  \|x(0)\|,
\end{equation}
where $M_k=P_{k}^{-1}P_1$, $\xi=\max\left\{\sup_{t\in [0,\infty)}\left\|{\rm e}^{J_1 t}\right\|\sup_{t\in [0,\eta]}\left\|{\rm e}^{J_k t}\right\|\ \colon\ k=2,\dots,N\right\}$, \\
$\zeta=\max\left\{\|P_1\|\|P_1^{-1}\|,\|P_k\|\|M_k\|\|P_1^{-1}\| \ \colon\ k=2,\dots,N \right\}$. Note that $\xi$ is finite since $J_1$ is a stable matrix. \\
In the following results, for given flee time $\eta>0$, we will find $\tau(\eta)>0$ such that for all $t>\tau(\eta)$ and $s<\eta$, $\left\|M_k^{-1}e^{J_k s}\,M_k e^{J_1 t} \right\|<1$, for each $k=2,\dots,N$. Then~\eqref{eq:solmm} will imply stability of the multimodal system~\eqref{eq:multisystem} for all signals $\sigma\in\s_N[\tau(\eta),\eta]$. In the following three results, we obtain dwell-flee relations according to the Jordan form of the Hurwitz stable matrix $A_1$. 

\begin{theorem}\label{star:realstable}
	Let $A_1$ has real eigenvalues with Jordan form $J_1=\text{diag}(-p_1,-q_1)$, where $p_1,q_1>0$. For $2\le j\le k+1$, let $A_j$ has non-real eigenvalues with Jordan form $J_j=\begin{pmatrix}
		\alpha_j & \beta_j\\
		-\beta_j & \alpha_j
	\end{pmatrix}$, where $\alpha_j\ge 0$, $\beta_j\ne 0$; for $k+1< j\le k+\ell+1$, let $A_j$ has non-real eigenvalues with Jordan form $J_j=\begin{pmatrix}
		n_j & 1\\
		0 & n_j
	\end{pmatrix}$, where $n_j\ge 0$; and for $k+\ell+1< j\le N$, let $A_j$ has non-real eigenvalues with Jordan form $J_j=\text{diag}(p_j,q_j)$, where $p_j\le q_j$ with $(p_j,q_j)\ne (0,0)$. For each $j=2,\dots,N$, let $P_j^{-1}P_1 = \begin{pmatrix}
		a_j & b_j\\
		c_j & d_j
	\end{pmatrix}$. 	
	\begin{enumerate}[(a)]
		\item For each $2\le j\le k+1$, \[
		\tau_{1,2}^{(j)}(\lambda,\eta)=\frac{\alpha_j}{p_1}\mathcal{\eta}+\frac{1}{p_1}\cosh^{-1}\left[\frac{1}{2}\left((a_j^2+c_j^2)\lambda^2+\frac{b_j^2+d_j^2}{\lambda^2}\right)\right],
		\]			
		\item for each $k+1< j\le k+\ell+1$, \[
		\tau_{1,2}^{(j)}(\lambda,\eta)=\frac{n_j}{p_1}\mathcal{\eta}+\frac{1}{p_1}\sinh^{-1}\left[\frac{1}{2}\left(c_j^2 \lambda^2+\frac{d_j^2}{\lambda^2}\right)\right], \text{ and}
		\]			
		\item for each $k+\ell+1< j\le N$, $\tau_{1,2}^{(j)}(\lambda,\eta)$ is the unique solution $\tau\ge (q_j/p_1)\eta$ satisfying \[
		\frac{\sinh(p_1\tau-q_j\eta)\,\sinh(p_1\tau-p_j\eta)}{\sinh^2\left(\frac{q_j-p_j}{2}\right)\eta}=\left(\frac{b_jd_j}{\lambda^2}+a_jc_j\lambda^2\right)^2.
		\]
	\end{enumerate}
	Then for given $\epsilon>0$, let $\tau(\eta)$ equals
	\[\begin{cases}
		\inf_{\lambda>0}\max \left\{\tau_{1,2}^{(2)}(\lambda,\eta),\cdots,\tau_{1,2}^{(N)}(\lambda,\eta)\right\}, & \text{if the infimum over $\lambda$ is attained}\\
		\inf_{\lambda>0}\max \left\{\tau_{1,2}^{(2)}(\lambda,\eta),\cdots,\tau_{1,2}^{(N)}(\lambda,\eta)\right\}+\epsilon, & \text{otherwise}.
	\end{cases}
	\]
	Then the switched system~\eqref{eq:multisystem} is stable for all $\sigma\in \s_N[\tau(\eta),\eta]$ and asymptotically stable for all $\sigma\in\s_N'[\tau(\eta),\eta]$. 	
\end{theorem}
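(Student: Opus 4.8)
The plan is to reduce the stability of the star-graph system to the $N-1$ pairwise norm conditions already analyzed in the bimodal sections, but now sharing a single Jordan basis for the stable internal node $A_1$. Starting from the flow estimate \eqref{eq:solmm}, it suffices to produce $\tau(\eta)$ so that $\|M_j^{-1}e^{J_j s}M_j e^{J_1 t}\|<1$ for every leaf $j=2,\dots,N$ whenever $t>\tau(\eta)$ and $s<\eta$; then every factor in the product in \eqref{eq:solmm} is $<1$, and the bound forces $\|x(t)\|\to 0$ along signals in $\s_N'[\tau(\eta),\eta]$ exactly as in the bimodal case. The one freedom available is the scaling $D_1=\mathrm{diag}(\lambda,1/\lambda)$ of the basis $P_1$, which replaces each $M_j=P_j^{-1}P_1$ by $M_j(\lambda)=M_jD_1$ while leaving $J_1$ (hence $e^{J_1 t}$) untouched. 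The decisive simplification is that, since $0<p_1\le q_1$, one has $\|e^{J_1 t}\|=e^{-p_1 t}$, so submultiplicativity yields $\|M_j(\lambda)^{-1}e^{J_j s}M_j(\lambda)e^{J_1 t}\|\le e^{-p_1 t}\,\|M_j(\lambda)^{-1}e^{J_j s}M_j(\lambda)\|$, and the right-hand factor depends only on the data of the single leaf $A_j$ and on $\lambda$. This decoupling is what makes a common $\lambda$ tractable across all leaves simultaneously.

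First I would, for fixed $\lambda$, evaluate $\max_{0\le s\le\eta}\|M_j(\lambda)^{-1}e^{J_j s}M_j(\lambda)\|$ separately in the three leaf types and solve $e^{-p_1\tau}\cdot(\text{this maximum})=1$ for $\tau=\tau_{1,2}^{(j)}(\lambda,\eta)$. For a complex leaf, $e^{J_j s}=e^{\alpha_j s}R_{\beta_j s}$ with $R$ a rotation, and a short singular-value computation gives $\max_\theta\|W^{-1}R_\theta W\|=\|W\|^2$ for any $W$ with $|\det W|=1$; since $\|W\|^2+\|W\|^{-2}$ equals the sum of squares of the entries of $W=M_j(\lambda)$, this reproduces the $\cosh^{-1}$ expression of part (a), exactly as in Theorem~\ref{cc:12} with $A_1$ collapsed to the scalar $-p_1$. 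For a defective leaf, $e^{J_j s}=e^{n_j s}(I+sN)$ and conjugation gives $I+s\,W^{-1}NW$ with $W^{-1}NW$ rank-one nilpotent and $\|W^{-1}NW\|=c_j^2\lambda^2+d_j^2/\lambda^2$; using $\|I+xB\|=\theta(x\|B\|)$ for rank-one nilpotent $B$ together with the identity $\ln\theta(y)=\sinh^{-1}(y/2)$ yields part (b). For a real-diagonalizable leaf, writing $G=W^{-1}e^{J_j s}W$ one has $\det G=e^{(p_j+q_j)s}$ and $\mathrm{tr}(G^\top G)=e^{2p_j s}+e^{2q_j s}+\omega^2(e^{p_j s}-e^{q_j s})^2$, where $\omega=a_jc_j\lambda^2+b_jd_j/\lambda^2$ is the off-diagonal entry of $WW^\top$; setting the larger eigenvalue of $G^\top G$ equal to $e^{2p_1\tau}$ and factoring the resulting exponential sums into hyperbolic sines reproduces part (c), mirroring the analysis of Section~\ref{sec:RR}.

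Next I would record, as in the corresponding bimodal proofs, that for each leaf the worst value of $s$ on $[0,\eta]$ is attained at $s=\eta$ (monotonicity of the relevant functions in $s$ and $t$), so that $t>\tau_{1,2}^{(j)}(\lambda,\eta)$ indeed forces the $j$-th norm below $1$. Fixing $\lambda$, all $N-1$ conditions hold simultaneously once $t>\max_j\tau_{1,2}^{(j)}(\lambda,\eta)$; minimizing this common threshold over the single shared parameter gives $\tau(\eta)=\inf_{\lambda>0}\max_j\tau_{1,2}^{(j)}(\lambda,\eta)$, with the additive $\epsilon$ inserted exactly when the infimum is not attained. Choosing the (near-)optimal $\lambda$, i.e. the Jordan basis $P_1D_1$ for $A_1$, every factor in \eqref{eq:solmm} is strictly less than $1$ for $t>\tau(\eta)$ and $s<\eta$, which gives stability on $\s_N[\tau(\eta),\eta]$ and, since the factors are bounded away from $1$ along signals in $\s_N'[\tau(\eta),\eta]$, geometric decay and hence asymptotic stability.

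The main obstacle is precisely this minimax over the single shared scaling $\lambda$: in the bimodal setting each pair could be optimized using its own two scaling freedoms, whereas here one $\lambda$ must control all leaves at once. The key that resolves it is the decoupling above, which turns each leaf threshold into an explicit continuous function of $\lambda$; one then needs the coercivity $\tau_{1,2}^{(j)}(\lambda,\eta)\to\infty$ as $\lambda\to 0^+$ or $\lambda\to\infty$ (immediate from the Frobenius-type expressions, provided the relevant entries of $M_j$ do not all vanish) to guarantee that $\max_j\tau_{1,2}^{(j)}(\cdot,\eta)$ is coercive and its infimum is a genuine minimum, so that $\tau(\eta)$ is well-defined and attained at an interior $\lambda^\ast$.
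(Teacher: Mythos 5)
Your proposal is correct and follows essentially the same route as the paper: the paper likewise reduces to the leaf-wise conditions via the bound $\|e^{J_1 t}\|\le e^{-p_1 t}$ (phrased there as substituting $p_1=q_1$, i.e.\ using that the curve $\|D_1^{-1}P_1^{-1}P_j e^{J_j s}P_j^{-1}P_1 D_1 e^{-p_1 t}\|=1$ bounds the true level set from the right), obtains the three thresholds by repeating the arguments of Section~\ref{sec:RR12} and Theorems~\ref{rc:12} and~\ref{rn:12} with the single shared scaling $D_1=\mathrm{diag}(\lambda,1/\lambda)$, and then takes the minimax over $\lambda$, adding $\epsilon$ when the infimum is not attained. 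Your explicit per-leaf computations (rotation conjugation for part (a), rank-one nilpotent with $\ln\theta(y)=\sinh^{-1}(y/2)$ for part (b), and the trace--determinant identity with $\omega=a_jc_j\lambda^2+b_jd_j/\lambda^2$ for part (c)) reproduce the stated formulas, and in part (b) your derivation gives $\sinh^{-1}\bigl(\tfrac{\eta}{2}(c_j^2\lambda^2+d_j^2/\lambda^2)\bigr)$, consistent with Theorems~\ref{cn:12} and~\ref{star:defectivestable}, which indicates the missing factor $\eta$ in the theorem's displayed formula is a typo in the statement rather than a gap in your argument.
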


\begin{proof}
	Since the eigenvalues of $A_1$ are $-p_1<0,-q_1<0$, its Jordan form $J_1=\text{diag}(-p_1,-q_1)$. Let $D_1=\text{diag}(\lambda,1/\lambda)$ be the scaling matrix with $\lambda>0$. We will use the fact that the curve $\|D_1^{-1}P_1^{-1}P_{j}e^{J_{j}s}P_{j}^{-1}P_1D_1e^{-p_1 t}\|=1$ bounds the set $\|D_1^{-1}P_1^{-1}P_{j}e^{J_{j}s}P_{j}^{-1}P_1D_1e^{J_{1}t}\|=1$ from the right in the $ts$-plane. Substituting $p_1=q_1$ in the relevant results in Section~\ref{sec:RR12}, Theorems~\ref{rc:12}, and~\ref{rn:12}, and repeating the steps therein, we get the expressions for $\tau_{1,2}^{(j)}(\lambda,\eta)$ as given in the statement of the theorem and hence the result follows.
\end{proof}

\begin{theorem}\label{star:defectivestable}
	Let $A_1$ be a defective matrix with Jordan form $J_1=\begin{pmatrix}
		-n_1 & 1\\0& -n_1
	\end{pmatrix}$, where $n_1>0$. Let $A_2,\dots,A_{k+1}$ have a pair of non-real eigenvalues; $A_{k+2},\dots,A_{k+\ell+1}$ be defective; and $A_{k+\ell+2},\dots,A_N$ be real-diagonalizable. Let $P_j^{-1}P_1$ and $J_j$ be as defined in Theorem~\ref{star:realstable}. If $\tau(\eta)=\min_{\epsilon}\max \left\{\tau_{1,2}^{(2)}(\epsilon,\eta),\cdots,\tau_{1,2}^{(N)}(\epsilon,\eta)\right\}$, where \begin{enumerate}[(a)]
		\item for each $2\le j\le k+1$, $\tau_{1,2}^{(j)}(\epsilon,\eta)$ is the unique positive solution $\tau$ of
		\[
		n_1\tau-\ln\theta(\tau)=\alpha_j\eta+\cosh^{-1}\left(\frac{a_j^2+(b_j+\epsilon a_j)^2+c_j^2+(d_j+\epsilon c_j)^2}{2}\right).
		\]			
		\item for each $k+1< j\le k+\ell+1$, $\tau_{1,2}^{(j)}(\epsilon,\eta)$ is the unique positive solution $\tau$ of\[
		n_1\tau-\ln\theta(\tau)=n_j\eta+\sinh^{-1}\left(\left(\frac{c_j^2+(d_j+\epsilon c_j)^2}{2}\right)\eta\right).
		\]			
		\item for each $k+\ell+1< j\le N$, $\tau_{1,2}^{(j)}(\epsilon,\eta)$ is the unique positive solution $\tau$ of \[
		n_1\tau-\ln\theta(\tau)=\left(\frac{p_j+q_j}{2}\right)\eta+\sinh^{-1}\left(\sqrt{K_j(\epsilon)}\sinh\left(\frac{q_j-p_j}{2}\right)\eta\right),
		\]
	\end{enumerate}
	where $K_j(\epsilon)=(a_j^2+(b_j+\epsilon a_j)^2)(c_j^2+(d_j+\epsilon c_j)^2)$. Then the switched system~\eqref{eq:multisystem} is stable for all $\sigma\in\s_N[\tau(\eta),\eta]$ and is asymptotically stable for all $\sigma\in\s_N'[\tau(\eta),\eta]$.
\end{theorem}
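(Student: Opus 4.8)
The plan is to follow the scheme of Theorem~\ref{star:realstable}, with the essential new feature that the stable subsystem $A_1$ is now defective. The only freedom in choosing a Jordan basis matrix for a defective $A_1$ is a shear: every such matrix has the form $P_1E_1$ with $E_1=\begin{pmatrix}1&\epsilon\\0&1\end{pmatrix}$, $\epsilon\in\mathbb{R}$. Replacing $P_1$ by $P_1E_1$ transforms \emph{every} transition matrix at once, $M_j=P_j^{-1}P_1\mapsto M_jE_1$, so a single scalar $\epsilon$ is simultaneously responsible for all the unstable modes $A_2,\dots,A_N$. This coupling is exactly what forces the dwell time to be the value of a minimax problem, in contrast to the independent, mode-by-mode optimizations carried out in the bimodal Theorems~\ref{nn:12},~\ref{nc:12}, and~\ref{nr:12}.

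First I would fix $\epsilon$ and estimate each factor $\|M_j(\epsilon)^{-1}e^{J_j s}M_j(\epsilon)e^{J_1 t}\|$ in~\eqref{eq:solmm} separately, where $M_j(\epsilon)=M_jE_1$, splitting into the three cases according to the Jordan form of $A_j$. For $2\le j\le k+1$ (non-real eigenvalues) I would reuse conditions (C1)--(C2) from the proof of Theorem~\ref{nc:12} but stop \emph{before} minimizing over the shear: the bounding curve $L_+$ gives $n_1\tau-\ln\theta(\tau)=\alpha_j\eta+\sinh^{-1}R_j(\epsilon)$ with $R_j(\epsilon)=\sqrt{\tfrac14\big(a_j^2+(b_j+\epsilon a_j)^2+c_j^2+(d_j+\epsilon c_j)^2\big)^2-1}$. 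Since $|\det M_j(\epsilon)|=1$ forces the squared Frobenius norm $a_j^2+(b_j+\epsilon a_j)^2+c_j^2+(d_j+\epsilon c_j)^2$ to be at least $2$, the radicand is nonnegative and the identity $\sinh^{-1}\sqrt{x^2-1}=\cosh^{-1}x$ (for $x\ge1$) rewrites this relation as the formula in part~(a). For $k+1<j\le k+\ell+1$ (defective) and for $k+\ell+1<j\le N$ (real-diagonalizable) I would likewise reuse the proofs of Theorems~\ref{nn:12} and~\ref{nr:12}, keeping $\epsilon$ free instead of minimizing it, which reproduces parts~(b) and~(c) verbatim, with coefficient $\tfrac12(c_j^2+(d_j+\epsilon c_j)^2)$ and with $K_j(\epsilon)$ respectively. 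In every case the conclusion is that the corresponding factor is strictly less than $1$ whenever $t>\tau_{1,2}^{(j)}(\epsilon,\eta)$ and $s<\eta$.

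Next I would combine the modes. For fixed $\epsilon$, every factor in the product~\eqref{eq:solmm} is at most $1$ once $t>\max_{2\le j\le N}\tau_{1,2}^{(j)}(\epsilon,\eta)$ and $s<\eta$, so $\tau(\eta)$ is obtained by minimizing this maximum over $\epsilon$. Each map $\epsilon\mapsto\tau_{1,2}^{(j)}(\epsilon,\eta)$ is continuous, and since $M_j$ is invertible its first column $(a_j,c_j)^{\top}$ cannot vanish; a short check then shows the governing coefficient $R_j(\epsilon)$, $\tfrac12(c_j^2+(d_j+\epsilon c_j)^2)$, or $K_j(\epsilon)$ diverges as $|\epsilon|\to\infty$ for every non-defective mode and for every defective mode with $c_j\ne0$. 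Hence the pointwise maximum is coercive, except in the degenerate situation where all modes are defective with $c_j=0$, in which case it is constant in $\epsilon$; either way the outer minimum over $\epsilon$ is genuinely attained, yielding $\tau(\eta)$ exactly as stated. With this $\tau(\eta)$, for $\sigma\in\s_N[\tau(\eta),\eta]$ each factor of~\eqref{eq:solmm} is bounded by $1$ and $\|x(t)\|\le\zeta\xi\|x(0)\|$, giving stability. For $\sigma\in\s_N'[\tau(\eta),\eta]$ the tuples $(t_k,s_k)$ stay a fixed distance $\delta>0$ from the corner $(\tau(\eta),\eta)$ infinitely often; by continuity of the norm on the compact parameter region those factors are uniformly bounded by some $\rho<1$, so the product tends to $0$ and the system is asymptotically stable.

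The main obstacle is conceptual rather than computational: recognizing $\epsilon$ as a single global degree of freedom tied to the Jordan basis of the defective $A_1$ and shared by all transition matrices $M_j$. This is what rules out the independent per-mode minimizations of the bimodal theorems and replaces them by one joint minimax. Once this is understood, each inner maximization is handled by the already-established bimodal estimates, and the only remaining point---existence of a true minimizer in $\epsilon$---follows from the coercivity of the maximum noted above.
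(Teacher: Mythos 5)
Your proposal is correct and follows essentially the same route as the paper, whose proof simply cites Theorems~\ref{nc:12},~\ref{nn:12}, and~\ref{nr:12} for the three expressions (your $\sinh^{-1}\sqrt{x^2-1}=\cosh^{-1}x$ rewriting in part~(a) included) and invokes ``standard continuity arguments'' for the existence of the minimax value. Your additional observations---that the shear $\epsilon$ is a single global parameter shared by all $M_j=P_j^{-1}P_1$, and that coercivity of the pointwise maximum (outside the degenerate all-defective, $c_j=0$ case) guarantees the minimum in $\epsilon$ is attained---merely make explicit what the paper leaves implicit.
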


\begin{proof}
	The expressions for $\tau_{1,2}^{(j)}(\epsilon,\eta)$ are obtained by Theorems~\ref{nn:12},~\ref{nc:12},~\ref{nr:12}, and the result follows. Using standard continuity arguments, the existence of $\tau(\eta)$ (given as the solution of a minimax problem) can be shown.
\end{proof}

\begin{theorem}\label{star:non-realstable}
	Let $A_1$ has non-real eigenvalues with Jordan form $J_1=\begin{pmatrix}
		-\alpha_1 &\beta_1\\ -\beta_1 & -\alpha_1
	\end{pmatrix}$, where $\alpha_1>0$ and $\beta_1\ne 0$. Let $A_2,\dots,A_{k+1}$ have a pair of non-real eigenvalues, $A_{k+2},\dots,A_{k+\ell+1}$ be defective, and $A_{k+\ell+2},\dots,A_N$ be real-diagonalizable. Let $P_j^{-1}P_1$ and $J_j$ be as defined in Theorem~\ref{star:realstable}. Let $\tau(\eta)=\max_{j\ge 2}\,\left\{\tau_{1,2}^{(j)}(\eta)\right\}$, where \begin{enumerate}[(a)]
		\item for each $2\le j\le k+1$, $\tau_{1,2}^{(j)}(\eta)$ is the unique positive solution $\tau$ of
		\[
		\tau_{1,2}(\eta)=\frac{\alpha_j}{\alpha_1} \eta+\frac{1}{\alpha_1}\cosh^{-1}\left(\frac{a_j^2+b_j^2+c_j^2+d_j^2}{2}\right).
		\]		
		\item for each $k+1< j\le k+\ell+1$, $\tau_{1,2}^{(j)}(\eta)$ is the unique positive solution $\tau$ of\[
		\tau_{1,2}(\eta)=\frac{n_j}{\alpha_1}\eta+\frac{1}{\alpha_1}\sinh^{-1}\left(\left(\frac{c_j^2+d_j^2}{2}\right)\eta\right).
		\]
		
		\item for each $k+\ell+1< j\le N$, $\tau_{1,2}^{(j)}(\eta)$ is the unique positive solution $\tau$ of \[
		\tau_{1,2}(\eta)=\left(\frac{q_j+p_j}{2\alpha_1}\right)\eta+\frac{1}{\alpha_1} \sinh^{-1}\left(\sqrt{(a_j^2+b_j^2)(c_j^2+d_j^2)}\sinh\left(\frac{q_j-p_j}{2}\right)\eta\right).
		\]	
	\end{enumerate}
	Then the switched system~\eqref{eq:multisystem} is stable for all $\sigma\in\s_N[\tau(\eta),\eta]$ and is asymptotically stable for all $\sigma\in\s_N'[\tau(\eta),\eta]$.
\end{theorem}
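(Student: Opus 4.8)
The plan is to reduce the multimodal statement to the bimodal results already established for a stable subsystem with non-real eigenvalues, namely Theorems~\ref{cc:12},~\ref{cn:12}, and~\ref{cr:12}, and then to pass from the per-leaf bounds to a uniform one through the flow estimate~\eqref{eq:solmm}. First I would record the structural fact that makes the answer a plain maximum rather than a minimax (in contrast with Theorems~\ref{star:realstable} and~\ref{star:defectivestable}): since $A_1$ has non-real eigenvalues, its real Jordan basis is determined up to a nonzero scalar, and such a scalar cancels in $M_j=P_j^{-1}P_1$ inside the conjugation $M_j^{-1}(\cdot)M_j$, so a single fixed $P_1$ serves every leaf simultaneously. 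Moreover, for each leaf the norm $\|M_j^{-1}e^{J_j s}M_j e^{J_1 t}\|$ in the ordering appearing in~\eqref{eq:solmm} is independent of the scaling of the leaf's own Jordan basis, exactly as recorded in the proofs of Theorems~\ref{cc:12},~\ref{cn:12}, and~\ref{cr:12}. Hence no free parameter remains to optimize, and each leaf contributes a single well-defined threshold.

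Next, for each $j\ge 2$ I would invoke the bimodal theorem dictated by the Jordan type of $A_j$: Theorem~\ref{cc:12} for the non-real leaves $2\le j\le k+1$, Theorem~\ref{cn:12} for the defective leaves $k+1<j\le k+\ell+1$, and Theorem~\ref{cr:12} for the real-diagonalizable leaves $k+\ell+1<j\le N$. Under the substitutions $\alpha_2\mapsto\alpha_j$, $n_2\mapsto n_j$, $p_2\mapsto p_j$, $q_2\mapsto q_j$, and $(a,b,c,d)\mapsto(a_j,b_j,c_j,d_j)$, these yield precisely the expressions for $\tau_{1,2}^{(j)}(\eta)$ in parts (a), (b), and (c), together with the guarantee that $\|M_j^{-1}e^{J_j s}M_j e^{J_1 t}\|\le 1$ on the rectangle $\{t\ge\tau_{1,2}^{(j)}(\eta),\,s\le\eta\}$, with equality only at the corner $(\tau_{1,2}^{(j)}(\eta),\eta)$.

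Setting $\tau(\eta)=\max_{j\ge 2}\tau_{1,2}^{(j)}(\eta)$, for every $t\ge\tau(\eta)$ and $s\le\eta$ we then have $\|M_j^{-1}e^{J_j s}M_j e^{J_1 t}\|\le 1$ simultaneously for all $j$, and strictly below $1$ except possibly at the single corner $(\tau(\eta),\eta)$ for a maximizing leaf. Substituting into~\eqref{eq:solmm}, every factor of the product is $\le 1$, so $\|x(t)\|\le\zeta\xi\|x(0)\|$ and the system is stable for $\sigma\in\s_N[\tau(\eta),\eta]$. For asymptotic stability when $\sigma\in\s_N'[\tau(\eta),\eta]$, I would show that on $\{t\ge\tau(\eta),\,s\le\eta\}$ each norm is uniformly bounded by some $\rho<1$ away from a neighborhood of the corner: on the compact part this follows from continuity and strictness of the inequality, while for large $t$ it follows from the submultiplicative estimate $\|M_j^{-1}e^{J_j s}M_j e^{J_1 t}\|\le\|M_j^{-1}e^{J_j s}M_j e^{J_1\tau(\eta)}\|\,e^{-\alpha_1(t-\tau(\eta))}$, using $\|e^{J_1 u}\|=e^{-\alpha_1 u}$. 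Since $(t_k,s_k)$ does not converge to $(\tau(\eta),\eta)$, infinitely many factors are $\le\rho$, forcing the product in~\eqref{eq:solmm} to decay and hence $\|x(t)\|\to 0$.

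The main obstacle I anticipate is the bookkeeping required to make the single Jordan basis $P_1$ of the stable subsystem serve all leaves at once and to verify that the leaf-scaling genuinely drops out of the relevant ordering, since this is exactly the point separating the clean maximum here from the minimax of Theorems~\ref{star:realstable} and~\ref{star:defectivestable}. The uniform-$\rho$ step for asymptotic stability, where the unboundedness in $t$ must be controlled by the exponential decay of $e^{J_1 t}$, is the other place demanding care.
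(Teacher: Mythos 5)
Your proposal is correct and follows essentially the same route as the paper, whose proof simply invokes Theorems~\ref{cc:12},~\ref{cn:12}, and~\ref{cr:12} leaf-by-leaf and combines the resulting bounds through~\eqref{eq:solmm} via the maximum $\tau(\eta)=\max_{j\ge 2}\tau_{1,2}^{(j)}(\eta)$. The extra details you supply --- the scalar ambiguity of $P_1$ cancelling in the conjugations $M_j^{-1}(\cdot)M_j$ (which is why no minimax arises here, unlike Theorems~\ref{star:realstable} and~\ref{star:defectivestable}), and the uniform $\rho<1$ argument using $\|e^{J_1 u}\|=e^{-\alpha_1 u}$ for the asymptotic-stability claim on $\s_N'[\tau(\eta),\eta]$ --- are sound elaborations of steps the paper leaves implicit.
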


\begin{proof}
	The proof follows from Theorems~\ref{cr:12},~\ref{cn:12}, and~\ref{cc:12}.
\end{proof}
\begin{remark}
	If $A_1$ is an unstable matrix and all the other subsystem matrices $A_2,\dots,A_N$ are Hurwitz in Theorems~\ref{star:realstable},~\ref{star:defectivestable} and~\ref{star:non-realstable}, then we can use corresponding results for the bimodal system~\eqref{eq:system} for computing $\tau_{2,1}$ instead of $\tau_{1,2}$ and obtain dwell-flee relations as in Theorems~\ref{star:realstable},~\ref{star:defectivestable}, and~\ref{star:non-realstable}. 
\end{remark}
\section{Examples}\label{sec:examples}
In Remark~\ref{rem:examplesRR} and Examples~\ref{example:rc}, and~\ref{example:nr}, we will compare the dwell-flee relations obtained in this paper with the dwell-flee relations given in~\cite{agarwal2018simple} with which direct comparisons are possible. For given flee time $\eta>0$, we will denote the dwell time obtained in~\cite{agarwal2018simple} as $\tau_A(\eta)$. 

\begin{remark}\label{rem:examplesRR}
	Consider a switched system~\eqref{eq:system} with both subsystems real-diagonalizable. Refer to Theorems~\ref{rr:12zero} and~\ref{rr:21zero} for dwell-flee relations.
	
	Let $J_1=\text{diag}(-p_1,-q_1)$ and $J_2=\text{diag}(p_2,q_2)$ with $0<p_1\le q_1$, $q_2>0$, $p_2\le q_2$. Let $V_1=\begin{bmatrix} v_1 & w_1\end{bmatrix}$ and $V_2=\begin{bmatrix} v_2 & w_2	\end{bmatrix}$ be Jordan basis matrices, \textit{having unit norm columns}, of $A_1$ and $A_2$ with Jordan forms $J_1$ and $J_2$, respectively. 
	\begin{itemize}
		\item If $v_1=v_2$, or if $w_1=w_2$, then one of the off-diagonal entries of $V_2^{-1}V_1$ is $0$. In which case,
		\[
		\tau_A(\eta)=\frac{q_2}{p_1}\eta+\frac{\ln\left(\|V_2^{-1}V_1\|\, \|V_1^{-1}V_2\|\right)}{p_1}>\max\left(\frac{q_2}{q_1},\frac{p_2}{p_1}\right)\eta+\epsilon_0=\tau(\eta),
		\]
		for any $0<\epsilon_0< p_1^{-1}\ln\left(\|V_2^{-1}V_1\|\, \|V_1^{-1}V_2\|\right)$. Note that $\lim_{\eta\to\infty}\tau_A(\eta)-\tau(\eta)\to \infty$.		
		\item If $v_1=w_2$, or if $v_2=w_1$, then one of the diagonal entries of $V_2^{-1}V_1$ is $0$. Then
		\[
		\tau_A(\eta)=\frac{q_2}{p_1}\eta+\frac{\ln\left(\|V_2^{-1}V_1\|\, \|V_1^{-1}V_2\|\right)}{p_1}>\frac{q_2}{p_1}\eta+\epsilon_0=\tau(\eta),
		\]
		for any $0<\epsilon_0< p_1^{-1}\ln\left(\|V_2^{-1}V_1\|\, \|V_1^{-1}V_2\|\right)$. Note that $\tau_A(\eta)-\tau(\eta)$ is bounded.
	\end{itemize}
\end{remark}

\begin{example}\label{example:rc}
	Consider a Hurwitz stable (real-diagonalizable) matrix $A_1=\begin{pmatrix}	-0.1& 0\\ 0.4 & -0.2\end{pmatrix}$ and an unstable matrix $A_2=\begin{pmatrix}0 & 1\\ -2& 1\end{pmatrix}$ with non-real eigenvalues. 
	
	For computing dwell-flee relation given in~\cite{agarwal2018simple}, take the Jordan basis matrix $V_1=\begin{pmatrix}
		1/\sqrt{17} & 0\\ 4/\sqrt{17}&1
	\end{pmatrix}$ of $A_1$ having unit norm columns with Jordan form $J_1=\text{diag}\left(-0.1,-0.2\right)$ and the Jordan basis matrix $V_2=\sqrt{\frac{2}{3}}\begin{pmatrix}\frac{1}{4}+\mathrm{i}\frac{\sqrt{7}}{4}&\frac{1}{4}-\mathrm{i}\frac{\sqrt{7}}{4}\\1 &1\end{pmatrix}$ of $A_2$ having unit norm columns with complex Jordan form $\text{diag}\left(\frac{1}{2}-\mathrm{i}\frac{\sqrt{7}}{2},\,\frac{1}{2}+\mathrm{i}\frac{\sqrt{7}}{2}\right)$. Then  $\tau_A(\eta)=5\eta+17.05$. 
	
	Now take a Jordan basis matrix $P_1$ of $A_1$ with Jordan form $J_1$ and a Jordan basis matrix $P_2$ of $A_2$ with real Jordan form $J_2=\begin{pmatrix}1/2 & \sqrt{7}/2\\-\sqrt{7}/2& 1/2 \end{pmatrix}$ such that $M=P_2^{-1}P_1$ has determinant 1. Then $M=\dfrac{1}{\sqrt[4]{7}}\begin{pmatrix}
		\sqrt[2]{7}&\sqrt[2]{7}\\0&1
	\end{pmatrix}$. The dwell-flee relation $\tau_{1,2}(\eta)$ is given by $\eta=0.3\,\tau_{1,2}(\eta)-2\sinh^{-1}\left(\sqrt{7}\cosh\,0.05\,\tau_{1,2}(\eta)+\sinh\, 0.05\,\tau_{1,2}(\eta)\right)$ (Theorem~\ref{rc:12}) and $\tau_{2,1}(\eta)$ is given by $\eta=0.3\, \tau_{2,1}(\eta)-2\sinh^{-1}\left(\sqrt{8}\, \sinh\, 0.05\ \tau_{2,1}(\eta)\right)$ (Theorem~\ref{rc:21}). By Remark~\ref{rem:rcbetter}, $\tau(\eta)=\min\{\tau_{1,2}(\eta),\tau_{2,1}(\eta)\}=\tau_{2,1}(\eta)$. The graphs of $\tau$ and $\tau_A$ are shown in Figure~\ref{fig:rcexample} (left). 
	
	\begin{figure}[h!]
		\centering
		\includegraphics[scale=0.4]{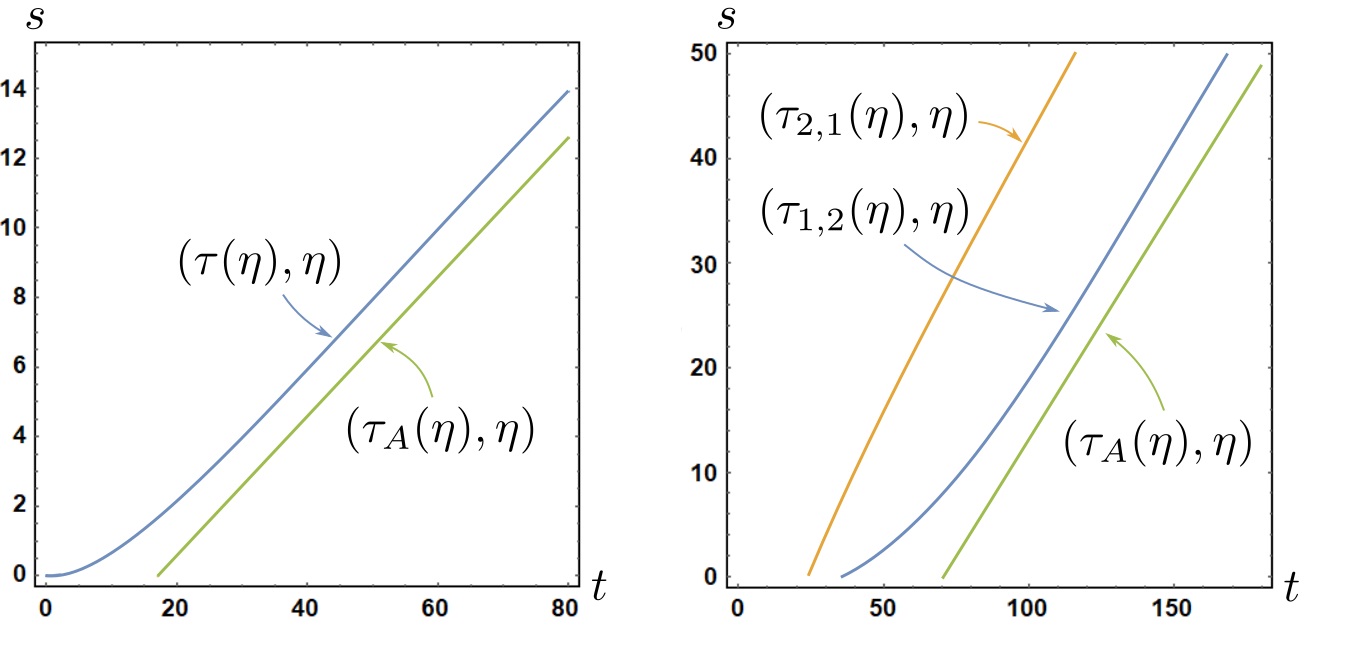}
		\caption{Comparison of dwell-flee relations for system in Example~\ref{example:rc} (left) and system in Example~\ref{example:nr} (right). }
		\label{fig:rcexample}
	\end{figure}
\end{example}

\begin{example}\label{example:nr}
	Consider a Hurwitz stable (defective) matrix $A_1=\begin{pmatrix}-0.1 & \sqrt{2}\\ 0 & -0.1\end{pmatrix}$ and an unstable (real-diagonalizable) matrix $A_2=\begin{pmatrix} 0.1 & 0\\-0.4 & 0.2\end{pmatrix}$. 
	
	For computing the dwell-flee relation given in~\cite{agarwal2018simple}, take the Jordan basis matrix $V_1=\begin{pmatrix}
		1& 1/\sqrt{2}\\ 0&1/\sqrt{2}
	\end{pmatrix}$ of $A_1$ having unit norm columns with Jordan form $J_1=\begin{pmatrix}-0.1&1\\0&-0.1 \end{pmatrix}$ and the Jordan basis matrix $V_2=\begin{pmatrix}1/\sqrt{17} &0\\4/\sqrt{17}&1\end{pmatrix}$ of $A_2$ with unit norm columns with Jordan form $J_2=\text{diag}\left(0.1,\,0.2\right)$. Then $\tau_A(\eta)=\frac{0.2\eta+\ln c_{\theta_1}+2.75}{\theta_1}$,
	for any $0<\theta_1<n$ where $\|{\rm{e}}^{J_1 t}\|\le c_{\theta_1}{\rm{e}}^{-\theta_1 t}$. The least value of $\tau_A(1)$ is $72.36$ which is attained at $\theta_1=0.086$, of $\tau_A(5)$ is $81.56$ which is attained at $\theta_1=0.088$, and of $\tau_A(10)$ is $92.86$ which is attained at $\theta_1=0.089$. 
	
	Now take a Jordan basis matrix $P_1$ of $A_1$ with Jordan form $J_1$ and a Jordan basis matrix $P_2$ of $A_2$ with Jordan form $J_2$ such that the determinant of $M=P_2^{-1}P_1$ is 1. Then $M=\begin{pmatrix}
		2\sqrt[4]{2}&0\\-2\sqrt[4]{2}&1/2\sqrt[4]{2}
	\end{pmatrix}$. Using Theorem~\ref{nr:12}, the dwell-flee relation for $\tau_{1,2}(\eta)$ is given by $0.1\, \tau_{1,2}(\eta)-\ln\theta(\tau_{1,2}(\eta))=0.15\,\eta+\sinh^{-1}\left(5.7\,\sinh 0.05\,\eta\right)$ and using Theorem~\ref{nr:21}(ii)(b), the dwell-flee relation for $\tau_{2,1}(\eta)$ is given by $4\sqrt{2}\cosh 0.05\,\eta=\sinh 0.05\,\eta+\sinh\left(0.1\, \tau_{2,1}(\eta)-0.15\,\eta\right)$. Hence $\tau_{1,2}(1)=41.60$, $\tau_{1,2}(5)=60.07$ and $\tau_{1,2}(10)=76.46$, and $\tau_{2,1}(1)=25.76$, $\tau_{2,1}(5)=31.71$ and $\tau_{2,1}(10)=39.68$. Since $\tau(\eta)=\min\{\tau_{1,2}(\eta),\tau_{2,1}(\eta)\}$, $\tau(1)=25.76$, $\tau(5)=31.71$, and $\tau(10)=39.68$. Refer to Figure~\ref{fig:rcexample} (right) where $\tau_A$ (for $\theta_1=0.089$), $\tau_{1,2}$ and $\tau_{2,1}$ are plotted together.
\end{example}

\begin{example}\label{example:nn}
	Consider a Hurwitz stable (defective) matrix $A_1=\begin{pmatrix}
		-0.1&1\\0&-0.1
	\end{pmatrix}$ with Jordan form $J_1=A_1$, and an unstable (defective) matrix $A_2=\begin{pmatrix}
		-2.8&9\\-1&3.2
	\end{pmatrix}$, with $J_2=\begin{pmatrix}
		0.2&1\\0&0.2
	\end{pmatrix}$. Both $A_1$ and $A_2$ are defective matrices. The matrix $A_1$ is already in normal form, hence $P_1$ is the identity matrix. Consider Jordan basis matrix $P_2=\begin{pmatrix}
		3&8\\1&3
	\end{pmatrix}$, then $M=P_2^{-1}P_1=\begin{pmatrix}
		3&-8\\-1&3
	\end{pmatrix}$ of $A_2$ with Jordan form $J_2$. We will now use the notations and results from Section~\ref{sec:NN} to discuss stability of the system~\eqref{eq:system} with subsystem matrices given here. Using Theorem~\ref{nn:12}, for fixed $E_1$ and $E_2$, the dwell-flee relation $\tau_{1,2}(P_1 E_1,P_2 E_2,\eta)$ is given by the unique positive solution $\tau$ of \[
	0.1\,\tau-\ln\theta(\tau)=0.2\,\eta+\sinh^{-1}\left(0.5\left(1+(3-\epsilon_1)^2\right)\eta\right).
	\]
	Observe that the relation does not depend on $\epsilon_2$. Refer to Figure~\ref{fig:dwell-flee_variation} for the graphs of $\tau_{1,2}(P_1 E_1,P_2 E_2,\eta)$ as functions of $\eta$, for $\epsilon_1=3,0,12$. With this procedure, the ``best" dwell-flee relation is obtained when $\epsilon_1=3$. Let us take $\eta=10$, then $\tau_{1,2}(10)=87.89$. 
	
	Using Theorem~\ref{nn:21}, for fixed $E_1$ and $E_2$, the dwell-flee relation $\tau_{2,1}(P_1 E_1,P_2 E_2,\eta)$ is given by the unique positive solution $\tau$ of \[
	0.2\,\eta+\ln\theta(\eta)=0.1\,\tau-\sinh^{-1}\left(0.5\left(1+(3+\epsilon_2)^2\right)\tau\right).
	\]
	Observe that the relation does not depend on $\epsilon_1$. With this procedure, the ``best" dwell-flee relation is obtained when $\epsilon_2=-3$, in which case, $\tau_{2,1}(10)=87.89$.
	
	Hence $\tau(10)=\min\{\tau_{1,2}(10),\tau_{2,1}(10)\}=87.89$. Consider a switching signal $\sigma\in \mathcal{S}[87.89,10]$ for which $\sigma(0)=1$ and time spent after each switch, $\Delta_k=d_k-d_{k-1}$, for $k=1,2,\cdots,12$, is randomly generated in Mathematica 11.0, such that $\Delta_k\in(5,10)$, for even $k$ (time spent in the unstable subsystem $A_2$) and $\Delta_k\in(87.89,92)$, for $k$ odd (time spent in the stable subsystem). The solution trajectory of the switched system with initial condition $(10,-5)^T$ and the randomly generated switched signal $\sigma$ with $\left(\Delta\right)_{k=1}^{12}$ given by 
	\[
	(90.71, 6.26, 90.3, 9.69, 88.21, 6.88, 89.63, 9.91, 88.56, 7.12, 90.05, 6.96),
	\]
	converges to the origin, see Figure~\ref{fig:dwell-flee_variation}.
	\begin{figure}[h!]
		\centering
		\includegraphics[scale=0.5]{}
		\caption{Dwell-flee relations for $\epsilon_1=0, 3, 12$ (left); the $x_1(t)$ and $x_2(t)$ curves of the solution trajectory $x(t)=(x_1(t),x_2(t))^T$ with initial condition $(10,-5)^T$ and switching signal $\sigma$ with $\left(\Delta_k\right)_{k=1}^{12}$ described in Example~\ref{example:nn} (right). }
		\label{fig:dwell-flee_variation}
	\end{figure}
\end{example}

\section*{Funding}

The first named author is supported by the Senior Research Fellowship provided by the Indian Institute of Science Education and Research Bhopal.

\appendix

\section{Details of proofs}\label{ap:proof}

\noindent The following notation will be used in this section $\alpha_i=p_i+q_i$ and $\beta_i=q_i-p_i$, for $i=1,2$.

\begin{proof}[Few details of proof of Theorem~\ref{rr:12zero}]
	The following three statements are true about the sequence of functions $\{T_k\}_{k\in\mathbb{N}}$:
	\begin{enumerate}
		\item Each $T_k$ is continuous on $[0,\eta]$.
		\item $\{T_k\}$ converges pointwise to the function $T(s)$ since $b^2d^2/k^4\to 0$ as $k\to\infty$. 
		\item $T_k(s)\ge T_{k+1}(s)$, for all $s\in[0,\eta]$ and each $k\in\mathbb{N}$.
	\end{enumerate}
	Hence the sequence of functions $\{T_k\}_{k\in\mathbb{N}}$ converges uniformly to the function $\max\left(q_2/q_1,\,p_2/p_1\right)s$ on the closed interval $[0,\eta]$, refer~\cite[Theorem 7.13]{rudin1976principles}.	
\end{proof}

\begin{proof}[Details of proof of Theorem~\ref{rr:12abcd>0}]
	The following facts are either used or referred to in the proof.
	\begin{itemize}
		\item $(\ell_+)_t>0$ in $Q_1$ follows immediately from the expression of $(\ell_+)_t$.
		\item $(\ell_+)_s<0$ if and only if \begin{eqnarray}\label{lpluss}
			q_2e^{p_2 s}-p_2e^{q_2 s}<\beta_2(e^{q_1t}-e^{p_1 t})+e^{\alpha_1t}(q_2 e^{-p_2 s}-p_2e^{-q_2 s}).
		\end{eqnarray}		
		Since $q_2 e^{-p_2 s}-p_2e^{-q_2 s}>0$, the term on the right of the inequality~\eqref{lpluss} is increasing in $t$, and hence is bounded below by the following function in $\Omega$: $\beta_2\left(e^{\frac{\alpha_2}{\alpha_1}q_1s}-e^{\frac{\alpha_2}{\alpha_1} p_1 s}\right)+(q_2 e^{q_2 s}-p_2e^{p_2 s})$. This function clearly is an upper bound for the term on the left of the inequality~\eqref{lpluss}. Hence, $(\ell_+)_s<0$ in $\Omega$.
		\item Similarly, the term on the right of the inequality~\eqref{lpluss} is bounded below by the following function in $\Omega_3$: $\beta_2(e^{q_2 s}-e^{\frac{p_1q_2 s}{q_1}})+e^{\frac{q_2p_1 s}{q_1}}(q_2e^{\beta_2s}-p_2)$. Moreover the term on the left of the inequality~\eqref{lpluss} is bounded above by $\beta_2e^{q_2 s}$. Since $\beta_2e^{q_2 s}\le \beta_2(e^{q_2 s}-e^{\frac{p_1q_2 s}{q_1}})+e^{\frac{q_2p_1 s}{q_1}}(q_2e^{\beta_2s}-p_2)$, we can conclude that $(\ell_+)_s<0$ in $\Omega_3$.
		\item $(\ell_-)_s(t,s)>0$ if and only if \[
		F_1(t,s)=e^{\alpha_1t}\left(q_2e^{-p_2 s}-p_2e^{-q_2 s}\right)+p_2e^{q_2 s}-q_2e^{p_2 s}>\beta_2(e^{q_1t}-e^{p_1 t})=F_2(t,s).
		\]
		We claim that $(F_1)_t(t,s)>(F_2)_t(t,s)$ for all $(t,s)\in\Omega_0$. Note that $(F_1)_t(t,s)> (F_2)_t(t,s)$ if and only if $\frac{\alpha_1}{\beta_2}\left(q_2e^{-p_2 s}-p_2e^{-q_2 s}\right)>(q_1e^{-p_1t}-p_1e^{-q_1 t})$. The term on the left decreases with $s$. For a point $(t,s)\in \Omega_0$, $0<s<(p_1/p_2)t$, thus the above inequality holds true in $\Omega_0$ if $\frac{\alpha_1}{\beta_2}\left(q_2-p_2e^{-\left(\frac{p_1q_2}{p_2}-p_1\right) t}\right)>q_1-p_1e^{-\beta_1 t}$, which is true since the function on the right of the inequality is bounded above by $q_1$, and the function of the left of the inequality is bounded below by $\alpha_1$. Since we have proved $(F_1)_t>(F_2)_t$ in $\Omega_0$, if we show that $F_1\left(\frac{p_2}{p_1}s,s\right)\ge F_2\left(\frac{p_2}{p_1}s,s\right)$, that is, $F_1\ge F_2$ on $\partial\Omega_0$, we will get $(\ell_-)_s>0$ in $\Omega_0$. The aforementioned inequality is true since $(\ell_-)_s\left(\frac{p_2}{p_1}s,s\right)>0$.
		\item $(\ell_-)_t<0$ if and only if $\beta_1\cosh\left(\frac{\beta_2 s}{2}\right)<p_1\cosh\left(q_1 t-\frac{\alpha_2 s}{2}\right)+q_1\cosh\left(p_1 t-\frac{\alpha_2 s}{2} s\right)$. For $(t,s)\in \Omega_1$, $0<q_2 s<p_1t$, hence the right side of the inequality is bounded below by $q_1\cosh\left(\beta_2s/2\right)$. Thus the above inequality holds true in $\Omega_1$.
	\end{itemize}
\end{proof}

\begin{proof}[Details of the proof of Theorem~\ref{nr:21}]
	The following facts were used in the proof.
	\begin{itemize}
		\item $\ell_+(t,s)$ is decreasing in $s$, $\ell_-(t,s)$ is increasing in $s$, and $\ell_+(t,s)$ is increasing in $t$ in the region $\Omega_0$, follow easily from their respective expressions. 
		\item For each $s_0>0$, $\ell_-(t,s_0)$ has a unique point of maxima $t_0$ such that $(t_0,s_0)\in \overline{\Omega}_0$: $\left(\ell_-\right)_t(t,s)=\frac{1}{t^2}\sech\left(\frac{\beta_2 s}{2}\right) R(t,s)$, where $R(t,s)=-n_1t\cosh\left(n_1t-\frac{\alpha_2 s}{2}\right)+\sinh\left(\frac{\beta_2 s}{2}\right)+\sinh\left(n_1t-\frac{\alpha_2 s}{2}\right)$. Since $R_t(t,s)<0$ in $\Omega_0$, for each $s_0>0$, $\ell_-(t,s_0)$ is either monotonically decreasing in $t$ throughout or it has a unique point of maxima. Moreover on the line $2n_1t=\alpha_2s$, we have $R\left(\frac{\alpha_2}{2n_1}\xi,\xi\right)=-\frac{\alpha_2\xi}{2}+\sinh\left(\frac{\beta_2\xi}{2}\right)$. Using the fact that for $\gamma>0$, the function $\xi^{-1}\sinh(\gamma\xi)$ is unbounded and increasing for $\xi\in [0,\infty)$, with minimum value $\gamma$ attained at $\xi=0$, there exists $\xi_0> 0$ such that $R\left(\frac{\alpha_2}{2n_1}\xi_0,\xi_0\right)=0$. Then, for $0<s_0\le\xi_0$, $\ell_-(t,s_0)$ is monotonically decreasing in $t$, in $\Omega_0$. Further for $s_0>\xi_0$, $\ell_-(t,s_0)$ first increases and then decreases in $t$, in $\Omega_0$, and hence it has a unique maxima.
	\end{itemize}
\end{proof}

	\bibliographystyle{siam}
	\bibliography{References}

\end{document}